\newcommand{\Tinj}{\tau_{_{\!\text{inj}H}}}
\newcommand{\TM}{T^*\!M}
\newcommand{\Ti}{T}
\newcommand{\hyp}{\Sigma_{_{\!H}}}
\newcommand{\hypq}{\Sigma_{_{\!H_q}}}
\newcommand{\Hs}[1]{\text{\raisebox{.2cm}{${_{\!H_{\scl}^{#1}(M)}}$}}}
\newcommand{\scl}{\operatorname{scl}}
\newcommand{\ad}{\operatorname{ad}}
\newtheorem{theorem}{Theorem}
\newtheorem{lemma}{Lemma}
\newtheorem{proposition}[lemma]{Proposition}
\theoremstyle{definition}
\newtheorem{definition}{Definition}
\newtheorem{corollary}[lemma]{Corollary}
\newtheorem{remark}{Remark}
\newcommand{\R}{{\mathbb R}}
\newcommand{\G}{{\mathcal G}}
\newcommand{\ep}{\varepsilon}
\newcommand{\Id}{\operatorname{Id}}
\newcommand{\mc}[1]{\mathcal{#1}}
\newcommand{\re}{\mathbb{R}}
\newcommand{\Span}{\operatorname{span}}
\newcommand{\WFh}{\operatorname{WF_h}}
\newcommand{\MSh}{\operatorname{MS_h}}
\newcommand{\red}[1]{{\color{purple}{#1}}}
\newcommand{\blue}[1]{{\color{blue}{#1}}}
\newcommand{\SNH}{S\!N^*\!H}
\newcommand{\SM}{S^*\!M}
\newcommand{\SigH}{\Sigma_{_{\!H}}}
\newcommand{\comp}{\operatorname{comp}}
\newcommand{\LambdaH}{\Lambda^\tau_{_{\!\hyp}}}
\newcommand{\LambdaHq}{\Lambda^\tau_{_{\!\hypq}}}
\newcommand{\LM}{{_{\!L^2(M)}}}
\newcommand{\class}{\delta}
\def\XXint#1#2#3{{\setbox0=\hbox{$#1{#2#3}{\int}$} \vcenter{\hbox{$#2#3$}}\kern-.5\wd0}}
\DeclareMathOperator{\vol}{vol}
\DeclareMathOperator{\supp}{supp}
\DeclareMathOperator{\inj}{inj}
\DeclareMathOperator{\conj}{{inj}}
\DeclareMathOperator{\A}{\mathcal A}
\DeclareMathOperator{\I}{\mathcal I}
\DeclareMathOperator{\J}{\mathcal J}
\newcommand{\e}{\varepsilon}
\numberwithin{equation}{section}
\numberwithin{lemma}{section}
\newcommand{\Lpexp}{n(\frac{1}{2}-\frac{1}{p})+\e}
\newcommand{\Om}{\Omega}
\newcommand{\T}{\mathcal{T}}
\newcommand{\sub}[1]{_{_{#1}}}
\newcommand{\Op}{{\widetilde{Op}}}
\title[Growth of high $L^p$ norms via geodesic beams]{Growth of high $L^p$ norms for eigenfunctions:\\ an application of geodesic beams}
\author{Yaiza Canzani}
\address{Department of Mathematics,University of North Carolina at Chapel Hill, Chapel Hill, NC, USA}
\email{canzani@email.unc.edu}
\author{Jeffrey Galkowski}
\address{Department of Mathematics, University College London, London, United Kingdom}
\email{j.galkowski@ucl.ac.uk }
\date{}
\begin{document}

%

\begin{abstract}
This work concerns $L^p$ norms of high energy Laplace eigenfunctions, $(-\Delta_g-\lambda^2)\phi_\lambda=0$, $\|\phi_\lambda\|_{L^2}=1$. In~\cite{So88}, Sogge gave optimal estimates on the growth of $\|\phi_\lambda\|_{L^p}$ for a general compact Riemannian manifold. The goal of this article is to 
give general dynamical conditions guaranteeing quantitative improvements in $L^p$ estimates for $p>p_c$, where $p_c$ is the critical exponent. We also apply the results of~\cite{CG19dyn} to obtain quantitative improvements in concrete geometric settings including all product manifolds.
These are  the first results improving estimates for the $L^p$ growth of eigenfunctions that only require dynamical assumptions. In contrast with previous  improvements, our assumptions are local in the sense that they depend only on the geodesics passing through a shrinking neighborhood of a given set in $M$. Moreover, the article gives a structure theorem for eigenfunctions which saturate the quantitatively improved $L^p$ bound. Modulo an error, the theorem describes these eigenfunctions as finite sums of quasimodes which, roughly, approximate zonal harmonics on the sphere scaled by $1/\sqrt{\log \lambda}$.

\end{abstract}
\vspace*{-1.2cm}
\maketitle


\section{Introduction}
Let $(M,g)$ be a smooth, compact, Riemannian manifold of dimension $n$ and consider normalized Laplace eigenfunctions: solutions to 
$$
(-\Delta_g-\lambda^2)\phi_\lambda=0,\qquad \|\phi_\lambda\|_{L^2(M)}=1.
$$
This article studies the growth of $L^p$ norms of the eigenfunctions, $\phi_\lambda$, as $\lambda\to \infty$. Since the work of Sogge~\cite{So88}, it has been known that there is a change of behavior in the growth of $L^p$ norms for eigenfunctions at the \emph{critical exponent} $p_c:=\tfrac{2(n+1)}{n-1}$. In particular, 
\begin{equation} 
\label{e:stdBounds}
\|\phi_\lambda\|_{L^p(M)}\leq C\lambda^{\delta(p)},\qquad \delta(p):=\begin{cases} \frac{n-1}{2}-\frac{n}{p}& p_c\leq p\\ \frac{n-1}{4}-\frac{n-1}{2p}& 2\leq p\leq p_c.
\end{cases}
\end{equation}
For $p\geq p_c$,~\eqref{e:stdBounds} is saturated by the zonal harmonics on {the round sphere} $S^n$. On the other hand, for $p\leq p_c$, these bounds are saturated by the highest weight spherical harmonics on $S^n$, also known as Gaussian beams.  In a very strong sense, {the authors showed in~\cite[page 4]{CG19a} that any eigenfunction saturating~\eqref{e:stdBounds} for $p>p_c$ behaves like a zonal harmonic}, while Blair--Sogge~\cite{BlSo15,BlSo17} showed that for $p<p_c$ such eigenfunctions behave like Gaussian beams. {In the $p\leq p_c$, Blair--Sogge have recently made substantial progress on improved $L^p$ estimates on manifolds with non-positive curvature~\cite{BlSo19,blair2018concerning,BlSo15b}}

This article concerns the behavior of $L^p$ norms for high $p$; that is, for $p>p_c$. {While there has been a great deal of work on $L^p$ norms of eigenfunctions~\cite{KTZ,HeRi,Ta19,Ta18a,SoggeTothZelditch,SoggeZelditch,SoZe16, TZ02,ToZe03} this article departs from the now standard approaches. We both adapt the geodesic beam methods developed by the authors in~\cite{GT,GDefect,GJEDP,CGT, CG17,GT18a,CG19dyn,CG19a} and develop a new second microlocal calculus used to understand the number of points at which $|u_\lambda|$ can be large. By doing this} we give general dynamical conditions guaranteeing quantitative improvements over~\eqref{e:stdBounds} for $p>p_c$. In order to work in compact subsets of phase space, we semiclassically rescale our problem. Let $h=\lambda^{-1}$ and, abusing notation slightly, write $\phi_\lambda=\phi_h$ so that 
$$
(-h^2\Delta_g-1)\phi_h=0,\qquad \|\phi_h\|_{L^2(M)}=1.
$$
We also work with the semiclassical Sobolev spaces {$H^s_{\text{scl}}(M)$, $s\in \re$, defined by the norm}
$$
\|u\|_{\Hs{s}}^2:=\langle (-h^2\Delta_g+1)^su,u\rangle_{\LM}.
$$

We start by stating a consequence of our main theorem.
Let ${\Xi}$ denote the collection of maximal unit speed geodesics for $(M,g)$. {For $m$ a positive integer, $r>0$,  $t\in \R$, and $x \in M$}  define
$$
{\Xi}_x^{m,r,t}:=\big\{\gamma\in \Xi: \gamma(0)=x,\,\exists\text{ at least }m\text{ conjugate points to } x \text{ in }\gamma(t-r,t+r)\big\},
$$
where we count conjugate points with multiplicity. Next, for  a set $V \subset M$ write
$$
\mc{C}_{_{\!V}}^{m,r,t}:=\bigcup_{x\in V}\{\gamma(t): \gamma\in \Xi_x^{m,r,t}\}.
$$
Note that if $r_t \to 0^+$ as $|t|\to \infty$, then saying  {$y \in \mc{C}_x^{n-1,r_t,t}$} for $t$ large indicates that $y$ behaves like  a point that is maximally conjugate to $x$. This is the case for every point $x$ on the sphere when $y$ is either equal to $x$ or its antipodal point.  The following result applies under the assumption that points are not maximally conjugate and obtains quantitative improvements. 

\begin{theorem}
\label{t:noConj}
Let $p>p_c$, $U \subset M$, and assume there exist $t_0>0$ and $a>0$  so  
$$
\inf_{x_1,x_2\in U}d\big(x_1, \mc{C}_{x_2}^{n-1,r_t,t}\big)\geq r_t,\qquad\text{ for } t\geq t_0,
$$
with $r_t=\frac{1}{a}e^{-at}.$ 
Then, there exist $C>0$ and $h_0>0$ so that for $0<h<h_0$ and $u \in {\mc{D}'}(M)$
$$
\|u\|_{L^p(U)}\leq Ch^{-\delta(p)}\left(\frac{\|u\|_{\LM}}{\sqrt{\log h^{-1}}}\;+\; \frac{\sqrt{\log h^{-1}}}{h}\big\|(-h^2\Delta_g-1)u\big\|_{\Hs{\frac{n-3}{2}-\frac{n}{p}}}\right).
$$
\end{theorem}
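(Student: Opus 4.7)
My plan is to derive Theorem~\ref{t:noConj} as a consequence of the paper's main $L^p$ improvement result---a beam--spreading theorem of roughly the following form: if for every pair $x_1,x_2\in U$ the geodesic beams from $x_2$ reaching $x_1$ avoid coherent refocusing on the $\sqrt h$ scale up to an Ehrenfest time $T(h)\asymp \log h^{-1}$, then Sogge's bound~\eqref{e:stdBounds} improves by a factor $(\log h^{-1})^{-1/2}$ for every $p>p_c$, with the quasimode remainder as stated. The content of Theorem~\ref{t:noConj} is then to convert the metric assumption on $(n-1)$-fold conjugate points into this spreading hypothesis, with Ehrenfest time proportional to $a^{-1}\log h^{-1}$.

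The central geometric step is a pointwise bound on the half--wave kernel $e^{-it\sqrt{-h^2\Delta_g}/h}(x_1,x_2)$ that degrades only mildly when the connecting geodesic has fewer than $n-1$ conjugate points to $x_2$ in $\gamma(t-r_t,t+r_t)$. By the Hadamard/FIO parametrix, each such geodesic contributes $\lesssim h^{-(n-1)/2}$ multiplied by an inverse power of the smallest transverse Jacobian of $\exp_{x_2}$; the hypothesis $d(x_1,\mc{C}_{x_2}^{n-1,r_t,t})\geq r_t$ forces this Jacobian to be $\gtrsim r_t^{N}$ for some $N=N(n)$, so the kernel is $\lesssim h^{-(n-1)/2}r_t^{-N}$. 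With $r_t=\tfrac{1}{a}e^{-at}$ the factor $r_t^{-N}$ remains under control up to times $t\leq c\log h^{-1}$ for a suitable $c=c(a,n)>0$, which fixes the admissible Ehrenfest time $T(h)$ and supplies the non--refocusing condition demanded by the main theorem. One then decomposes $u$ into $\sqrt h$-wide tubes over $S^*M|_U$, propagates each tube on $[t_0,T(h)]$ by the half--wave group, and sums in $t$; almost--orthogonality across times yields the advertised $T(h)^{-1/2}=(\log h^{-1})^{-1/2}$ gain, and the quasimode term on the right arises from commuting $e^{-it\sqrt{-h^2\Delta_g}/h}$ with $(-h^2\Delta_g-1)$.

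The main obstacle is making this long--time kernel analysis uniform over $x_1,x_2\in U$. Near the Ehrenfest time, ordinary Egorov estimates break down, Hadamard parametrices accumulate Maslov contributions at conjugate points, and tube widths expand exponentially in $t$; the second microlocal calculus developed in this paper, together with the dynamical refinements of~\cite{CG19dyn}, is exactly what is needed to track tube widths and conjugate--point multiplicities uniformly up to $T(h)$. The sub--exponential tolerance $r_t=\tfrac{1}{a}e^{-at}$ in the hypothesis is precisely tuned so that the $r_t^{-N}$ cost per conjugate point balances the exponential instability of the geodesic flow, which is what ultimately produces the $(\log h^{-1})^{-1/2}$ saving in the final estimate.
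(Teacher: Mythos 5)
You correctly identify the overall architecture: Theorem~\ref{t:noConj} is deduced by verifying that the conjugate-point hypothesis implies the dynamical non-looping hypothesis of Theorem~\ref{t:main bound} with $T(h)\asymp\log h^{-1}$, and then invoking that theorem. However, the mechanism you propose for this verification — a pointwise bound on the half-wave kernel $e^{-it\sqrt{-h^2\Delta_g}/h}(x_1,x_2)$ via the Hadamard/FIO parametrix, with the conjugate-point hypothesis controlling a transverse Jacobian — is not what the paper does, and it has real problems.

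First, the hypothesis of Theorem~\ref{t:main bound} is purely geometric/measure-theoretic: one must exhibit, for each pair $x_1,x_2\in U$, a set of ``bad'' tubes $\mc{B}_{x_1,x_2}\subset \J_{x_1}$ whose complement does not loop into $S^*_{B(x_2,R(h))}M$ during $[t_0,T(h)]$, with $|\mc{B}_{x_1,x_2}|R(h)^{n-1}$ sufficiently small. A wave-kernel estimate, even if it held, does not directly produce such a tube partition or control its cardinality. The paper's Section~5 instead invokes \cite[Lemma~4.1]{CG19dyn}, which converts the hypothesis $d(x_2,\mc{C}_{x_1}^{n-1,r_t,t})>r_t$ into a quantitative \emph{left-invertibility} statement for $d\psi_2$ restricted to $\re\partial_t\times\re{\bf w}$ (with norm $\lesssim a e^{C(a+\Lambda)|t_0|}$), and then \cite[Proposition~2.2]{CG19dyn} turns this into a direct volume bound $|\mc{B}_{x_1,x_2}|R(h)^{n-1}\lesssim T e^{4(2\Lambda+a)T} R(h)$, which becomes $\leq h^{\e/3}$ for $T=b\log h^{-1}$ with $b$ small. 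No parametrix enters. Indeed the paper emphasizes explicitly that ``the techniques do not require long-time wave parametrices''; your plan reintroduces precisely the tool the authors are at pains to avoid.

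Second, the geometric step you hinge on — that $d(x_1,\mc{C}_{x_2}^{n-1,r_t,t})\geq r_t$ forces the transverse Jacobian of $\exp_{x_2}$ to be $\gtrsim r_t^{N}$ — is not justified and is likely false as stated. Avoiding the near-maximal conjugate set rules out $n-1$ conjugate points clustering near time $t$, but says nothing a priori about the total Jacobian determinant, which can still be exponentially small due to near-conjugacy of fewer than $n-1$ directions. The CG19dyn lemmata sidestep this by controlling only a rank-$2$ piece of $d\psi_2$ (the flow direction plus one transverse direction ${\bf w}$), which is exactly what the hypothesis guarantees and exactly what is needed for the tube-volume count. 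Your claimed Jacobian lower bound is a strictly stronger statement and would require a separate argument you do not supply. You would need to replace the kernel-estimate paragraph with the tube-volume verification via \cite[Lemma~4.1]{CG19dyn} and \cite[Proposition~2.2]{CG19dyn}, together with the choice of parameters $r_0=h^{\e_1}$, $r_1=R(h)=h^\e$, $T=b\log h^{-1}$ satisfying~\eqref{e:assumeR}.
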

\noindent The  assumption in Theorem~\ref{t:noConj} {rules} out maximal conjugacy {of any two points} $x,y\in U$ uniformly up to time $\infty$, and we expect it to hold on a generic manifold $M$ with $U=M$.
Since Theorem~\ref{t:noConj} includes the case of manifolds without conjugate points, it generalizes the work of~\cite{HaTa15}, where it was shown that logarithmic improvements {in $L^p$ norms for} $p>p_c$ are possible on manifolds with non-positive curvature. One family of examples where the assumptions of Theorem~\ref{t:noConj} hold is that of product manifolds~\cite[Lemma 1.1]{CG19a} i.e. $(M_1\times M_2, g_1 \oplus g_2)$ where $(M_i,g_i)$ are non-trivial compact Riemannian manifolds. Note that this family of examples includes manifolds with large numbers of conjugate points e.g. $S^{2}\times {M}$. 

{The proof of Theorem~\ref{t:noConj} gives a great deal of information about eigenfunctions which may saturate $L^p$ bounds $(p>p_c)$.  Our next theorem describes the structure of such eigenfunctions. This theorem shows that an eigenfunction can saturate the \emph{logarithmically improved} $L^p$ norm near at most \emph{boundedly many} points. Moreover, modulo an error small in $L^p$,  near each of these points the eigenfunction can be decomposed as a sum of quasimodes which are similar to the highest weight spherical harmonics scaled by $h^{\frac{n-1}{4}}/\sqrt{\log h^{-1}}$ whose number is nearly proportional to $h^{\frac{1-n}{2}}$. In the theorem below the quasimodes are denoted by $v_j$ and, while similar to highest weight spherical harmonics (a.k.a Gaussian beams), they are not as tightly localized to a geodesic segment and do not have Gaussian profiles. We refer to these quasimodes as geodesic beams (see Remark~\ref{r:geodesicBeams}). 

\begin{theorem}\label{t:JeffsFavorite} Let $p>p_c$. There exist $c,C>0$ such that the following holds.
Suppose the same assumptions as Theorem~\ref{t:noConj}. Let $0<\delta_1<\delta_2<\frac{1}{2}$, $h^{\delta_2}\leq R(h)\leq h^{\delta_1}$, and  $\{x_\alpha\}_{\alpha \in \mc{I}(h)} \subset M$ be a maximal $R(h)$-separated set. 
Let $u \in \mc{D}'(M)$ with $\|(-h^2\Delta_g-1)u\|_{H_h^{\frac{n-3}{2}}}=o\big(\frac{h}{\log h^{-1}}\|u\|_{L^2}\big)$,
and for $\e>0$
$$
\mc{S}\sub{U}(h, \e,u):=\Big\{\alpha \in \mc{I}(h): \|u\|_{L^\infty (B(x_\alpha,R(h)))} \geq  \frac{\e h^{\frac{1-n}{2}}}{\sqrt{\log h^{-1}}}\|u\|_{\LM}, \;\; B(x_\alpha,R(h))\cap U\neq \emptyset\Big\}.
$$
Then, for
all $\e>0$ there are $N_\e>0$ and $h_0>0$ such that  $|\mc{S}\sub{U}(h,\e,u)|\leq N_\e$ for all $0<h\leq h_0$.

Moreover, there is collection of geodesic tubes $\{\T_j\}_{j \in \mc{L}(\e,u)}$ of radius $R(h)$ (see Definition~\ref{d: cover}), with indices satisfying  $\mc{L}(\e,u)=\cup_{i=1}^{C}\J_i$ and   $\T_k\cap \T_\ell=\emptyset$  for $k,\ell\in \J_i$   with $k\neq \ell$, such that 
$$
u=u_e+\frac{1}{\sqrt{\log h^{-1}}}\sum_{j \in \mc{L}(\e,u)}v_{j},
$$
where $v_j$ is microsupported in $\T_j$, $|\mc{L}(\e,u)|\leq C \e^{-2}R(h)^{1-n}$, and for all $p\leq q\leq \infty$,
\begin{gather*}
\|u_e\|_{L^q}\leq {\e h^{-\delta(q)}}({\log h^{-1}})^{-\frac{1}{2}}\|u\|_{L^2},\\
\|v_{j}\|_{L^2}\leq C\e^{-1}R(h)^{\frac{n-1}{2}}\|u\|_{L^2},\qquad \quad\|Pv_j\|_{L^2}\leq C\e^{-1}R(h)^{\frac{n-1}{2}}h\|u\|_{L^2}.
\end{gather*}
 Finally, with
$
\mc{L}(\e,u,\alpha):=\big\{j\in \mc{L}(\e,u):\, \pi(\T_j)\cap B(x_\alpha, 3R(h))\neq \emptyset\big\},
$
 for every $\alpha\in \mc{S}\sub{U}(h,\e,u)$,
$$
c \e^2 R(h)^{1-n}\leq |\mc{L}(\e,u,\alpha)|\leq CR(h)^{1-n},\qquad \quad\sum_{j\in \mc{L}(\e,u,\alpha)}\|v_j\|^2_{L^2} \geq c^2\e^2.$$

\end{theorem}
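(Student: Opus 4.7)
The plan is to combine the geodesic beam decomposition of \cite{CG19dyn,CG19a} with the new second microlocal calculus alluded to in the introduction. Following \cite{CG19dyn}, I would first cover the $h$-wavefront set of $u$ by geodesic tubes $\{\T_j\}_j$ of radius $R(h)$; a standard packing argument on the energy surface decomposes the tube index set into $C=C(n)$ color classes $\J_i$ on which tubes are pairwise disjoint. A semiclassical partition of unity subordinate to the cover, together with propagation over time $\rho\log h^{-1}$, yields a decomposition
$$
u=u_e+\tfrac{1}{\sqrt{\log h^{-1}}}\sum_{j\in \mc{L}(\e,u)} v_j,
$$
where $v_j$ is microsupported in $\T_j$ and only tubes carrying at least $c\e R(h)^{(n-1)/2}\|u\|_{L^2}$ of mass (after rescaling by $\sqrt{\log h^{-1}}$) are retained, the rest being absorbed into $u_e$. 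The $L^2$ and $Pv_j$ bounds are immediate from the microsupport and the quasimode hypothesis on $u$, and orthogonality within each color class $\J_i$ yields $|\mc{L}(\e,u)|\leq C\e^{-2}R(h)^{1-n}$.

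For the $L^q$ bound on $u_e$, I would apply the proof of Theorem~\ref{t:noConj} to $u_e$: by construction, each geodesic-beam component of $u_e$ has $L^2$ mass at most $c\e R(h)^{(n-1)/2}\|u\|_{L^2}$, which is precisely the input needed to improve the $\|u\|_{L^2}/\sqrt{\log h^{-1}}$ factor in Theorem~\ref{t:noConj} to $\e\|u\|_{L^2}/\sqrt{\log h^{-1}}$. Interpolation with the universal geodesic beam $L^\infty$ bound then extends this to $q\in[p,\infty]$. For the structure near a large point $x_\alpha\in\mc{S}\sub{U}(h,\e,u)$, the pointwise geodesic beam estimate $\|v_j\|_{L^\infty}\leq Ch^{(1-n)/2}R(h)^{(1-n)/2}\|v_j\|_{L^2}$, combined with microlocal smoothing ($v_j(x)=O(h^\infty)$ off a neighborhood of $\pi(\T_j)$), shows that the hypothesis $\|u\|_{L^\infty(B(x_\alpha,R(h)))}\geq \e h^{(1-n)/2}\|u\|_{L^2}/\sqrt{\log h^{-1}}$ forces $\sum_{j\in\mc{L}(\e,u,\alpha)}R(h)^{(1-n)/2}\|v_j\|_{L^2}\geq c\e\|u\|_{L^2}$. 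Cauchy--Schwarz color-class by color-class, together with the packing upper bound $|\mc{L}(\e,u,\alpha)|\leq CR(h)^{1-n}$ on the number of radius-$R(h)$ tubes through a $3R(h)$-ball, then yields both the mass lower bound $\sum_{j\in\mc{L}(\e,u,\alpha)}\|v_j\|_{L^2}^2\geq c^2\e^2$ and the matching lower count $|\mc{L}(\e,u,\alpha)|\geq c\e^2R(h)^{1-n}$.

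The main obstacle is the $h$-uniform bound $|\mc{S}\sub{U}(h,\e,u)|\leq N_\e$. Naively, summing $\sum_{j\in\mc{L}(\e,u,\alpha)}\|v_j\|_{L^2}^2\geq c^2\e^2$ over $\alpha$ and comparing with $\|u\|_{L^2}^2$ would give $|\mc{S}\sub{U}|\leq C\e^{-2}$, but only \emph{if} the families $\{\mc{L}(\e,u,\alpha)\}_\alpha$ are essentially disjoint -- i.e.\ no single tube $\T_j$ contributes to the $L^\infty$ concentration at two distinct points of $U$. Without any dynamical assumption a geodesic tube could refocus at several $x_\alpha$'s; the Theorem~\ref{t:noConj} hypothesis $d(x_1,\mc{C}_{x_2}^{n-1,r_t,t})\geq r_t$ with $r_t=a^{-1}e^{-at}$ is precisely what prohibits the full-order refocusing required for such multi-point concentration at the logarithmic scale. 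Turning non-maximal conjugacy quantitatively into the statement that each tube belongs to only $O(1)$ of the sets $\mc{L}(\e,u,\alpha)$ is the hardest step, and it is the piece of the argument that requires the new second microlocal calculus introduced in this paper. Once this almost-disjointness is in hand, the summation
$$
c^2\e^2|\mc{S}\sub{U}(h,\e,u)|\leq \sum_{\alpha}\sum_{j\in\mc{L}(\e,u,\alpha)}\|v_j\|_{L^2}^2\leq C\|u\|_{L^2}^2
$$
closes the proof by yielding $|\mc{S}\sub{U}(h,\e,u)|\leq C\e^{-2}=:N_\e$.
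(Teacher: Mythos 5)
Your proposed route to the $h$-uniform bound $|\mc{S}\sub{U}(h,\e,u)|\leq N_\e$ is not the paper's argument, and as written it contains two genuine gaps. First, the ``almost-disjointness'' you single out as the crux---that each tube $\T_j$ belongs to only $O(1)$ of the families $\mc{L}(\e,u,\alpha)$---is not proved in the paper and is generally false: a tube of unit length and radius $R(h)$ passes over roughly $R(h)^{-1}$ of the $R(h)$-separated centers $x_\alpha$, and the hypothesis $d(x_1,\mc{C}_{x_2}^{n-1,r_t,t})\geq r_t$ constrains refocusing (conjugacy), not the sharing of directions along a common geodesic segment through two nearby points. The second microlocal calculus is \emph{not} used to establish any tube-family disjointness; it enters earlier, inside the proof of Theorem~\ref{t:main bound}, via Lemma~\ref{l: |I_km|} and Proposition~\ref{P:orthogonality}, which give almost-orthogonality of the co-isotropic \emph{point} localizers $X_{\tilde x_\alpha}$ and use it to bound the number $|\I_{k,m}|$ of balls on which $w_k$ attains a prescribed $L^\infty$ weight, in terms of the tube count $|\A_{k,m}|$. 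Second, even granting almost-disjointness, your $L^2$ budget is not $C\|u\|_{L^2}^2$: since $v_j=\sqrt{T}u_j$ with $T\sim\log h^{-1}$, orthogonality only gives $\sum_j\|v_j\|_{L^2}^2\leq CT\|u\|_{L^2}^2$; alternatively the per-tube bound $\|v_j\|_{L^2}\leq C\e^{-1}R(h)^{(n-1)/2}\|u\|_{L^2}$ together with $|\mc{L}(\e,u)|\leq C\e^{-2}R(h)^{1-n}$ gives $\sum_j\|v_j\|_{L^2}^2\leq C\e^{-4}\|u\|_{L^2}^2$. Neither is $O(1)$, so the displayed inequality $\sum_\alpha\sum_j\|v_j\|_{L^2}^2\leq C\|u\|_{L^2}^2$ does not hold, and your $N_\e=C\e^{-2}$ is not obtained (though a worse polynomial bound would still suffice if the disjointness were available, which it isn't).

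The paper's proof of the count is independent of the decomposition and considerably more direct. For $\alpha\in\mc{S}\sub{U}(h,\e,u)$, one truncates to low frequency with $\chi_0(-h^2\Delta_g)$ and cuts to the ball $B(x_\alpha,2R(h))$; the semiclassical $L^\infty$-to-$L^p$ comparison for band-limited functions then turns the assumed $L^\infty$ lower bound into $\|u\|_{L^p(B(x_\alpha,2R(h)))}\geq c\e\sqrt{t_0/T}\,h^{-1/p}\|u\|\sub{P,T}$. Because $\{x_\alpha\}$ is a maximal $R(h)$-separated set, the balls $B(x_\alpha,2R(h))$ overlap at most $\mathfrak{D}_n$ times; raising to the $p$-th power, summing over $\alpha$, and applying the improved $L^p$ bound from Theorem~\ref{t:main bound} yields $N^{1/p}\e\leq C$, hence $|\mc{S}\sub{U}(h,\e,u)|\leq C\mathfrak{D}_n\e^{-p}$. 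Only disjointness of the $R(h)$-balls is used, never any disjointness of tube families. Your sketch of the decomposition itself (cover by tubes, retain heavy ones, pointwise geodesic beam estimate, Cauchy--Schwarz for the mass lower bound at each $x_\alpha$) is broadly consistent with the paper, though you omit the double filtering by $L^2$ mass scale $k$ and $L^\infty$ weight $m$ as well as the split into looping ($\mc{B}_{k,m}$) and non-looping ($\mc{G}_{k,m}$) tubes that the paper uses to isolate $u_{\rm big}=\sum_{j\in\mc{L}(\e)}u_j$ from the error term.
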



The decomposition of $u$ into geodesic beams $v_j$ is illustrated in Figure \ref{f:structure}. One covers $S^*M$ with a collection of tubes $\{\mc{T}_j\}$  of radius $R(h)$ that run along a geodesic. Each geodesic beam $v_j$ corresponds to microlocalizing $u$ to the tube $\mc{T}_j$. 

Let $u$ be a quasimode with $\|Pu\|=o(h/\log h^{-1})\|u\|$. Note that, by interpolation Theorem~\ref{t:noConj} implies that for each $p>p_c$ there is $N>0$ such that if $\|u\|_{L^p}\geq \e h^{-\delta(p)}/\sqrt{\log h^{-1}}\|u\|_{L^2}$, then $\|u\|_{L^\infty}\geq \e^{N}h^{\frac{1-n}{2}}\|u\|_{L^2}$.  
In particular, for $u$ to saturate the logarithmically improved $L^p$ bound, it follows that $\mc{S}\sub{M}(h,\e^N,u)$ is non-empty. Theorem~\ref{t:JeffsFavorite} then gives that $\mc{S}\sub{M}(h,\e^N,u)$ has a uniformly bounded number of points and at these points the quasimode $u$ needs to consist of at least $c \ep^{2N} R(h)^{1-n}$ geodesic beams whose combined $L^2$ mass is at least $c \e^{N}$. Since $\dim(S^*_{x_\alpha}M)=n-1,$ this implies that there is a positive measure set of directions through $x_\alpha$ among which $u$ is spreading its mass nearly uniformly.

The proofs of Theorems~\ref{t:noConj} and~\ref{t:JeffsFavorite} hinge on a much more general theorem which does not require global geometric assumptions on $(M,g)$ and, in particular, Theorem \ref{t:JeffsFavorite} holds without modification under the assumptions of Theorem~\ref{t:main bound} below. {(We actually prove Theorem \ref{t:JeffsFavorite} under the more general assumptions, see Section \ref{s:JeffsFavorite}).}   As far as the authors are aware,  Theorem~\ref{t:noConj}  is the first result giving quantitative estimates for the $L^p$ growth of eigenfunctions that \emph{only} requires dynamical assumptions. We emphasize that, in contrast with previous improvements on Sogge's $L^p$ estimates, the assumptions in Theorem~\ref{t:main bound} below are purely dynamical and, moreover, are local in the sense that they depend only on the geodesics passing through a shrinking neighborhood of a given set in $M$. Moreover, the techniques do not require long-time wave parametrices.
}

\begin{figure}
\centering
\includegraphics[width=16cm]{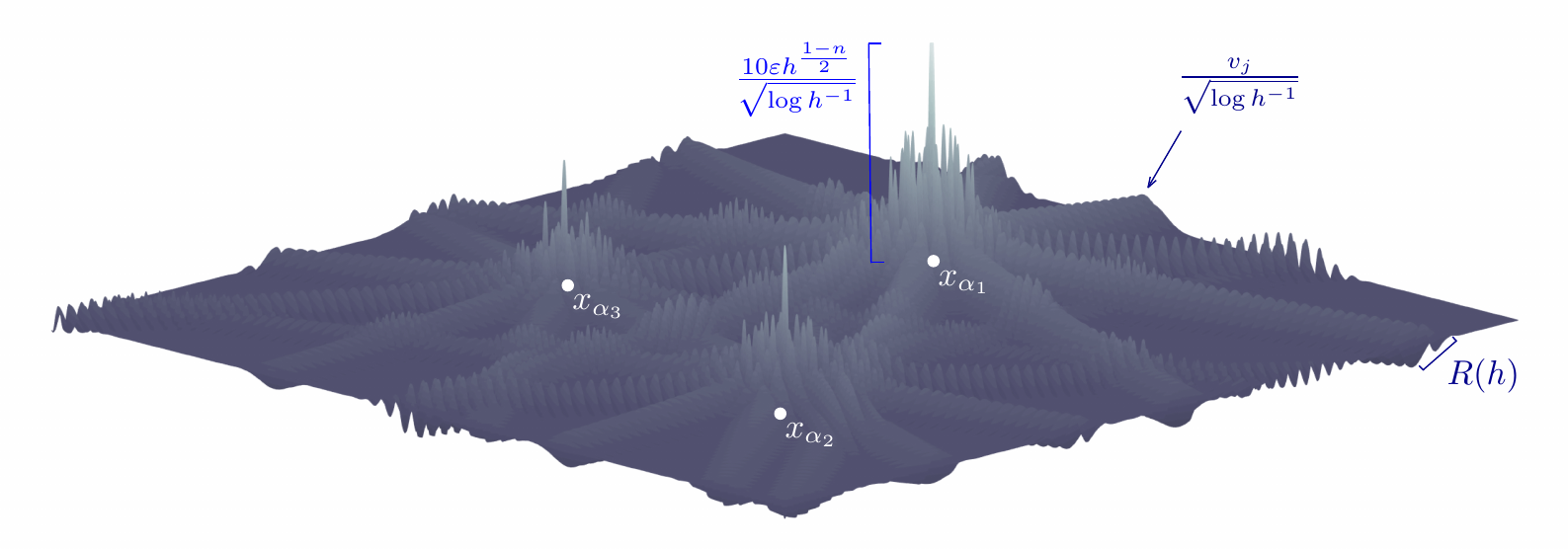}
\caption{\label{f:structure} The figure illustrates a function $u$ that saturates the $L^\infty$ bound at three points $x_{\alpha_1},x_{\alpha_2},x_{\alpha_3}$ viewed as a superposition of geodesic beams $v_j$. Each ridge corresponds to a beam $v_j$ and is microsupported on a tube $\mc{T}_j$ of radius $R(h)$. }
\end{figure}

Theorem~\ref{t:main bound} {below} controls $\|u\|_{L^p(U)}$ using an assumption on the maximal volume of long geodesics joining any two given points in $U$. For our proof, it is necessary to control the number points in $U$ where the $L^\infty$ norm of $u$ can be large. 
This is a very delicate and technical part of the argument, as the points in question may be approaching one another at rates $\sim h^\delta$ as $h\to 0^+$, with $0<\delta<\frac{1}{2}$.
We {overcome this problem} by developing a second microlocal calculus in Section~\ref{S:co-isotrop} which, after a delicate microlocal argument, yields an uncertainty type principle controlling the amount of $L^2$ mass shared along short geodesics connecting two nearby points. We expect that additional development of these counting techniques will have many other applications, e.g. to estimates on $L^p$ norms $p\leq p_c$.

To state our theorem, we need to introduce a few geometric objects. First, consider {the Hamiltonian function $p \in C^\infty(T^*M{\setminus\{0\}})$,}
\[p(x,\xi)=|\xi|_g-1,\]
 and let  
 $\varphi_t:T^*M\setminus 0 \to T^*M\setminus 0$
 denote the Hamiltonian flow for $p$ {at time $t$}. We also define the \emph{maximal expansion rate } and the \emph{Ehrenfest time} at frequency $h^{-1}$ respectively:
\begin{equation}
\label{e:LambdaMax}
\Lambda_{\max}:=\limsup_{|t|\to \infty}\frac{1}{|t|}{\log} \sup_{S^*M}\|d\varphi_t(x,\xi)\|, \qquad 
T_e(h):=\frac{\log h^{-1}}{2\Lambda_{\max}},
\end{equation}
where $\|\cdot\|$ denotes the norm in any metric on $T(T^*M)$.
Note that $\Lambda_{\max}\in[0,\infty)$, and if $\Lambda_{\max}=0$ we may replace it by an arbitrarily small positive constant. 
 We next describe a cover of $\SM$ by geodesic tubes. 

For each $\rho_0\in \SM$, {the co-sphere bundle to $M$}, let $H_{_{\!\rho_0}}\subset M$ be a hypersurface so that $\rho_0\in \SNH_{_{\!\rho_0}}$, {the unit conormal bundle to $H_{_{\!\rho_0}}$}. Then, let 
\[\mc{H}_{_{\!\rho_0}}\;\subset\; T_{H_{_{\!\rho_0}}}^*M=\{(x, \xi)\in T^*M:\; x \in H_{_{\!\rho_0}}  \}\] 
be a hypersurface containing $\SNH_{_{\!\rho_0}}$. {Next,} for $q\in \mc{H}_{_{\!\rho_0}}$, {$\tau>0$}, we define {the tube through $q$ of radius $R(h)>0$ and `length' $\tau+R(h)$ as}
\begin{equation}\label{e:tubes}
\Lambda_{q}^\tau(R(h)):=\bigcup_{|t|\leq \tau +R(h)} \varphi_t(B_{_{\mc{H}_{_{\!\rho_0}}}}\!({q},R(h))),
 B_{_{\mc{H}_{_{\!\rho_0}}}}\!({q}, R(h)):=\{\rho \in \mc{H}_{_{\!\rho_0}}:\;  d(\rho,{q})\leq R(h)\},
\end{equation}
{and $d$ is {distance induced by} the Sasaki metric on $\TM$ (See e.g.~\cite[Chapter 9]{BlairSasaki} for a description of the Sasaki metric). {Note that the tube runs along the geodesic through $q\in H_{\rho_0}$. Similarly, for $A \subset S^*M$, we define $\Lambda_{A}^\tau(R(h))$ in the same way, replacing $q$ with $A$ in \eqref{e:tubes}.} }

\begin{definition} \label{d: cover} Let $A\subset \SM$, $r>0$, and  $\{\rho_j(r)\}_{j=1}^{N_r} \subset A$ {for some $N_r>0$}. We say the collection of tubes {$\{\Lambda_{\rho_j}^\tau(r)\}_{j=1}^{N_r}$} is a \emph{$(\tau, r)$-cover} of a set $A\subset  \SM$ provided
 $$\Lambda_A^\tau(\tfrac{1}{2}r) \subset\bigcup_{j=1}^{N_r}\T_j,\qquad \T_j:=\Lambda_{\rho_j}^{\tau}(r).$$
\end{definition}

\noindent{Given a $(\tau,r)$ cover $\{\T_j\}_{j\in \J}$  for $\SM$, for each  $x\in M$ we define} 
$$\J_{\!x}:=\{j\in \J:\; \pi(\T_j)\cap B(x,r)\neq \emptyset\}.$$

{We are now ready to state Theorem \ref{t:main bound}}, where we give {\emph{explicit dynamical conditions}} guaranteeing quantitative improvements in $L^p$ norms. 

\begin{theorem}\label{t:main bound} There exists $\tau\sub{M}>0$ such that for all $p>p_c$  {and {$\e_0>0$}} the following holds. Let $U \subset M$,  \, ${0}<{\delta_1}<{\delta_2}<\frac{1}{2}$  {and let}  $h^{{\delta_2}}\leq R(h)\leq h^{{\delta_1}}$ {for all $h>0$}. Let $1\leq T(h)\leq (1-2{\delta_2})T_e(h)$ and let $t_0>0$ be $h$-independent. Let  $\{\T_j\}_{j\in \J}$ be a $(\tau, R(h))$ cover for $\SM$ {for some $0<\tau<\tau\sub{M}$}. 

Suppose that for any pair of points $x_1,x_2\in {U}$, the tubes over $x_1$ can be partitioned into {a disjoint union} $\J_{\! x_1}=\mc{B}\sub{x_1,x_2}\sqcup\mc{G}\sub{x_1,x_2}$ where 
$$
\bigcup_{j\in \mc{G}\sub{x_1,x_2}}\varphi_t(\T_j)\cap S^*_{B(x_2,R(h))}M=\emptyset,\qquad {t\in [t_0,T(h)].}$$
Then, there are $h_0>0$ and $C>0$ so that for all $u\in \mc{D}'(M)$, and $0<h<h_0$,
\begin{multline}
\label{e:LpestFinal}
\|u\|_{L^p({U})}\leq Ch^{-\delta(p)}\Bigg(\frac{\sqrt{t_0}}{\sqrt{T(h)}}+\Big[\sup_{x_1,x_2\in U}|\mc{B}\sub{x_1,x_2}|R(h)^{n-1}\Big]^{{\frac{1}{6+\e_0}(1-\frac{p_c}{p})}}\Bigg)\\{\times}\Bigg(\|u\|_{L^2}+\frac{T(h)}{h}\|(-h^2\Delta_g-1)u\|_{H_h^{\frac{n-3}{2}-\frac{n}{p}}}\Bigg).
\end{multline}
\end{theorem}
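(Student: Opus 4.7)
The plan is to reduce the $L^p(U)$ estimate to an improved local $L^\infty(U)$ bound via interpolation against Sogge's standard $L^{p_c}$ estimate:
\[
\|u\|_{L^p(U)}\leq \|u\|_{L^{p_c}(U)}^{p_c/p}\|u\|_{L^\infty(U)}^{1-p_c/p},
\]
so that any quantitative improvement of the local $L^\infty$ mass is converted into an $L^p$ improvement with the interpolation factor $1-p_c/p$; the remaining factor $\tfrac{1}{6+\e_0}$ will come from a dyadic counting step inside the $L^\infty$ bound. The averaging factor $\sqrt{t_0/T(h)}$, already having the right power to survive interpolation intact, will arise from the dynamical hypothesis through a geodesic-beam decomposition.

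I would first reduce to an approximate eigenfunction by subtracting a parametrix for $P=-h^2\Delta_g-1$ iterated out to time $T(h)$; this accounts for the $T(h)/h$ weight on $\|Pu\|_{H_h^{(n-3)/2-n/p}}$ in~\eqref{e:LpestFinal}. I would then introduce a second microlocal partition of unity $\{\chi_j\}_{j\in \J}$ subordinate to the $(\tau, R(h))$-cover $\{\T_j\}$, writing $u=\sum_{j\in \J}u_j+O_{L^\infty}(h^\infty)\|u\|_{L^2(M)}$ with each $u_j=\chi_j u$ microsupported in $\T_j$. The single-tube $L^\infty$ bound of~\cite{CG19dyn,CG19a} then yields, for every $x\in U$,
\[
\|u\|_{L^\infty(B(x,R(h)))}^2 \leq Ch^{1-n}\sum_{j\in \J_{\!x}}\|u_j\|_{L^2(M)}^2,
\]
modulo negligible errors, reducing the local $L^\infty$ control to bounding the $L^2$ mass in the tubes hitting $B(x,R(h))$.

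For each pair $x_1,x_2\in U$ I would exploit the hypothesized split $\J_{\!x_1}=\mc{B}\sub{x_1,x_2}\sqcup\mc{G}\sub{x_1,x_2}$: applying Egorov's theorem and the non-return property $\varphi_t(\T_j)\cap S^*_{B(x_2,R(h))}M=\emptyset$ for $t\in[t_0,T(h)]$ and $j\in \mc{G}\sub{x_1,x_2}$, the propagated good tubes stay essentially disjoint and avoid the slice over $B(x_2,R(h))$. Averaging $\|u_j\|_{L^2}^2$ over this window, using $L^2$-conservation under the flow, gives the geodesic-beam averaging bound
\[
\sum_{j\in \mc{G}\sub{x_1,x_2}}\|u_j\|_{L^2(M)}^2\leq C\frac{t_0}{T(h)}\|u\|_{L^2(M)}^2,
\]
while the bad contribution is at most $C|\mc{B}\sub{x_1,x_2}|R(h)^{n-1}\|u\|_{L^2(M)}^2$ after using the natural $R(h)^{n-1}$-scaling for a single $\chi_j$. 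Setting $x_1=x_2$ in the single-tube bound then yields a ball-wise improvement with factor $\sqrt{t_0/T(h)}+(|\mc{B}\sub{x_2,x_2}|R(h)^{n-1})^{1/2}$.

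To pass from ball-wise to a genuine $L^\infty(U)$ (and hence $L^p(U)$) improvement with the correct exponent $\tfrac{1}{6+\e_0}$ on the bad-tube factor, I would invoke the second microlocal calculus developed in Section~\ref{S:co-isotrop}. Applied to the co-isotropic submanifolds associated to pairs of distinct concentration points $x_\alpha,x_\beta\in U$, the calculus supplies an uncertainty principle controlling the $L^2$ mass that tubes can simultaneously carry between two such points, even at sub-semiclassical separations $d(x_\alpha,x_\beta)\sim h^{\delta_1}\gg h^{1/2}$. A dyadic pigeonhole over concentration heights then bounds the number of $R(h)$-balls in $U$ that can saturate, converting the trivial $\tfrac{1}{2}$-power from Cauchy--Schwarz into the weaker $\tfrac{1}{6+\e_0}$. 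Feeding the resulting $L^\infty(U)$ bound into the interpolation above, together with Sogge's classical estimate on $\|u\|_{L^{p_c}(U)}$, yields~\eqref{e:LpestFinal}. The main obstacle is exactly this last step: the concentration points live below the standard semiclassical scale, so usual wavefront tools fail; constructing the co-isotropic second microlocal calculus and extracting a mass uncertainty principle sharp enough to feed a clean dyadic count is the technical heart of the proof and is what forces the somewhat awkward exponent $\tfrac{1}{6+\e_0}$.
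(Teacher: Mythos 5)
Your high-level ingredients — geodesic-beam decomposition, the good/bad tube split, the second-microlocal uncertainty principle to bound the number of near-extremal balls — are all present in the paper. But the top-level architecture you propose has a genuine gap that would yield a strictly weaker theorem.

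You begin by interpolating $\|u\|_{L^p(U)}\leq \|u\|_{L^{p_c}(U)}^{p_c/p}\|u\|_{L^\infty(U)}^{1-p_c/p}$ and then plan to pair an improved $L^\infty(U)$ bound with Sogge's \emph{unimproved} $L^{p_c}$ estimate. If the $L^\infty$ improvement carries the factor $\sqrt{t_0/T(h)}$, this interpolation produces only $(t_0/T(h))^{\frac{1}{2}(1-\frac{p_c}{p})}$ in the $L^p$ bound — the interpolation exponent damps the averaging gain — whereas \eqref{e:LpestFinal} requires the undamped factor $\sqrt{t_0/T(h)}$. Your remark that the averaging factor ``already has the right power to survive interpolation intact'' is exactly where this breaks down: it does not survive intact unless you also carry a $T^{-1/2}$ improvement on the $L^{p_c}$ side, which Sogge's classical estimate does not provide. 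The bad-tube term, by contrast, is supposed to acquire the $(1-p_c/p)$ exponent, which is why interpolating that piece at face value is consistent with \eqref{e:LpestFinal}; the problem is confined to the $T$-term.

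The paper avoids this by never interpolating at the level of $u$ itself. Instead, after localizing near $\{p=0\}$ it filters the geodesic beams by tube $L^2$-mass into classes $\A_k$ and further filters by the $L^\infty$ weight on $R(h)$-balls into classes $\I_{k,m}$, producing dyadic pieces $w_{k,m}$. The low-$m$ pieces are handled by interpolation (this is your step, and is fine there). For the high-$m$ pieces, the good part $w_{k,m}^{\mc{G}}$ is built from non-self-looping tubes, so its \emph{$L^2$ mass} (hence its $L^{p_c}$ norm) already carries the factor $(t_0/T)^{1/2}$; the interpolation against this improved $L^{p_c}$ bound, combined with the improved pointwise bound on $w_{k,m}^{\mc{G}}$, recombines to give the full $\sqrt{t_0/T}$ rather than the damped power. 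To make your interpolation-first strategy work you would need to prove an improved $L^{p_c}(U)$ estimate under the same dynamical hypothesis — which is not assumed and is a separate (and in fact harder) problem — or restructure the argument along the lines above. The dyadic double filtration by $(k,m)$ is also what makes the counting bound from the uncertainty principle usable; your pigeonhole sketch is in the right spirit, but without the $(k,m)$ bookkeeping there is no clean place to insert the counting inequality $|\I_{k,m}|\lesssim |\A_{k,m}|2^{-2m}R(h)^{1-n}$ that ultimately produces the $\tfrac{1}{6+\e_0}$ exponent.
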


In order to interpret~\eqref{e:LpestFinal}, note that we think of the tubes $\mc{G}\sub{x_1,x_2}$ and $\mc{B}\sub{x_1,x_2}$ as respectively good (or non- looping) and bad (or looping) tubes. Then, observe that $|\mc{B}\sub{x_1,x_2}|R(h)^{n-1}\sim \vol\big(\bigcup_{j\in \mc{B}\sub{x_1,x_2}} \T_j\cap S^*_{x_1}M)$, and $\bigcup_{j\in \mc{B}\sub{x_1,x_2}}\T_j$ is the set of points over $x_1$ which may loop through $x_2$ in time $T(h)$. Therefore, if the volume of points in $S^*_{x_1}M$ looping through $x_2$ is bounded by { $T(h)^{{-(3+\e_0)(1-\frac{p_c}{p})^{-1}}}$},~\eqref{e:LpestFinal} provides {$T(h)^{-\frac{1}{2}}$} improvements over the standard $L^p$ bounds. We expect these {non-looping type} assumptions {to be} valid on generic manifolds. 

{Theorem~\ref{t:main bound} can be used to obtain improved $L^p$ resolvent bounds~\cite[Theorem 2.21]{Cu20} and, as shown their, are stable by certain rough perturbations. These estimates in turn can be used to construct complex geometric optics solutions and solve certain inverse problems~\cite{DSFKeCa13}.}

As in~\cite[Theorem 5 and Section 5]{CG19a}, the assumptions of Theorem~\ref{t:main bound} can be verified in certain integrable situations with $T(h)\gg \log h^{-1}$, thus producing $o((\log h^{-1})^{-\frac{1}{2}})$ improvements. Moreover, in~\cite{CG19dyn}, we used these types of good and bad tubes to understand averages and $L^\infty${-norms} under various assumptions on $M$, including that it has Anosov geodesic flow or non-positive curvature. Since our results do not require parametrices for the wave-group, we expect that the arguments leading to Theorem~\ref{t:main bound} will provide \emph{polynomial} improvements over Sogge's estimates on manifolds where Egorov type theorems hold for longer than logarithmic times. 

\begin{remark}
The proofs below adapt directly to  the case of quasimodes for real principal type semiclassical pseudodifferential operators of Laplace type. That is, to operators with principal symbol $p$  {satisfying both} $\partial_{\xi}p\neq 0$ on $\{p=0\}$ and {that} $\{p=0\}\cap T^*_xM$ has positive definite second fundamental form. {This is the case, for example, for Schr\"odinger operators away from the forbidden region.} However, for concreteness and simplicity of exposition, we have chosen to consider only the Laplace operator.
\end{remark}

\subsection{Discussion of the proof {of Theorem~\ref{t:main bound}}}

{Our method for proving Theorem~\ref{t:main bound} differs from the standard approaches for treating $L^p$ norms in two major ways. It}  hinges on adapting the geodesic beam techniques constructed by the authors~\cite{CG19a}, and on the development of a new second-microlocal calculus.

 We start in Section~\ref{s:tubes} by covering $S^*M$ with  tubes of radius $R(h)$. Then, in Section~\ref{s:estNear}, we decompose the function $u$, {whose $L^p$ norm we wish to study}, into geodesic beams i.e. into pieces microlocalized along each of these tubes. We then sort these {beams} into collections which carry $\sim 2^{-k}\|u\|_{L^2}$ mass and study the collections for each $k$ separately.  

In order to understand the $L^p$ norm of $u$, we next decompose the manifold into balls of radius $R(h)$. By constructing a good cover of $M$, we are able to think the $L^p$ norm of a function on $M$ as the $L^p$ norm of a function on a disjoint union of balls of radius $R(h)$. In each ball, $B$, we are able to apply the methods from~\cite{CG19a} to understand the $L^\infty$ norm of $u$ on $B$ in terms of the number of tubes with mass $\sim 2^{-k}\|u\|_{L^2}$ passing over that ball. 

To bound the $L^p$ norm with $p<\infty$, it then remains to understand {the number of balls on which} the function $u$ can have a certain $L^\infty$ norm. In Section~\ref{S:k_1, k_2} we first observe that when $u$ has relatively low $L^\infty$ norm on a ball, this ball {can be} neglected by interpolation with Sogge's $L^{p_c}$ estimate. It thus remains to understand {\emph{the number of balls} $B$ on which} the $L^\infty$ norm of $u$ can be large (i.e. close to extremal).  In fact, we will show that the number of balls such that $\|u\|_{L^\infty(B(x_\alpha,R(h)))}\geq C h^{\frac{1-n}{2}}/\sqrt{\log h^{-1}}$ is bounded \emph{uniformly} in $h$. That is, there is some number $N$ such that there are at most $N$ such balls for any value of $h>0$. This is the content of Theorem \ref{t:JeffsFavorite} and is proved in Section~\ref{s:JeffsFavorite}. It is in this step where a crucial new ingredient is input. 

The new method allows us to control the size of the set on which an eigenfunction (or quasimode) can have high $L^\infty$ norm. The method relies on understanding how much $L^2$ mass can be effectively shared along {short geodesics joining} two nearby points in such a way as to produce large $L^\infty$ norm at both points. That is, {if $x_\alpha$ and $x_\beta$ are nearby points on $M$, and} if $|u(x_\alpha)|$ and $|u(x_\beta)|$ are near extremal, how much total $L^2$ mass must the tubes over $x_\alpha$ and $x_\beta$ carry? 

In order to understand this sharing phenomenon, we develop a new second microlocal calculus associated to a Lagrangian foliation $L$ over a co-isotropic {submanifold $\Gamma \subset T^*M$}. This calculus allows for simultaneous localization along a leaf of $L$ and along $\Gamma$. The calculus, which is developed in Section~\ref{s:anisotropic}, can be seen as an interpolation between those in~\cite{DyZa} and ~\cite{SjZw:99}.  It is then the incompatibility between the calculi coming from two nearby points which allows us to control this sharing {of mass}. This incompatibility is demonstrated in Section~\ref{s:uncertainMe} {in the form of an  uncertainty principle type of estimate}.

Once the number of balls with high $L^\infty$ norm is understood, it remains to employ the non-looping techniques from~\cite{CG19a} where the $L^2$ mass on a collection of tubes is estimated using its non-looping time (see Section~\ref{s:loopMe}).

\subsection{Outline of the paper}
In section~\ref{s:tubes}, we construct the covers of $S^*M$ by tubes and $T^*M$ by balls which are necessary in the rest of the article. Section~\ref{s:mainThm} contains the proof of Theorems~\ref{t:JeffsFavorite} and~\ref{t:main bound}. This proof uses the anisotropic calculus developed in Section~\ref{s:anisotropic} and the almost orthogonality results from Section~\ref{s:uncertainMe}. Section~\ref{s:dynamical} contains the necessary dynamical arguments to prove Theorem~\ref{t:noConj} using Theorem~\ref{t:main bound}.

\noindent {\sc Acknowledgements.} 
The authors are grateful to the National Science Foundation for support under grants DMS-1900519 (Y.C) and DMS-1502661, DMS-1900434 (J.G.). {Y.C. is grateful to the Alfred P. Sloan Foundation. }  

\section{Tubes Lemmata}
\label{s:tubes}

{The next few lemmas are aimed at constructing $(\tau,r)$-good covers and partitions of various subsets of $\TM$ (see also~\cite[Section 3.2]{CG19a}).
\begin{definition}[good covers and partitions] \label{d:good cover} Let $A\subset \TM$, $r>0$, and  $\{\rho_j(r)\}_{j=1}^{N_r} \subset A$ {be a collection of points, for some $N_r>0$. Let $\mathfrak{D}$ be a positive integer}. We say that the collection of tubes $\{\Lambda_{\rho_j}^\tau(r)\}_{j=1}^{{N_r}}$ is a \emph{$( \mathfrak{D},\tau, r)$-good cover} of  $A\subset  \TM$ provided it is a $(\tau,r)$-cover of $A$ and there exists a partition $\{\mathcal{J}_\ell\}_{\ell=1}^{\mathfrak{D}}$ of $\{1, \dots, N_r\}$ so that for every $\ell\in \{1, \dots, \mathfrak{D}\}$
    \[
    \Lambda_{\rho_j}^\tau (3r)\cap \Lambda_{\rho_i}^\tau(3r)=\emptyset,\qquad i,j\in \mathcal{J}_\ell, \qquad  i\neq j.
    \]
In addition, {for $0\leq \delta\leq \frac{1}{2}$ and} $R(h)\geq {8}h^\delta$, we say that a collection $\{\chi_j\}_{j=1}^{N_h}\subset S_\delta(\TM;[0,1])$ is a \emph{$\delta$-good partition for $A$   
associated {to a} $(\mathfrak{D},\tau, R(h))$-good cover}
if  $\{\chi_j\}_{j=1}^{N_h}$ is bounded in $S_\delta$ and
$$
    \text{(1)} \supp \chi_j \subset {\Lambda_{\rho_j}^\tau(R(h))},\qquad\qquad
    \text{(2) $\sum_{j=1}^{N_h}\chi_j\geq 1 \;\text{on}\;  \Lambda_{A}^{{\tau/2}}(\tfrac{1}{2}R(h)).$}
$$
\end{definition}}
\begin{remark}
{We  show below that for any compact Riemannian manifold $M$, there are $\mathfrak{D}_{_{\!M}},R_0,\tau_0>0$, depending only on $(M,g)$, such that for $0<\tau<\tau_0$, $0<r<R_0$, there exists a $(\mathfrak{D}_{_{\!M}},\tau,r)$ good cover for $\SM$.}
\end{remark}

We start by constructing a useful cover of any Riemannian manifold with bounded curvature.
\begin{lemma}
\label{l:cover1}
Let $\tilde M$ be a compact Riemannian manifold. There exist $\mathfrak{D}_n>0$, depending only on $n$, and $R_0>0$ {depending only on $n$ and a lower bound for the sectional curvature of $\tilde M$},  so that the following holds. For $0<r<R_0$, there exist a finite collection of points $\{x_\alpha\}_{\alpha \in \I}\subset \tilde M$, $\I=\{1,\dots, N_r\}$, and a partition $\{\mathcal{I}_i\}_{i=1}^{\mathfrak{D}_n}$ of $\I$ so that 
\begin{equation*}
\begin{gathered}
\tilde M\subset \bigcup_{\alpha \in \I}B(x_{\alpha},r), \qquad\qquad
B(x_{\alpha_1},3r)\cap B(x_{\alpha_2},3{r})=\emptyset, \qquad \alpha_1,\alpha_2\in \mathcal I_i,\quad \alpha_1 \neq \alpha_2,\\ \text{$\{x_\alpha\}_{\alpha \in \I}$ is an $\frac{r}{2}$ maximal separated set in $\tilde M$.}
\end{gathered}
\end{equation*}
\end{lemma}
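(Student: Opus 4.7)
The plan is to construct $\{x_\alpha\}$ as a maximal $\tfrac{r}{2}$-separated set, deduce the covering from maximality, and then obtain the partition into $\mathfrak{D}_n$ classes by a greedy coloring of the graph whose edges record pairs of indices with $B(x_{\alpha_1},3r)\cap B(x_{\alpha_2},3r)\neq\emptyset$.

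First I would choose $\{x_\alpha\}_{\alpha\in\mc{I}}\subset \tilde M$ to be a maximal collection of points satisfying $d(x_{\alpha_1},x_{\alpha_2})\geq \tfrac{r}{2}$ for distinct indices; such a set exists and is finite since $\tilde M$ is compact. By maximality, for every $y\in \tilde M$ there is some $\alpha$ with $d(y,x_\alpha)<\tfrac{r}{2}$, so $\tilde M\subset \bigcup_\alpha B(x_\alpha,r)$. Moreover, the open balls $B(x_\alpha,\tfrac{r}{4})$ are pairwise disjoint.

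Next I would bound the valence of the intersection graph. Two indices $\alpha_1\neq\alpha_2$ are joined by an edge precisely when $d(x_{\alpha_1},x_{\alpha_2})<6r$. For a fixed $\alpha_1$, every neighbor $\beta$ satisfies $B(x_\beta,\tfrac{r}{4})\subset B(x_{\alpha_1},7r)$, and the balls $B(x_\beta,\tfrac{r}{4})$ are disjoint. Fixing a lower bound $-K^2$ for the sectional curvature of $\tilde M$, choose $R_0=R_0(n,K)$ so small that on any geodesic ball of radius $7R_0$ centered in $\tilde M$ the Bishop--Gromov (or Rauch) volume comparison gives
\[
\frac{\vol B(x_{\alpha_1},7r)}{\vol B(x_\beta,\tfrac{r}{4})}\leq C_n,\qquad 0<r<R_0,
\]
with $C_n$ depending only on $n$. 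This bounds the number of neighbors of $\alpha_1$ by an integer $\mathfrak{D}_n-1$ depending only on $n$.

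Finally I would color the graph greedily: order $\mc{I}$ arbitrarily and assign to each $\alpha$ the smallest index in $\{1,\dots,\mathfrak{D}_n\}$ not already used by any already-colored neighbor; since each vertex has at most $\mathfrak{D}_n-1$ neighbors, such a color is always available. The resulting color classes $\{\mc{I}_i\}_{i=1}^{\mathfrak{D}_n}$ are exactly the required partition: within each class every two indices are non-adjacent, so the corresponding balls $B(x_\alpha,3r)$ are pairwise disjoint.

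The only non-routine step is the valence bound, which is why the threshold $R_0$ must be chosen in terms of both $n$ and a lower sectional curvature bound: one needs the scale $r$ to be small enough that volume comparison in $B(x_{\alpha_1},7r)$ is controlled by the Euclidean constant $C_n$, and so that distance balls are genuine metric balls inside convex neighborhoods. All other ingredients (maximality, disjointness of $B(x_\alpha,\tfrac{r}{4})$, greedy coloring) are purely combinatorial.
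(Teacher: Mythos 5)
Your argument follows essentially the same route as the paper: a maximal $\tfrac{r}{2}$-separated set gives the cover by maximality, a volume-comparison (Bishop--Gromov) packing argument bounds the degree of the $6r$-intersection graph by a constant $\mathfrak{D}_n-1$ depending only on $n$ (once $R_0$ is taken small in terms of $n$ and a lower curvature bound), and a greedy coloring produces the partition. In fact your choice of $B(x_\beta,\tfrac{r}{4})$ as the disjoint packing balls is slightly cleaner than the paper's $B(x_\alpha,\tfrac{r}{2})$, since $\tfrac{r}{2}$-separation only guarantees disjointness at radius $\tfrac{r}{4}$.
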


\begin{proof}
Let $\{x_\alpha\}_{\alpha \in \I}$ be a maximal $\frac{r}{2}$ separated set in $\tilde M$. Fix $\alpha_0 \in \I $ and suppose that $B(x_{\alpha_0},3{r})\cap B(x_\alpha,3{r})\neq \emptyset$ for all {$\alpha\in \mathcal K_{{{\alpha_0}}} \subset \I$}. Then for all ${\alpha \in \mathcal K_{{{\alpha_0}}}}$, $B(x_\alpha,\tfrac{{r}}{2})\subset B(x_{\alpha_0},8{{r}}).$ In particular,
$$
\sum_{{\alpha\in \mathcal K_{{{\alpha_0}}}}}\vol(B(x_\alpha,\tfrac{{{r}}}{2}))\leq \vol(B(x_{\alpha_0},8{{r}})). 
$$

Now, there {exist} ${R_0}>0$ depending on {$n$ and} a lower bound on the {sectional} curvature of $\tilde M$, {and $\mathfrak{D}_n>0$ depending only on $n$, so that {for all  $0<{r}<{R_0}$}},
\begin{equation}\label{e:asterix}
\vol(B(x_{\alpha_0},8{{r}}))\leq \vol(B(x_\alpha,14{{r}}))\leq \mathfrak{D}_{n}\vol(B(x_\alpha,\tfrac{{{r}}}{2})).
\end{equation}
Hence, it follows from \eqref{e:asterix} that
$$
\sum_{{\alpha\in \mathcal K_{{{\alpha_0}}}}}\vol(B(x_\alpha,\tfrac{{r}}{2}))\leq \vol(B(\rho_{\alpha_0},8{r}))\leq \frac{{\mathfrak{D}_n}}{{| \mathcal K_{{{\alpha_0}}}|}}\sum_{{\alpha\in \mathcal K_{{{\alpha_0}}}}}\vol(B(x_\alpha,\tfrac{{{r}}}{2})).
$$
In particular, $|\mathcal K_{{{\alpha_0}}}|\leq {\mathfrak{D}_n}$.

At this point we have proved that each of the balls $B(x_\alpha,3r)$ intersects at most ${\mathfrak{D}_n}-1$ other balls. We now construct the sets $\mathcal I_1,\dots, \mathcal I_{{\mathfrak{D}_n}}$ using a greedy algorithm. We will say that {the index $\alpha_1$ \emph{intersects} the index $\alpha_2$} if 
$$
B(x_{\alpha_1},3r)\cap B(x_{\alpha_2},3r)\neq \emptyset.
$$
We place the index $1\in \mathcal I_1$. Then suppose we have placed the indices  $\{1,\dots, \alpha\}$ in $\mathcal I_1,\dots, \mathcal I_{{\mathfrak{D}_n}}$ so each of the $\mathcal I_i$'s consists of disjoint indices. Then, since $\alpha+1$ intersects at most ${\mathfrak{D}_n}-1$ indices, it is disjoint from $\mathcal I_i$ for some $i$. We add the index $\alpha$ to $\mathcal I_i$. By induction we obtain the partition $\mathcal I_1,\dots ,\mathcal I_{{\mathfrak{D}_n}}$. 

Now, suppose that there exists $x \in \tilde M$ so that {$x \notin \bigcup_{\alpha \in \I} B(x_{\alpha},r)$}. Then, 
$
 \min_{\alpha \in \I} d(x,x_{\alpha})\geq r,
$
a contradiction of the $r/2$ maximality of $x_{\alpha}$.

\end{proof}

In order to construct our microlocal partition, we first fix a smooth hypersurface $H\subset M$, and choose Fermi normal coordinates $x=(x_1,x')$ in a neighborhood  of $H=\{x_1=0\}$. We write  $(\xi_1, \xi') \in T_x^*M$ for the dual coordinates.
Let 
\begin{equation}
\label{e:hyp}
\hyp:=\Big\{(x, \xi)\in S^*\sub{H}M \big |\;\, |\xi_1|\geq \tfrac{1}{2} \Big\} 
\end{equation}
We then consider
\begin{equation}
\label{e:hyp2}
\mathcal{H}\sub{\hyp}:=\{(x, \xi)\in {T^*\sub{H}M} \mid\;\;\; |\xi_1|\geq \tfrac{1}{2},\;\; \tfrac{1}{2}<|\xi|_{g(x)}<\tfrac{3}{2} \}. 
\end{equation}
Then, $\mc{H}\sub{\hyp}$ is transverse to the geodesic flow and there is $0<\Tinj<1$ so that the map
\begin{equation}
    \label{e:psi}
 \Psi:[-\Tinj,\Tinj]\times \mc{H}\sub{\hyp} \to  \TM, \qquad \qquad \Psi(t,\rho):=\varphi_t(\rho),
\end{equation}
is injective.
{Our next lemma shows that there is $\mathfrak{D}_n>0$ depending only on $n$ such that one can construct a $(\mathfrak{D}_n,\tau, r)$-good cover of $\SigH$.}

\begin{lemma}
\label{l:cover}
There exist $\mathfrak{D}_{n}>0$ depending only on $n$,  ${R_0=R_0(n,H)}>0$, such that for $0<{r_1}<{R_0}$, $0<r_0\leq \frac{{r_1}}{2} $, there exist {points} $\{\rho_j\}_{j=1}^{N_{r_1}}\subset \hyp$ {and a partition $\{\mathcal J_{i}\}_{i=1}^{\mathfrak{D}_{n}}$ of $\{1,\dots, N_{r_1}\}$ so that for all $0<\tau<\frac{\Tinj}{2}$ 
\begin{center}
\begin{itemize*}
    \item $\LambdaH(r_0)\subset \bigcup_{j=1}^{N_{r_1}}\Lambda_{_{\rho_j}}^\tau({r_1}),\qquad\qquad$
    \item $\Lambda_{_{\rho_j}}^\tau(3{r_1})\cap \Lambda_{_{\rho_\ell}}^\tau(3{r_1})=\emptyset, \qquad j,\ell\in \mathcal J_i,\quad j\neq \ell.$
\end{itemize*}
\end{center}
}
\end{lemma}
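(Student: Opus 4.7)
The plan is to apply Lemma~\ref{l:cover1} (or a minor variant) to $\hyp$, viewed as a compact Riemannian manifold with the induced Sasaki metric, and then transport the resulting intrinsic cover and partition of $\hyp$ to a cover of $\Lambda^\tau_\hyp(r_0)$ by tubes in $T^*M$ using the injectivity of the map $\Psi$ from~\eqref{e:psi}.

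First I would fix the geometric constants. Because $\hyp\subset \mc{H}\sub{\hyp}$ is a smoothly embedded codimension-one submanifold, the tubular neighborhood theorem provides a constant depending only on $(M,g,H)$ such that the intrinsic Sasaki distance $d_\hyp$ on $\hyp$ and the restriction of $d_{\mc{H}\sub{\hyp}}$ to $\hyp$ are bi-Lipschitz comparable on a small diagonal neighborhood. I would then choose $R_0>0$, depending only on $(M,g,H)$, so that this comparability holds on balls of radius $R_0$ and so that $2(\tau+3r_1)<\Tinj$ whenever $\tau<\Tinj/2$ and $r_1<R_0$; the latter is needed below for the flow-injectivity argument. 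Apply Lemma~\ref{l:cover1} to $\hyp$ (its sectional curvature is bounded below by a constant determined by $(M,g,H)$) after rescaling the input radius to absorb the bi-Lipschitz factor. This yields $\{\rho_j\}_{j=1}^{N_{r_1}}\subset\hyp$ and a partition $\{\mc J_i\}_{i=1}^{\mathfrak D_n}$ with $\hyp\subset\bigcup_j B_\hyp(\rho_j, r_1/2)$ and $d_{\mc{H}\sub{\hyp}}(\rho_j,\rho_\ell)>6r_1$ for distinct $j,\ell\in\mc J_i$. The constant $\mathfrak D_n$ depends only on $n$ since $\dim\hyp=2n-2$.

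For the covering claim, write any point of $\Lambda^\tau_\hyp(r_0)$ as $\varphi_t(q)$ with $|t|\leq\tau+r_0$ and $d_{\mc{H}\sub{\hyp}}(q,\sigma)\leq r_0$ for some $\sigma\in\hyp$. Choose $\rho_j$ with $d_\hyp(\sigma,\rho_j)\leq r_1/2$, so $d_{\mc{H}\sub{\hyp}}(\sigma,\rho_j)\leq r_1/2$; combining with $r_0\leq r_1/2$ gives $d_{\mc{H}\sub{\hyp}}(q,\rho_j)\leq r_1$ and hence $\varphi_t(q)\in\Lambda^\tau_{\rho_j}(r_1)$. For the disjointness claim, suppose by contradiction that $\varphi_t(q)=\varphi_s(q')$ for some $q\in B_{\mc{H}\sub{\hyp}}(\rho_j,3r_1)$, $q'\in B_{\mc{H}\sub{\hyp}}(\rho_\ell,3r_1)$, and $|t|,|s|\leq\tau+3r_1$. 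Then $\Psi(t-s,q)=\Psi(0,q')$ with $|t-s|<\Tinj$ by the choice of $R_0$, so injectivity of $\Psi$ forces $t=s$ and $q=q'$, whence $d_{\mc{H}\sub{\hyp}}(\rho_j,\rho_\ell)\leq 6r_1$, contradicting the disjointness established above.

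The main technical obstacle is the precise calibration of the metric-comparability constant between $d_\hyp$ and the restriction of $d_{\mc{H}\sub{\hyp}}$ to $\hyp$: this must be absorbed into the radius passed to Lemma~\ref{l:cover1} so that intrinsic disjointness on $\hyp$ translates into the required disjointness of ambient $3r_1$-balls in $\mc{H}\sub{\hyp}$. This is a standard tubular-neighborhood computation, and once done the flow-injectivity step is direct.
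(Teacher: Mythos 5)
Your strategy matches the paper's: apply Lemma~\ref{l:cover1} to $\hyp$ and transfer the resulting cover and separation to tubes via the injectivity of $\Psi$. Your attention to the intrinsic-versus-ambient metric comparison, which the paper elides, is reasonable.

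However, there is a quantitative gap in the invocation of Lemma~\ref{l:cover1}. You assert that a single application (with rescaled input radius) yields simultaneously a cover of $\hyp$ by $r_1/2$-balls and separation $d(\rho_j,\rho_\ell)>6r_1$ between distinct centers in each $\mathcal J_i$. But with input radius $r$, the first two conclusions of Lemma~\ref{l:cover1} give a cover by $r$-balls and disjointness of $3r$-balls, i.e.\ separation $>6r$; so the covering constraint forces $r\leq r_1/2$, while the separation constraint forces $r\geq r_1$ — and in fact $r\geq Kr_1$ once the bi-Lipschitz factor $K\geq 1$ is accounted for, since intrinsic distance dominates ambient distance, which is exactly the wrong direction for the disjointness transfer. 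These two constraints are incompatible. The missing ingredient is the \emph{third} conclusion of Lemma~\ref{l:cover1}: the centers $\{\rho_j\}$ form a \emph{maximal} $r/2$-separated set, and maximality (any $\sigma\in\hyp$ at distance $\geq r/2$ from every $\rho_j$ could be added to the set, a contradiction) is precisely what furnishes the cover of $\hyp$ by $r/2$-balls. Apply the lemma with $r=r_1$, use maximality for the $r_1/2$-cover, and use the $3r_1$-ball disjointness directly for the separation; this is exactly the route the paper's covering argument takes (as a proof by contradiction with the maximality). Once your covering step is rerouted through the maximality rather than the direct $r$-ball cover, the rest of your proposal goes through.
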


\begin{proof}
We first apply Lemma~\ref{l:cover1} to ${\tilde M=\hyp}$ to obtain $R_0>0$ depending only on $n$ {and the sectional curvature of $H$ {and that of $M$ near $H$},} so that for $0<r_1<R_0$, there exist $\{\rho_j\}_{j=1}^{N_{r_1}}\subset \hyp$ and a partition $\{\mathcal{J}_i\}_{i=1}^{\mathfrak{D}_n}$ of $\{1,\dots,N_{r_1}\}$ such that 
\begin{equation*}
\begin{gathered}
    \hyp\subset \bigcup_{j=1}^{{N_{r_1}}}B(\rho_j,{r_1}),\qquad\qquad 
B({\rho_j},{3r_1})\cap B(\rho_\ell,3{r_1})=\emptyset, \qquad j,\ell\in \mathcal J_i,\quad j\neq \ell,\\
 \text{$\{\rho_j\}_{j=1}^{{N_{r_1}}}$ is {an} ${\frac{r_1}{2}}$ maximal {separated set} in $\hyp$.}
    \end{gathered}
\end{equation*}

Now, {suppose that} $j,\ell\in \mc{J}_i$ and
$$
\Lambda_{_{\rho_\ell}}^\tau(3{r_1})\cap \Lambda_{_{\rho_{{j}}}}^\tau(3{r_1})\neq \emptyset.
$$
Then, there exist $q_\ell\in B(\rho_\ell,3{r_1})\cap\mc{H}_{\Sigma_{{H}}}$, $q_{{j}}\in B(\rho_{{j}},3{r_1})\cap\mc{H}_{\Sigma_{{H}}}$ and {$ t_\ell,{t_j}\in[-\tau, \tau]$}  so that 
$
\varphi_{_{\!t_\ell-t_{{j}}}}(q_\ell)=q_{{j}}.
$
{Here, $\mc{H}_{\Sigma}$ is the hypersurface defined in \eqref{e:hyp2}}.
In particular, {for $\tau<\Tinj/2$}, this implies that $q_\ell=q_{{j}}$, $t_\ell=t_{{j}}$ and hence $B(\rho_\ell,3{r_1})\cap B(\rho_{{j}},3{r_1})\neq \emptyset$ a contradiction.

Now, suppose $r_0\leq {r_1}$ and that there exists $\rho \in \LambdaH(r_0)$ so that {$\rho \notin \bigcup_{j=1, \dots, N_{r_1}} \Lambda_{_{\rho_j}}^\tau(r_1)$}. Then, there are $|t|<\tau+r_0$ and $q\in \mc{H}_{\Sigma_{{H}}}$ so that
$$
\rho=\varphi_t(q),\qquad d(q,\SigH)<r_0,\qquad \min_{{j=1, \dots, N_{r_1}}}d(q, \rho_j)\geq {r_1}.
$$
In particular, {there exists  $\tilde{\rho}\in \hyp$ with $d(q, \tilde \rho)<r_0$ such that for all $j=1, \dots, N_{r_1}$}, 
$$
d(\tilde{\rho},\rho_j)\geq d(q,\rho_j)-d(q,\tilde{\rho})>{r_1}-r_0.
$$
This contradicts the maximality of $\{\rho_j\}_{j=1}^{N_{r_1}}$ if $r_0\leq {r_1}/2$. 
\end{proof}

{We proceed to build a $\delta$-good partition of unity associated to the cover we constructed in Lemma \ref{l:cover}. {The key feature in this partition {is} that it is invariant under the {geodesic} flow. Indeed, the partition is built so that its quantization commutes with the operator ${P=-h^2\Delta-I}$ in a neighborhood of $\SigH$.}}
\begin{proposition}\label{l:nicePartition}
There exist $\tau_1=\tau_1(\Tinj)>0$ and  $\e_1=\e_1(\tau_1)>0$, and given $0<\delta<\tfrac{1}{2}$,{ $0<\e\leq \e_1$}, there exists $h_1>0$,
so that for any $0<\tau\leq \tau_1$, and $R(h)\geq 2h^\delta$, the following holds.

There exist $C_1>0$  so that  for all $0<h\leq h_1$ and every $(\tau,R(h))$-cover of $\SigH$
 there exists a partition of unity $\chi_j\in S_\class\cap  C^\infty_c(\TM ;[-C_1h^{1-2\delta},1+C_1h^{1-2\delta}])$ on $\LambdaH({\frac{1}{2}R(h)})$
for which
$$
\begin{gathered}
\supp \chi_j\subset \Lambda_{\rho_j}^{\tau+{\e}}(R(h)),\qquad
\MSh([P,Op_h(\chi_j)])\cap \LambdaH({\e})=\emptyset,\\
 {\sum_{j}{\chi_j}\equiv 1 \text{\;on \;}  {\Lambda^\tau\sub{\Sigma_{H}}(\tfrac{1}{2}R(h))},}
\end{gathered}
$$
and $\{\chi_j\}_j$ is bounded in $S_\delta$, and $[-h^2\Delta_g,Op_h(\chi_j)]$ is bounded in $\Psi_\delta$. 
\end{proposition}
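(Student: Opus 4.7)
The plan is a two-stage construction. First, I build a partition of unity $\{\tilde\chi_j\}$ on the transverse hypersurface $\mathcal{H}_{\SigH}$ subordinate to the projection of the given $(\tau,R(h))$-cover; second, I extend to $T^*M$ by propagating along the Hamilton flow of $p=|\xi|_g-1$ using the flow-box map $\Psi$ of \eqref{e:psi}. Since $H_p=\partial_t$ in the $(t,\rho)$ coordinates supplied by $\Psi$, multiplying the extension by a $t$-cutoff produces a symbol that is \emph{exactly} independent of $t$ in a neighborhood of $\SigH$; this forces $H_p\chi_j\equiv 0$ on $\LambdaH(\e)$, and combined with an inductive refinement of lower-order terms (the usual procedure for killing higher-order Moyal contributions in the commutator) yields the required microlocal condition.

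For the transverse partition, Lemma~\ref{l:cover1} applied to $\tilde M=\mathcal{H}_{\SigH}$ at scale $R(h)\geq 2h^\delta$ ensures bounded multiplicity $\mathfrak{D}_n$, and a standard bump-and-normalize construction produces $\tilde\chi_j\in C_c^\infty(B_{\mathcal{H}}(\rho_j,R(h));[0,1])$ with $\sum_j\tilde\chi_j\equiv 1$ on $B_{\mathcal{H}}(\SigH,R(h)/2)$, uniformly bounded in $S_\delta$ thanks to the bounded multiplicity and the scale $R(h)\geq 2h^\delta$. Fix $\tau_1,\e_1>0$ so that $\tau+\e+R(h)<\Tinj$ always, and pick $\psi_\tau\in C_c^\infty(\mathbb{R};[0,1])$ equal to $1$ on $[-(\tau+\e),\tau+\e]$ with $\supp\psi_\tau\subset[-(\tau+\e+R(h)/2),\tau+\e+R(h)/2]$, so that $|\partial_t^k\psi_\tau|\lesssim R(h)^{-k}\lesssim h^{-\delta k}$. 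Define
\[\chi_j(\Psi(t,\rho)):=\psi_\tau(t)\,\tilde\chi_j(\rho),\]
extended by zero off the image of $\Psi$. Since $\Psi$ and $\Psi^{-1}$ have bounded derivatives on compact sets, the resulting $\chi_j$ lie uniformly in $S_\delta(T^*M)$.

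The four conclusions now follow: (i) $\supp\chi_j\subset\bigcup_{|t|\leq\tau+\e+R(h)/2}\varphi_t(B_{\mathcal{H}}(\rho_j,R(h)))\subset\Lambda_{\rho_j}^{\tau+\e}(R(h))$ by construction; (ii) on $\LambdaH(R(h)/2)$ one has $|t|\leq\tau+R(h)/2\leq\tau+\e$ for $h$ small, so $\psi_\tau=1$ and $\sum_j\chi_j=\sum_j\tilde\chi_j=1$; (iii) on $\LambdaH(\e)$, $\psi_\tau\equiv 1$, so $\chi_j$ is independent of $t$ and $H_p\chi_j=0$ there, yielding $\MSh([P,\Oph(\chi_j)])\cap\LambdaH(\e)=\emptyset$; (iv) $[-h^2\Delta_g,\Oph(\chi_j)]\in h\Psi_\delta$ is bounded in $\Psi_\delta$ by standard commutator estimates. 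The value range $[-C_1 h^{1-2\delta},1+C_1 h^{1-2\delta}]$ reflects the main technical subtlety, and is the step I expect to require the most care: producing an \emph{exact} partition of unity whose entries lie in $S_\delta$ requires normalizing by $\sum_k\psi_k$ via its symbolic inverse in the $\delta$-calculus, which introduces Moyal errors of size $h^{1-2\delta}$ and hence small overshoots beyond $[0,1]$; these are harmless for all downstream applications.
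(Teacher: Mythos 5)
Your overall strategy — build a transverse partition of unity on the hypersurface $\mathcal{H}_{\SigH}$, extend along the flow using $\Psi$, and cut off in the time direction with $\psi_\tau$ — is the right one and matches the flow-box philosophy of~\cite[Proposition 3.4]{CG19a}, which is what the paper actually cites. The first half of your argument (constructing $\tilde\chi_j$ with bounded multiplicity via Lemma~\ref{l:cover1}, the $S_\delta$ bounds from $R(h)\gtrsim h^\delta$, conclusions (i), (ii), (iv)) is fine. But there is a genuine gap in your treatment of the microsupport condition (iii), which is the heart of the proposition.

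Having $H_p\chi_j\equiv 0$ on $\LambdaH(\e)$ does \emph{not} give $\MSh([P,\Oph(\chi_j)])\cap\LambdaH(\e)=\emptyset$. The symbol of the commutator has a full asymptotic expansion: in $S_\delta$ the terms go as $-ih H_p\chi_j + h^2 r_2 + h^3 r_3 + \cdots$, where $r_k$ involves $k$-th order $(x,\xi)$-derivatives of both $p$ and $\chi_j$. Making $\chi_j$ independent of the flow parameter $t$ on $\LambdaH(\e)$ kills all $t$-derivatives of $\chi_j$ there, but the $r_k$ also contain transverse (i.e.\ $\rho$-direction) derivatives, which do \emph{not} vanish; indeed $|r_2|\sim R(h)^{-2}$ is nonzero throughout $\supp\tilde\chi_j$. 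The $\MSh$ statement requires every term of the expansion to vanish on $\LambdaH(\e)$, so a genuine inductive correction is needed. You gesture at this with ``the usual procedure for killing higher-order Moyal contributions,'' but this is not routine here: if you correct $\chi_j^{(0)}\mapsto\chi_j^{(0)}+h^{1-2\delta}\chi_j^{(1)}+\cdots$ by solving transport equations $\partial_t\chi_j^{(1)}=-r_2^{(0)}$, you must simultaneously (a) preserve $\supp\chi_j\subset\Lambda_{\rho_j}^{\tau+\e}(R(h))$, which is a nontrivial constraint because the solution of a transport equation with source supported in the tube need not vanish at the forward end of the tube; and (b) preserve $\sum_j\chi_j\equiv 1$ on $\Lambda^\tau_{\Sigma_H}(\tfrac12 R(h))$, so the corrections must be arranged to sum to zero. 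Neither point is addressed, and the argument as written therefore does not establish (iii).

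Separately, your explanation of the value range $[-C_1h^{1-2\delta},1+C_1h^{1-2\delta}]$ is off. Normalizing by $\sum_k\psi_k$ is a \emph{pointwise} division of functions, not an operator inversion, so it incurs no Moyal error: the normalized $\tilde\chi_j$ take values exactly in $[0,1]$, and so does $\psi_\tau\tilde\chi_j$. In your construction as stated the overshoot would be identically zero, which contradicts the need for the slack. The actual source of the $O(h^{1-2\delta})$ overshoot is precisely the inductive corrections you omitted: each step modifies $\chi_j$ by a term of size $h^{1-2\delta}$ in $S_\delta$, pushing values slightly outside $[0,1]$. This is another sign that the missing inductive step is not a formality but carries real content.
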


{\begin{proof}
The proof is identical to that of~\cite[Proposition 3.4]{CG19a}.{ Although the claim that $\sum_{j}{\chi_j}\equiv 1$ on $ {\Lambda^\tau\sub{\Sigma_{H}}(\tfrac{1}{2}R(h))}$ does not appear its statement, it is included in its proof.}
\end{proof}}

\section{Proof of Theorem \ref{t:main bound}}
\label{s:mainThm}
For each $q \in \SM$, choose {a hypersurface} $H_q \subset M$ with $q \in S\!N^*\!H_q$ and $\tau_{_{\inj\! H_{q}}}>\frac{\inj(M)}{2}$, where {$\tau_{_{\inj\! H_{q}}}$ is defined in \eqref{e:psi} and} $\inj(M)$ is the injectivity radius of $M$. 
{We next use Lemma~\ref{l:cover} to generate a cover of $\Sigma\sub{H_q}$. Lemma~\ref{l:cover} yields the existence of  $\mathfrak{D}_n>0$ depending only on $n$ and  ${R_0=R_0(n,H_q)}>0$, 
such that the following holds. 
Since by assumption $R(h)\leq h^{\delta_{{1}}}$, there is $h_0>0$ such that $ h^{\delta_2}\leq R(h) \leq R_0$ for all $0<h<h_0$.}
Also, {set $r_1:=R(h)$ and $r_0:=\tfrac{1}{2}R(h)$}. Then, by Lemma~\ref{l:cover} there exist $N\sub{R(h)}\!=N\sub{R(h)}(q,R(h))>0$, $\{\rho_j\}_{j\in \J_{\! q}}\subset \hypq$ with ${\J_{\! q}}= \{1,\dots, N\sub{R(h)}\}$,  and a partition $\{\mathcal J_{q,i}\}_{i=1}^{{\mathfrak{D}_n}}$ of $\J_{\! q}$, so that for all $0<\tau<\frac{{\Tinj}_q}{2}$ 
\begin{align}
    &\bullet\; \LambdaHq({\tfrac{1}{2}R(h)})\subset \bigcup_{j\in \J_{\! q}}\Lambda_{_{\rho_j}}^\tau({R(h)}), \label{e:cover}\\
    &\bullet\; \bigcup_{i=1}^{{\mathfrak{D}_n}}{\mathcal J_{q,i}}=\J_{\! q}, \label{e:partition}\\
    &\bullet\; \Lambda_{_{\rho_{j_1}}}^\tau(3{R(h)})\cap \Lambda_{_{\rho_{j_2}}}^\tau(3{R(h)})=\emptyset, \qquad {j_1},{j_2}\in \mathcal J_{q,i},\quad {j_1}\neq {j_2}. \label{e:disjoint partition} 
\end{align}
By \eqref{e:cover} there is an $h$-independent open neighborhood of $q$, $V_q \subset \SM$,  covered by tubes as in Lemma~\ref{l:cover}. Since $\SM$ is compact, we may choose $\{q_\ell\}_{\ell=1}^L$ with $L$ independent of $h$, so that $\SM\subset \cup_{\ell=1}^L V_{q_\ell}$. 
In particular, 
if  $0<\tau \leq \min_{1\leq\ell\leq L} \tau_{_{\!H_{q_\ell}}}$, and for each $\ell\in \{1, \dots, L\}$ we let $$ \mathcal{T}_{q_\ell,j}= \Lambda_{_{\rho_j}}^\tau({R(h)}),$$
then there is $\mathfrak{D}_{_{\!M}}>0$ such that 
$\bigcup_{\ell=1}^L\{{\T_{q_\ell,j}}\}_{j\in \J_{q_\ell}}$ is a $(\mathfrak{D}_{_M},\tau,R(h))$-good cover for $\SM$. Let $\{{\psi_{q_\ell}}\}_{\ell=1}^L \subset C_c^\infty(\TM)$ satisfy
$$
\begin{gathered} \supp\psi_{q_\ell}\subset \{(x,\xi)\in \TM\setminus\!\{0\}\mid \; \big(x, \tfrac{\xi}{|\xi|_g}\big)\in V_{q_\ell}\} \qquad \forall \ell=1, \dots, L,\\
\sum_{\ell=1}^L\psi_{q_\ell}\equiv 1 \text{ in an $h$-independent neighborhood of }\SM.
\end{gathered}
$$
{We split the analysis of $u$ in two parts: near and away from the characteristic variety $\{p=0\}=S^*M$. In what follows we use $C$ to denote a positive constant that may change from line to line.}

\subsection{It suffices to study  $u$ near the characteristic variety}
\label{s:estAway}

In this section we reduce the study of $\|u\|_{L^p(U)}$ to an $h$-dependent neighborhood of the characteristic variety $\{p=0\}=S^*M$.
We will use repeatedly the following result.
\begin{lemma}\label{L:lp bound}
For all $\e>0$ and all $p\geq 2$, there exists $C>0$ such that 
\begin{equation}
\label{l:basicLp}
\|u\|_{L^p}\leq C h^{n(\frac{1}{p}-\frac{1}{2})}\|u\|_{H_h^{n(\frac{1}{2}-\frac{1}{p})+\e}}.
\end{equation}
\end{lemma}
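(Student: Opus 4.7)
The statement is the standard semiclassical Sobolev embedding, so the plan is to establish the endpoints $p=2$ and $p=\infty$ and interpolate between them. The case $p=2$ is immediate: since $\e\geq 0$ one has $(-h^2\Delta_g+1)^{\e}\geq I$ in the operator sense, so $\|u\|_{L^2}\leq\|u\|_{H_h^{\e}}$, which is \eqref{l:basicLp} at $p=2$.

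For the endpoint $p=\infty$ I would show $\|u\|_{L^{\infty}(M)}\leq Ch^{-n/2}\|u\|_{H_h^{n/2+\e}}$. Choosing a finite partition of unity subordinate to coordinate charts on $M$, it suffices to establish the analogous inequality for $u\in C_c^{\infty}(\R^n)$ with a fixed compact support. There, write $\mathcal{F}_hu(\xi):=\int e^{-i\langle x,\xi\rangle/h}u(x)\,dx$; by Fourier inversion and Cauchy--Schwarz with weight $s=n/2+\e$,
$$
|u(x)|\;\leq\;(2\pi h)^{-n}\int|\mathcal{F}_hu(\xi)|\,d\xi\;\leq\;(2\pi h)^{-n}\,\big\|(1+|\xi|^2)^{-s/2}\big\|_{L^2}\,\big\|(1+|\xi|^2)^{s/2}\mathcal{F}_hu\big\|_{L^2}.
$$
The first $L^2$ norm is finite because $s>n/2$, and the semiclassical Plancherel identity converts the second factor into $(2\pi h)^{n/2}\|u\|_{H_h^{s}}$, yielding the claimed $h^{-n/2}$ prefactor. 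On the manifold the same bound follows by pseudodifferential calculus, rewriting $(1-h^2\Delta_g)^{-s/2}$ locally as an elliptic semiclassical operator with the Euclidean model above; alternatively one may argue from the heat kernel of $-h^2\Delta_g$.

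For general $p\in[2,\infty]$ I would interpolate via Stein's complex interpolation theorem applied to the analytic family
$$
T_z\;:=\;(-h^2\Delta_g+1)^{-\tfrac{1}{2}\big((1-z)\e+z(n/2+\e)\big)},\qquad 0\leq\Re z\leq 1,
$$
which is of admissible growth in the strip. The two endpoints just established give $\|T_z\|_{L^2\to L^2}\leq C$ on $\Re z=0$ and $\|T_z\|_{L^2\to L^{\infty}}\leq Ch^{-n/2}$ on $\Re z=1$, so at $\Re z=\theta:=1-2/p$ one obtains $\|T_\theta\|_{L^2\to L^p}\leq Ch^{-n(1/2-1/p)}$. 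Since $(1-\theta)\e+\theta(n/2+\e)=n(\tfrac12-\tfrac1p)+\e=:s$, writing $u=T_\theta v$ with $v=(-h^2\Delta_g+1)^{s/2}u$ and noting $\|v\|_{L^2}=\|u\|_{H_h^{s}}$ delivers \eqref{l:basicLp}. The only genuine content is the $p=\infty$ endpoint; the interpolation is routine bookkeeping, so I do not anticipate any significant obstacle.
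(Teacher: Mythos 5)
Your proposal is correct and follows essentially the same route as the paper: establish the semiclassical Sobolev bound $\|u\|_{L^\infty}\leq C h^{-n/2}\|u\|_{H_h^{n/2+\e}}$ and interpolate against the trivial $L^2$ endpoint. The only difference is that the paper quotes the $L^\infty$ estimate from \cite[Lemma 6.1]{GDefect} and invokes complex interpolation without spelling out the analytic family, whereas you rederive the endpoint via semiclassical Fourier inversion and write the Stein family explicitly.
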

\begin{proof}
By~\cite[Lemma 6.1]{GDefect} (or more precisely its proof), for any $\e>0$, there exists $C_\e\geq 1$ so that
$
\|\Id\|_{H_h^{\frac{n}{2}+\e}\to L^\infty}\leq C_\e h^{-\frac{n}{2}}.
$
By complex interpolation of $\Id:L^2\to L^2$ and $\Id:H_h^{\frac{n}{2}+\e}\to L^\infty$ with $\theta=\frac{2}{p}$ we obtain 
$
\|\Id\|_{H_h^{(\frac{n}{2}+\e)(1-\theta)}\to L^p}
\leq C_\e^{1-\theta}h^{-\frac{n}{2}(1-\theta)},
$
and this yields \eqref{l:basicLp}.
\end{proof}

 Observe that
$$
u=\sum_{\ell=1}^L Op_h(\psi_{q_\ell})u + \Big(1-\sum_{\ell=1}^L Op_h(\psi_{q_\ell})\Big)u.
$$
Note that since $1-\sum_{\ell=1}^L\psi_{q_\ell}{=} 0$  in  an $h$-independent neighborhood of $S^*M=\{p=0\}$,
by the standard elliptic parametrix construction (e.g.~\cite[Appendix E]{ZwScat}) there is $E\in \Psi^{-2}(M)$ with
\begin{equation}\label{e:S0 parametrix}
1-\sum_{\ell=1}^L Op_h(\psi_{q_\ell})=E{P}+O(h^\infty)_{\Psi^{-\infty}}.
\end{equation}

Next, combining \eqref{e:S0 parametrix} with Lemma \ref{L:lp bound}, and using that  $h^{n(\frac{1}{p}-\frac{1}{2})}=h^{-\delta(p)+\frac{1}{2}}  h^{-1}$, we have
\begin{align}\label{e:hugo}
\Big\|\Big(1-\sum_{\ell=1}^L Op_h(\psi_{q_\ell})\Big)u\Big\|_{L^p}
&\leq C h^{n(\frac{1}{p}-\frac{1}{2})}\|E{P}u\|_{H_h^{\Lpexp}}+O(h^\infty)\|u\|_{L^2} \notag\\
&\leq Ch^{-\delta(p)+\frac{1}{2}}  h^{-1}\|{P}u\|_{H_h^{\Lpexp-2}}+O(h^\infty)\|u\|_{L^2}.
\end{align}

It remains to understand the terms $Op_h(\psi_{q_\ell})u$. Since there are finitely many such terms, 
\begin{equation}\label{e:paco}
\Big\|\sum_{\ell=1}^L Op_h(\psi_{q_\ell})u\Big\|_{L^p}\leq \sum_{\ell=1}^L \|Op_h(\psi_{q_\ell})u\|_{L^p},
\end{equation}
and consider each term $\|Op_h(\psi_{q_\ell})u\|_{L^p}$ individually.

{By Proposition \ref{l:nicePartition} for each $\ell=1,\dots, L$ there exist $\tau_1(q_\ell)>0$, $\e_1(q_\ell)>0$, and a family of cut-offs $\{\tilde{\chi}\sub{\T_{q_\ell,j}}\}_{j\in \J_{q_\ell}}$, with  $\tilde\chi\sub{\T_{q_\ell,j}}$ supported in $\Lambda_{\rho_j}^{\tau+{\e_1(q_\ell)}}(R(h))$ and such that for all $0<\tau<\tau_1(q_\ell)$
\begin{equation}\label{E:ellipticity condition}
\sum_{j\in \J_{\! q_\ell}}\tilde{\chi}\sub{\T_{q_\ell,j}}{\equiv} 1\qquad \text{on} \quad {\Lambda^\tau\sub{\Sigma_{H_{q_\ell}}}(\tfrac{1}{2}R(h))}.
\end{equation}}
{Let $\tau_0(q_\ell)$ from~\cite[Theorem 8]{CG19a}. Then, set 
$$
\tau_{_{\!M}}:=\min_{1\leq \ell \leq L} \Big\{\tfrac{\inj(M)}{4},\, \tau_0(q_\ell),\,\tau_1(q_\ell),\, \tfrac{1}{2}\tau_{_{\inj\! H_{q_\ell}}}\Big\}.
$$
From now on we work with tubes $\T_{q_\ell,j}=\Lambda_{_{\rho_j}}^\tau({R(h)})$ for some $0<\tau<\tau_{_{\!M}}$.
Next, we localize $u$ near and away from $\Lambda^\tau\sub{\Sigma_{H_{q_\ell}}}(h^\delta)$:}
$$
Op_h(\psi_{q_{\ell}})u=\sum_{{j\in \J_{\! q_\ell}}}Op_h(\tilde \chi\sub{\T_{q_\ell,j}})Op_h(\psi_{q_{\ell}})u+\Big(1-\sum_{j\in \J_{\! q_\ell}}Op_h(\tilde  \chi\sub{\T_{q_\ell,j}})\Big)Op_h(\psi_{q_\ell})u.
$$
\begin{remark}
\label{r:geodesicBeams}
We refer to functions of the form $Op_h(\tilde{\chi}\sub{\T_{q_\ell},j})u$ as \emph{geodesic beams}. One can check using Proposition~\ref{l:nicePartition}, that if $u$ solves $Pu=O(h)$, then the geodesic beams also solve $Pu=O(h)$ and are localized to an $R(h)$ neighborhood of a length$\sim$1 segment of a geodesic.
\end{remark}
In particular, by \eqref{E:ellipticity condition}, {$\tfrac{1}{2}R(h)\geq {\frac{1}{2}h^{\delta_2}}$}, and \cite[Lemma 3.6]{CG19a}, there is $E\in h^{-{\delta_2}}\Psi_{{\delta_2}}^{\comp}$ so that 
\begin{equation}\label{e:Sdelta parametrix}
\Big(1-\sum_{j\in \J_{\! q_\ell}}Op_h(\tilde{\chi}\sub{\T_{q_\ell,j}})\Big)Op_h(\psi_{q_{\ell}})=E{P}+O_{\Psi^{-\infty}}(h^\infty).
\end{equation}
Since $h^{n(\frac{1}{p}-\frac{1}{2})-\delta_{{2}}}=h^{-\delta(p)+\frac{1}{2}-\delta_{{2}}}h^{-1}$, combining \eqref{e:Sdelta parametrix} with Lemma \ref{L:lp bound} yields
\begin{align}\label{e:luis}
\Big\|\Big(1-\sum_{j\in \J_{\! q_\ell}} Op_h(\tilde{\chi}\sub{\T_{q_\ell,j}})\Big)Op_h(\psi_{q_{\ell}})u\Big\|_{L^p}
\leq
Ch^{-\delta(p)-\frac{1}{2}-\delta_{{2}}}\|{P}u\|_{H_h^{\Lpexp-2}}+O(h^\infty)\|u\|_{L^2}.
\end{align}
{Combining \eqref{e:hugo}, \eqref{e:paco} and \eqref{e:luis} we have proved that for $U\subset M$
\begin{align}\label{e:rico}
\|u\|_{L^p(U)}
&\leq 
\sum_{\ell=1}^L\Big\| \sum_{j\in \J_{\! q_\ell}}Op_h(\tilde{\chi}\sub{\T_{q_\ell,j}})Op_h(\psi_{q_\ell})u\Big\|_{L^p(U)} \notag\\
&\qquad \qquad \qquad+Ch^{-\delta(p)+\frac{1}{2}-\delta_{{2}}}  h^{-1}\|{P}u\|_{H_h^{\Lpexp-2}}+O(h^\infty)\|u\|_{L^2}.
\end{align}}
\subsection{Filtering tubes by $L^2$-mass} \label{s:estNear}
By \eqref{e:rico} it only remains to control terms of the form $\|\sum_{j\in \J_{\!q_\ell}}Op_h(\tilde{\chi}\sub{\T_{q_\ell,j}})Op_h(\psi_{q_\ell})u\|_{L^p}$, where  $u$ is localized to $V_{q_\ell}$ within the characteristic variety $S^*M$ and, more importantly, to the tubes $\T_{q_\ell,j}$.
We fix $\ell$ and, abusing notation slightly, write 
\begin{equation}
\label{e:individualized}
\begin{gathered}
\psi:=\psi_{q_{\ell}},\qquad \J=\J_{\! q_\ell}, \qquad {\T_j=\T_{q_\ell,j}},\qquad  \tilde\chi\sub{\T_j}:=\tilde\chi\sub{\T_{q_\ell,j}},\\
v:=\sum_{j\in \J}Op_h(\tilde{\chi}\sub{\T_j})Op_h(\psi)u.
\end{gathered}
\end{equation}

 Let $T=T(h)\geq 1$.  For each $j\in \J$ let 
  \begin{equation}\label{e:chiNoTilde}
     \chi\sub{\T_j}\in C_c^\infty(T^*M;[0,1]){\cap S_\delta}
 \end{equation}
 be a smooth cut-off function  with $\supp \chi\sub{\T_j}\subset \T_j$, 
 $\chi\sub{\T_j}\equiv 1$ on $\supp \tilde{\chi}\sub{\T_j}$, {and such that $\{\chi_j\}_j$ is bounded in $S_\delta$}. We shall work with the modified norm
$$
\|u\|\sub{P,T}:=\|u\|_{L^2}+\tfrac{T}{h}\|Pu\|_{L^2}.
$$
Note that this norm is the natural norm for obtaining $T^{-\frac{1}{2}}$ improved estimates in $L^p$ bounds since the fact that $u$ is an $o(T^{-1}h)$ quasimode implies, roughly, that $u$ is an accurate solution to $(hD_t+P)u=0$ for times $t\leq T$.
For each integer $k\geq -1$ we consider the set 
\begin{equation}\label{E:A_k}
\A_k=\Big\{j\in \J: \;\; \;\frac{1}{2^{k+1}}\|u\|\sub{P,T}\; \leq \|Op_h(\chi\sub{\T_j})u\|_{L^2}+h^{-1}\|Op_h(\chi\sub{\T_j})Pu\|_{L^2}\leq\; \frac{1}{2^{k}}\|u\|\sub{P,T}\Big\}.
\end{equation}
{It follows that $\A_k$ consists of those tubes $\T_j$ with $L^2$ mass comparable to $2^{-k}$.}

Observe that since $|\chi\sub{\T_j}|\leq 1$, {for $h$ small enough depending on finitely many seminorms of $\chi_j$,}
$
\|Op_h(\chi\sub{\T_j})\|_{L^2\to L^2}\leq 2.
$
In particular, this together with $T\geq 1$, implies that 
\begin{equation}\label{E:set J}
\J=\bigcup_{k\geq -1}\A_k.
\end{equation}

\begin{lemma}\label{L:|A_k| bound}
There exists $C_n>0$ so that for all $k \geq -1$
\begin{equation}\label{e:|A_k| bound}
|\A_k| \leq C_n 2^{2k}.
\end{equation}
\end{lemma}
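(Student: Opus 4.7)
The estimate is a Bessel/almost-orthogonality bound that only uses two ingredients: (i) the disjointness property \eqref{e:disjoint partition} of the enlarged tubes within each partition class $\mathcal{J}_{q_\ell,i}$, and (ii) the inequalities $\|u\|_{L^2}\leq \|u\|\sub{P,T}$ and $h^{-1}\|Pu\|_{L^2}\leq \|u\|\sub{P,T}$ (the latter uses $T(h)\geq 1$).

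The plan is as follows. First I would decompose $\A_k$ according to which of the two terms in \eqref{E:A_k} carries at least half of the lower bound: set
\[
\A_k^{(1)}:=\Big\{j\in\A_k:\; \|Op_h(\chi\sub{\T_j})u\|_{L^2}\geq \tfrac{1}{2^{k+2}}\|u\|\sub{P,T}\Big\},\qquad \A_k^{(2)}:=\A_k\setminus \A_k^{(1)},
\]
so that for $j\in \A_k^{(2)}$ one has $h^{-1}\|Op_h(\chi\sub{\T_j})Pu\|_{L^2}\geq \tfrac{1}{2^{k+2}}\|u\|\sub{P,T}$. It then suffices to bound each piece by $\tilde C_n 2^{2k}$.

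Next, I would exploit the partition $\{\mathcal{J}_{q_\ell,i}\}_{i=1}^{\mathfrak{D}_n}$ from \eqref{e:partition}--\eqref{e:disjoint partition}. For $j_1\neq j_2$ in the same class $\mathcal{J}_{q_\ell,i}$, the tubes $\T_{j_1}$ and $\T_{j_2}$ are disjoint, so since $\supp \chi\sub{\T_j}\subset \T_j$ and $0\leq \chi\sub{\T_j}\leq 1$, one has pointwise
\[
\sum_{j\in \mathcal{J}_{q_\ell,i}}|\chi\sub{\T_j}|^2\;\leq\;\sum_{j\in \mathcal{J}_{q_\ell,i}}\chi\sub{\T_j}\;\leq\; 1.
\]
The operator $A_i:=\sum_{j\in \mathcal{J}_{q_\ell,i}}Op_h(\chi\sub{\T_j})^*Op_h(\chi\sub{\T_j})$ is therefore a quantization of a symbol bounded (modulo $O(h^{1-2\delta})_{S_\delta}$ lower order errors coming from the composition formula in $S_\delta$) by $1$; since the family $\{\chi\sub{\T_j}\}$ is uniformly bounded in $S_\delta$ with $\delta<\tfrac12$, the Calderón--Vaillancourt theorem yields $\|A_i\|_{L^2\to L^2}\leq C$ with $C$ independent of $h,i,\ell$. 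Pairing against $w\in L^2$ gives the almost-orthogonality bound
\[
\sum_{j\in \mathcal{J}_{q_\ell,i}}\|Op_h(\chi\sub{\T_j})w\|_{L^2}^2\;=\;\langle A_i w,w\rangle_{L^2}\;\leq\; C\|w\|_{L^2}^2,
\]
and summing over the $\mathfrak{D}_n$ classes,
\[
\sum_{j\in \J}\|Op_h(\chi\sub{\T_j})w\|_{L^2}^2\;\leq\; C\mathfrak{D}_n\|w\|_{L^2}^2.
\]

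Finally I would conclude as follows. Applying the above with $w=u$ and restricting the sum to $\A_k^{(1)}$ gives
\[
|\A_k^{(1)}|\,\frac{\|u\|\sub{P,T}^2}{4^{k+2}}\;\leq\;\sum_{j\in \A_k^{(1)}}\|Op_h(\chi\sub{\T_j})u\|_{L^2}^2\;\leq\; C\mathfrak{D}_n\|u\|_{L^2}^2\;\leq\; C\mathfrak{D}_n\|u\|\sub{P,T}^2,
\]
so $|\A_k^{(1)}|\leq 16C\mathfrak{D}_n\,2^{2k}$. Applying it with $w=Pu$ and restricting to $\A_k^{(2)}$, and then using $T(h)\geq 1$ to deduce $h^{-2}\|Pu\|_{L^2}^2\leq \|u\|\sub{P,T}^2$, yields the same bound for $|\A_k^{(2)}|$. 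Adding the two estimates gives $|\A_k|\leq C_n 2^{2k}$, as claimed. The only nontrivial step is the uniform Calderón--Vaillancourt bound on $A_i$; this is routine once one verifies that the seminorms of $\chi\sub{\T_j}$ in $S_\delta$ are controlled independently of $j$ and $h$, which is built into \eqref{e:chiNoTilde}.
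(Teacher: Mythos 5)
Your proof is correct and uses the same two key ingredients as the paper: the disjointness of the enlarged tubes within each partition class $\mathcal{J}_{q_\ell,i}$ (hence the uniform $L^2$ boundedness of $\sum_j Op_h(\chi\sub{\T_j})^*Op_h(\chi\sub{\T_j})$), and the inequalities $\|u\|_{L^2}\leq\|u\|\sub{P,T}$ and $h^{-1}\|Pu\|_{L^2}\leq\|u\|\sub{P,T}$ coming from $T\geq 1$. The only cosmetic difference is that you split $\A_k$ into $\A_k^{(1)}$ and $\A_k^{(2)}$ by which term dominates, whereas the paper handles both terms at once via $(a+b)^2\leq 2(a^2+b^2)$ and sums $\|Op_h(\chi\sub{\T_j})u\|^2 + h^{-2}\|Op_h(\chi\sub{\T_j})Pu\|^2$ directly over $j\in\A_k$; both routes land on $|\A_k|\leq C_n 2^{2k}$ with the same effort.
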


\begin{proof}
According to \eqref{e:partition}, the collection $\{\T_j\}_{j \in \J}$ can be partitioned into $\mathfrak{D}_n$ sets of disjoint tubes. {Thus,} we have $\sum_{j\in \J} |\chi\sub{\T_j}|^2\leq \mathfrak{D}_n$ and  there is $C_n>0$ depending only on $n$ such that
$$
\Big\|\sum_{{j\in \J}} Op_h(\chi\sub{\T_j})^*Op_h(\chi\sub{\T_j})\Big\|_{L^2\to L^2}\leq C_n.
$$
In particular,
{
$$
\sum_{{j\in \J}} \|Op_h(\chi\sub{\T_j})u\|^2_{L^2}\leq C_n\|u\|_{L^2}^2 
\qquad \text{and} \qquad
\sum_{{j\in \J}} \|Op_h(\chi\sub{\T_j})Pu\|^2_{L^2}\leq C_n\|Pu\|_{L^2}^2 . 
$$
}
Therefore, 
\begin{align*}
|\A_k|2^{-2k-2}\|u\|\sub{P,T}^2
&\leq 2\Big(\sum_{j\in \A_k} \|Op_h(\chi\sub{\T_j})u\|^2_{L^2} +h^{-2}\|Op_h(\chi\sub{\T_j})Pu\|_{L^2}^2\Big)
\leq C_n\|u\|\sub{P,T}^2.
\end{align*}
\vspace{-1cm}

\end{proof}

Next, let 
\begin{equation}\label{e:w_k}
w_k:=\sum_{j\in \A_k}Op_h(\tilde{\chi}\sub{\T_j}){Op_h(\psi)}u.
\end{equation}
Then, {by \eqref{e:individualized} and \eqref{E:set J} we have}
\begin{equation}\label{e:v}
v=\sum_{k={-1}}^\infty w_k.
\end{equation}
The goal is therefore to control $\|w_k\|_{L^p(U)}$ for each $k$ since the triangle inequality yields
 $$
 \|v\|_{L^p(U)}\leq \sum_{k=-1}^\infty \|w_k\|_{L^p(U)}.
 $$


\subsection{Filtering tubes by $L^\infty$ weight on shrinking balls} 
By Lemma~\ref{l:cover1}, there are points $\{x_{\alpha}\}_{\alpha\in \I}\subset M$ such that there exists a partition $\{\I_i\}_{i=1}^{\mathfrak{D}_n}$ of $\I$ so that 

\begin{itemize}
    \item $M\subset \bigcup_{\alpha \in \I}B(x_{\alpha},R(h)),$ \medskip
    \item $B(x_{\alpha_1},3R(h))\cap B(x_{\alpha_2},3R(h))=\emptyset, \qquad \alpha_1,\alpha_2\in \mathcal I_i,\quad \alpha_1\neq \alpha_2.$
\end{itemize}
Then, for $m \in \mathbb Z$ define 
\begin{equation}\label{E: I_km}
\I_{k,m}:=\Big\{\alpha \in \I\sub{U}: \;\;\; 2^{m-1}\leq h^{\frac{n-1}{2}}R(h)^{\frac{1-n}{2}}2^k\frac{ \|w_k\|_{L^\infty(B(x_\alpha,R(h)))}}{\|u\|\sub{P,T}}\leq 2^m\Big\},
\end{equation}
where ${\I\sub{U}:=\{\alpha \in \I:\; B(x_{\alpha},R(h))\cap U\neq \emptyset\}.}$
For each $k\in \mathbb{Z}_+$ and  $\alpha \in \I$ consider the sets
\[
{\A_{k}(\alpha)}:=\{j\in \A_k:\; {\pi\sub{M}}(\T_j)\cap B(x_{\alpha},{2}R(h))\neq \emptyset\},
\]
{where $\pi\sub{M}:T^*M \to M$ is the standard projection.}
\noindent The indices in $\A_k$ are those that correspond to tubes with mass comparable to $\tfrac{1}{2^{k}}\|u\|\sub{P,T}$, while indices in $\A_{k}(\alpha)$ correspond to tubes of mass $\tfrac{1}{2^{k}}\|u\|_{\sub{P,T}}$ that run over the ball $B(x_{\alpha}, {2}R(h))$. In particular, Lemma \ref{L:|A_k| bound} and ~\cite[Lemma 3.7]{CG19a} yield the existence of $C_n, c_{_{\!M}}>0$ such that 
\begin{equation}
\label{e:bound1}
c_{_{\!M}}2^m\leq |\A_{k}(\alpha)|\leq C_n 2^{2k},\qquad \alpha \in \I_{k,m}.
\end{equation}
{Indeed, for $\alpha \in \I_{k,m}$,
\begin{equation}\label{e:lb infty}
    2^{m-1}h^{\frac{1-n}{2}}R(h)^{\frac{1-n}{2}}2^{-k}\|u\|\sub{P,T}\leq \|w_k\|_{L^\infty(B(x_\alpha,R(h)))}.
\end{equation}
In addition, \eqref{E:A_k} and Lemma \cite[Lemma 3.7]{CG19a} imply that there exist $c\sub{M}>0$, $\tau\sub{M}>0$, and $C_n>0$, depending on $M$ and $n$ respectively, such that for all $N>0$ there exists $C\sub{N}>0$ with
\begin{align*}
&\|w_k\|_{L^\infty(B(x_\alpha,R(h)))}\\
&\leq \frac{C_n R(h)^{\frac{n-1}{2}}}{\tau\sub{M}^{1/2}h^{\frac{n-1}{2}}}\!\!\!\!\!\!\sum_{j\in \A_k(\alpha)}\!\!\!\!\|Op_h(\tilde{\chi}\sub{\T_j})Op_h(\psi) u\|_{L^2}+\!h^{-1}\|Op_h(\tilde{\chi}\sub{\T_j})POp_h(\psi) u\|_{L^2}\!+\!C\sub{N}h^N\|u\|\sub{P,T}\\
&\leq c_{_{\!M}}^{{-1}}h^{-\frac{n-1}{2}}R(h)^{\frac{n-1}{2}}2^{-k}\|u\|\sub{P,T}|\A_k(\alpha)|+\!C\sub{N}h^N\|u\|\sub{P,T},
\end{align*}
which, combined with \eqref{e:lb infty}, proves the lower bound in ~\eqref{e:bound1}.}
%
%
To simplify notation, let 
\begin{equation}\label{E: A_km}
\A_{k,m}:=\bigcup_{\alpha\in \I_{k,m}}\A_{k}(\alpha).
\end{equation}

{\noindent Note that for each $\alpha\in \I_{k,m}$ there is $\tilde{x}_\alpha\in B(x_\alpha,R(h))$ such that 
\begin{equation}\label{E:x_alphas}
|w_k(\tilde{x}_\alpha)|\geq 2^{m-1}h^{\frac{1-n}{2}}R(h)^{\frac{n-1}{2}}2^{-k}\|u\|\sub{P,T}.
\end{equation}

{We finish this section with a result that controls the size of $\I_{k,m}$ in terms of that of $\A_{k,m}$.}
Let  
\begin{equation}\label{E:rho}
 \tfrac{1}{2}(\delta_2+1)< \rho < 1,
\end{equation}
 {$0<\e<\delta$,  $\tilde{\chi}\in C_c^\infty((-1,1))$, and define the operator $\chi\sub{h,\tilde x_\alpha}$ by
\begin{equation*}
\chi\sub{h,\tilde x_\alpha}u(x):=\tilde{\chi}(\tfrac{1}{\e}h^{-\rho}d(x,\tilde x_\alpha))\;[Op_h(\tilde\chi(\tfrac{1}{\e}(|\xi|_g-1)))u](x).
\end{equation*}
 In Lemma \ref{l:chi_h,y} we prove that $\chi\sub{h,\tilde x_\alpha} \in \Psi_{\Gamma_{\tilde x_\alpha},{L_{\tilde x_\alpha}} ,\rho}^{-\infty}$,
where
\[\Om_{\tilde x_\alpha}=\{\xi \in T_{\tilde x_\alpha}^*M: \; |1-|\xi|_{g(\tilde x_\alpha)}|<\delta\},
\qquad
\Gamma_{\tilde x_\alpha}= \bigcup_{|t|<\tfrac{1}{2}\inj(M)} \varphi_t(\Om_{\tilde x_\alpha}),
\]
and 
 $\Psi_{\Gamma_{\tilde{x}_\alpha}, L_{\tilde{x}_\alpha} ,\rho}^{-\infty}$ is a class of smoothing pseudodifferential operators that allows for localization to $h^\rho$ neighborhoods of $\Gamma_{\tilde{x}_\alpha}$ and is compatible with localization to $h^\rho$ neighborhoods of the foliation $L_{\tilde{x}_\alpha}$ of $\Gamma_{\tilde{x}_\alpha}$ generated by $\Om_{\tilde{x}_\alpha}$.}

{In Theorem~\ref{l:nice2ndCut} for $\e>0$ we explain how to build a cut-off operator 
$
X_{\tilde x_\alpha}\in \Psi_{\Gamma_{\tilde{x}_\alpha}, L_{\tilde{x}_\alpha} ,\rho}^{-\infty}
$
such that 
\begin{equation}\label{e:cutoffconds}
\begin{cases}
\chi\sub{h,\tilde x_\alpha} X_{\tilde x_\alpha}=\chi\sub{h,\tilde x_\alpha} + O(h^\infty)_{\Psi^{-\infty}},\\ 
\WFh'([P, X_{\tilde x_\alpha}])  \cap \{(x,\xi): x \in B(\tilde x_\alpha, \tfrac{1}{2}{\conj M}), \;\xi \in \Om_x\} = \emptyset,        
\end{cases}
\end{equation}
where $\conj M$ denotes the injectivity radius of $M$.
}}

\begin{lemma}\label{l: |I_km|}
Let $\frac{1}{2}(\delta_2+1)< \rho \leq 1$. There exists $C>0$ so that for every $k \geq -1$ and $m \in \mathbb Z$ the following holds. If  
$$
|\A_{k,m}|\leq C \,2^{2m}R(h)^{n-1}\Big(h^{\rho-\frac{1}{2}}{R(h)^{-\frac{1}{2}}}\Big)^{-\frac{2n(n-1)}{3n+1}},
$$
then
\begin{equation}
\label{e:counting}
|\I_{k,m}|\leq C|\A_{k,m}|2^{-2m}R(h)^{1-n}.
\end{equation}
\end{lemma}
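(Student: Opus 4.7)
My plan is a $TT^*$/almost-orthogonality argument built on the second microlocal cut-offs $\chi\sub{h,\tilde x_\alpha}$ and $X_{\tilde x_\alpha}$ supplied by Theorem~\ref{l:nice2ndCut}, with the ``sharing'' estimates of Section~\ref{s:uncertainMe} controlling the off-diagonal interactions.

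First, I would set up a map $T\colon L^2(M)\to \ell^2(\I_{k,m})$ by $(Tf)_\alpha:=[\chi\sub{h,\tilde x_\alpha}X_{\tilde x_\alpha}f](\tilde x_\alpha)$ and try to recover $w_k(\tilde x_\alpha)$, up to acceptable errors, from $(Tw_{k,m})_\alpha$, where $w_{k,m}:=\sum_{j\in\A_{k,m}}Op_h(\tilde\chi\sub{\T_j})Op_h(\psi)u$. Using \eqref{e:cutoffconds}, the fact that $w_k$ is microsupported near the characteristic variety in tubes over $\tilde x_\alpha$, and a non-stationary phase argument for tubes in $\A_k\setminus\A_k(\alpha)$, I expect
\[
w_k(\tilde x_\alpha)=[\chi\sub{h,\tilde x_\alpha}X_{\tilde x_\alpha}w_{k,m}](\tilde x_\alpha)+E_\alpha,\qquad |E_\alpha|\ll 2^{m-k}h^{\frac{1-n}{2}}R(h)^{\frac{n-1}{2}}\|u\|\sub{P,T},
\]
so that~\eqref{E:x_alphas} furnishes a lower bound on $\|Tw_{k,m}\|^2_{\ell^2}$ proportional to $|\I_{k,m}|\,2^{2(m-k)}h^{1-n}R(h)^{n-1}\|u\|^2_{P,T}$.

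Next I would bound $\|T\|^2=\|TT^*\|$ by the Schur test applied to the kernel
\[
K_{\alpha\beta}:=[\chi\sub{h,\tilde x_\alpha}X_{\tilde x_\alpha}X_{\tilde x_\beta}^*\chi_{h,\tilde x_\beta}^*](\tilde x_\alpha,\tilde x_\beta).
\]
The diagonal is computed directly from the second-microlocal calculus of Section~\ref{s:anisotropic}: essentially the value on the diagonal of a smoothing operator localized to an $h^\rho$-ball in position and an $O(1)$-shell in momentum, yielding $|K_{\alpha\alpha}|\lesssim h^{-(n-1)}$. The off-diagonal terms, where $\tilde x_\alpha\ne \tilde x_\beta$ and $d(\tilde x_\alpha,\tilde x_\beta)\geq R(h)/2$, are bounded using the uncertainty principle of Section~\ref{s:uncertainMe}, which quantifies how little $L^2$ mass the two different co-isotropic foliations $L_{\tilde x_\alpha},L_{\tilde x_\beta}$ can effectively share; summing the resulting decay in $\beta$ should produce
\[
\|T\|^2\lesssim h^{1-n}\bigl(h^{\rho-\tfrac12}R(h)^{-\tfrac12}\bigr)^{\kappa},
\]
with $\kappa=\tfrac{2n(n-1)}{3n+1}$ chosen to match the hypothesis on $|\A_{k,m}|$.

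For the complementary $L^2$ bound on $w_{k,m}$, the $(\mathfrak D_{\!M},\tau,R(h))$-good cover of Proposition~\ref{l:nicePartition} (equivalently \eqref{e:partition}) partitions the tubes into $\mathfrak D_{\!M}$ subfamilies of disjoint tubes, and a Cotlar--Stein argument combined with the defining inequality \eqref{E:A_k} of $\A_k$ gives $\|w_{k,m}\|^2_{L^2}\leq C|\A_{k,m}|\,2^{-2k}\|u\|^2\sub{P,T}$. Feeding this into $\|Tw_{k,m}\|^2_{\ell^2}\leq \|T\|^2\|w_{k,m}\|^2_{L^2}$, and using the hypothesis on $|\A_{k,m}|$ to absorb the error $\sum_\alpha |E_\alpha|^2$ on the left, the two bounds collide and produce $|\I_{k,m}|\leq C|\A_{k,m}|\,2^{-2m}R(h)^{1-n}$, which is~\eqref{e:counting}.

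The main obstacle is Step~3: getting the sharp off-diagonal decay of $|K_{\alpha\beta}|$ in the regime where the extremal points satisfy only $d(\tilde x_\alpha,\tilde x_\beta)\geq R(h)$ and live at an $h^\rho$-scale that is much finer than the standard semiclassical one. That is exactly the regime where the second-microlocal calculus of Section~\ref{s:anisotropic} is designed to operate, and where the uncertainty principle of Section~\ref{s:uncertainMe} must be invoked; the numerical exponent $\kappa=\tfrac{2n(n-1)}{3n+1}$ will emerge from a careful balance between the $h^\rho$-scale in position, the $R(h)$-scale transverse to tubes, and the power $-\delta(p)$ in Sogge's bound.
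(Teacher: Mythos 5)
Your overall architecture matches the paper's — localize with $\chi\sub{h,\tilde x_\alpha}X_{\tilde x_\alpha}$, get an $L^\infty$ lower bound from $\alpha\in\I_{k,m}$, bound $\|w_{k,m}\|_{L^2}^2\lesssim|\A_{k,m}|2^{-2k}\|u\|^2\sub{P,T}$ from the disjoint-tube decomposition, and close with an almost-orthogonality bound — but Step~3 as you state it has a genuine gap. You assert that a Schur/Cotlar--Stein estimate yields a \emph{fixed} operator norm bound
$\|T\|^2\lesssim h^{1-n}\bigl(h^{\rho-\tfrac12}R(h)^{-\tfrac12}\bigr)^{\kappa}$ with $\kappa=\tfrac{2n(n-1)}{3n+1}$, and that this $\kappa$ can be ``chosen to match the hypothesis.'' That bound does not exist. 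The off-diagonal decay supplied by Corollary~\ref{c:corUncertain} is only $\|X_{x_\alpha}X_{x_\beta}\|\lesssim h^{\frac{n-1}{2}(2\rho-1)}d(x_\alpha,x_\beta)^{\frac{1-n}{2}}$, and when you run the Cotlar--Stein sum over the $R(h)$-separated points $\tilde x_\beta$ you get a dyadic sum $\sum_{k\lesssim \frac1n\log_2|\mc{J}|}2^{kn}(2^kR)^{\frac{1-n}{4}}\sim|\mc{J}|^{\frac{3n+1}{4n}}R^{\frac{1-n}{4}}$, which is \emph{not} summable independently of $|\mc{J}|$; this is exactly the content of Proposition~\ref{P:orthogonality}, which gives $\sum_{\alpha\in\tilde\I}\|X_{\tilde x_\alpha}v\|^2\lesssim\bigl(1+a_h|\tilde\I|^{\frac{3n+1}{2n}}\bigr)\|v\|^2$ with $a_h=(h^{\rho-\frac12}R^{-\frac12})^{n-1}$. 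Replacing $|\mc{J}|$ by the ambient $R^{-n}$ many balls would blow up, so there is no $|\mc{J}|$-free version.

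The way the paper closes the loop is a dichotomy/bootstrapping argument that you would also need: first derive the $|\tilde\I|$-dependent inequality $|\tilde\I|R(h)^{n-1}2^{2m}\leq C\max\bigl(1,a_h|\tilde\I|^{\frac{3n+1}{2n}}\bigr)|\A_{k,m}|$ valid for every $\tilde\I\subset\I_{k,m}$; then argue that if $a_h|\I_{k,m}|^{\frac{3n+1}{2n}}\geq 1$, one may extract $\tilde\I$ with $a_h|\tilde\I|^{\frac{3n+1}{2n}}=1$, whence the inequality forces $|\A_{k,m}|\gtrsim a_h^{-\frac{2n}{3n+1}}R(h)^{n-1}2^{2m}$ — contradicting the hypothesis. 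This is where the exponent $\tfrac{2n(n-1)}{3n+1}$ comes from: it is the threshold in the hypothesis that makes the second branch of the max impossible, not an operator norm exponent. Two smaller points: (i) the paper does not work with pointwise evaluations $(Tf)_\alpha=[\cdot](\tilde x_\alpha)$; it converts $|w_{k,m}(\tilde x_\alpha)|$ to $\|X_{\tilde x_\alpha}w_{k,m}\|_{L^2}+h^{-1}\|X_{\tilde x_\alpha}Pw_{k,m}\|_{L^2}$ via the standard $L^\infty$ quasimode bound, which sidesteps the delicate kernel estimates you flag as the obstacle; and (ii) you need the cut-off property $\WFh'([P,X_{\tilde x_\alpha}])$ disjoint from the relevant region (from~\eqref{e:cutoffconds}) to commute $X_{\tilde x_\alpha}$ past $P$ before applying that quasimode bound.
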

\begin{proof}

We claim that by \eqref{e:w_k}, for $\alpha \in \I_{k,m}$,
\begin{equation}\label{E:w_km}
\chi\sub{h,\tilde x_\alpha} w_k=\chi\sub{h,\tilde x_\alpha} w_{k,m}+O(h^\infty\|u\|\sub{L^2}),
\qquad \quad
w_{k,m}:= \sum_{j\in  \A_{k,m}}Op_h(\tilde{\chi}\sub{\T_j}){Op_h(\psi)}u.
\end{equation}
Indeed, it suffices to show that $\chi\sub{h,\tilde x_\alpha} Op_h(\tilde{\chi}\sub{\T_j}){Op_h(\psi)}u=O(h^\infty\|u\|_{L^2})$ for $\alpha \in \I_{k,m}$ and $j \notin \A_{k,m}$. Note that for such indices $\pi\sub{M}(\T_j)\cap B(\tilde x_\alpha, {2}R(h))=\emptyset$ while 
$$\supp {\tilde{\chi}(\tfrac{1}{\e}h^{-\rho}d(x,\tilde x_\alpha))} \subset B(\tilde x_\alpha, C \e h^\rho){\subset B(x_\alpha,\tfrac{3}{2}R(h))}$$
for some $C>0$ {and all $h$ small enough}.

Our next goal is to produce a lower bound for $|\A_{k,m}|$ in terms of $|\I_{k,m}|$ by using 
the lower bound \eqref{E:x_alphas} on $\|\chi\sub{h,\tilde x_\alpha} w_{k,m}\|_{L^\infty}$ for indices  $\alpha \in \I_{k,m}$.  
{By~\eqref{e:cutoffconds},} we have
$$
\chi\sub{h,\tilde x_\alpha} w_{k,m}=\chi\sub{h,\tilde x_\alpha}X_{\tilde x_\alpha} w_{k,m}+O(h^\infty)_{L^\infty},
$$
for $\alpha \in \I_{k,m}$.
In particular, by \eqref{E:x_alphas} and \eqref{E:w_km},
\begin{equation}
\label{e:IhaveAsquid}
 2^{m-1}h^{\frac{1-n}{2}}R(h)^{\frac{n-1}{2}}2^{-k}\|u\|\sub{P,T}\leq  \|\chi\sub{h,\tilde x_\alpha} w_{k}\|_{L^\infty} \leq  \| X_{\tilde x_\alpha} w_{k,m}\|_{L^\infty}{+O(h^\infty)\|u\|\sub{P,T})}. 
\end{equation}
Therefore, applying the standard $L^\infty$ bound for quasimodes of the Laplacian (see e.g.~\cite[Theorem 7.12]{EZB}) and using {that by \eqref{e:cutoffconds}} we have that  $X_{\tilde x_\alpha}$ nearly commutes with $P$ on {$B(\tilde x_\alpha, \tfrac{1}{2}{\conj M})$,} 
\begin{equation}\label{E:bound for one alpha}
\begin{aligned}
2^{m-1}R(h)^{\frac{n-1}{2}}2^{-k}\|u\|\sub{P,T}&\leq C(\|X_{\tilde x_\alpha}w_{k,m}\|_{L^2}+h^{-1}\| PX_{\tilde x_\alpha} w_{k,m}\|_{L^2(B)})+O(h^\infty\|u\|\sub{P,T}).\\
&\leq C(\|X_{\tilde x_\alpha}w_{k,m}\|_{L^2}+h^{-1}\|X_{\tilde x_\alpha} P w_{k,m}\|_{L^2})+O(h^\infty\|u\|\sub{P,T}).
\end{aligned}
\end{equation}
Note that we have canceled the factor $h^{\frac{1-n}{2}}$ which appears both in~\eqref{e:IhaveAsquid} and the standard $L^\infty$ bounds for quasimodes.
Using that $h^{2\rho-1}R(h)^{-1}=o(1)$, Proposition \ref{P:orthogonality} proves  that for all $\tilde \I  \subset \I_{k,m}$ {and $v\in L^2(M)$}
$$\sum_{\alpha\in\tilde I}\| X_{\tilde x_\alpha} v\|^2_{L^2}\leq C\Big(1+a_h|\tilde I|^{\frac{3n+1}{2n}}\Big)\|v\|_{L^2}^2,$$
where $a_h=(h^{\rho-\frac{1}{2}}R(h)^{-\frac{1}{2}})^{n-1}$.
As a consequence, \eqref{E:bound for one alpha} gives 
\begin{align*}
|\tilde \I|R(h)^{n-1}2^{-2k}2^{2(m-1)}\|u\|^2\sub{P,T}
& \leq C \Big(\sum_{\alpha \in \tilde I}\|X_{\tilde x_\alpha}  w_{k,m}\|_{L^2}^2+h^{-2}\sum_{\alpha \in \tilde I}\|X_{\tilde x_\alpha} P w_{k,m}\|_{L^2}^2\Big)\\
&\leq C\Big(1+a_h|\tilde \I|^{\frac{3n+1}{2n}}\Big)(\| w_{k,m}\|_{L^2}^2+h^{-2}\|P w_{k,m}\|_{L^2}^2)\\
&\leq C\Big(1+a_h|\tilde \I|^{\frac{3n+1}{2n}}\Big)2^{-2k}|\A_{k,m}|\|u\|^2\sub{P,T}.
\end{align*}
The last inequality follows from the definition of $w_{k,m}$ together with the definition \eqref{E:A_k} of $\A_k$. 

In particular,  we have proved that there is $C>0$ such that for all $\tilde \I  \subset \I_{k,m}$
\begin{equation}\label{E:max bound}
|\tilde \I|R(h)^{n-1}2^{2m}\leq C\max \Big(1\;,\; a_h|\tilde \I|^{\frac{3n+1}{2n}}\Big)|\A_{k,m}|.
\end{equation}
Suppose that $a_h|\I_{k,m}|^{\frac{3n+1}{2n}} \geq 1$. Then, there exists $\tilde \I  \subset \I_{k,m}$ such that  $a_h|\tilde \I|^{\frac{3n+1}{2n}}= 1$. In particular, 
$|\tilde \I|R(h)^{n-1}2^{2m}\leq C|\A_{k,m}|.$ This implies that if 
$|\A_{k,m}|\leq \tfrac{1}{C}  a_h^{-\frac{2n}{3n+1}} R(h)^{n-1}2^{2m},$
then $a_h|\I_{k,m}|^{\frac{3n+1}{2n}} \leq 1$ and so by \eqref{E:max bound}
\[|\I_{k,m}|R(h)^{n-1}2^{2m} \leq C |\A_{k,m}|.\]

\vspace{-.9cm}

\end{proof}


Note that for $w_{k,m}$ defined as in  \eqref{E:w_km}, 
\begin{equation}\label{E:using U_km}
    \|w_{k}\|_{L^p(U)}^p 
    \leq {\mathfrak{D}_n}\sum_{m=-\infty}^{\infty}\|w_{k}\|_{L^p(U_{k,m})}^p
    = {\mathfrak{D}_n}\sum_{m=-\infty}^{\infty}\|w_{k,m}\|_{L^p(U_{k,m})}^p+ O(h^\infty \|u\|\sub{P,T}),
\end{equation}
where 
\begin{equation}\label{E:U_km}
U_{k,m}:=\bigcup_{\alpha\in \I_{k,m}}B(x_\alpha,R(h)).
\end{equation}

Finally, we split the study of $\|w_{k}\|_{L^p(U)}$ into two regimes: tubes with low or high $L^\infty$ mass. {Fix $N>0$ large, to be determined later.} {(Indeed, we will see that it suffices to take $N{> \frac{1}{2}}(1-\frac{p_c}{p})^{-1}$.)} Then, {we claim that} for each $k \geq -1$,
\begin{equation}\label{e:splitMe}
\begin{aligned}
\|w_{k}\|_{L^p(U)}^p
&\leq {\mathfrak{D}_n}\sum_{m=-\infty}^{m_{1,k}}\|w_{k,m}\|_{L^p(U_{k,m})}^p+{\mathfrak{D}_n}\sum_{m=m_{1,k}+1}^{m_{2,k}}\|w_{k,m}\|^p_{L^p(U_{k,m})} + O(h^\infty \|u\|\sub{P,T}),
\end{aligned}
\end{equation}
where 
${m_{1,k}}$ and ${m_{2,k}}$ are defined by 
$$
2^{m_{1,k}}=\min\Bigg(\frac{2^kR(h)^{\frac{1-n}{2}}}{{\Ti}^N}\;,\;c_n 2^{2k}\;,\; c_0R(h)^{1-n}\Bigg),\quad 2^{m_{2,k}}=\min\Big(c_n 2^{2k}\;,\; c_0R(h)^{1-n}\Big),
$$
where $c_0, c_n$ are described in what follows. 
Indeed, note that the  bound \eqref{e:bound1} yields that $2^m$ is bounded by $|\A_k(\alpha)|$ for all $\alpha \in \I_{k,m}$ and the latter is controlled by $c_0 R(h)^{n-1}$ for some $c_0>0$, depending only on $(M,g)$. Also, note that by \eqref{e:bound1} the  $w_{k,m}$ are  only defined for $m$ satisfying $2^m \leq c_n 2^{2k}$. These observations justify that the second sum in \eqref{e:splitMe} runs only up to $m_{2,k}$.

\subsection{Control of the low $L^\infty$ mass term,  $m\leq m_{1,k}$}\label{S:k_1, k_2}
We first estimate the small $m$ term in \eqref{e:splitMe}. The estimates here essentially amount to interpolation between $L^{p_c}$ and $L^\infty$. From the definition \eqref{E: I_km} of $\I_{km}$, together with $\frac{1-n}{2}(p-p_c){-1}=-p\delta(p)$ and $\|w_{k,m}\|_{L^{p_c}(U_{k,m})} \leq {h^{-\frac{1}{p_c}}}\|u\|\sub{P,T}$,
\begin{align*}
\sum_{m=-\infty}^{m_{1,k}}\|w_{k,m}\|_{L^p(U_{k,m})}^p
&\leq C\sum_{m=-\infty}^{m_{1,k}} \|w_{k,m}\|_{L^\infty(U_{k,m})}^{p-p_c}\|w_{k,m}\|^{p_c}_{L^{p_c}(U_{k,m})}\notag\\
&  \leq C h^{-p\delta(p)}R(h)^{\frac{n-1}{2}(p-p_c)}2^{-k(p-p_c)}\sum_{m=-\infty}^{m_{1,k}}2^{m(p-p_c)}\|u\|^p\sub{P,T}\notag\\
& \leq Ch^{-p\delta(p)}R(h)^{\frac{n-1}{2}(p-p_c)}2^{(m_{1,k}-k)(p-p_c)}\|u\|^p\sub{P,T}. 
\end{align*}
It follows that 
 \begin{align}\label{E:pluto}
 \sum_{k\geq -1} \Big(\sum_{m=-\infty}^{m_{1,k}}\|{w_{k,m}}\|_{L^p(U_{k,m})}^p\Big)^{\frac{1}{p}}
 &\leq Ch^{-\delta(p)}R(h)^{\frac{n-1}{2}(1-\frac{p_c}{p})}\|u\|\sub{P,T} \sum_{k\geq -1} 2^{(m_{1,k}-k)(1-\frac{p_c}{p})}.
 \end{align}
Finally, define $k_1, k_2$ such that
 \begin{equation}\label{E:defn of k}
 2^{k_1}=\frac{R(h)^{\frac{1-n}{2}}}{{c_n}\Ti^N},\qquad 2^{k_2}={c_0}R(h)^{\frac{1-n}{2}}\Ti^N.
 \end{equation}
 If $k \leq k_1$, then $2^{m_{1,k}}=c_n2^{2k}$, so there exists {$C_{n,p}>0$} such that 
\[
\sum_{k=-1}^{k_1}2^{(m_{1,k}-k)(1-\frac{p_c}{p})} \leq {C_{n,p}}\frac{R(h)^{\frac{1-n}{2}(1-\frac{p_c}{p})}}{\Ti^{N(1-\frac{p_c}{p})}}.
\]
 If $ k_1\leq k \leq k_2$, then $2^{m_{1,k}}=\frac{2^kR(h)^{\frac{1-n}{2}}}{\Ti^N}$. Therefore, since ${|k_2-k_1|} \leq c N\log \Ti$ for some $c>0$,  there exists $C>0$ such that
 \[
 \sum_{k=k_1}^{k_2}2^{(m_{1,k}-k)(1-\frac{p_c}{p})}
 {\leq CN\log \Ti\frac{R(h)^{\frac{1-n}{2}(1-\frac{p_c}{p})}}{\Ti^{N(1-\frac{p_c}{p})}}}.
\]
Last, if $k \geq k_2$, then $2^{m_{1,k}}=c_0R(h)^{1-n}$, so there exists $C_p>0$ such that 
\[
\sum_{k=k_2}^{\infty}2^{(m_{1,k}-k)(1-\frac{p_c}{p})} \leq C_p\frac{R(h)^{\frac{1-n}{2}(1-\frac{p_c}{p})}}{\Ti^{N(1-\frac{p_c}{p})}}.
\]
Putting these three bounds together with \eqref{E:pluto}, we obtain
 \begin{equation}
 \label{e:squid1}
 \sum_{k\geq -1} \Big(\sum_{m=-\infty}^{m_{1,k}}\|{w_{k,m}}\|_{L^p(U_{k,m})}^p\Big)^{\frac{1}{p}}\leq Ch^{-\delta(p)}\frac{{N\log \Ti}}{\Ti^{N(1-\frac{p_c}{p})}}\|u\|\sub{P,T}.
 \end{equation}
 

\subsection{Control of the high $L^\infty$ mass term,  $m\geq m_{1,k}$}\label{s:highLinf}
In this section we estimate the large $m$ term in \eqref{e:splitMe}. To do this we split 
$$
\A_{k,m}=\mc{G}_{k,m}\sqcup \mc{B}_{k,m},
$$
where the set of `good' tubes $\bigcup_{j\in \mc{G}_{k,m}}\T_j$ is $[t_0,T]$ non-self looping and the number of `bad' tubes $|\mc{B}_{k,m}|$ is small.
To do this, let
\begin{equation} 
\label{e:indB}
{\mc{B}\sub{U}(\alpha,\beta)}:=\Bigg\{j\in \bigcup_k\A_k(\alpha)\,:\; \bigcup_{t=t_0}^T\varphi_t(\T_j)\cap S^*_{B(x_\beta,{2}R(h))}M\neq \emptyset\Bigg\}.
\end{equation}
Then, we define 
$$
\mc{B}_{k,m}:=\bigcup_{\alpha,\beta \in \mc{I}_{k,m}}\mc{B}\sub{U}(\alpha,\beta)\cap \mc{A}_{k}(\alpha).
$$
Let $\mc{G}_{k,m}:=\mc{A}_{k,m}\setminus\mc{B}_{k,m}$. Then, by construction, $\bigcup_{j\in \mc{G}_{k,m}}\T_j$ is $[t_0,T]$ non-self looping and we have
 \begin{equation}\label{E:bound on B_km}
 |\mc{B}_{k,m}|\leq c|\I_{k,m}|^2|\mc{B}\sub{U}|
 \end{equation}
for some $c>0$, where 
\begin{equation} 
\label{e:globB}
|\mc{B}\sub{U}|:=\sup\{ |\mc{B}\sub{U}(\alpha,\beta)|:\; \alpha, \beta \in \I\},
\end{equation}
That is, $|\mc{B}\sub{U}|$ is the maximum number of loops of length in $[t_0,T]$ joining any two points {in $U$}.

Then, define 
\begin{gather}\label{e:theGoodTheBadAndTheUgly}
w_{k,m}^{\mc{G}}:=\sum_{j\in \mc{G}_{k,m}}Op_h(\tilde{\chi}\sub{\T_j})Op_h(\psi)u,\qquad w_{k,m}^{\mc{B}}:=\sum_{j\in \mc{B}_{k,m}}Op_h(\tilde{\chi}\sub{\T_j})Op_h(\psi)u.
\end{gather}
Next, consider
\begin{align}\label{e:lps}
    \Big(\sum_{m=m_{1,k}}^{m_{2,k}}\|w_{k,m}\|_{L^p(U_{k,m})}^p\Big)^{\frac{1}{p}}&\leq \Big( \sum_{m=m_{1,k}}^{m_{2,k}}\|w_{k,m}^{\mc{G}}\|_{L^p(U_{k,m})}^p\Big)^{\frac{1}{p}}+\Big(\sum_{m=m_{1,k}}^{m_{2,k}}\|w_{k,m}^{\mc{B}}\|_{L^p(U_{k,m})}^p\Big)^{\frac{1}{p}}.
\end{align}


\subsubsection{Bound on the looping piece.}
We start by estimating the `bad' piece
$$\sum_{k\geq -1}\Big(\sum_{m=m_{1,k}}^{m_{2,k}}\|w_{k,m}^{\mc{B}}\|_{L^p(U_{k,m})}^p\Big)^{\frac{1}{p}}.$$
Observe that if 
$
2^{m_{1,k}}=\min(c_0R(h)^{1-n}, c_n2^{2k}), 
$
then $m_{1,k}=m_{2,k}$ and we need not consider this part of the sum. Therefore, the high $L^\infty$ mass term has
\begin{equation}
\label{e:defM1}
2^{m_{1,k}}=\frac{2^kR(h)^{\frac{1-n}{2}}}{\Ti^N}
\end{equation}
and $k_1\leq k\leq k_2$. 
Hence, for $m_{1,k}<m\leq m_{2,k}$,  Lemma \ref{L:|A_k| bound} gives that there is ${C_n}>0$ with
$$
|\A_{k,m}| 
\leq {C_n} 2^{2k} 
\leq {C_n}  R(h)^{n-1}2^{2m} \Ti^{2N}.
$$
Furthermore, since $R(h)\geq h^{\delta_2}$ with $\delta_2 <\tfrac{1}{2}$, \eqref{E:rho} yields that there is {$\e=\e(n, N)>0$} such that $h^{\rho-\frac{1}{2}}R(h)^{-\frac{1}{2}}<h^\e$, and hence, {since $\Ti=O(\log h^{-1})$},
\[
|\A_{k,m}|{=o\Big(  R(h)^{n-1}2^{2m} \left(h^{\rho-\frac{1}{2}}R(h)^{-\frac{1}{2}}\right)^{-\frac{2n(n-1)}{3n+1}}\Big)}.
\]
In particular,  a consequence of Lemma \ref{l: |I_km|} is the existence of {$h_0>0$} and $C>0$ such that 
\begin{align}
\label{e:boundCount}
|\I_{k,m}|&\leq C R(h)^{1-n}2^{-2m}|\A_{k,m}|\\
&\leq CR(h)^{1-n}2^{2k-2m},&\label{e:boundCount2}
\end{align}
for all $0<h \leq h_0$,
where we have used again  Lemma \ref{L:|A_k| bound} to bound $|\A_{k,m}|$.


Next, note that for  each point in $\I_{k,m}$ there are at most  $c|\I_{k,m}||\mc{B}\sub{U}|$ tubes  in $\mc{B}_{k,m}$  touching it. {Therefore,} we may apply~\cite[Lemma 3.7]{CG19a} to obtain $C>0$ such that 
\begin{equation} \label{e:LinfBad}
\|w_{k,m}^\mc{B}\|_{L^\infty({U_{k,m}})}\leq C h^{\frac{1-n}{2}}R(h)^{\frac{n-1}{2}}|\I_{k,m}||\mc{B}\sub{U}|2^{-k}\|u\|\sub{P,T}.
\end{equation}
Using \eqref{e:LinfBad}  and interpolating between $L^\infty$ and $L^{p_c}$ we obtain
\begin{equation}\label{E: Bad piece L^p norm}
\|w_{k,m}^{\mc{B}}\|_{L^p(U_{k,m})}^p \leq C h^{-p\delta(p)}\left(R(h)^{\frac{n-1}{2}}|\I_{k,m}||\mc{B}\sub{U}|2^{-k}\|u\|\sub{P,T}\right)^{p-p_c}\|w_{k,m}^{\mc{B}}\|_{L^2(U_{k,m})}^{p_c}.
\end{equation}
In addition, since combining \eqref{E:A_k} with  \eqref{E:bound on B_km} yields
\[
\|w_{k,m}^{\mc{B}}\|_{L^2(U_{k,m})} \leq C |\mc{B}_{k,m}|^{\frac{1}{2}}2^{-k}\|u\|\sub{P,T}  \leq C 2^{-k}|\mc{I}_{k,m}||\mc{B}\sub{U}|^{\frac{1}{2}}\|u\|\sub{P,T},
\]
the bounds in \eqref{E: Bad piece L^p norm} and \eqref{e:boundCount2}, together with the definition of $m_{1,k}$~\eqref{e:defM1} yield
\begin{align*}
\sum_{m=m_{1,k}}^{m_{2,k}}\|w_{k,m}^{\mc{B}}\|_{L^p(U_{k,m})}^p
&\leq C h^{-p\delta(p)}R(h)^{\frac{n-1}{2}(p-p_c)}
\sum_{m=m_{1,k}}^{m_{2,k}}|\I_{k,m}|^{p}|\mc{B}\sub{U}|^{p-\frac{p_c}{2}}2^{-kp}\|u\|^{p}\sub{P,T}\\
&\leq C h^{-p\delta(p)}R(h)^{\frac{n-1}{2}(-p-p_c)}
2^{kp}|\mc{B}\sub{U}|^{p-\frac{p_c}{2}}\|u\|^{p}\sub{P,T}\sum_{m=m_{1,k}}^{m_{2,k}}2^{-2mp}\\
&\leq C h^{-p\delta(p)}R(h)^{\frac{n-1}{2}(p-p_c)}|\mc{B}\sub{U}|^{p-\frac{p_c}{2}}\Ti^{2Np}\,2^{-kp}\|u\|^{p}\sub{P,T}.
\end{align*}
Then, with $k_1, k_2$ defined as in \eqref{E:defn of k}, we have that 
\begin{align*}
\sum_{k=k_1}^{k_2}\Big(\sum_{m=m_{1,k}}^{m_{2,k}}\|w_{k,m}^{\mc{B}}\|_{L^p(U_{k,m})}^p\Big)^{\frac{1}{p}}&\leq C h^{-\delta(p)}R(h)^{\frac{n-1}{2}(1-\frac{p_c}{p})}|\mc{B}\sub{U}|^{1-\frac{p_c}{2p}}\Ti^{2N}\|u\|\sub{P,T}\sum_{k=k_1}^{k_2}2^{-k}\notag\\
&\qquad \leq C h^{-\delta(p)}(R(h)^{n-1}|\mc{B}\sub{U}|)^{1-\frac{p_c}{2p}}\Ti^{3N}\|u\|\sub{P,T}.
\end{align*}
Finally, since we only need to consider $k_1\leq k\leq k_2$,
\begin{equation}
\sum_{k\geq -1}\Big(\sum_{m=m_{1,k}}^{m_{2,k}}\|w_{k,m}^{\mc{B}}\|_{L^p(U_{k,m})}^p\Big)^{\frac{1}{p}}
\leq C h^{-\delta(p)}(R(h)^{n-1}|\mc{B}\sub{U}|)^{1-\frac{p_c}{2p}}\Ti^{3N}\|u\|\sub{P,T}.\label{e:squid2}
\end{equation}
\subsubsection{Bound on the non self-looping piece.}\label{s:loopMe}In this section we aim to control the `good' piece
\begin{equation}\label{E:non-looping part}
    \sum_{k\geq -1}\Big(\sum_{m=m_{1,k}}^{m_{2,k}}\|w_{k,m}^{\mc{G}}\|_{L^p(U_{k,m})}^p\Big)^{\frac{1}{p}}.
\end{equation}
So far all $L^p$ bounds appearing have been $\ll h^{\frac{1-n}{2}}/\sqrt{T}$. The reason for this is that the bounds were obtained by interpolation with an $L^\infty$ estimate which is substantially stronger than $h^{\frac{1-n}{2}}/\sqrt{T}$.

We now estimate the number of non-self looping tubes $\T_j$ with $j \in \A_k$. That is, tubes on which the $L^2$ mass of $u$ is comparable $2^{-k}\|u\|\sub{P,T}$.
\begin{lemma}
\label{l:propLowerBound}
{Let $k \in \mathbb Z$, $k\geq -1$,} and $t_0>1$. Suppose that $\mc{G}\subset \A_k$ is such that 
$$
\bigcup_{j\in\mc{G}}\T_j\;\;\text{ is }[t_0,T]\text{ non-self looping}.
$$
Then, there exists a constant {$C_n>0$}, {depending only on $n$,} such that
$
|\mathcal{G}|\leq \frac{C_n{t_0}}{T}\, 2^{2k}.
$
\end{lemma}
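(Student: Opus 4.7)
The plan is to construct a time-averaged sum $\mathcal{F}$ of cutoff operators whose $L^2 \to L^2$ operator norm is $O_n(1)$, yet whose expectation against $u$ exceeds $cL\,|\mathcal{G}|\,2^{-2k}\|u\|\sub{P,T}^2$ with $L := \lfloor T/t_0\rfloor$. Comparing the two immediately yields $|\mathcal{G}| \leq C_n t_0\,2^{2k}/T$. The non-self-looping hypothesis enters the upper bound on $\|\mathcal{F}\|$ as a disjointness statement for the supports of the propagated symbols, while the lower bound is nothing but $L$ copies of the defining property of $\A_k$.

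Setting $U(t) := e^{-itP/h}$ and $B := \sum_{j\in\mathcal{G}} Op_h(\chi\sub{\T_j})^* Op_h(\chi\sub{\T_j})$, I consider
\[
\mathcal{F} \;:=\; \sum_{l=0}^{L-1} U(lt_0)^* B\,U(lt_0).
\]
Since $T \leq (1-2\delta_2)T_e(h)$ and each $\chi\sub{\T_j}\in S_\delta$ with $\delta<\tfrac12$, Egorov's theorem in the $S_\delta$-calculus is valid at each time $lt_0$ and gives that $\mathcal{F}$ has principal symbol $\sum_{l,j} |\chi\sub{\T_j}|^2 \circ \varphi_{lt_0}$, supported in $\bigcup_{l,j}\varphi_{-lt_0}(\T_j)$. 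The $[t_0,T]$ non-self-looping assumption for $\bigcup_{j\in\mathcal{G}}\T_j$ renders these sets pairwise disjoint for distinct values of $l$, while the $\mathfrak{D}_n$-good cover structure of Lemma \ref{l:cover} bounds their pointwise multiplicity by $\mathfrak{D}_n$ for each fixed $l$. The symbol is therefore pointwise $\leq \mathfrak{D}_n$, and sharp Gårding yields $\|\mathcal{F}\|_{L^2 \to L^2} \leq C_n$, and similarly $h^{-2}\langle \mathcal{F}Pu, Pu\rangle \leq C_n h^{-2}\|Pu\|_{L^2}^2$ since $[P, U(lt_0)] = 0$.

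For the lower bound, I expand $\langle \mathcal{F} u,u\rangle + h^{-2}\langle \mathcal{F}Pu, Pu\rangle = \sum_l \sum_{j\in\mathcal{G}}\bigl(\|Op_h(\chi\sub{\T_j})U(lt_0)u\|_{L^2}^2 + h^{-2}\|Op_h(\chi\sub{\T_j})U(lt_0)Pu\|_{L^2}^2\bigr)$. Using the Duhamel identity $Op_h(\chi\sub{\T_j})U(lt_0)u - Op_h(\chi\sub{\T_j})u = -\tfrac{i}{h}\int_0^{lt_0}\! Op_h(\chi\sub{\T_j})U(s)Pu\,ds$ together with the microsupport-disjoint commutator $[P,Op_h(\chi\sub{\T_j})]$ from Proposition~\ref{l:nicePartition}, each summand can be lower bounded by $\|Op_h(\chi\sub{\T_j})u\|_{L^2}^2 + h^{-2}\|Op_h(\chi\sub{\T_j})Pu\|_{L^2}^2$ minus an error controlled by $(T/h)^2\|Pu\|_{L^2}^2$, which is absorbed into $\|u\|\sub{P,T}^2$. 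The defining inequality of $\A_k$ then yields that each summand is at least $c\,2^{-2k}\|u\|\sub{P,T}^2$, hence $\langle \mathcal{F}u,u\rangle + h^{-2}\langle \mathcal{F}Pu,Pu\rangle \geq cL|\mathcal{G}|\,2^{-2k}\|u\|\sub{P,T}^2$.

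The main technical obstacle is verifying uniformly in $l \in \{0,\dots,L-1\}$ that the Egorov remainders (and the commutator errors from Proposition~\ref{l:nicePartition}) remain negligible after propagation to time $lt_0 \leq T \leq (1-2\delta_2)T_e(h)$. Under the flow, $S_\delta$-seminorms grow at worst like $e^{2\Lambda_{\max}T}\sim h^{-(1-2\delta_2)}$, which multiplies the $O(h)$ Egorov loss to $O(h^{2\delta_2}) \to 0$; this is exactly why the cutoff $T \leq (1-2\delta_2)T_e(h)$ appears in the hypotheses of Theorem~\ref{t:main bound}, and the verification proceeds precisely as in the Ehrenfest-time Egorov arguments of~\cite{CG19a,CG19dyn}.
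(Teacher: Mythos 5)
Your time-averaging operator $\mathcal{F}$ is the right object to consider, and your upper bound $\|\mathcal{F}\|\leq C_n$ via Egorov, disjointness from non-self-looping, and the $\mathfrak{D}_n$-bounded multiplicity is in the correct spirit; but your lower bound has a genuine gap that renders the conclusion unobtainable. You claim each summand is bounded below by $\|Op_h(\chi\sub{\T_j})u\|^2 + h^{-2}\|Op_h(\chi\sub{\T_j})Pu\|^2$ minus an error of size $(T/h)^2\|Pu\|^2\lesssim\|u\|\sub{P,T}^2$, ``absorbed into $\|u\|\sub{P,T}^2$.'' But there are $L\cdot|\mc{G}|$ summands, so the total error is of order $L|\mc{G}|\,\|u\|\sub{P,T}^2$, while the main term coming from the definition of $\A_k$ is only $c\,L|\mc{G}|\,2^{-2k}\|u\|\sub{P,T}^2$. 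For the relevant range of $k$ (which is large — one has $k\geq k_1$ in the application, with $2^{2k_1}\sim R(h)^{1-n}T^{-2N}$), the error swamps the target and the inequality collapses to the trivial one, yielding only $|\mc{G}|\lesssim 2^{2k}$ rather than $|\mc{G}|\lesssim (t_0/T)2^{2k}$. The elementary Duhamel bound $\|Op_h(\chi\sub{\T_j})(U(lt_0)-\Id)u\|\leq 2(T/h)\|Pu\|$ is simply too coarse to give the $t_0/T$ gain; one must exploit the non-self-looping hypothesis \emph{inside} the Duhamel integral, e.g.\ by showing that the propagated localizers overlap $\supp Pu$ for a total time $\lesssim t_0$ rather than $T$, which yields $\int_0^T\|Op_h(\chi\sub{\T_j})U(s)Pu\|^2\,ds\lesssim t_0\|Pu\|^2$ and hence an error of size $(t_0T/h^2)\|Pu\|^2\lesssim (t_0/T)\|u\|\sub{P,T}^2$. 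That refinement is precisely the content of the propagation estimate~\cite[Lemma 4.1]{CG19a}.

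There is also a minor point worth flagging: you invoke the ``microsupport-disjoint commutator from Proposition~\ref{l:nicePartition},'' but that commutator property holds for the cutoffs $\tilde{\chi}\sub{\T_j}$ constructed there, not for the $\chi\sub{\T_j}$ that appear in the definition~\eqref{E:A_k} of $\A_k$. The latter are merely arbitrary $S_\delta$ cutoffs supported in $\T_j$ and identically $1$ on $\supp\tilde{\chi}\sub{\T_j}$, so one cannot assume their commutator with $P$ vanishes to high order near $S^*M$. In contrast, the paper's actual proof is short precisely because it invokes the full strength of the propagation estimate~\cite[Lemma 4.1]{CG19a} as a black box, giving $\sum_{j\in\mc{G}_i}\|Op_h(\chi\sub{\T_j})u\|^2\leq(4t_0/T)\|u\|\sub{P,T}^2$ in one stroke, and then combines this with the bounded-overlap estimate for the $Pu$ terms and the defining lower bound of $\A_k$; what you are really doing is attempting to re-derive that black box from scratch, and in doing so you lose the $t_0/T$ gain in the quasimode error.
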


\begin{proof}
Using that $\G\subset \A_k$, we have
\begin{equation}
\label{e:Gbd}
|\G| \frac{\|u\|\sub{P,T}^2}{2^{2(k+1)}}\leq 2\sum_{j\in \G} \Big(\|Op_h(\chi\sub{\T_j})u\|^2+h^{-2}\|Op_h(\chi\sub{\T_j})Pu\|_{L^2}^2\Big).
\end{equation}
Since $\{\T_j\}_{j \in \mc{G}}$ is $(\mathfrak{D}_n,\tau,R(h))$-good, there are $\{\mc{G}_i\}_{i=1}^{ \mathfrak{D}_n}\subset \mc{G}$, such that for each $i=1, \dots, \mathfrak{D}_n$,
$$
\T_{j}\cap \T_k=\emptyset,\qquad j,k \in \mc{G}_i, \;\;\;j\neq k.
$$
By~\cite[Lemma 4.1]{CG19a} with $t_\ell=t_0$ and $T_\ell=T$ for all $\ell$,
\begin{equation}
\label{e:propEst}
 \sum_{j\in \G} \|Op_h(\chi\sub{\T_j})u\|_{L^2}^2
 \leq \sum_{i=1}^{\mathfrak{D}_n}\sum_{j\in \mc{G}_i}\|Op_h(\chi\sub{\T_j})u\|_{L^2}^2
 \leq \frac{\mathfrak{D}_n4 {t_0}}{T}\|u\|\sub{P,T}^2.
\end{equation}
On the other hand, since $\sum_{j\in \mc{G}_i}\|Op_h(\chi\sub{\T_j})\|^2 \leq 2$ for each $i$,
\begin{equation}
\label{e:Pbd}
    \sum_{j\in \G} \|Op_h(\chi\sub{\T_j})Pu\|_{L^2}^2\leq {2\mathfrak{D}_n}\|Pu\|_{L^2}^2.
\end{equation}
Combining~\eqref{e:Gbd},~\eqref{e:propEst}, and~\eqref{e:Pbd} yields
\begin{equation*}
|\G| \frac{\|u\|\sub{P,T}^2}{2^{2(k+1)}}\leq \frac{8\mathfrak{D}_n{t_0}}{T}\|u\|\sub{P,T}^2 +\frac{4 \mathfrak{D}_n}{h^{2}}  \|Pu\|_{L^2}^2  \leq   \frac{8\mathfrak{D}_n{t_0}+{\tfrac{4\mathfrak{D}_n}{T}}}{T}\|u\|\sub{P,T}^2.
\end{equation*}

\vspace{-1cm}
\end{proof}


We may now proceed to estimate the $L^p$-norm of the non-looping piece \eqref{E:non-looping part}. 
The first step is to notice that we only need to sum up to $m \leq m_{3,k}$, where $m_{3,k}$ is defined by
$$
2^{m_{3,k}}:=\min\Bigg( \frac{{C_n}t_02^{2k}}{{c_{_{\!M}}} T}\;,\; c_0R(h)^{1-n}\Bigg),
$$
and $c_{_{\!M}}>0$ is as defined in \eqref{e:bound1} {and $C_n>0$ is the constant in Lemma \ref{l:propLowerBound}}.
To see this, first observe that, using \eqref{E: I_km},~\eqref{e:boundCount} and~\eqref{e:LinfBad}, for each $\alpha \in \I_{k,m}$
\begin{equation}
\label{e:LinfGood1}
\begin{aligned}
\|w_{k,m}^{\mc{G}}\|\sub{L^\infty (B((x_\alpha, R(h))))}&\leq \|w_{k,m}\|\sub{L^\infty (B((x_\alpha, R(h))))}+\|w_{k,m}^{\mc{B}}\|\sub{L^\infty (B((x_\alpha, R(h))))}\\
&\leq C(2^{m}+|\I_{k,m}||\mc{B}\sub{U}|)2^{-k}h^{\frac{1-n}{2}}R(h)^{\frac{n-1}{2}}\|u\|\sub{P,T}\\
&\leq C(1+R(h)^{1-n}2^{-3m}|\A_{k,m}||\mc{B}\sub{U}|)2^{m-k}h^{\frac{1-n}{2}}R(h)^{\frac{n-1}{2}}\|u\|\sub{P,T}.
\end{aligned}
\end{equation}
Furthermore,  since $|\mc{G}_{k,m}|\geq |\A_{k,m}|-|\I_{k,m}|^2|\mc{B}\sub{U}|$ and $\mc{G}_{k,m}$ is $[t_0,T]$ non-self looping, Lemma~\ref{l:propLowerBound} yields {the existence of $C_n>0$ such that}
$$ |\A_{k,m}|-|\I_{k,m}|^2|\mc{B}\sub{U}| \leq C_n\frac{t_0}{T}2^{2k} .$$

Next, since $m_{1,k} \leq m \leq m_{2,k}$, we may apply Lemma \ref{l: |I_km|} to bound $|\I_{k,m}|$ as in \eqref{e:boundCount} to obtain that for some $C>0$ 

\begin{equation}
\label{e:nonLoopCount}
|\A_{k,m}|(1-CR(h)^{2(1-n)}2^{-4m}|\A_{k,m}||\mc{B}\sub{U}|)\leq C_n\frac{t_0}{T}2^{2k}.
\end{equation}
{In addition}, provided 
\begin{equation}\label{e: bound on Bu}
|\mc{B}\sub{U}|R(h)^{n-1}\ll \Ti^{-6N},
\end{equation}
we have, that for $m\geq m_{1,k}$ and  $k_1\leq k\leq k_2$
\begin{align}\label{E:noname bound}
R(h)^{2(1-n)}2^{-4m}|\A_{k,m}||\mc{B}\sub{U}|&\leq R(h)^{2(1-n)}2^{-4m+2k}|\mc{B}\sub{U}|\leq  2^{-2k}T^{4N}|\mc{B}\sub{U}|\notag\\
&\leq  {R(h)}^{n-1}T^{6N}|\mc{B}\sub{U}|\ll 1,\end{align}
where we used that by~\eqref{e:bound1}, $|A_{k,m}|$ is controlled by $2^{2k}$ to get the first inequality, that $m\geq m_{1,k}$ to get the second, and that $k\geq k_1$ to get the third.
Combining~\eqref{e:nonLoopCount} and the bound in \eqref{E:noname bound} we obtain
$
|\A_{k,m}|\leq {C_n}\frac{t_02^{2k}}{T},
$
and so, by  \eqref{e:bound1}, 
$
2^m\leq {C_n}\frac{t_02^{2k}}{{c_{_{\!M}}} T}.
$
{As claimed, this shows that to deal with \eqref{E:non-looping part} we only need to sum up to $m \leq m_{3,k}$.

The next step is to use interpolation to control the first sum in \eqref{E:non-looping part} by
\begin{equation}\label{E:interpolation good piece}
{\sum_{m=m_{1,k}}^{m_{2,k}}\|w_{k,m}^\mc{G}\|_{L^p(U_{k,m})}^p=}\sum_{m=m_{1,k}}^{m_{3,k}}\|w_{k,m}^\mc{G}\|_{L^p(U_{k,m})}^p
\leq \!\!\!\sum_{m=m_{1,k}}^{m_{3,k}} \|w_{k,m}^\mc{G}\|_{L^\infty(U_{k,m})}^{p-p_c}\|w_{k,m}^\mc{G}\|^{p_c}_{L^{p_c}(U_{k,m})}.
\end{equation}

We claim that \eqref{e:LinfGood1} yields
\begin{equation}\label{e:Linfty good piece}
\|w_{k,m}^{\mc{G}}\|\sub{L^\infty (B(x_\alpha, R(h)))}\leq C2^{m-k}h^{\frac{1-n}{2}}R(h)^{\frac{n-1}{2}}\|u\|\sub{P,T}.
\end{equation}
Indeed, using the  bound \eqref{e: bound on Bu} on $|\mc{B}\sub{U}|$, that $|\A_{k,m}|$ is controlled by $2^{2k}$, that $m\geq m_{1,k}$ as in~\eqref{e:defM1}, and that  $k_1\leq k\leq k_2$, we have
\begin{align*}
R(h)^{1-n}2^{-3m}|\A_{k,m}||\mc{B}\sub{U}|
\ll R(h)^{2(1-n)}2^{-3m+2k}\Ti^{-6N}
\leq \Ti^{-2N}.
\end{align*}

Using \eqref{e:Linfty good piece}, the standard bound on $\|w_{k,m}^\mc{G}\|^{p_c}_{L^{p_c}(U_{k,m})}$, and $\|w_{k,m}^\G\|_{L^2}^2 \leq C \frac{t_0}{T}$,  we obtain
\begin{equation}
\label{e:OhMarcus}
\|w_{k,m}^{\mc{G}}\|_{L^p(U_{k,m})}^2\leq Ch^{-p\delta(p)}(R(h)^{\frac{n-1}{2}}2^{m-k})^{p-p_c}\frac{t_0^{\frac{p_c}{2p}}}{T^{\frac{p_c}{2p}}} +O(h^\infty\|u\|\sub{P,T}^p).
\end{equation}
Using this, we estimate \eqref{E:interpolation good piece} 
\begin{align}\label{e:before358}
&\sum_{m=m_{1,k}}^{{m_{2,k}}}\|w_{k,m}^\mc{G}\|_{L^p(U_{k,m})}^p
\leq C h^{-p\delta(p)}(R(h)^{\frac{n-1}{2}}2^{(m_{3,k}-k)})^{p-p_c}\|u\|\sub{P,T}^{p}
\frac{t_0^{\frac{p_c}{2}}}{T^{\frac{p_c}{2}}} +O(h^\infty\|u\|\sub{P,T}^p).
\end{align}

Then, summing in $k$, and again using that only $k_1\leq k\leq k_2$ contribute,
\begin{align}
    {\sum_{k=-1}^{\infty}}\Big(\sum_{m=m_{1,k}}^{{m_{2,k}}}\|w_{k,m}^\mc{G}\|_{L^p(U_m)}^p\Big)^{\frac{1}{p}}&\leq C h^{-\delta(p)}\|u\|\sub{P,T}
\frac{t_0^{\frac{p_c}{2p}}}{T^{\frac{p_c}{2p}}}
    \sum_{k=k_1}^{k_2} \big(R(h)^{\frac{n-1}{2}}2^{(m_{3,k}-k)}\big)^{1-\frac{p_c}{p}} +O(h^\infty\|u\|\sub{P,T})\notag\\
&\qquad\leq C h^{-\delta(p)}\frac{t_0^{\frac{1}{2}}}{T^{\frac{1}{2}}}\|u\|\sub{P,T}{+O(h^\infty\|u\|\sub{P,T}).}\label{e:squid3}
\end{align}
}
Note that the sum over $k$ in~\eqref{e:squid3} is controlled by the value of $k$ for which  
$
\frac{C_nt_0 2^{2k}}{c_MT}=c_0R(h)^{1-n},
$
since the sum is geometrically increasing before such $k$ and geometrically decreasing afterward.


\subsection{Wrapping up the proof of Theorem~\ref{t:main bound} }
Combining~\eqref{e:squid1},~\eqref{e:squid2},~\eqref{e:squid3}, with \eqref{e:lps} and \eqref{e:splitMe}, and taking $N{> \frac{1}{2}}(1-\frac{p_c}{p})^{-1},$ provided
$
R(h)^{n-1}|\mc{B}\sub{U}|\leq C \Ti^{-6N},
$
{for some $C>0$,} we obtain
$$
\|v\|_{L^p(U)}\leq \sum_{k=-1}^\infty \|w_k\|_{L^p(U)}\leq C h^{-\delta(p)}\Bigg(\frac{t_0^{\frac{1}{2}}}{{T}^{\frac{1}{2}}}+{(R(h)^{n-1}|\mc{B}\sub{U}|)^{1-\frac{p_c}{2p}}\Ti^{3N}}\Bigg)\|u\|\sub{P,T}
$$
 as requested in \eqref{e: bound on Bu}. {Since this estimate holds only when $|\mc{B}\sub{U}|R(h)^{n-1}\leq {C}T^{-6N}$, we replace $\Ti$ by $\Ti_0:=\min\{{\tfrac{1}{C}}(R(h)^{n-1}|\mc{B}\sub{U}|)^{-\frac{1}{6N}}\,,\, \Ti\}$ so that 
\begin{equation}
\label{e:almostThere}
\begin{aligned}
\|v\|_{L^p(U)}&\leq C h^{-\delta(p)}\Bigg(\frac{t_0^{\frac{1}{2}}}{{\Ti_0}^{\frac{1}{2}}}+{(R(h)^{n-1}|\mc{B}\sub{U}|)^{1-\frac{p_c}{2p}}\Ti_0^{3N}}\Bigg)\|u\|\sub{P,T}\\
&\leq Ch^{-\delta(p)}\Bigg(\frac{t_0^{\frac{1}{2}}}{{\Ti}^{\frac{1}{2}}}+{t_0^{\frac{1}{2}}}(R(h)^{n-1}|\mc{B}\sub{U}|)^{\frac{1}{12N}}+{(R(h)^{n-1}|\mc{B}\sub{U}|)^{\frac{1}{2}(1-\frac{p_c}{p})}}\Bigg)\|u\|\sub{P,T}\\
&\leq Ch^{-\delta(p)}\Bigg(\frac{t_0^{\frac{1}{2}}}{{\Ti}^{\frac{1}{2}}}+{(R(h)^{n-1}|\mc{B}\sub{U}|)^{\frac{1}{12N}}}\Bigg)\|u\|\sub{P,T},
\end{aligned}
\end{equation}
{where the constant $C$ is adjusted from line to line.}

Next, combining~\eqref{e:almostThere} with~\eqref{e:rico} and the definition of $v$ in~\eqref{e:individualized}, we obtain
\begin{multline*}
\|u\|_{L^p(U)}\leq Ch^{-\delta(p)}\Bigg(\frac{t_0^{\frac{1}{2}}}{{\Ti}^{\frac{1}{2}}}+{(R(h)^{n-1}|\mc{B}\sub{U}|)^{\frac{1}{12N}}}\Bigg)\|u\|\sub{P,T} 
+Ch^{-\delta(p)+\frac{1}{2}-\delta_2}h^{-1}\|Pu\|_{H_h^{\Lpexp-2}}.
\end{multline*}
Putting $\e=\frac{1}{2}$ and setting $N=\frac{1}{2}(1+\frac{\e_0}{6})(1-\frac{p_c}{p})^{-1}$, the estimate~\eqref{e:LpestFinal} will follow once we relate $|\mc{B}\sub{U}|$ for a given $(\tau, R(h))$ cover to $|\mc{B}\sub{U}|$ for the $(\mathfrak{D},\tau,R(h))$ cover used in our proof.

Finally, to finish the proof of Theorem~\ref{t:main bound}, we need to show that for any $(\tau,R(h))$ cover $\{{\T}_j\}_j$ of $\SM$, up to a constant depending only on $M$, {$|\mc{B}\sub{U}|$ can be bounded by $|{\tilde{\mc{B}\sub{U}}}|$ where ${\tilde{\mc{B}}\sub{U}}$ is defined as in~\eqref{e:globB} using a $(\tilde{\mathfrak{D}},\tau,R(h))$ good cover $\{\tilde{\T}_k\}_k$ of $\SM$.}

\begin{lemma}
There exists $C_{_{\!M}}>0$ depending only on $M$ so that if $\{\T_j\}_{j\in \J}$ and $\{\tilde{\T}_k\}_{k\in \mc{K}}$ are respectively a $(\tau,R(h))$ cover $\SM$ and a $(\tilde{\mathfrak{D}},\tau,R(h))$ good cover of $\SM$, and {$|\mc{B}\sub{U}|$, $|\tilde{\mc{B}}\sub{U}|$} are defined as in~\eqref{e:globB} for respectively the covers $\{\T_j\}_{j\in \J}$,$\{\tilde{\T}_k\}_k$, then 
$$
|\tilde{\mc{B}}\sub{U}|\leq C_{_{\!M}}\tilde{\mathfrak{D}}|\mc{{B}}\sub{U}|.
$$
\end{lemma}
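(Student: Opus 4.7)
The plan is to construct, for each pair $(\alpha,\beta)\in\I\times\I$, a map $\Phi:\tilde{\mc{B}}\sub{U}(\alpha,\beta)\to\mc{B}\sub{U}(\alpha,\beta)$ whose fibers have size bounded by $C_M\tilde{\mathfrak{D}}$, and then take the supremum over $(\alpha,\beta)$ to obtain the claim. To build $\Phi$, for each $\tilde{\T}_k\in\tilde{\mc{B}}\sub{U}(\alpha,\beta)$ I will select a marked point $\rho_k$ on the central geodesic of $\tilde{\T}_k$---for concreteness, its intersection with the central hypersurface used in the construction of $\tilde{\T}_k$. Since $\{\T_j\}_{j\in\J}$ is a $(\tau,R(h))$-cover of $S^*M$, Definition~\ref{d: cover} provides some $j(k)\in\J$ with $\rho_k\in\T_{j(k)}$, and I set $\Phi(\tilde{\T}_k):=j(k)$.

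I will then verify that $j(k)\in\mc{B}\sub{U}(\alpha,\beta)$. Since $\rho_k$ lies in both $\T_{j(k)}$ and $\tilde{\T}_k$, and both tubes have transverse radius $R(h)$ and the same longitudinal extent, the geodesic segments they are built around stay within $O(R(h))$ of one another throughout $[-\tau-R(h),\tau+R(h)]$. Consequently the proximity condition $\pi\sub{M}(\tilde{\T}_k)\cap B(x_\alpha,2R(h))\neq\emptyset$ and the looping condition $\bigcup_{t=t_0}^{T}\varphi_t(\tilde{\T}_k)\cap S^*_{B(x_\beta,2R(h))}M\neq\emptyset$ transfer directly to $\T_{j(k)}$, at the cost of an $(M,g)$-dependent bounded enlargement of the threshold $2R(h)$. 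Such an enlargement is absorbed into $C_M$ by noting that the definition of $\mc{B}\sub{U}$ used in the proof of Theorem~\ref{t:main bound} is robust to a bounded rescaling of $R(h)$ (one can, for instance, work with a slight refinement of $\{\T_j\}$).

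For the multiplicity bound I fix $j\in\J$ and estimate $|\Phi^{-1}(j)|=\#\{k:\rho_k\in\T_j\}$. By the good-cover hypothesis (Definition~\ref{d:good cover}), $\{\tilde{\T}_k\}_{k\in\mc{K}}$ partitions into $\tilde{\mathfrak{D}}$ families whose $3R(h)$-enlargements are pairwise disjoint; within a single family, the marked points $\rho_k$ are therefore $\gtrsim R(h)$-separated in the Sasaki metric. A packing argument using that $\T_j$ has transverse cross-section of volume $O(R(h)^{n-1})$ and longitudinal extent $O(\tau)$ shows that at most a constant $C_M$, depending only on $(M,g)$, of these separated points can lie in $\T_j$ per family. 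Summing over the $\tilde{\mathfrak{D}}$ families yields $|\Phi^{-1}(j)|\leq C_M\tilde{\mathfrak{D}}$, which combined with the previous paragraph gives $|\tilde{\mc{B}}\sub{U}(\alpha,\beta)|\leq C_M\tilde{\mathfrak{D}}|\mc{B}\sub{U}(\alpha,\beta)|$. The main obstacle I anticipate is the transfer-of-badness step: precisely matching the $2R(h)$ thresholds in the two definitions requires Lipschitz control of $\varphi_t$ on $t\in[t_0,T]$ and careful handling of the constants comparing $\tilde{\T}_k$ and $\T_{j(k)}$; once this is done, the packing bound and the final supremum are routine.
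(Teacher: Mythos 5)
Your high-level strategy --- build a map $\Phi:\tilde{\mc{B}}\sub{U}(\alpha,\beta)\to\mc{B}\sub{U}(\alpha,\beta)$ via the cover property and bound its fibers --- mirrors the paper's, which fixes $j\in\J$ and bounds $|\mc{C}_j|=|\{k:\T_j\cap\tilde{\T}_k\neq\emptyset\}|$ by a volume argument. However, the transfer-of-badness step contains a genuine error. You claim that because $\rho_k$ lies in both $\T_{j(k)}$ and $\tilde{\T}_k$, ``the geodesic segments they are built around stay within $O(R(h))$ of one another throughout $[-\tau-R(h),\tau+R(h)]$.'' This is false: $\rho_k$ is the longitudinal center of $\tilde{\T}_k$ by construction, but nothing forces it to be the center of $\T_{j(k)}$. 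If $\rho_k=\varphi_{t_0}(\sigma)$ with $\sigma$ in the base ball of $\T_{j(k)}$ and $t_0$ close to $\tau+R(h)$, then, measured relative to $\rho_k$, the central segment of $\tilde{\T}_k$ covers $t\in[-\tau,\tau]$ while that of $\T_{j(k)}$ covers $t\in[-\tau-t_0,\tau-t_0]$, so the two tubes miss each other completely on $(\tau-t_0,\tau]$. If the witness for the $x_\alpha$-crossing or for the looping condition of $\tilde{\T}_k$ lies in that interval, $j(k)\in\mc{B}\sub{U}(\alpha,\beta)$ does not follow. The paper is terse here as well, but the containment $\tilde{\T}_k\subset\Lambda_{\rho_j}^{c_M\tau}(c_M R(h))$ proved in~\eqref{e:inside} --- note the enlargement $c_M\tau$ in the \emph{length}, not only the radius --- is precisely the mechanism that absorbs this longitudinal shift, and a correct version of your argument will need something equivalent.

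The multiplicity bound is also incomplete as written. The marked points $\rho_k$ within one good family are indeed $\gtrsim R(h)$-separated, but $\T_j$ has Sasaki volume $\sim\tau R(h)^{2n-1}$ in the $2n$-dimensional $T^*M$, so naively packing $R(h)$-separated points into $\T_j$ yields up to $\sim\tau/R(h)\to\infty$ of them, not $O(1)$; your stated count ``transverse cross-section of volume $O(R(h)^{n-1})$, longitudinal extent $O(\tau)$'' does not give a bounded number either. The missing ingredient is that the marked points all lie on one of the $L$ fixed (finitely many, $h$-independent) hypersurfaces $\mc{H}\sub{\hypq}$ transverse to the flow, and by uniform transversality $\T_j$ meets each such hypersurface in a set of diameter $\sim R(h)$; only then does the packing give $O(1)$ per family per hypersurface. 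The paper avoids this subtlety entirely: it shows the full tube $\tilde{\T}_k$ (not merely a marked point) sits inside $\Lambda_{\rho_j}^{c_M\tau}(c_M R(h))$, and since the $\tilde{\T}_k$ in each of the $\tilde{\mathfrak{D}}$ families are pairwise disjoint, the bound $|\mc{C}_j|\leq\tilde{\mathfrak{D}}\,\vol(\Lambda_{\rho_j}^{c_M\tau}(c_M R(h)))/\inf_k\vol(\tilde{\T}_k)\leq C_M\tilde{\mathfrak{D}}$ follows from a clean volume comparison that is blind to the choice of marked point and hypersurface.
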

\begin{proof}
Fix $\alpha,\beta$ such that $x_\alpha,x_\beta \in U$. Suppose that $j\in {\mc{B}\sub{U}(\alpha,\beta)}$ where $\mc{B}\sub{U}(\alpha,\beta)$ is as in~\eqref{e:indB}. Then, there is $k\in \tilde {\mc{B}}\sub{U}(\alpha,\beta)$ such that $\tilde{\T}_k\cap \T_j\neq \emptyset.$ Now, fix $j\in \J$ and let 
$$\mc{C}_j:=\{ k\in \mc{K}:\;\T_j\cap \tilde{\T}_k\neq \emptyset\}.$$
We claim that there is $c\sub{M}>0$ such  that for each  $k\in \mc{C}_j$
\begin{equation}
\label{e:inside}
\tilde{\T}_k\subset \Lambda_{\rho_j}^{c\sub{M}\tau}(c\sub{M}R(h)).
\end{equation}
Assuming~\eqref{e:inside} for now, {there exists $C\sub{M}>0$ such that}
$$
|\mc{C}_j|\leq \tilde{\mathfrak{D}}\frac{\vol(\Lambda_{\rho_j}^{c\sub{M}\tau}(c\sub{M}R(h))}{\inf_{k\in \mc{K}}\vol(\tilde{\T}_k)}\leq \tilde{\mathfrak{D}}C_{_{\!M}}.
$$
Thus, for each $j\in \mc{B}\sub{U}({\alpha ,\beta})$, there are at most $C_{_{\!M}}\mathfrak{\tilde{D}}$ elements in $\tilde{\mc{B}}\sub{U}({\alpha,\beta})$ and hence $|\mc{B}\sub{U}(\alpha,\beta)|\geq |\tilde{\mc{B}}\sub{U}(\alpha,\beta)|/(C_{_{\!M}}\tilde{\mathfrak{D}})$ as claimed.

We now prove~\eqref{e:inside}. Let $q\in \tilde{\T}_k$. Then, there are $\rho'_k,\rho'_j,q'\in \SM$ and $t_k,t_j,s\in[\tau-R(h),\tau+R(h)]$ such that 
$$
\begin{gathered}
d(\rho_k,\rho'_k)<R(h),\qquad \quad d(\rho_j,\rho'_j)<R(h),\qquad \quad d(\rho_k,q')<R(h), \\
\varphi_{t_k}(\rho'_k)=\varphi_{t_j}(\rho'_j),\qquad \qquad \varphi_{s}(q')=q.
\end{gathered}
$$
In particular, $d(q',\rho'_k)<2R(h)$, so there is $c\sub{M}>0$ such that
$
d(\varphi_{t_k}(\rho'_k),\varphi_{t_k-s}(q))<c\sub{M}R(h).
$
Applying $\varphi_{-t_j}$, and adjusting $c\sub{M}$ in a way depending only on $M$,
$
d(\rho'_j,\varphi_{t_k-t_j-s}(q))<c\sub{M}R(h).
$
In particular, adjusting $c\sub{M}$ again,
$
d(\rho_j,\varphi_{t_k-t_j-s}(q))<c\sub{M}R(h)
$
and the claim follows.
\end{proof}

\subsection{Proof of Theorem \ref{t:JeffsFavorite}} \label{s:JeffsFavorite}
As explained in the introduction, {Theorem \ref{t:JeffsFavorite} actually holds under the more general assumptions of Theorem \ref{t:main bound}}. Let $p>p_c$ and assume that there is $\delta>0$ such that  
$$
T=T(h)\to \infty,\qquad\qquad |\mc{B}\sub{U}|R(h)^{n-1}T^{\frac{3p}{p-p_c}+\delta}=o(1).
$$
{In the general setup we work with
$$
\mc{S}\sub{U}(h, \e,u):=\Big\{\alpha \in \mc{I}(h): \|u\|_{L^\infty (B(x_\alpha,R(h)))} \geq  \frac{\e h^{\frac{1-n}{2}}{\sqrt{t_0}}}{\sqrt{T(h)}}\|u\|_{\LM}, \;\; B(x_\alpha,R(h))\cap U\neq \emptyset\Big\}.
$$}
We proceed to prove Theorem \ref{t:JeffsFavorite} in this setup, using the decompositions introduced in the previous sections. Throughout this proof we assume that 
\begin{equation}\label{e:quas}
\|Pu\|_{H_h^{\frac{n-3}{2}}}=o\Big(\frac{h}{T}\|u\|_{L^2}\Big).
\end{equation}

\subsubsection{Proof of the bound on $|\mc{S}\sub{U}(h,\e,u)|$}
{We claim that there is $c>0$ such that for  $\alpha\in S\sub{U}(h,\e,u)$ 
\begin{equation}\label{e:LB1}
\frac{c\e\sqrt{t_0}}{\sqrt{T}}h^{-\frac{1}{p}}\|u\|_{\sub{P,T}}\leq\|u\|_{L^{p}(B(x_\alpha,2R(h)))}.
\end{equation}}
{To see \eqref{e:LB1}, first let $\chi_0,\chi_1 \in C_c^\infty(-2,2)$, $\chi\equiv 1$ on $[-3/2,3/2]$, $\chi_1\equiv 1$ on $\supp \chi_0$ and  note that by Lemma~\ref{L:lp bound}, the elliptic parametrix construction for $P$, and~\eqref{e:quas}
\begin{equation}
\label{e:highFreq}
\|(1-\chi_0(-h^2\Delta_g))u\|_{L^p} \leq Ch^{-\delta(p)-\frac{1}{2}}\|Pu\|_{H_h^{\frac{n-3}{2}}}=o\Big(\frac{h^{-\delta(p)+\frac{1}{2}}}{T}\Big){\|u\|_{L^2}}.
\end{equation}}
Therefore, for $\alpha\in S\sub{U}(h,\e,u)$ 
we have
\begin{equation}\label{e:C_h}
\|\chi_0(-h^2\Delta_g) u\|_{L^\infty(B(x_\alpha,R(h)))}\geq    {\frac{\e h^{\frac{1-n}{2}}}{2\sqrt{T}}\|u\|_{\LM}}
\end{equation}
for $h$ small enough.  Next, set $\chi_{\alpha,h}(x):=\chi(R(h)^{-1}d(x,x_\alpha))$ and note
$$
\chi_1(-h^2\Delta_g) \chi_{\alpha,h}\chi_0(-h^2\Delta_g)u=\chi_{\alpha,h}\chi_0(-h^2\Delta_g)u+O(h^\infty\|u\|_{L^2})_{C^\infty}.
$$
Then, by \eqref{e:C_h} and \cite[Theorem 7.15]{EZB}
\begin{align}\label{e:LB2}
{\frac{\e h^{\frac{1-n}{2}}}{2\sqrt{T}}\|u\|_{\LM}}\leq \|\chi_0(-h^2\Delta_g)u&\|_{L^\infty(B(x_\alpha,R(h)))}
\leq \|\chi_{\alpha,h}\chi_0(-h^2\Delta_g)u\|_{{L^\infty(B(x_\alpha,R(h)))}} \notag\\
&\leq Ch^{-\frac{n}{p}}\Big({\|\chi_0(-h^2\Delta_g)u\|}_{L^{p}(B(x_\alpha,2R(h))}+O(h^\infty)\|u\|_{L^2}\Big),
\end{align}

Combining \eqref{e:LB2} and \eqref{e:highFreq} yields the claim in \eqref{e:LB1}. It then follows that, if $\{\alpha_i\}_{i=1}^N\subset S\sub{U}(h,\e,u)$ with $B(x_{\alpha_i},2R(h))\cap B(x_{\alpha_j},2R(h))=\emptyset$ for $i\neq j$, then {using Theorem~\ref{t:main bound},}
$$
N^{\frac{1}{p}}\frac{c\e\sqrt{t_0}}{\sqrt{T}}h^{-\frac{1}{p}}\|u\|_{\sub{P,T}}\leq \|u\|_{L^{p}}\leq Ch^{-\frac{1}{p}}\|u\|_{L^{2}}\leq C h^{-\frac{1}{p}}\frac{\sqrt{t_0}}{\sqrt{T}}\|u\|_{\sub{P,T}}.
$$
Then, 
$
N^{\frac{1}{p}}\leq C\e^{-1}.
$
Since  at most $\mathfrak{D}_n$ balls $B(x_\alpha,2R(h))$ intersect, 
$
|S\sub{U}(h,\e,u)|\leq C\mathfrak{D}_n\e^{-p}.
$

\subsubsection{Preliminaries for the decomposition of $u$}
Let $q\in \mathbb{R}$ such that $p\leq q\leq \infty$. Below, all implicit constants are uniform for $p\leq q\leq \infty$.
As above, it suffices to prove the statement for $v$ as in~\eqref{e:individualized} instead of $u$. Then, we decompose $v=\sum_{k=-1}^\infty w_k$ as in~\eqref{e:v}. For $V\subset U$, by {the same analysis that led to}~\eqref{e:splitMe},
$$
\|w_k\|^q_{L^q(V)}\leq \mathfrak{D}_n\sum_{m=-\infty}^{m_{2,k}}\|w_{k,m}\|^q_{L^q(V\cap U_{k,m})} +O(h^\infty)\|u\|\sub{P,T},
$$
where $w_{k,m}$ is as in \eqref{E:w_km}. Then, by~\eqref{e:squid1}, with  $N=\frac{q}{2(q-p_c)}+\frac{\delta}{6}$
\begin{equation}
\label{e:marcus0}
\sum_{k\geq -1}\Big(\sum_{m=-\infty}^{m_1,k}\|w_{k,m}\|_{L^q( U_{k,m})}^{{q}}\Big)^{{\frac{1}{q}}}\leq Ch^{-\delta(q)}\frac{\log T}{T^{\frac{1}{2}+ {\frac{\delta(q-p_c)}{6q}}}}\|u\|_{\sub{P,T}},
\end{equation}
for $h$ small enough.
Then, splitting $w_{k,m}=w_{k,m}^{\mc{B}}+w_{k,m}^{\mc{G}}$, as in~\eqref{e:theGoodTheBadAndTheUgly}, we have by~\eqref{e:squid2}
\begin{equation}
\label{e:marcus1}
\sum_{k\geq -1}\Big(\sum_{m=m_{1,k}}^{m_{2,k}}\|w_{k,m}^\mc{B}\|_{L^q( U_{k,m})}^{{q}}\Big)^{{\frac{1}{q}}}\leq Ch^{-\delta(q)}(R(h)^{n-1}|\mc{B}\sub{U}|)^{1-\frac{p_c}{2q}}T^{\frac{3q}{2(q-p_c)}+\frac{\delta}{2}}\|u\|\sub{P,T}.
\end{equation}

Define ${k_1^\e}$ and ${k_2^\e}$,  by
\begin{equation}\label{e:Kep}
2^{2k_1^\e}=\frac{C^{-2}\mathfrak{D}_n^{-2}\e^2 R(h)^{1-n} c\sub{M} T}{4C_n t_0},\qquad\quad 2^{2k_2^\e}=\frac{C^2\mathfrak{D}_n^2\e^{-2} R(h)^{1-n} c\sub{M} T}{4C_n t_0},
\end{equation}
where $C$ is as in ~\eqref{e:squid3}. Then, define 
$
\mc{K}(\e):=\{k: k_1^\e\leq k\leq k_2^\e\}
$
and note that, {since $2^{(k_2^\e-k_1^\e)}=C^2\mathfrak{D}_n^2\e^{-2}$,} $|\mc{K}(\e)|\leq \log_2(4C^2\mathfrak{D}_n^2\e^{-2})=:K_\e$. Using~\eqref{e:OhMarcus} and summing over $k\notin \mc{K}(\e)$, it follows that we have
\begin{equation}
\label{e:marcus2}
\sum_{k\notin \mc{K}(\e)}\Big(\sum_{m=m_{1,k}}^{m_{3,k}} \|w_{k,m}^\mc{G}\|_{L^q(U_{k,m})}^q\Big)^{\frac{1}{q}}\leq \frac{\e}{4\mathfrak{D}_n} \frac{h^{-\delta(q)}\sqrt{t_0}}{\sqrt{T}}\|u\|\sub{P,T}.
\end{equation}
Next, for $k\in \mc{K}(\e)$ let
$$
\mc{M}(k,\e):=\{ m\,:\,m^\e_{3,k}\leq m\leq m_{3,k}\}, \qquad\qquad {m^\e_{3,k}:=m_{3,k}-\tfrac{p}{p-p_c}\log_2 (\e^{-1}2C\mathfrak{D}_n),}
$$
and note $|\mc{M}(k,\e)|\leq \frac{p}{p-p_c}\log_2 (\e^{-1}2C\mathfrak{D}_n):=M_\e$. Using~\eqref{e:OhMarcus} and summing over $k\in \mc{K}(\e)$, $m\notin \mc{M}(k,\e)$, it follows that
\begin{align}\label{e:marcus3}
\sum_{k\in \mc{K}(\e)}\!\!\Big(\sum_{{m\notin \mc{M}(k,\e)}}\!\!\|w_{k,m}^{\mc{G}}\|_{L^q(U_{k,m})}^{{q}}\Big)^{{\frac{1}{q}}}
\!\!
&\leq Ch^{-\delta(q)}\frac{t_0^{\frac{p_c}{2q}}}{T^{\frac{p_c}{2q}}}\!\!\sum_{k\in \mc{K}(\e)}\!(R(h)^{\frac{n-1}{2}}2^{m^\e_{3,k}-k})^{1-\frac{p_c}{q}}\|u\|\sub{P,T}+O(h^\infty\|u\|\sub{P,T}) \notag\\
&\leq \frac{\e}{4\mathfrak{D}_n} \frac{h^{-\delta(q)}t_0^{\frac{1}{2}}}{T^{\frac{1}{2}}}\|u\|\sub{P,T}.
\end{align}
Let
\begin{equation}
\label{e:marcus4}
\mc{N}_{k,m}(\e):=\Big\{\alpha \in \mc{I}_{k,m}\,:\, \|w_{k,m}^{\mc{G}}\|_{L^\infty(B(x_\alpha,R(h)))}\geq \frac{\e}{4\mathfrak{D}_n M_\e K_\e}\frac{h^{\frac{1-n}{2}}\sqrt{t_0}}{\sqrt{T}}\|u\|\sub{P,T}\Big\}.
\end{equation}
We claim 
\begin{equation}\label{e:cliamNe0}
\mc{S}\sub{U}(h,\e,u) \subset \bigcup_{k\in \mc{K}(\e)}\bigcup_{m\in \mc{M}(k,\e)}\mc{N}_{k,m}(\e).
\end{equation}
To prove the claim \eqref{e:cliamNe0},
suppose $\alpha\notin \bigcup_{k\in \mc{K}(\e)}\bigcup_{m\in \mc{M}(k,\e)}\mc{N}_{k,m}(\e)$. Then, using ~\eqref{e:marcus0} {with $q=\infty$ and $N=\tfrac{1}{2}+\frac{\delta}{6}$}, 
\begin{align}\label{e:rain}
&\frac{1}{\mathfrak{D}_n}\|v\|_{L^\infty(B(x_\alpha,R(h)))}
\leq \frac{Ch^{\frac{1-n}{2}} \log T}{T^{\frac{1}{2}+\frac{\delta}{6}}}\|u\|_{\sub{P,T}} + \sum_{k\geq -1}\sum_{m=m_{1,k}}^{m_{2,k}}\|w_{k,m}\|_{L^\infty(U_{k,m})}.
\end{align}
Next, we decompose the second term in the RHS of \eqref{e:rain} as 
\begin{align}\label{e:cloud}
 \sum_{k\geq -1}\sum_{m=m_{1,k}}^{m_{2,k}}\|w_{k,m}^{\mc{B}}\|_{L^\infty(U_{k,m})}+\sum_{k\notin \mc{K}(\e)}\sum_{m=m_{1,k}}^{m_{3,k}}\|w_{k,m}^{\mc{G}}\|_{L^\infty(U_{k,m})}+\sum_{k\in \mc{K}(\e)}\sum_{m=m_{1,k}}^{m_{2,k}}\|w_{k,m}^{\mc{G}}\|_{L^\infty(U_{k,m})}
\end{align}
Note that in the term with the sum over $k\notin \mc{K}(\e)$ we only sum in $m\leq m_{3,k}$ for the same reason as in \eqref{E:interpolation good piece}.
We bound the three terms in \eqref{e:cloud} using ~\eqref{e:marcus1}, ~\eqref{e:marcus2}, ~\eqref{e:marcus3}, and ~\eqref{e:marcus4}, {with $q=\infty$ and $N=\tfrac{1}{2}+\frac{\delta}{6}$}. 
Combining it with \eqref{e:rain} this yields
\begin{align*}
&\frac{1}{\mathfrak{D}_n}\|v\|_{L^\infty(B(x_\alpha,R(h)))}
\leq Ch^{\frac{1-n}{2}}\|u\|_{\sub{P,T}}\Big( \frac{\log T}{T^{\frac{1}{2}+\frac{\delta}{6}}}+R(h)^{n-1}|\mc{B}\sub{U}|T^{\frac{3}{2}+\frac{\delta}{2}}+\frac{3\e}{4\mathfrak{D}_n}\frac{\sqrt{t_0}}{\sqrt{T}}+O(h^\infty)\Big).
\end{align*}
Thus, if $\alpha\notin \bigcup_{k\in \mc{K}(\e)}\bigcup_{m\in \mc{M}(k,\e)}\mc{N}_{k,m}(\e)$, then
$
\|v\|_{L^\infty(B(x_\alpha,R(h)))}\leq \e h^{\frac{1-n}{2}}\frac{\sqrt{t_0}}{\sqrt{T}}\|u\|\sub{P,T}
$
for $h$ small enough.
In particular, $\alpha\notin \mc{S}\sub{U}(h,\e,u)$. This proves the claim \eqref{e:cliamNe0}


\subsubsection{Decomposition of $u$} We next decompose $u$ as described in the theorem.
First, put 
$$
u_{e,1}:=\sum_{k\geq -1}\sum_{m=-\infty}^{m_1,k}w_{k,m}+\sum_{k\geq -1}\sum_{m=m_{1,k}}^{m_{2,k}}w_{k,m}^{\mc{B}}+\sum_{k\notin \mc{K}(\e)}\sum_{m=m_{1,k}}^{m_{3,k}}w_{k,m}^{\mc{G}}+\sum_{k\in \mc{K}(\e)}\sum_{m\notin \mc{M}(k,\e)}w_{k,m}^{\mc{G}},
$$
$$
u_{big}:=\sum_{k\in \mc{K}(\e)}\sum_{m\in \mc{M}(k,\e)}w_{k,m}^{\mc{G}}.
$$
and $u_{e,2}:=u-u_{big}-u_{e,1}$. Note that
$$
\|u_{e,1}\|_{L^q}\leq \frac{3\e}{4}h^{-\delta(q)}\frac{\sqrt{t_0}}{\sqrt{T}}\|u\|\sub{P,T},\qquad\qquad \|u_{e,2}\|_{L^q}\leq  Ch^{-\delta(q)+\frac{1}{2}-\delta_{{2}}}  h^{-1}\|{P}u\|_{H_h^{\frac{n-3}{2}}},
$$
where we use~\eqref{e:marcus1}, ~\eqref{e:marcus2}, ~\eqref{e:marcus3},~\eqref{e:rain}, and~\eqref{e:cloud} to obtain the frist estimate, and~\eqref{e:rico} to obtain the second. These two estimates prove the claim on $\|u_\e\|_{L^q}$ after combining them with \eqref{e:quas}.
Next, observe that
$$
u_{big}= \sum_{j\in \mc{L}(\e)}u_j,\qquad \qquad u_j:=Op_h(\tilde{\chi}\sub{\T_j})Op_h(\psi)u,\qquad \qquad \mc{L}(\e):=\bigcup_{k\in \mc{K(\e)}}\bigcup_{m\in \mc{M}(k,\e)}\mc{G}_{k,m}.
$$
We claim that the statement of the theorem holds with $v_j=\sqrt{T}u_j$. Note that $v_j$ are manifestly microsupported inside $\T_j$.

Let $\alpha \in \mc{S}\sub{U}(h,\e,u)$, then by definition,
\begin{equation}\label{e:uBigLB}
\|u_{big}\|_{L^\infty(B(x_\alpha,R(h)))}\geq \frac{\e}{4}h^{\frac{1-n}{2}}\frac{\sqrt{t_0}}{\sqrt{T}}\|u\|\sub{P,T}. 
\end{equation}
Note that for all $j\in \mc{L}(\e)$, the estimate
\begin{equation}
\label{e:upperA_k}
\|Op_h(\tilde{\chi}\sub{\T_j})Op_h(\psi)u\|+h^{-1}\|Op_h(\tilde{\chi}\sub{\T_j})Op_h(\psi)Pu\|_{L^2}\leq {2^{-k_1^\e+1}}\|u\|\sub{P,T}
\end{equation}
follows from the definition,~\eqref{E:A_k}, of $\A_k$ and the fact that $\chi\sub{\T_j}\equiv 1$ on $\supp \tilde{\chi}\sub{\T_j}$. To see that $u_j$ is a quasimode, we use the definition of $\A_k$ again, together with Proposition~\ref{l:nicePartition}, and obtain
\begin{equation}
\label{e:upperA_kP}
\|Pu_j\|_{L^2} \leq \|[-h^2\Delta_g,Op_h(\tilde{\chi}\sub{\T_j})]u_j\|_{L^2}+\|Op_h(\tilde{\chi}\sub{\T_j})Pu\|_{L^2}\leq C{2^{-k_1^\e}}h\|u\|\sub{P,T}.
\end{equation}
The definition of $k_1^\e$, together with~\eqref{e:upperA_k} and~\eqref{e:upperA_kP} give the required bounds on $v_j$ and $Pv_j$.

Next, define 
$$
\mc{L}(\e, u,\alpha):=\{j\in \mc{L}\,:\,\pi\sub{M}(\T_j)\cap B(x_\alpha,3R(h))\neq\emptyset\},
$$
and note that by~\cite[Lemma 3.7]{CG19a}

\begin{align}
&\|u_{big}\|_{L^\infty(B(x_\alpha,R(h)))}\notag\\
&\leq Ch^{\frac{1-n}{2}}R(h)^{\frac{n-1}{2}}\sum_{j\in \mc{L}(\e, u, \alpha)}\|Op_h(\tilde{\chi}\sub{\T_j})Op_h(\psi)u\|+h^{-1}\|Op_h(\tilde{\chi}\sub{\T_j})Op_h(\psi)Pu\|_{L^2}+O(h^\infty)\|u\|_{L^2}\notag\\
&\leq Ch^{\frac{1-n}{2}}R(h)^{\frac{n-1}{2}}2^{-k_1^\e}|\mc{L}(\e,u,\alpha)|\|u\|\sub{P,T}  +O(h^\infty)\|u\|\sub{P,T}.\label{e:usefulLower}
\end{align}
Therefore, {combining \eqref{e:uBigLB} with \eqref{e:usefulLower} yields}
$$
\e\frac{\sqrt{t_0}}{\sqrt{T}}\leq CR(h)^{\frac{n-1}{2}}2^{-k_1^\e}|\mc{L}(\e,\alpha,u)| +O(h^\infty).
$$
Moreover, $\bigcup_{j\in \mc{L}(\e,u)}\T_j$ is $[t_0,T]$ non-self looping and so by Lemma~\ref{l:propLowerBound}
$
|\mc{L}(\e,u)|\leq \frac{C_nt_0}{T}2^{2k_2^\e}.
$
Using the definition \eqref{e:Kep} of $k^\e_1,k^\e_2$ we have for $h$ small enough,
\begin{equation*}
c\e^2 R(h)^{1-n}=\e \frac{\sqrt{t_0}}{\sqrt{T}}R(h)^{\frac{1-n}{2}}2^{k_1^\e}\leq |\mc{L}(\e,u,\alpha)|\leq |\mc{L}(\e,u)|\leq \frac{C_n t_0}{T}2^{2k_2^\e}\leq  C\e^{-2}R(h)^{1-n},
\end{equation*}
which yields the upper bound on $|\mc{L}(\e,u)|$ and the lower bound on $|\mc{L}(\e,u,\alpha)|$. Note that, the upper bound on $|\mc{L}(\e,u,\alpha)|$ follows from the fact that the total number of tubes over $B(x_\alpha,3R(h))$ is bounded by $CR(h)^{1-n}$.  
Next, we note that the fact that at most $\mathfrak{D}_n$ tubes $\T_j$ overlap implies
$$
\sum_{j\in \mc{L}(\e, u,\alpha)}\|Op_h(\tilde{\chi}\sub{\T_j})Op_h(\psi)Pu\|_{L^2}^2\leq C\|Pu\|^2_{L^2}+O(h^\infty\|u\|_{L^2}).
$$
Therefore, using the first inequality in~\eqref{e:usefulLower} again, applying Cauchy-Schwarz, and using that there is $C>0$ such that $|\mc{L}(\e,u,\alpha)|\leq CR(h)^{1-n}$ we have
$$
\begin{aligned}\frac{\e}{4}\frac{\sqrt{t_0}}{\sqrt{T}}\|u\|\sub{P,T}&\leq CR(h)^{\frac{n-1}{2}}|\mc{L}(\e,u,\alpha)|^{\frac{1}{2}}\Big(\sum_{j\in \mc{L}(\e, u,\alpha)}\|u_j\|_{L^2}^2\Big)^{^{\frac{1}{2}}}+ Ch^{-1}\|Pu\|_{L^2}+O(h^\infty)\|u\|_{L^2}\\
&\leq C\Big(\sum_{j\in \mc{L}(\e, u,\alpha)}\|u_j\|_{L^2}^2\Big)^{^{\frac{1}{2}}}+o(T^{-1}\|u\|_{L^2}).
\end{aligned}
$$
In particular, for $h$ small enough, 
$
c\frac{\sqrt{t_0}}{\sqrt{T}}\|u\|\sub{P,T}\leq \Big(\sum_{j\in\mc{L}(\e, u,\alpha)}\|u_j\|^2\Big)^{\frac{1}{2}}.
$
This completes the proof. 
\qed

\section{Proof of theorem 1}
\label{s:dynamical}

In order to finish the proof of Theorem~\ref{t:noConj}, we need to verify that the hypotheses of Theorem~\ref{t:main bound} hold with $T(h)=b\log h^{-1}$ for some $b>0$, and such that for all $x_1,x_2 \in U$ there is some splitting  $\J_{x_1}=\mc{G}\sub{x_1,x_2}\cup \mc{B}\sub{x_1,x_2}$ of the set of tubes over $x_1 \in M$ with a set of `bad' tubes $\mc{B}\sub{x_1,x_2}$ satisfying
$$(|\mc{B}\sub{x_1,x_2}|R(h)^{n-1})^{\frac{1}{6+\e_0}(1-\frac{p_c}{p})}\leq T(h)^{-\frac{1}{2}}$$ and ${\e_0>0}$. Fix $x_1,x_2\in U$ and let {$F_1,F_2:T^*M \to \re^{n+1}$ be smooth functions so that for $i=1,2$,}
\begin{equation}
 \label{e:defFunction}
\begin{gathered}
 S^*_{x_{i}}M=F_i^{-1}(0),\qquad 
 \tfrac{1}{2}d(q,S^*_{x_{i}}M)\leq |F_i(q)|\leq 2d(q,S^*_{x_i}M),\qquad \max_{|\alpha|\leq 2}(|\partial^\alpha F_i(q)|)\leq 2 \\
 dF_i(q)\text{ has a {right} inverse }{R}_{_{\!F_i}}(q)\text{ with }\|{R}_{_{\!F_i}}(q)\|\leq 2.
 \end{gathered}
 \end{equation}
Define also   $\psi_i:\re\times \TM \to \re^{n+1}$ by
$
\psi_i(t,\rho)=F_i\circ \varphi_t(\rho).
$

 To {find $\mc{B}_{x_1,x_2}$}, we apply the arguments from~\cite[Sections 2, 4]{CG19dyn}. In particular, fix $a>0$ and let $r_{t}:=a^{-1}e^{-a |t|}$. Suppose that
$
d(x_2\,,\,\mc{C}_{x_1}^{n-1,r_{t_0},t_0})>r_{t_0},
$
then for $\rho_0\in S^*_{x_1}M$ with 
$
d(S^*_{x_2}M,\varphi_{t_0}(\rho_0))<r_{t_0}
$
we have by~\cite[Lemma 4.1]{CG19dyn},  that there exists ${\bf w}\in T_{\rho_0}S^*_{x_1}M$ so that 
$$
d(\psi_{2})_{(t_0,\rho_0)}: \re\partial_t\times \re {\bf w} \to T_{\psi_2(t_0,\rho_0)}\re^{n+1}
$$
has a left inverse $L_{(t_0,\rho_0)}$ with
$$
\|L_{(t_0,\rho_0)}\|\leq C\sub{M}\max( a e^{C\sub{M}(a+\Lambda)|t_0|},1).
$$ 

Next, let $\{\Lambda_{\rho_j}^\tau(r_1)$ be a $(\mathfrak{D}\sub{M},\tau,r_1)$-good cover for $\SM$. We apply~\cite[Proposition 2.2]{CG19dyn} to construct $\mc{B}\sub{x_1,x_2}$ and $\mc{G}\sub{x_1,x_2}$. 
\begin{remark}We must point out that we are applying the proof of that proposition rather than the proposition as stated. {The only difference here is that the loops we are interested in go from a point $x_1$ to a point $x_2$ where $x_1$ and $x_2$ are not necessarily equal. This does not affect the proof.}
\end{remark}

{We use \cite[Proposition 2.2]{CG19dyn} to see that} there exist $\alpha_1=\alpha_1(M)>0$, $ \alpha_2=\alpha_2(M,a)$, and ${{\bf{C}_0}={\bf{C}}_0(M,a)}$ so that the following holds.
Let $r_0,r_1,r_2 >0$ satisfy
\begin{equation}
\label{e:assumeR}
r_0 < r_1, \qquad r_1< \alpha_1\, r_2,  \qquad  r_2 \leq \min\{{R_0},{1},  \alpha_2\, e^{-\gamma T}\}, \qquad r_0 < {\tfrac{1}{3}}\, e^{-\Lambda T} r_2,
\end{equation}
where  {$\gamma= 5\Lambda+2a$ and $\Lambda>\Lambda_{\max}$ where $\Lambda_{\max}$ is as in~\eqref{e:LambdaMax}}}. {Then,}  for all balls $B\subset S^*_{x_1}M$ of radius ${R_0}>0$, there is a family {of points}
 $\{\rho_j\}_{j\in \mc{B}\sub{B}}\subset S^*_{x_1}M$ such that
$$
|\mc{B}\sub{B}|\leq  {\bf{C}}_0{\mathfrak{D}_n} \;r_2 \;\frac{R_0^{n-1}}{r_1^{n-1}}\;  T\,e^{4({2}\Lambda+{a})T},
$$ 
and for $j\in \mc{G}\sub{B}:= \{j\in \J_{x_1}:\; {B(\rho_j,2r_1)}\cap B\neq \emptyset \}\setminus \mc{B}\sub{B}\}$ 
$$
\bigcup_{t\in[t_0,T]}\varphi_t\Big(\Lambda_{\rho_j}^\tau(r_1)\Big)\cap \Lambda_{S^*_{x_2}M}^\tau(r_1)=\emptyset.
$$

{We proceed to apply \cite[Proposition 2.2]{CG19dyn}.}
There is ${c\sub{M}r^{1-n}}\geq N\sub{\!{r}}>0$ so that for all $x_1\in M$, $S^*_{x_1}M$ can be covered by $N\sub{\!{r}}$ balls. Let {$0<R_0<1$ and} $\{B_{i}\}_{i=1}^{N\sub{\!{R_0}}}$ be such a cover. Fix ${0}<\e{<\e_1}<{\frac{1}{{4}}}$ and set
 \[r_0:=h^{{\e_1}}, \qquad r_1:=h^\e, \qquad r_2:=\tfrac{2}{\alpha_1}h^\e.\]
Let {$T(h)=b \log h^{-1}$ with $0<b{<\frac{1}{4\Lambda_{\max}}<\frac{1-2\e_1}{2\Lambda_{\max}}}$ to be chosen later}. Then, the assumptions in~\eqref{e:assumeR} hold provided
$$
h^{\e}< \min \Big\{\tfrac{\alpha_1\alpha_2}{2} e^{- \gamma T}, {\tfrac{\alpha_1 {R_0}}{2}}\Big\},\qquad {h^{\e_1-\e}<{\tfrac{2}{3\alpha_1}} e^{-\Lambda T}.}
$$
 In particular, if we set $\alpha_3:= \tfrac{\alpha_1\alpha_2}{2}$, $\alpha_4=\frac{2}{3\alpha_1}$, the assumptions in~\eqref{e:assumeR} hold provided {$h<\big(\frac{\alpha_1 R(h)}{2}\big)^{\frac{1}{\e}}$} and
\begin{equation}
\label{e:t0Temp}
T(h)< {\min\Big\{}\frac{ \e}{ \gamma} \log h^{-1} + \frac{\log \alpha_3}{ \gamma}{\;,\; \frac{\e_1-\e}{\Lambda}\log h^{-1}+\frac{\log(\alpha_4)}{\Lambda}\Big\}}.
\end{equation}

Fix {$b>0$} and ${h_0>0}$ so that 
{$
b < \frac{{\min(\e,\e_1-\e)}}{12(2\Lambda+a)}
$}
{and  \eqref{e:t0Temp} is satisfied for all $h<h_0$.} {Note that this implies that $b=b(M,a)$, {$h_0=h_0(M,a)$}.} Let $\mc{B}\sub{x_1,x_2}:=\bigcup_{i=1}^{N\sub{{R_0}}}\mc{B}\sub{B_i}$. Then, for $j\in \mc{G}\sub{x_1,x_2}:= \J_{x_1}\setminus \mc{B}\sub{x_1,x_2}$,
$$
\bigcup_{t\in[t_0,T_]}\varphi_t\Big(\Lambda_{\rho_j}^\tau(r_1)\Big)\cap \Lambda_{S^*_{x_2}M}^\tau(r_1)=\emptyset.
$$
Moreover, shrinking $h_0$ in a way depending only on $(M,a,\e)$, we have for $0<h<h_0$,
$$
r_1^{n-1}|\mc{B}_{x_1,x_2}|\leq C\sub{M}{\bf{C}}_0{\mathfrak{D}_n}r_2 Te^{4(2\Lambda+a)T}\leq h^\frac{\e}{3}.
$$
Therefore, putting $R(h)=r_1=h^\e$ and $T=T(h)=b\log h^{-1}$ in Theorem~\ref{t:main bound} proves Theorem~\ref{t:noConj}.


\section{Anisotropic Pseudodifferential calculus}
\label{s:anisotropic}

In this section, we develop the second microlocal calculi necessary to understand `effective sharing' of $L^2$ mass between two nearby points. That is, to answer the question: how much $L^2$ mass is necessary to produce high $L^\infty$ growth at two nearby points? To that end, we develop a calculus associated to the co-isotropic
$$
\Gamma_x:=\bigcup_{{|t|<\tfrac{1}{2}\inj(M)}} \varphi_t(\Omega_x),\qquad \Omega_x:=\{\xi\in T^*_xM:\;  |1-|\xi|_g|<\delta\},
$$
which allows for localization to a Lagrangian leaves $\varphi_t(\Omega_x)$. In Section~\ref{S:uncertainty} we will see, using a type of uncertainty principle, that the calculi associated to two distinct points, {$x_\alpha,\, x_\beta \in M$}, are incompatible in the sense that, despite the fact that $\Gamma_{x_\alpha}$ and $\Gamma_{x_\beta}$ intersect in a dimension 2 submanifold, for {operators ${X_{x_\alpha}}$ and $X_{x_\beta}$ localizing to  $\Gamma_{x_\alpha}$ and $\Gamma_{x_\beta}$ respectively}, 
$$
\|X_{x_\alpha}X_{x_\beta}\|_{L^2\to L^2}\ll \|X_{x_\alpha}\|_{L^2\to L^2}\|X_{x_\beta}\|_{L^2\to L^2}.
$$

Let $\Gamma\subset T^*M$ be a co-isotropic submanifold and {$L=\{L_{{q}}\}_{q\in \Gamma}$ be a family of Lagrangian subspaces $L_{{q}}\subset T_{{q}}\Gamma$} that is integrable in the sense that if $U$ is a neighborhood of $\Gamma$, and $V,W$ are smooth vector fields {on $T^*M$} such that $V_{q}, W_{q}\in L_{q}$ for all $q\in \Gamma$, then $[V,W]_{q}\in L_{q}$ for all ${q} \in \Gamma$. The aim of this section is to introduce a calculus of pseudodifferential operators associated to $(L,\Gamma)$ that allows for localization to $h^\rho$ neighborhoods of $\Gamma$ with $0\leq \rho<1$ and is compatible with localization to $h^\rho$ neighborhoods of the foliation of $\Gamma$ generated by $L$. This calculus is close in spirit to those developed in~\cite{SjZw:99} and~\cite{DyZa}. To see the relationships between these calculi, note that the calculus in~\cite{DyZa} allows for localization to any leaf of a Lagrangian foliation defined over an open subset of $T^*M$ and that in~\cite{SjZw:99} allows for localization to a single hypersurface. The calculus developed in this paper is designed to allow localization along leaves of a Lagrangian foliation defined only over a co-isotropic submanifold of $T^*M$. In the case that the co-istropic is a whole open set, this calculus is the same as the one developed in~\cite{DyZa}. Similarly, in the case that the co-isotropic is a hypersurface and no Lagrangian foliation is prescribed, the calculus becomes that developed in~\cite{SjZw:99}.

\begin{definition}
Let $\Gamma$ be a co-isotropic submanifold and $L$ a Lagrangian foliation on $\Gamma$. {Fix  $0\leq \rho<1$ and let $k$ be a positive integer}.  We say that $a\in S^{{k}}_{\Gamma,L,\rho}$ if $a\in C^\infty(T^*M)$, $a$ is supported in an $h$-independent compact set, and 
\begin{equation}
\label{e:anSymbEst}
V_1\dots V_{\ell_1}W_1\dots W_{\ell_2}a=O(h^{-\rho\ell_2}\langle h^{-\rho}d(\Gamma,\cdot)\rangle^{{k}-\ell_2})
\end{equation}
{where $W_1,\dots W_{\ell_2}$ are any vector fields on $T^*M$,  $V_1,\dots V_{\ell_1}$ are vector fields on $T^*M$ with $(V_1)_q,\dots (V_{\ell_1})_q\in L_q$ for $q\in \Gamma$}, {and $q \mapsto d(\Gamma, q)$ is the distance from $q$ to $\Gamma$ induced by the Sakai metric on $T^*M$}.  
\end{definition}

We also define symbol classes associated to only the co-isotropic
\begin{definition}
Let $\Gamma$ be a co-isotropic submanifold. We say that $a\in S^k_{\Gamma,\rho}$ if $a\in C^\infty(T^*M)$, $a$ is supported in an $h$-independent compact set, and 
$$
V_1\dots V_{\ell_1}W_1\dots W_{\ell_2}a=O(h^{-\rho\ell_2}\langle h^{-\rho}d(\Gamma,\cdot)\rangle^{k-\ell_2})
$$
where $V_1,\dots V_{\ell_1}$ are tangent vector fields to $\Gamma$, and $W_1,\dots W_{\ell_2}$ are any vector fields. 
\end{definition}

\subsection{Model case}
The goal of this section is to define the quantization of symbols in $S^{{k}}_{\Gamma_0,L_0,\rho}$, where $\Gamma_0, L_0$ are a model pair of co-isotropic and Lagrangian foliation defined below.
The model co-isotropic submanifolds of dimension $2n-r$ is
{$$
\Gamma_0:=\{(x',x'',\xi', \xi'') \in \R^r\times \R^{n-r} \times \R^r \times \R^{n-r}:\; x'=0\}
$$
with Lagrangian foliation
$$
L_0:= \{L_{0,q}\}_{q \in \Gamma_0}, \qquad L_{0,q}=\text{span}\{\partial_{\xi_i},\,i=1,\dots n\}\subset T_q\Gamma_0.
$$ }
 Note that in this model case the distance from a point $(x,\xi)$ to $\Gamma_0$ is controlled by  $|x'|$. Therefore, $a\in S^{k}_{\Gamma_0,L_0,\rho}$ if and only if $a$ is supported in an $h$-independent compact set and for all $(\alpha, \beta) \in \mathbb N^n \times \mathbb N^n$ there exists $C_{\alpha, \beta}>0$ such that 
{$$
|\partial_{x}^{\alpha} \partial_{\xi}^\beta a|\leq C_{\alpha,\beta} h^{-\rho|\alpha|}\langle h^{-\rho}|x'|\rangle^{k-|\alpha|}.
$$}
In the model case, it will be convenient to define $\tilde{a}\in C^\infty(\re^n_x\times \re^n_\xi\times \re^r_\lambda)$ such that 
$$
a(x,\xi)=\tilde{a}(x,\xi,h^{-\rho}x'),
$$
and for all $(\alpha', \alpha'', \beta, \gamma ) \in \mathbb N^r \times \mathbb N^{n-r} \times \mathbb N^n\times \mathbb N^r$ there exists $C_{\alpha, \beta, \gamma}>0$ such that 
\begin{equation}\label{E: symbol model L}
|\partial_{x'}^{\alpha'}\partial_{x''}^{\alpha''} \partial_\xi^{\beta} \partial_\lambda^{\gamma} \tilde{a}(x,\xi,\lambda)|\leq C_{\alpha, \beta, \gamma} h^{-\rho|\alpha''|}\langle \lambda\rangle^{k-|\gamma|-|\alpha''|}.
\end{equation}
Similarly, if $a\in S^k_{\Gamma_0,\rho}$, then 
for  $(\alpha', \alpha'', \beta, \gamma ) \in \mathbb N^r \times \mathbb N^{n-r} \times \mathbb N^n\times \mathbb N^r$ there exists $C_{\alpha, \beta, \gamma}>0$
\begin{equation}\label{E: symbol model no L}
|\partial_{x'}^{\alpha'}\partial_{x''}^{\alpha''} \partial_\xi^{\beta} \partial_\lambda^{\gamma} \tilde{a}(x,\xi,\lambda)|\leq C_{\alpha, \beta, \gamma} \langle \lambda\rangle^{k-|\gamma|}.
\end{equation}

\begin{definition}
The symbols associated with this submanifold are as follows. We say $a\in \widetilde{S^{k}_{\Gamma_0,L_0,\rho}}$ if $a\in C^\infty(\re_x^n\times \re_\xi^{n}\times \re_\lambda^{r})$ {satisfies \eqref{E: symbol model L}}
and $a$ is supported in an $h$-independent set in $(x,\xi)$. {If we have the improved estimates} \eqref{E: symbol model no L}
then we say that $a\in \widetilde{S^k_{\Gamma_0,\rho}}$.
\end{definition}
\begin{remark}
{While there is no $\rho$ in the definition of $\widetilde{S^k_{\Gamma_0,\rho}}$, we keep it in the notation for consistency.}
\end{remark}

{Let $a\in {\widetilde{S^{k}_{\Gamma_0,L_0,\rho}}}$.} We then define 
$$
[\Op_{h}(a)] u(x):=\frac{1}{(2\pi h)^n}\int e^{\frac{i}{h}\langle x-y,\xi\rangle}a(x,\xi,h^{-\rho}x'){u(y)}{dy}d\xi.
$$
Since $a\in {\widetilde{S^{k}_{\Gamma_0,L_0,\rho}}}$ is compactly supported in $x$, {there exists $C>0$ such that} on the support of the integrand $\lambda \leq {C} h^{-\rho}$ and hence 
$
h\leq {C} h^{1-\rho}\langle \lambda\rangle^{-1}.
$
This will be important when computing certain asymptotic expansions.

\begin{lemma}\label{E: boundedness}
Let $k\in \re$ and $a\in \widetilde{S^k_{\Gamma_0,L_0,\rho}}$. Then,
$$
\|\Op_h(a)\|_{L^2\to L^2}\leq C \sup_{\re^{2n}}|a(x,\xi,{h^{-\rho}x'})|+O(h^{{-\rho}\max(k,0)+\frac{1-\rho}{2}})
$$
\end{lemma}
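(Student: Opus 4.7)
The plan is to reduce the estimate to a sharp $L^2$ boundedness at $h=1$ via a unitary rescaling. By definition, $[\Op_h(a)]$ is the ordinary semiclassical left-quantization of the symbol $b(x,\xi):=a(x,\xi,h^{-\rho}x')$. Using the chain rule together with \eqref{E: symbol model L}, and the fact that $|h^{-\rho}x'|\leq Ch^{-\rho}$ on the ($h$-independent compact) support of $a$, one obtains
$$
|\partial_x^\alpha\partial_\xi^\beta b(x,\xi)|\leq C_{\alpha,\beta}\,h^{-\rho|\alpha|}\langle h^{-\rho}x'\rangle^{k}\leq C_{\alpha,\beta}\,h^{-\rho|\alpha|-\rho\max(k,0)}.
$$
Crucially, no $\xi$-derivative of $b$ incurs an $h^{-\rho}$ loss: this anisotropy is what we exploit.

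Next, conjugate by the $L^2$-unitary $V u(y):=h^{\rho n/2}u(h^\rho y)$. Changing variables $z=h^\rho w$ and $\xi=h^{1-\rho}\eta$ in the Schwartz kernel of $V\,[\Op_h(a)]\,V^{-1}$ shows that this conjugate equals the standard $h=1$ left-quantization $\operatorname{Op}_1(\tilde b)$ of the rescaled symbol
$$
\tilde b(y,\eta):=b(h^\rho y,h^{1-\rho}\eta)=a(h^\rho y,h^{1-\rho}\eta,y').
$$
In particular $\sup|\tilde b|=\sup|b|=\sup|a(x,\xi,h^{-\rho}x')|$, and the derivative estimates above transfer to
$$
|\partial_y^\alpha\partial_\eta^\beta \tilde b(y,\eta)|\leq C_{\alpha,\beta}\,h^{(1-\rho)|\beta|-\rho\max(k,0)}.
$$
Thus $\tilde b$ is a non-semiclassical symbol with uniformly bounded $y$-derivatives and an extra smallness $h^{(1-\rho)|\beta|}$ per $\eta$-derivative.

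Finally, I apply the sharp form of the Calder\'on--Vaillancourt $L^2$ boundedness theorem to $\operatorname{Op}_1(\tilde b)$. A Cotlar--Stein decomposition of $\tilde b$ into bumps at unit scale in $y$ and scale $h^{-(1-\rho)/2}$ in $\eta$, on which $\tilde b$ is essentially constant in $\eta$ up to an error of size $h^{(1-\rho)/2-\rho\max(k,0)}$, yields individual pieces with operator norm bounded by $\sup|\tilde b|$ together with almost-orthogonality errors of the same order. This produces
$$
\|\operatorname{Op}_1(\tilde b)\|_{L^2\to L^2}\leq \sup|\tilde b|+Ch^{-\rho\max(k,0)+(1-\rho)/2},
$$
and since $V$ is unitary the claim follows.

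The main obstacle is extracting the sharp leading constant $\sup|\tilde b|$ rather than a sum of derivative seminorms, which is all that the plain Calder\'on--Vaillancourt estimate provides. The Cotlar--Stein step requires careful tracking of first-order commutator bounds in $\eta$, exploiting that each $\eta$-derivative of $\tilde b$ carries the smallness $h^{1-\rho}$ so that almost-orthogonality sums against $\eta$-separated bumps yield precisely the $h^{(1-\rho)/2}$ gain stated in the lemma.
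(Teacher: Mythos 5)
Your computation is correct up to the last step, and the overall strategy (unitary rescaling, then a sharp $L^2$ bound at $h=1$) is the right one, but the final Cotlar--Stein step is doing avoidable work and is left quite vague. The paper rescales by $T_{(1+\rho)/2}u(x)=h^{n(1+\rho)/4}u(h^{(1+\rho)/2}x)$ rather than by your $V=T_\rho$. Composing your $V$ with a further $T_{(1-\rho)/2}$ gives exactly the paper's rescaling, and this is the ``balanced'' choice: under $T_{(1+\rho)/2}$ the conjugated symbol is $a_h(x,\xi)=a\big(h^{(1+\rho)/2}x,\,h^{(1-\rho)/2}\xi,\,h^{(1-\rho)/2}x'\big)$, and a direct check from \eqref{E: symbol model L} gives the \emph{symmetric} estimates
$$
|\partial_x^\alpha\partial_\xi^\beta a_h|\leq C_{\alpha\beta}\,h^{\frac{1-\rho}{2}(|\alpha|+|\beta|)}\big\langle h^{\frac{1-\rho}{2}}x'\big\rangle^{k-|\alpha|},
$$
i.e. $a_h$ lies in $h^{-\rho\max(k,0)}S_{(1-\rho)/2}$ (with $(1-\rho)/2<1/2$). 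At that point the standard Calder\'on--Vaillancourt theorem, e.g.\ \cite[Theorem 4.23]{EZB}, gives $\|\Op_1(a_h)\|_{L^2\to L^2}\leq C\sum_{|\alpha|\leq Mn}\sup|\partial^\alpha a_h|\leq C\sup|a|+C_a h^{(1-\rho)/2-\rho\max(k,0)}$ immediately, which is the claimed bound.

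By stopping at $V=T_\rho$ you are left with the anisotropic symbol $\tilde b(y,\eta)=a(h^\rho y,h^{1-\rho}\eta,y')$ whose $y$-derivatives are only $O(h^{-\rho\max(k,0)})$ rather than small, and the Cotlar--Stein argument you outline is essentially re-deriving the $S_\delta$ Calder\'on--Vaillancourt theorem from scratch. It can be made to work, but not exactly as stated: with bumps at \emph{unit} scale in $y$, the symbol is far from constant in $y$ on each piece (variation $O(h^{-\rho\max(k,0)})$), so you cannot treat the pieces as quasi-constants; you must instead split each piece as $m_{jk}(y)\phi_k(D)+\Op_1(r_{jk})$ and verify that \emph{all} derivatives of $r_{jk}$, not just the sup, are $O(h^{(1-\rho)/2-\rho\max(k,0)})$ before invoking Calder\'on--Vaillancourt on the remainder. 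You would also need to supply the almost-orthogonality estimates for $T_{jk}T_{j'k'}^*$ (integration by parts in $\eta$) and $T_{jk}^*T_{j'k'}$ (integration by parts in $w$), neither of which is stated. Given that the balanced rescaling collapses all of this into a one-line citation of the standard theorem, I would advise replacing the Cotlar--Stein sketch with the extra factor of $T_{(1-\rho)/2}$.
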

\begin{proof}
Define $T_\delta:L^2(\re^n)\to L^2(\re^n)$ by
\begin{equation}
\label{e:rescaling}
T_\delta u(x):=h^{\frac{n}{2}\delta}u(h^{\delta}x).
\end{equation}
Then $T_\delta$ is unitary and, for $a\in\widetilde{S^k_{\Gamma_0,L_0,\rho}}$,
$$
\Op_h(a)u=T_{\frac{1+\rho}{2}}^{-1}Op_1(a_h)T_{\frac{1+\rho}{2}}u, \qquad 
a_h{(x,\xi)}:=a(h^{\frac{1+\rho}{2}}x,h^{\frac{1-\rho}{2}}\xi,h^{\frac{1-\rho}{2}}x').
$$
Then, {for all $\alpha, \beta \in \mathbb N^n$ there exists $C_{\alpha, \beta}$ such that}
$
|\partial_x^\alpha \partial_\xi^\beta a_h|\leq {C_{\alpha, \beta}}h^{\frac{1-\rho}{2}(|\alpha|+|\beta|)}\langle h^{\frac{1-\rho}{2}}x'\rangle^{k-|\alpha|}.
$
Now, {since $a_h,b_h\in S_{\frac{1-\rho}{2}}$,} by~\cite[Theorem 4.23]{EZB} {there is a universal constant $M>0$ with}
$$
\|\Op_1(a_h)\|_{L^2\to L^2}\leq C\sum_{|\alpha|\leq Mn}\sup_{{\re^{2n}}}|\partial^\alpha a_h|\leq C\sup|a|+C_ah^{{-}\max({\rho} k,0)+\frac{1-\rho}{2}}.
$$
\vspace{-.8cm}

\end{proof}

\begin{lemma}
\label{l:compose}
Suppose that $a\in \widetilde{S^{k_1}_{\Gamma_0,L_0,\rho}}$, $b\in \widetilde{S^{k_2}_{\Gamma_0,L_0,\rho}}$. Then, 
$
\Op_h(a)\Op_h(b)=\Op_h(c)+O(h^\infty)_{L^2\to L^2}
$
where $c\in \widetilde{S^{k_1+k_2}_{\Gamma_0,L_0,\rho}}$ satisfies
\begin{equation}
\label{e:composed}
c=ab+O(h^{1-\rho})_{\widetilde{S^{k_1+k_2-1}_{\Gamma_0,L_0,\rho}}}.
\end{equation}
In particular, 
\begin{equation}
\label{e:fullAsymptotic}
c\sim \sum_{j}\sum_{|\alpha|=j}\frac{i^j}{j!}\big((hD_{x''})^{\alpha''}(hD_{x'}+h^{1-\rho}D_\lambda)^{{\alpha'}}b\big) D^\alpha_{\xi}a.
\end{equation}
If instead, $a\in \widetilde{S^{k_1}_{\Gamma_0,\rho}}$ and $b\in \widetilde{S^{k_2}_{\Gamma_0,\rho}}$, then the remainder in~\eqref{e:composed} lies in $h^{1-\rho}\widetilde{S^{k_1+k_2-1}_{\Gamma_0,\rho}}$.
\end{lemma}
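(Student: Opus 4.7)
The strategy is to reduce the composition to the standard Kohn--Nirenberg calculus $S^{\rm std}_{(1-\rho)/2}$ via the rescaling $T_{(1+\rho)/2}$ introduced in the proof of Lemma~\ref{E: boundedness}, compose there, and pull back to obtain~\eqref{e:fullAsymptotic} and~\eqref{e:composed}.

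Set $\delta=(1+\rho)/2$. From the proof of Lemma~\ref{E: boundedness}, $\Op_h(a)=T_\delta^{-1}\Op_1(a_h)T_\delta$ with $a_h(x,\xi)=\tilde a(h^\delta x,h^{1-\delta}\xi,h^{1-\delta}x')$, and likewise for $b$, with both $a_h,b_h$ uniformly bounded in $S^{\rm std}_{(1-\rho)/2}$ by virtue of~\eqref{E: symbol model L}. Hence
\[
\Op_h(a)\Op_h(b)=T_\delta^{-1}\Op_1(a_h)\Op_1(b_h)T_\delta,
\]
and the classical semiclassical composition in $S^{\rm std}_{(1-\rho)/2}$ (see~\cite{EZB}) yields $\Op_1(a_h)\Op_1(b_h)=\Op_1(c_h)+O(h^\infty)_{L^2\to L^2}$ with
\[
c_h\sim\sum_\alpha \frac{(-i)^{|\alpha|}}{\alpha!}\partial_\xi^\alpha a_h\,\partial_x^\alpha b_h,
\]
each successive term gaining a factor of $h^{1-\rho}$.

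The core computation will be to identify the symbol $\tilde c$ satisfying $c_h(x,\xi)=\tilde c(h^\delta x,h^{1-\delta}\xi,h^{1-\delta}x')$, so that $\Op_h(c)=T_\delta^{-1}\Op_1(c_h)T_\delta$ with $c=\tilde c|_{\lambda=h^{-\rho}x'}$. The chain rule gives $\partial_{\xi_j}a_h = h^{1-\delta}(\partial_{\xi_j}\tilde a)_{\rm resc.}$, $\partial_{x''_j}b_h = h^{\delta}(\partial_{x''_j}\tilde b)_{\rm resc.}$, and $\partial_{x'_j}b_h = h^{\delta}(\partial_{x'_j}\tilde b)_{\rm resc.} + h^{1-\delta}(\partial_{\lambda_j}\tilde b)_{\rm resc.}$, where $(\cdot)_{\rm resc.}$ denotes evaluation at the rescaled arguments and the split of $\partial_{x'_j}$ reflects that $x'_j$ enters both the first and third slot of $\tilde b$. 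Substituting and using $\delta=(1+\rho)/2$, a multi-index $\alpha=(\alpha',\alpha'')$ with $\alpha'=\beta+\gamma$ produces total $h$-power
\[
(1-\delta)(|\alpha|+|\gamma|)+\delta(|\alpha''|+|\beta|)=|\alpha''|+|\beta|+(1-\rho)|\gamma|,
\]
exactly the power obtained by expanding $(hD_{x''})^{\alpha''}(hD_{x'}+h^{1-\rho}D_\lambda)^{\alpha'}$ binomially. Reorganizing the sum yields~\eqref{e:fullAsymptotic}.

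It remains to verify that each term of the expansion lies in $\widetilde{S^{k_1+k_2-|\alpha|}_{\Gamma_0,L_0,\rho}}$, and that truncation at order $N$ gives a remainder $O(h^\infty)_{L^2\to L^2}$ for $N$ large. The leading term is $ab\in\widetilde{S^{k_1+k_2}_{\Gamma_0,L_0,\rho}}$. Since $a,b$ have compact support, $|\lambda|\leq Ch^{-\rho}$ on the support, so $h\langle\lambda\rangle\leq Ch^{1-\rho}$; this makes an $h$-prefactor on a symbol of order $k$ equivalent to an $h^{1-\rho}$-prefactor on a symbol of order $k-1$ within our class. Thus the slowest-decaying subleading contribution (corresponding to $|\alpha|=|\gamma|=1$) is $h^{1-\rho}$, establishing~\eqref{e:composed}. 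Borel summation, combined with Lemma~\ref{E: boundedness} applied to the truncation remainder, closes the expansion modulo $O(h^\infty)_{L^2\to L^2}$. When $a,b\in\widetilde{S^{k_i}_{\Gamma_0,\rho}}$, the stronger estimates~\eqref{E: symbol model no L} ensure that $\partial_{x'}$-derivatives incur no $h^{-\rho}$ loss, so the remainder lives in $h^{1-\rho}\widetilde{S^{k_1+k_2-1}_{\Gamma_0,\rho}}$. The principal technical obstacle is the careful chain-rule bookkeeping of $h$-powers and verification that the binomial reorganization reproduces the coefficients stated in~\eqref{e:fullAsymptotic}.
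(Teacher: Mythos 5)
Your argument is correct and follows essentially the same route as the paper's proof: rescale via $T_\delta$ into a standard $S_\delta$ class where semiclassical composition applies, compose there using the expansion from~\cite{EZB}, and unwind the chain rule to track $h$-powers via the binomial split of $\partial_{x'}$ into first-slot and $\lambda$-slot derivatives. The only cosmetic difference is the rescaling parameter—you use $T_{(1+\rho)/2}$ to reduce to $\Op_1$ on $S_{(1-\rho)/2}$ (as in Lemma~\ref{E: boundedness}), while the paper uses $T_{\rho/2}$ to reduce to $Op_h$ on $S_{\rho/2}$—but the $h$-power bookkeeping and the reorganization into~\eqref{e:fullAsymptotic} are identical.
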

\begin{proof}
With $T_\delta$ as in~\eqref{e:rescaling}, we have 
$
\Op_h(a)\Op_h(b)=T^{-1}_{\rho/2}Op_h(a_h)Op_h(b_h)T_{\rho/2}
$
where
$$
a_h=a(h^{\frac{\rho}{2}}x,h^{-\frac{\rho}{2}}\xi,h^{-\frac{\rho}{2}}x'),\qquad b_h=b(h^{\frac{\rho}{2}}x,h^{-\frac{\rho}{2}}\xi,h^{-\frac{\rho}{2}}x').
$$
Now, {for all $\alpha, \beta \in \mathbb N^n$ there exists $C_{\alpha, \beta}$ such that}
\begin{gather*}
|\partial_x^\alpha \partial_\xi^\beta a_h|\leq {C_{\alpha, \beta}}h^{-\frac{\rho}{2}(|\alpha|+|\beta|)}\langle h^{-\frac{\rho}{2}}x'\rangle^{k_1-|\alpha|},\quad
|\partial_x^\alpha \partial_\xi^\beta b_h|\leq {C_{\alpha, \beta}}h^{-\frac{\rho}{2}(|\alpha|+|\beta|)}\langle h^{-\frac{\rho}{2}}x'\rangle^{k_2-|\alpha|}.
\end{gather*}
In particular, using that $a$ and $b$ are compactly supported, $a_h\in h^{-\max(\rho k_1,0)}S_{\rho/2}$ and $b_h\in h^{-\max(\rho k_2,0)}S_{\rho/2}$
and hence~\cite[Theorems 4.14,4.17]{EZB} apply. 
 In particular, {if we let $M>0$ and $\tilde k:= \max(k_1,0)+\max(k_2,0)$, we obtain} 
$
Op_h(a_h)Op_h(b_h)=Op_h(c_h)
$
where, for any $N>0$,
\begin{align*}
&c_h(x,\xi)= \sum_{j=0}^{N-1}\sum_{|\alpha|=j}\frac{h^ji^j}{j!}(D_{\xi}^\alpha a_h(x,\xi))(D_x^\alpha b_h(x,\xi)) +O(h^{-\rho{\tilde k}+N(1-\rho)})_{S_{\rho/2}}\\
&=\sum_{j=0}^{N-1}\sum_{|\alpha|=j}\sum_{\alpha'+\alpha''=\alpha}\!\! \!\!\frac{h^{(1-\rho){j}}i^j}{j!}(D^\alpha_\xi a)_h[(h^{\rho}D_{x''})^{\alpha''}(h^{\rho}D_{x'}+D_\lambda)^{\alpha'}b]_h+O(h^{-\rho {\tilde k}+N(1-\rho)})_{S_{\rho/2}}.
\end{align*}
Choosing 
$
N=\max\Big(k_1+k_2,\frac{\rho\tilde k+M}{1-\rho}\Big),
$
the remainder is $O(h^{M})_{S_{\rho/2}}$. Moreover, since $a$ and $b$ were compactly supported, we may assume {introducing an $h^\infty$ error,} that the remainder is supported in $\{(x, \xi): \, |(x,\xi)|\leq Ch^{-\frac{\rho}{2}}\}.$
Putting
$$c=\sum_{j=0}^{N-1}\sum_{|\alpha|=j}\sum_{\alpha'+\alpha''=\alpha} \frac{i^j}{j!}(D^\alpha_\xi a)[(hD_{x''})^{\alpha''}(hD_{x'}+h^{1-\rho}D_\lambda)^{\alpha'}b],
$$
we thus have
$T_{\rho/2}^{-1}Op_h(c_h)T_{\rho/2}=\Op_h(c)+O(h^M)_{\mc{D}'\to C^\infty}$
as claimed.
\end{proof}

\begin{lemma}
\label{l:commutator}
Suppose that $a\in \widetilde{S^{m_1}_{\Gamma_0,L_0,\rho}}$, $b\in \widetilde{S^{m_2}_{\Gamma_0,L_0,\rho}}$. Then, 
$$
[\Op_h(a),\Op_h(b)]=-ih^{1-\rho}\Op_h(c)+O(h^\infty)_{L^2\to L^2}
$$
where $c\in \widetilde{S^{m_1+m_2-2}_{\Gamma_0,L_0,\rho}}$ satisfies
\begin{equation*}
\label{e:commutator1}
c=h^\rho\sum_{i=1}^n(\partial_{\xi_i}a\partial_{x_i}b-\partial_{\xi_i}b\partial_{x_i}a)+\sum_{i=1}^r(\partial_{\xi_i}a\partial_{\lambda_i}b-\partial_{\lambda_i}a\partial_{\xi_i}b)+O(h^{1-\rho})_{\widetilde{S^{m_1+m_2-2}_{\Gamma_0,L_0,\rho}}}.
\end{equation*}
If instead, $a\in \widetilde{S^{m_1}_{\Gamma_0,\rho}}$ and $b\in \widetilde{S^{m_2}_{\Gamma_0,\rho}}$, then the remainder lies in $h^{1-\rho}\widetilde{S^{m_1+m_2-2}_{\Gamma_0,\rho}}$.
Moreover, if $a\in S^{\comp}(\re^{2n})$ is independent of $\lambda$ and ${\partial_{\xi'}}a=e(x,\xi)x'$ {with $e(x,\xi):\R^r \to \R^r$ for all $(x,\xi)$,} then 
$$
[\Op_h(a),\Op_h(b)]=-ih\Op_h(c)+O(h^\infty)_{{\Psi^{-\infty}}}
$$
with 
$
c=H_ab+\sum_{i=1}^r(e\lambda)_i\partial_{\lambda_i}b+O(h^{1-\rho})_{\widetilde{S^{m_2-1}_{\Gamma_0,L_0,\rho}}}.
$
Similarly, the same conclusion holds if $b\in\widetilde{ S^{m_2}_{\Gamma_0,\rho}}$ with the error term in $c$ being $O(h^{1-\rho})_{\widetilde{S^{m_2-1}_{\Gamma_0,\rho}}}$.
\end{lemma}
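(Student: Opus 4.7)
The plan is to read off both identities directly from the composition formula \eqref{e:fullAsymptotic} of Lemma \ref{l:compose}. Applying that lemma to both $\Op_h(a)\Op_h(b)$ and $\Op_h(b)\Op_h(a)$ and subtracting, the $j=0$ terms $ab$ and $ba$ cancel. For $j=1$, writing $\alpha=\alpha'+\alpha''$ with $\alpha'\in\mathbb{N}^r$ and $\alpha''\in\mathbb{N}^{n-r}$, the antisymmetrized contribution is
$$
i\sum_{k=1}^{r}\bigl[(hD_{x'_k}+h^{1-\rho}D_{\lambda_k})b\cdot D_{\xi'_k}a-(hD_{x'_k}+h^{1-\rho}D_{\lambda_k})a\cdot D_{\xi'_k}b\bigr]+i\sum_{k=1}^{n-r}h\bigl[D_{x''_k}b\cdot D_{\xi''_k}a-D_{x''_k}a\cdot D_{\xi''_k}b\bigr].
$$
Replacing $D=-i\partial$ and pulling out a common factor of $-ih^{1-\rho}$ produces exactly the stated principal symbol: weight $h^\rho$ on the full $(x,\xi)$--Poisson bracket part and weight $1$ on the $(\xi',\lambda)$--bracket part.

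For the remainder, I would truncate the expansion at $j=2$ and use that each further term carries an extra factor of at least $h^{1-\rho}$ coming either from $hD_x=h^{1-\rho}\cdot h^\rho D_x$ or from $h^{1-\rho}D_\lambda$, so the tail lies in $h^{1-\rho}\cdot\widetilde{S^{m_1+m_2-2}_{\Gamma_0,L_0,\rho}}$ once the common $-ih^{1-\rho}$ prefactor has been extracted. If instead $a,b\in\widetilde{S^{m_i}_{\Gamma_0,\rho}}$, the sharper estimate \eqref{E: symbol model no L} removes the $h^{-\rho}$ loss on non-$\lambda$ derivatives, so the same argument places the remainder in $h^{1-\rho}\widetilde{S^{m_1+m_2-2}_{\Gamma_0,\rho}}$.

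For the final assertion, the decisive ingredient is the structural hypothesis $\partial_{\xi'_k}a=e_k(x,\xi)x'$. Since the model quantization evaluates amplitudes at $\lambda=h^{-\rho}x'$, i.e.\ $x'=h^\rho\lambda$ on the relevant support, every occurrence of $h^{1-\rho}\partial_{\xi'_k}a\cdot\partial_{\lambda_k}b$ in the expansion rewrites as
$$
h^{1-\rho}\,e_k(x,\xi)\,x'\,\partial_{\lambda_k}b=h\,(e\lambda)_k\,\partial_{\lambda_k}b,
$$
which upgrades the prefactor from $h^{1-\rho}$ to $h$. Because $a\in S^{\comp}$ is $\lambda$-independent, every $D_\lambda$ in \eqref{e:fullAsymptotic} must hit $b$ while $D_\xi^{\alpha}a\in S^{\comp}$ remains bounded; applying the substitution $x'=h^\rho\lambda$ at each appearance of $\partial_{\xi'}a$ at every order converts the full subprincipal expansion into $h\cdot\widetilde{S^{m_2-1}_{\Gamma_0,L_0,\rho}}$ (or $\widetilde{S^{m_2-1}_{\Gamma_0,\rho}}$ when $b$ has the improvement). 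Combining the $(x,\xi)$--part (which already had $h$ as its natural weight because $a$ is $\lambda$-free, giving $H_ab$) with the converted $\sum_{i=1}^{r}(e\lambda)_i\partial_{\lambda_i}b$ yields the claimed principal symbol.

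The main obstacle is the bookkeeping needed to verify that the substitution $x'=h^\rho\lambda$ propagates consistently to all orders in the asymptotic expansion: each differentiation in $\lambda$ on $b$ must lower its $\lambda$-order by one, exactly compensating the extra $\lambda$ produced by $(e\lambda)$, so that the remainder lands in the correct anisotropic symbol class without any residual $h^{-\rho}$ loss.
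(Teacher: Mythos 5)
Your proposal is correct and is essentially the paper's proof spelled out in detail: the paper's own argument is the one-line remark ``In each case, we need only apply the formula~\eqref{e:fullAsymptotic},'' and you do exactly that, antisymmetrizing the composition expansion, cancelling the $j=0$ terms, and reading off the $j=1$ contributions with the $h^\rho$ versus $h^{0}$ weights on the $(x,\xi)$- and $(\xi',\lambda)$-brackets after factoring out $-ih^{1-\rho}$. Your treatment of the last assertion also captures the key mechanism the paper relies on, namely that the structural hypothesis $\partial_{\xi'}a=e(x,\xi)x'$ persists under further $\xi$-differentiation, so the substitution $x'=h^\rho\lambda$ supplies an extra $h^\rho$ at every order and upgrades the $h^{1-\rho}$ to $h$ throughout the expansion.
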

\begin{proof}
In each case, we need only apply the formula~\eqref{e:fullAsymptotic}.  
\end{proof}

\subsection{Reduction to normal form}
{In order to define the quantization of symbols in $S_{\Gamma,L, \rho}$ for general $(\Gamma, L)$, we first explain how to reduce the problem to the model case $(\Gamma_0, L_0)$.} 

\begin{lemma}
Let $L$ be a Lagrangian foliation over a co-isotropic submanifold $\Gamma {\subset \R^{2n}}$ of dimension $2n-r$. Then for each $(x_0,\xi_0)\in \Gamma$ there is a neighborhood $U_0$ of $(x_0,\xi_0)$ and a symplectomorphism $\kappa:U_0\to V_0\subset T^*\re^n$ such that 
$$  \kappa(\Gamma\cap U_0)=\Gamma_0\cap V_0 \qquad \text{and}\qquad {(\kappa_*)_q L_q=L_{0,q} \;\;\text{for}\;\; q\in \Gamma \cap U_0}.$$
\end{lemma}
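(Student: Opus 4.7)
The plan is to construct $\kappa$ as a composition $\kappa = \Psi \circ \kappa_1$, where $\kappa_1$ straightens the coisotropic $\Gamma$ to the model $\Gamma_0$ and $\Psi$ then corrects the Lagrangian foliation while preserving the already-normalized coisotropic. First I would apply the standard Darboux normal form for coisotropic submanifolds of codimension $r$ to obtain a local symplectomorphism $\kappa_1$ near $q_0=(x_0,\xi_0)$ mapping $\Gamma$ onto $\Gamma_0=\{x'=0\}$; this is a consequence of the relative Darboux theorem applied to local symplectic coordinates adapted to $\Gamma$ and to its characteristic distribution.

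The key observation is that both the pushforward $L' := (\kappa_1)_* L$ and the model $L_0$ automatically contain the characteristic distribution $(T\Gamma_0)^\perp=\operatorname{span}\{\partial_{\xi'_j}\}_{j=1}^r$. Indeed, any Lagrangian subspace $\Lambda\subset T_qM$ with $\Lambda\subset T_q\Gamma_0$ satisfies $\Lambda=\Lambda^\perp\supset (T_q\Gamma_0)^\perp$. Consequently, both $L'$ and $L_0$ descend under the symplectic reduction $\Gamma_0\to \Gamma_0/{\sim}\cong \mathbb{R}^{2(n-r)}_{(x'',\xi'')}$ to Lagrangian foliations $\tilde L'$ and to the vertical foliation $\operatorname{span}\{\partial_{\xi''_j}\}$, respectively.

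Next I would invoke the classical local normal form for Lagrangian foliations on a symplectic manifold (a consequence of the Darboux--Carath\'eodory--Jacobi--Lie theorem, obtained by selecting $n-r$ independent, Poisson-commuting first integrals of $\tilde L'$ and extending them to Darboux coordinates): there exists a local symplectomorphism
\[
\psi(x'',\xi'')=\bigl(Y(x'',\xi''),\,H(x'',\xi'')\bigr)
\]
of $\mathbb{R}^{2(n-r)}$ sending $\tilde L'$ to the vertical foliation. This is the step I expect to be the main obstacle, though it is classical; the preceding coisotropic reduction and the lifting step below are essentially coordinate bookkeeping once this normal form is in hand.

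Finally, I would lift $\psi$ to the full space by
\[
\Psi(x',x'',\xi',\xi''):=\bigl(x',\,Y(x'',\xi''),\,\xi',\,H(x'',\xi'')\bigr).
\]
Since $\Psi$ acts as the identity on the first and third factors and as a symplectomorphism on $(x'',\xi'')$, it preserves the form $dx'\wedge d\xi'+dx''\wedge d\xi''$ and it preserves $\Gamma_0$. Moreover, any leaf of $L'$ can be parametrized near $\Gamma_0$ as $\{(0,x''(s),\xi',\xi''(s)):s,\xi'\}$ with $(x''(s),\xi''(s))$ tracing a leaf of $\tilde L'$; its image under $\Psi$ is $\{(0,y''_0,\xi',\eta''): \xi'\in\R^r,\,\eta''\in\R^{n-r}\}$ for some constant $y''_0$, which is precisely a leaf of $L_0$. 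Setting $\kappa:=\Psi\circ\kappa_1$ then yields the required symplectomorphism.
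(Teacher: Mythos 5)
Your proposal is correct, and it takes a genuinely different structural route from the paper's, though both ultimately rest on the same two classical ingredients (the Darboux normal form for coisotropics, and a Frobenius/Carath\'eodory--Jacobi--Lie step for the Lagrangian foliation). The paper works directly on the $(2n-r)$-dimensional manifold $\Gamma$: after the inductive coisotropic normal form it applies Frobenius inside $\Gamma$ to produce $n-r$ functions $\tilde y_{r+1},\dots,\tilde y_n$ whose level sets are the leaves, observes that these Poisson-commute with the defining functions $y_1,\dots,y_r$ because $L$ is Lagrangian and contains the $H_{\tilde y_i}$, and then extends off $\Gamma$ and invokes Darboux to complete to a symplectic chart. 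Your argument instead exploits the crucial observation that $L'\supset(T\Gamma_0)^\perp$ (because any Lagrangian contained in $T\Gamma_0$ equals its own symplectic annihilator and therefore contains $(T\Gamma_0)^\perp$), passes to the symplectic reduction $\Gamma_0/\!\sim\;\cong\R^{2(n-r)}_{(x'',\xi'')}$, applies the classical Lagrangian-foliation normal form there, and then lifts by taking the product with the identity on the $(x',\xi')$-factor. The reduction step is what makes the construction modular: once $\Gamma$ is in the model form, the quotient is globally trivial and the lift $\Psi$ is manifestly symplectic, $\Gamma_0$-preserving, and maps $L'$ to $L_0$ leaf-by-leaf. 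What the paper's approach buys is self-containment (no appeal to reduction or to the fact that a characteristic-containing Lagrangian distribution descends to the quotient, which your argument implicitly uses and should be noted as requiring the integrability hypothesis). What your approach buys is conceptual clarity: the coisotropic direction and the foliation direction are handled on separate factors, and the lifting step is trivial, whereas the paper must track the extension of $\tilde y_{r+1},\dots,\tilde y_n$ off $\Gamma$ carefully. Both are valid.
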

\begin{proof}
We first put $\Gamma$ in normal form. That is, we build symplectic coordinates {$(y, \eta)$} such that 
\begin{equation}
\label{e:coisoForm}
\Gamma=\{{(y, \eta):\,}y_1=\dots =y_r=0\}.
\end{equation}

First, assume $r=1$ and let $f_1\in C^\infty(T^*M)$ define $\Gamma$. By Darboux's theorem there are symplectic coordinates such that $y_1=f_1$ and the proof { of \eqref{e:coisoForm}} is complete for $r=1$. 

Next, assume that we can put any coisotropic of co-dimension $r-1$ in normal form. Let $f_1,\dots, f_r\in C^\infty(T^*M)$ define $\Gamma$. Then,{ since $\Gamma$ is co-isotropic, for $X\in T\Gamma$ and $i=1, \dots, r$}
$$
\sigma(X ,H_{f_i})=df_i(X) =0. 
$$
In addition, since $\Gamma$ is co-isotropic,  $(T\Gamma)^\perp\subset T\Gamma$ and so $H_{f_i}\in T\Gamma$ for all $i=1, \dots, r$. In particular, 
$$
\{f_i,f_j\}= H_{f_i}f_j=df_j(H_{f_i})=0,
$$
on $\Gamma$. Now, using Darboux' theorem, choose symplectic coordinates ${(y, \eta)=}(y_1,y',\eta_1,\eta')$ so that $y_1=f_1$ and $(x_0,\xi_0)\mapsto (0,0)$. Then, 
$\partial_{\eta_1} f_j=\{f_j,y_1\}=0$ on $\Gamma,$
for $j=2,\dots, r$.
Next, observe that  
$
\Gamma=\{{(y, \eta):\,}y_1=f_2=\dots ={f_{r}}=0\},
$
and $dy_1,\{df_j\}_{j=2}^{{r}}$ are independent. Thus,  {since $\partial_{\eta_1} f_j=0$ on $\Gamma$,
$$
\Gamma=\{{(y, \eta):\;}y_1=0, \;\; f_j(0,y',0,\eta')=0,\,\;\;j=2,\dots, {r}\}.
$$
}
Now, $\{y_1=\eta_1=0\}\cap \Gamma$ is a co-isotropic submanifold of co-dimension $r-1$ in $T^*\{y_1=0\}$. Hence, by induction, there are symplectic coordinates {$(y_2, \dots, y_n, \eta_2, \dots, \eta_n)$} on $T^*\{y_1=0\}$ such that 
$$
\Gamma\cap \{y_1=\eta_1=0\}=\{y_1=\eta_1=0, \;\; y_2=\dots =y_r=0\}.
$$
In particular, 
$$
{\{ (y', \eta'): \; f_j(0,y',0,\eta')=0,\,\;\;j=2,\dots, {r}\}}=\{y_2=\dots = y_r=0\}. 
$$
Thus, extending {$(y_2, \dots, y_n, \eta_2, \dots, \eta_n)$} to be independent of $(y_1,\eta_1)$ puts $\Gamma$ in the form~\eqref{e:coisoForm}.

Next, we adjust the coordinates  to be adapted to $L$ along $\Gamma$. First, define $\tilde{y}_i:=y_i$ for $i=1,\dots r$. {Then, since $L\subset T\Gamma$, for every $i=1, \dots, r$ we have that $d\tilde{y_i}(X)|_{\Gamma}$ is well defined for $X\in L$ and $d\tilde{y_i}(X)|_{\Gamma}=0$.} Next, since $L$ is integrable, the Frobenius theorem~\cite[Theorem 19.21]{LeeBook13} shows that there are coordinates  {$(\tilde y_{r+1}, \dots, \tilde y_n, \tilde \xi_1, \dots, \tilde \xi_n)$ on $\Gamma$, defined in a neighborhood of $(0,0)$,} such that $L$ is the annihilator of $d\tilde{y}$. 
{Since  we know that for every  $X\in L$
$$
\sigma(X,H_{\tilde{y}_i})={d\tilde y_i}(X)=0,
$$
and $L$ is Lagrangian, we conclude that $H_{\tilde{y}_i}\in L$.} In particular, since $L$ is the annihilator of $d\tilde{y}$, $$\{\tilde{y}_i,\tilde{y}_j\}=H_{\tilde{y}_i}\tilde{y}_j=d\tilde{y}_j(H_{\tilde{y}_i})=0.$$
Now, extend {$(\tilde y_{r+1}, \dots, \tilde y_n, \tilde \xi_1, \dots, \tilde \xi_n)$ outside $\Gamma$ to be independent of $(\tilde y_{1}, \dots, \tilde y_r)$}. Then, 
$
\{\tilde{y}_i,\tilde{y}_j\}=0
$
in a neighborhood of $(x_0,\xi_0)$ and hence, by Darboux's theorem, there are functions $\{\tilde{\eta}_j\}_{j=1}^n$, such that 
$\{\tilde{y}_i,\tilde{\eta}_j\}=\delta_{ij}$ and $\{\tilde{\eta}_i,\tilde{\eta}_j\}=0.$
In particular, in the $(\tilde{y},\tilde{\eta})$ coordinates, 
$
\Gamma=\{(\tilde y, \tilde \eta):\;\; \tilde y_1=\dots =\tilde y_r=0\},
$
and $d\tilde{y}(L)|_{\Gamma}=0$. In particular, $L=\operatorname{span} \{{\partial}\tilde{\eta}_i\}$ as claimed.
\end{proof}

In order to create a well-defined global calculus of psuedodifferential operators associated to $(\Gamma,L)$, we will need to show invariance under conjugation by FIOs preserving the pair $(L_0,\Gamma_0)$. 
\begin{proposition}
\label{p:invariance}
Suppose that $U_0,V_0$ are neighborhoods of $(0,0)$ in $T^*\re^n$ and $\kappa:U_0\to V_0$ is a symplectomorphism such that 
\begin{equation} \label{e:specialSymplectic}
\kappa(0,0)=(0,0),\qquad \kappa(\Gamma_0\cap U_0)=\Gamma_0\cap V_0,\qquad \kappa_*|_{\Gamma_0} L_0=L_0|_{\Gamma_0}.
\end{equation}
Next, let $T$ be a semiclassically elliptic FIO microlocally defined in a neighborhood of $$\big((0,0),(0,0)\big){\in T^*\re^n \times T^*\re^n}$$ quantizing $\kappa$. Then, for $a\in \widetilde{S^k_{\Gamma_0,L_0,\rho}}$, there {are $b\in \widetilde{S^k_{\Gamma_0,L_0,\rho}}$ and $c\in \widetilde{S^{k-1}_{\Gamma_0,L_0,\rho}}$} such that 
$$
T^{-1}\Op_h(a)T=\Op_h(b),\qquad b=a\circ K_{\kappa}+ h^{1-\rho}c
$$
where {$K_{\kappa}:T^*\R^n \times \R^r \to T^*\R^n \times \R^r$ is defined by}
{\begin{equation*}
K_{\kappa}(y,\eta,\mu)=\Big(\kappa(y,\eta), \pi_{x'}(\kappa(y, \eta)) \frac{|\mu|}{|y'|}\Big), 
\end{equation*}
and $\pi_{x'}: T^*\R^n \to \R^r$ is the projection onto the first $r$-spatial coordinates.} In addition, if $a\in \widetilde{S^k_{\Gamma_0,\rho}}$, then $c\in \widetilde{S^{k-1}_{\Gamma_0,\rho}}$ and $b\in \widetilde{S^k_{\Gamma_0,\rho}}$.
\end{proposition}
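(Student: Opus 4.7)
The plan is a variant of Egorov's theorem adapted to the symbol class $\widetilde{S^k_{\Gamma_0,L_0,\rho}}$. I would connect $\kappa$ to the identity by a smooth family $\{\kappa_t\}_{t\in[0,1]}$ of symplectomorphisms of a neighborhood of $(0,0)$, each fixing the origin and satisfying~\eqref{e:specialSymplectic}, with $\kappa_0=\Id$ and $\kappa_1=\kappa$. Writing $\kappa_t = \exp(tH_{q_t})$ for a time-dependent Hamiltonian $q_t\in S^{\comp}(\R^{2n})$ and choosing semiclassical FIOs $T_t$ quantizing $\kappa_t$ with $T_0 = I$ and $T_1 = T\cdot E$ for some elliptic pseudodifferential factor $E$, reduces the proposition to understanding conjugation by $T_t$. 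The factor $E$ has a classical symbol in $\widetilde{S^0_{\Gamma_0,L_0,\rho}}$ and is dispatched by Lemma~\ref{l:compose}.

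The key observation is that the geometric hypotheses on $\kappa_t$ translate into algebraic conditions on $q_t$. Preservation of $\Gamma_0=\{x'=0\}$ forces $H_{q_t}$ to be tangent to $\Gamma_0$, i.e.\ $\partial_{\xi'_i}q_t|_{\Gamma_0}=0$ for $i=1,\dots,r$; by Taylor expansion, $\partial_{\xi'}q_t = e_t(x,\xi)\,x'$ for a smooth matrix-valued $e_t$. Preservation of the Lagrangian foliation $L_0$ along $\Gamma_0$ further yields $\partial^2_{\xi\,\xi''_j}q_t|_{\Gamma_0}=0$. Together these are exactly the hypotheses of the special case of Lemma~\ref{l:commutator}.

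Setting $A_t:=T_t^{-1}\Op_h(a)T_t = \Op_h(a_t)$ and using the Heisenberg equation $\dot A_t = \tfrac{i}{h}[A_t,\Op_h(q_t)]$, that lemma reduces the evolution at the symbol level to the transport equation
\[
\partial_t a_t + H_{q_t}a_t + \sum_{i=1}^r (e_t\lambda)_i\,\partial_{\lambda_i}a_t = r_t,\qquad a_0 = a,
\]
with $r_t\in h^{1-\rho}\widetilde{S^{k-1}_{\Gamma_0,L_0,\rho}}$. Solving by characteristics gives $a_t = a\circ\Phi_{-t} + O(h^{1-\rho})_{\widetilde{S^{k-1}_{\Gamma_0,L_0,\rho}}}$, where $\Phi_t$ is the lift of $\kappa_t$ to $T^*\R^n\times\R^r$ whose $(x,\xi)$-component is $\kappa_t$ and whose $\lambda$-component evolves linearly via $e_t$. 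A direct calculation identifies $\Phi_t$ with $K_{\kappa_t}$; the apparent $0/0$ in the definition of $K_{\kappa_t}$ is resolved because $\pi_{x'}\circ\kappa_t$ vanishes on $\Gamma_0$, so $\pi_{x'}(\kappa_t(y,\eta))/|y'|$ extends smoothly across $\Gamma_0$. Setting $t=1$ and absorbing $E$ gives the claimed formula for $b$.

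The improved statement for $a\in\widetilde{S^k_{\Gamma_0,\rho}}$ follows because Lemma~\ref{l:commutator} in this setting produces remainders in the Lagrangian-free class $\widetilde{S^{k-1}_{\Gamma_0,\rho}}$, which is preserved under the $t$-evolution. The main obstacle will be verifying the two structural conditions on $q_t$ from those on $\kappa_t$ and carrying out the bookkeeping needed to identify the lifted characteristic flow with the explicit map $K_{\kappa_t}$ appearing in the statement; once those are in place, the remainder estimate follows from a standard Gr\"onwall argument along $t\in[0,1]$ together with the symbol-class closure already provided by Lemmas~\ref{l:compose} and~\ref{l:commutator}.
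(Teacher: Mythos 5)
Your proposal follows the same overall Egorov-type strategy as the paper: deform $\kappa$ to the identity, derive a Heisenberg equation for $A_t:=T_t^{-1}\Op_h(a)T_t$, use the structural constraints on $q_t$ coming from preservation of $\Gamma_0$ and $L_0$ to invoke Lemma~\ref{l:commutator}, and solve the resulting transport equation by characteristics, identifying the lifted flow with $K_{\kappa_t}$. That part is correct and matches the paper.

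The genuine gap is the very first step, which you state but do not justify: ``connect $\kappa$ to the identity by a smooth family $\{\kappa_t\}_{t\in[0,1]}$ of symplectomorphisms\dots each fixing the origin and satisfying~\eqref{e:specialSymplectic}.'' This is not automatic. The group of germs of symplectomorphisms at $(0,0)$ that preserve $\Gamma_0$ and $L_0|_{\Gamma_0}$ is not connected, and an arbitrary $\kappa$ satisfying~\eqref{e:specialSymplectic} need not lie in the identity component. The paper handles this in two steps: it first disposes of the disconnected part explicitly, by observing that the proposition holds directly for the identity and for coordinate reflections $(y_j,\eta_j)\mapsto(-y_j,-\eta_j)$ (since these are quantized by $u\mapsto u(\dots,-x_j,\dots)$), thereby reducing to the case where $\kappa$ additionally satisfies the sign condition~\eqref{e:positivity}; only then does Lemma~\ref{L: deformation of kappa} construct the deformation, and even so it is only \emph{piecewise} smooth, built in stages through generating functions where the positivity condition $\partial_{\eta'}g>0$ (and $\partial_{y''}x''>0$) is what keeps the linear interpolations non-degenerate. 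Without this preliminary reduction and the careful construction, the Egorov argument never starts. You flag ``verifying the two structural conditions on $q_t$'' as the main obstacle, but that part is routine once the deformation exists; the real obstacle is the deformation itself.

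Two smaller points: the notation $\kappa_t=\exp(tH_{q_t})$ does not make sense for time-dependent $q_t$; what is needed (and what the paper uses) is $\partial_t\kappa_t=(\kappa_t)_*H_{q_t}$. Also, you sketch ``solve by characteristics, Gr\"onwall'' for the symbol evolution, while the paper instead sets $b_t^0:=a\circ K_{\kappa_t}$ as a first approximation and iterates, solving away successive $O(h^{(k+1)(1-\rho)})$ errors; either is fine in principle, but your version would need to specify the closure of the symbol class under the flow, which the iteration structure in the paper handles automatically via Lemma~\ref{l:commutator}.
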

To prove Proposition~\ref{p:invariance}, we follow~\cite{SjZw:99}. First, observe that the {proposition} holds  with $\kappa=\Id$ since then {$T$} is a standard pseudodifferential operator. In addition, {the proposition also} holds {whenever for a given $j \in\{1, \dots, n\}$ we work with} 
$$
\kappa(y,\eta):=(y_1,\dots,y_{j-1},-y_j,y_{j+1},\dots,y_n\;,\;\eta_1,\dots,{\eta_{j-1}, -\eta_j},\eta_{j+1},\dots, \eta_n).
$$
{Indeed, this follows from the fact that in this case an FIO quantizing $\kappa$ is} $$Tu(x)=u(x_1,\dots x_{j-1},-x_j,x_{j+1},\dots, x_n)$$ {and so the conclusion of the proposition follows from a direct computation together with the identity case.}
Thus, we may assume that 
\begin{equation}
\label{e:positivity}
\kappa(y,\eta)=(x,\xi)\qquad\text{ implies }\qquad x_iy_i\geq 0,\qquad i=1,\dots n.
\end{equation}

\begin{lemma}\label{L: deformation of kappa}
Let $\kappa$ be a symplectomorphism satisfying~\eqref{e:specialSymplectic} and~\eqref{e:positivity}. Then, there is a piecewise smooth family of symplectomorphisms $[0,1]\ni t\mapsto\kappa_t$ such that $\kappa_t$ satisfies~\eqref{e:specialSymplectic}, \eqref{e:positivity}, $\kappa_0=\Id$, and $\kappa_1=\kappa$.
\end{lemma}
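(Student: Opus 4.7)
I would construct the homotopy in two stages, the first reducing $\kappa$ to its linearization at the origin and the second giving a path in $Sp(2n,\R)$ from the linearization to $\Id$. For Stage 1 (radial scaling to the linearization), for $t\in(0,1]$ set $\kappa_t(y,\eta):=t^{-1}\kappa(ty,t\eta)$ and $\kappa_0:=d\kappa_{(0,0)}=:A$; since $\kappa(0,0)=(0,0)$, Taylor's theorem makes $(t,y,\eta)\mapsto\kappa_t(y,\eta)$ smooth down to $t=0$. Writing $\kappa_t = S_t^{-1}\circ\kappa\circ S_t$ for $S_t(y,\eta):=(ty,t\eta)$, a direct calculation gives $\kappa_t^*\sigma=t^{-2}\,\kappa^*\sigma=\sigma$, so every $\kappa_t$ is symplectic. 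The structural conditions pass through by homogeneity: $\Gamma_0=\{y'=0\}$ is $S_t$-invariant; for $q\in\Gamma_0$ one has $(d\kappa_t)_q=(d\kappa)_{tq}$, and since $tq\in\Gamma_0$ while $L_{0,q}=\operatorname{span}\{\partial_{\eta_1},\ldots,\partial_{\eta_n}\}$ is translation-independent, $L_0$-preservation follows; finally $\kappa_t(y,\eta)=(x,\xi)$ is equivalent to $\kappa(ty,t\eta)=(tx,t\xi)$, so $(tx_i)(ty_i)\ge 0$ is equivalent to $x_iy_i\ge 0$.

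For Stage 2 (linear path to the identity), it remains to join the linear symplectic $A$ to $\Id$ through linear symplectic maps obeying \eqref{e:specialSymplectic}-\eqref{e:positivity}. Preservation of $\Gamma_0$ forces $x'$ to depend only on $y'$, and preservation of $L_0|_{\Gamma_0}$ kills the $\eta$-columns in all the $x$-rows, so in $(y,\eta)$ block form $A=\begin{pmatrix}\tilde P&0\\C&E\end{pmatrix}$ with $\tilde P$ the $y\to x$ block. A short argument using the pointwise inequality $(\tilde P y)_i\,y_i\ge 0$ for all $y\in\R^n$ (test on $y=e_i+te_k$ and let $t$ range over $\R$) then forces both the off-diagonal entries of $\tilde P$ and the $y'\to x''$ cross coupling to vanish; thus $\tilde P$ is diagonal with strictly positive entries. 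The symplectic identity $A^TJA=J$ then yields $E=\tilde P^{-1}$ and $\tilde P C$ symmetric. I build the homotopy in two linear pieces: first interpolate $C\to 0$ via $A_s=\begin{pmatrix}\tilde P&0\\(1-s)C&\tilde P^{-1}\end{pmatrix}$, $s\in[0,1]$ (which stays symplectic since $(1-s)\tilde P C$ remains symmetric); then interpolate $\tilde P\to I$ via $A_s=\begin{pmatrix}\tilde P_s&0\\0&\tilde P_s^{-1}\end{pmatrix}$ with $\tilde P_s:=(1-s)I+s\tilde P$ (diagonal and strictly positive throughout). Every intermediate map has $x=(\text{positive diagonal})\,y$ independent of $\eta$, so automatically preserves $\Gamma_0$, $L_0|_{\Gamma_0}$, and the positivity $x_iy_i\ge 0$.

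The main technical point is the block-structural analysis in Stage 2: using \eqref{e:positivity} as an inequality over \emph{all} $(y,\eta)$ to force $\tilde P$ to be not merely invertible but diagonal with positive entries. This rigidity is precisely what the positivity hypothesis provides---replacing the statement that $A$ lies in the identity component of the constraint variety---and it is what ensures that the explicit path built above stays inside the subgroup of $Sp(2n,\R)$ cut out by \eqref{e:specialSymplectic}-\eqref{e:positivity}. Concatenating Stage 1 with the two pieces of Stage 2 produces the required piecewise smooth family of symplectomorphisms from $\Id$ to $\kappa$.
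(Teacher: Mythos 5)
Your proof is correct, and it takes a genuinely different route from the paper's. The paper first analyzes the restriction $\kappa''$ of $\kappa$ to the ``transverse'' $(y'',\eta'')$ variables, shows it is a cotangent lift deformable to the identity, and then for the residual map uses the generating-function representation $\phi(x,\eta)=\langle x'',\eta''\rangle+g(x,\eta)x'$ with $\partial_{\eta'}g>0$, deforming $g_t=(1-t)\eta'+tg$; this requires the Poincar\'e lemma and the theory of generating functions. Your two-stage argument instead dilates $\kappa$ radially to its linearization $A=d\kappa_{(0,0)}$ (a homotopy through symplectomorphisms that manifestly preserves~\eqref{e:specialSymplectic} and~\eqref{e:positivity}, since $\Gamma_0$, $L_0$, and the sign constraints are all scale-invariant and $(d\kappa_t)_q=(d\kappa)_{tq}$), and then reduces to a linear-algebra problem in $Sp(2n,\R)$: the structural constraints force $A=\begin{pmatrix}\tilde P&0\\C&E\end{pmatrix}$, positivity tested on $y=e_i+te_k$ forces $\tilde P$ diagonal with positive entries, and the symplectic identity gives $E=\tilde P^{-1}$ and $\tilde PC$ symmetric, after which two explicit linear segments finish. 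Your argument is shorter and avoids generating functions and the Poincar\'e lemma entirely; what the paper's approach buys is that it makes explicit the structure of $\kappa$ near $\Gamma_0$ (the factorization through $\kappa''$ and the generating function~\eqref{e:generate}), which is reused in spirit in the Egorov-type propagation argument in the proof of Proposition~\ref{p:invariance}. One small point worth adding for completeness: positivity of $A$ is obtained either by noting the condition is closed and $\kappa_t\to A$ as $t\to 0^+$, or directly from the homogeneous degree-two term $(\tilde P y)_iy_i$ in the Taylor expansion of $x_iy_i$ at the origin; and invertibility of $\tilde P$ (needed to conclude strict positivity of its diagonal and well-definedness of $\tilde P^{-1}$) follows from the block-triangular structure of the invertible matrix $A$.
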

\begin{proof}
{In what follows we assume that $\kappa(y, \eta)= (x,\xi)$ but
reorder the coordinates $(y',y'',\eta',\eta'')\in T^*\re^n$ is represented as $(y',\eta',y'',\eta'')\in \R^{2r}\times \R^{2(n-r)}$. Let $\xi'$ and $\kappa''{=(x''(y',\eta),\xi''(y',\eta))}$ with}
$$
\kappa|_{\Gamma_0}:(0, {\eta'}, y'',{\eta''})\mapsto (0,\xi'(y'',\eta),\kappa''(y'',\eta)).
$$
{Now, since $(\kappa_*)|_{\Gamma_0}L_0=L_0$, we have for $i=1,\dots n$,
\begin{equation}\label{e:Tmapping}
\kappa_* \partial_{\eta_i}=\frac{\partial x_j}{\partial \eta_i}\partial_{x_j}+\frac{\partial \xi_j}{\partial\eta_i}\partial_{\xi_j}{\in} L_0
\end{equation}
and hence,
\begin{equation} \label{e:xindep}
\partial_{\eta}x|_{\Gamma_0}\equiv 0.
\end{equation}}
{Next, since $\kappa$ preserves $\Gamma_0$, $\{\kappa^*x_i\}_{i=1}^r$ defines $\Gamma_0$ and 
$
\Span\{d\kappa^*x_i|_{\Gamma_0}\}_{i=1}^r=\Span\{dy_i|_{\Gamma_0}\}_{i=1}^r,
$
$$
\Span\{H_{\kappa^*x_i}|_{\Gamma_0}\}_{i=1}^r=\Span\{H_{y_i}|_{\Gamma_0}\}_{i=1}^r.
$$
By Jacobi's theorem, $\kappa_*H_{\kappa^*x^i}=H_{x_i}$. Therefore,
$$(\kappa|_{\Gamma_0})_* \Big(\operatorname{span} \{H_{y_i}\}_{i=1}^r\big|_{\Gamma_0}\Big)=\operatorname{span} \{H_{x_i}\}_{i=1}^r\big|_{\Gamma_0},
$$
 and we conclude from~\eqref{e:Tmapping} $\xi''|_{\Gamma_0}$ is independent of $\eta'$ and hence that $\kappa''$ is independent of $\eta'$. In particular, $\kappa''$ is a symplectomorphism on $T^*\re^{n-r}$.}
This also implies that for each fixed $(y'',\eta'')$, the map $\eta'\mapsto \xi'(y'', {\eta', \eta''})$ is a diffeomorphism. Writing $$\kappa''(y'',\eta'')=(x''(y'',\eta''),\xi''(y'',\eta'')),$$
we have {by~\eqref{e:xindep} that} 
$\partial_{\eta''}x''=0$, and hence $x''=x''(y'')$. Now, since $\kappa''$ is symplectic, 
$$
(\partial_{\eta''}\xi'' d\eta''+\partial_{y''}\xi''dy'')\wedge \partial_{y''}x''dy''=d\eta''\wedge dy'',
$$
{and so we conclude that}  
\begin{equation}\label{E:1 and 2}
(\partial_{y''}x'')^t\partial_{\eta''}\xi''=\Id, \qquad  \qquad (\partial_{y''}x'')^t\partial_{y''}\xi'' \;\;\text{ is diagonal.}
\end{equation} 
The first equality in \eqref{E:1 and 2} {gives that $\partial_{\eta''}\xi''$ is a function of $y''$ only, and hence there exists a function $F=F(y'')$ such that} 
$\xi''(y'', \eta'')=[({\partial x''}(y''))^t]^{-1}(\eta''-F(y'')).$
Therefore, calculating on $\eta''=F(y'')$, the second {statement in \eqref{E:1 and 2}} implies that 
$
-\partial_{y''}F(y'')dy''\wedge dy''=0. 
$
In particular, 
$
d(F(y'')\cdot dy'')=0.
$
It follows from the Poincar\'e lemma that, shrinking the neighborhood of {$(0,0)$} to be simply connected if necessary, $F(y'')\cdot dy''=d\psi(y'')$ for {some function $\psi=\psi(y'')$}. Hence, 
\begin{equation} 
\label{e:GammaSymp}
\kappa''(y'',\eta'')=\Big(x''(y'')\;,\; [(dx''(y''))^t]^{-1}(\eta''-{\partial \psi}(y''))\Big).
\end{equation}
Now, every symplectomorphism of the form~\eqref{e:GammaSymp} preserves $L_0$. Hence, we can deform $\kappa''$ to the identity by putting $\psi_t=t\psi$ and deforming $x''$ to the identity. Since the assumption in~\eqref{e:positivity} implies $\partial_{y''}x''>0$, this can be done simply by taking 
$
x''_t=(1-t)\Id+t x''.
$
Putting $\kappa''_t=(x''_t,\xi''_t)$, there is $\kappa''_t$ such that $\kappa''_0=\Id$ and $\kappa''_1=\kappa''$. Now, composing $\kappa$ with 
$$
(y',\eta';y'',\eta'')\mapsto (y',\eta';(\kappa''_t)^{-1}(y'',\eta''))
$$
we reduce to the case that $\kappa''=\Id$. 
In particular, we need only consider the case in which
\begin{equation} 
\label{e:reducedKappa}  
\kappa(y',\eta',y'',\eta'')
=\Big(f(y,\eta)y'\;,\;\xi'(y'',\eta)+h_0(y,\eta)y' \;,\; (y'',\eta'')+h_1(y,\eta)y'\Big).
\end{equation}
where $f(y,\eta)\in \mathbb{GL}_r$, {$h_0(y, \eta)$ is an $r\times r$ matrix, and $h_1(y, \eta)$ is an $ 2(n-r)\times r$ matrix}. 
Next, we claim that {the projection map from $\operatorname{graph}(\kappa)$ to $\R^{2n}$ defined as }
$
 (x,\xi;y,\eta)\mapsto (x,\eta)
$
is a local diffeomorphism. To see this, note that for $|y'|$ small the map $(x'',\eta'')\mapsto (y'',\xi'')$ is a diffeomorphism,  that for each fixed $(y'',\eta'')$ the map $\eta'\mapsto \xi'$ is a diffeomorphism, and that $\det \partial_{y'}x'|_{\Gamma_0}\neq 0$. 
Thus, $\kappa$ has a generating function $\phi$: 
$
\kappa: ({\partial_\eta\phi}(x,\eta),\eta)\mapsto (x, {\partial_x \phi}(x,\eta)),
$
such that 
$$
\det {\partial_{x \eta}^2\phi}(0,0)\neq 0,\qquad \partial_{\eta'}\phi(0,x'',\eta)=0.$$

Now, {writing $\kappa=(\kappa', \kappa'')$, we have} $\kappa''=\Id$ at $x'=0$. Therefore, 
$$
\partial_{\eta''}\phi(0,x'',\eta)=x'',\qquad \partial_{x''}\phi(0,x'',\eta)=\eta''
$$
and we have $\phi(0,x'',\eta)=\langle x'',\eta''\rangle +C$ {for some $C\in \re$}. We may choose $C=0$ to obtain
\begin{equation}
\label{e:generate}
\phi(x,\eta)=\langle x'',\eta''\rangle +g(x,\eta)x',
\end{equation}
for some  $g:\R^{2n} \to\mathbb{M}_{1\times r} $.
Finally, since $\kappa(0,0)=(0,0)$ and $\partial_{x \eta}^2\phi$ is non-degenerate, we have $\partial_{x'}\phi(0,0)=g(0,0)=0$ and $\partial_{\eta'}g$ is non-degenerate. In fact \eqref{e:positivity} implies that as a quadratic form \begin{equation} 
\label{e:posGen}
\partial_{\eta'}g>0.
\end{equation}

Observe next that for every $\phi$, such that~\eqref{e:generate} holds for some $g$ satisfying~\eqref{e:posGen} and $g(0,0)=0$ generates a canonical transformation satisfying~\eqref{e:reducedKappa} and~\eqref{e:positivity}. In particular, the symplectomorphism satisfies~\eqref{e:specialSymplectic}. Thus, we can deform from the identity by putting $g_t=(1-t)\eta'+tg$. 
\end{proof}

Finally, we proceed with the proof of Proposition~\ref{p:invariance}.

\begin{proof}[Proof of Proposition~\ref{p:invariance}] 
Let $\kappa_t$ be {as in Lemma \ref{L: deformation of kappa}. That is,  a piecewise smooth deformation from $\kappa_0=\Id$ to $\kappa_1=\kappa$} such that $\kappa_t$ preserves $\Gamma_0$ and $(\kappa_t)_*|_{\Gamma_0}$ perserves $L_0$. Let $T_t$ be piecewise smooth family of elliptic FIOs defined microlocally near $(0,0)$, quantizing $\kappa_t$, and satisfying 
\begin{equation}\label{E:FIO T}
hD_tT_t+T_tQ_t=0,\qquad T_0=\Id.
\end{equation}
Here, $Q_t$ is a smooth family of pseudodifferential operator with symbol $q_t$ satisfying 
$
\partial_t\kappa_t=(\kappa_t)_*H_{q_t}.
$
(Such an FIO exists, for example, by~\cite[Chapter 10]{EZB} and $q_t$ exists by~\cite[Thoerems 11.3, 11.4]{EZB})
Next, define $$A_t:=T_t^{-1}\Op_h(a)T_t.$$

Note that 
$
T^{-1}\Op(a)T=T^{-1}T_1T_1^{-1}\Op(a)T_1T_1^{-1}T +O(h^\infty)_{\Psi^{-\infty}}.
$
Hence, since the Proposition follows by direct calculation when $\kappa=\Id$, we may assume that $T=T_1$.

{In that case, our goal is to find a symbol $b$ such  that $A_1=Op_h(b)$}. First, observe that \eqref{E:FIO T} implies that {$hD_tT_t^{-1}-Q_tT_t^{-1}=0$} and so
\begin{equation}\label{E: A_t}
hD_t A_t=[Q_t,A_t],\qquad A_0=\Op_h(a).
\end{equation}

We will construct $b_t\in \widetilde{S^k_{\Gamma_0,L_0,\rho}}$ such that $B_t:=\Op_h(b_t)$ satisfies
\begin{equation}\label{E: claim b_t}
hD_t B_t=[Q_t,B_t]+O(h^\infty)_{{\Psi^{-\infty}}},\qquad B_0=\Op_h(a).
\end{equation}
This would yield that $B_t-A_t=O(h^\infty)_{L^2\to L^2}$ and the argument would then be finished by setting $b=b_1$. Indeed, that $B_t-A_t=O(h^\infty)_{L^2\to L^2}$ would follow from the fact that {by \eqref{E: claim b_t}}
$$
{hD_t(T_t B_tT_t^{-1})}=O(h^\infty)_{{\Psi^{-\infty}}},
$$
and hence, since $T_0=\Id$ and $B_0=\Op_h(a)$, we have
$
{T_tB_tT_t^{-1}}-\Op_h(a)=O(h^\infty)_{\Psi^{-\infty}}.
$
Combining this with the fact that both $T_t$ and $T_t^{-1}$ are bounded on $H^k_h$ completes the proof.

To find $b_t$ as in \eqref{E: claim b_t}, note that since $\kappa_t$ preserves $\Gamma_0$ and $L_0$,  $\partial_t \kappa_t=H_{q_t}$, and $H_{q_t}$ is tangent to $L_0$ on $\Gamma_0$. Therefore, $\partial_{\eta'}q_t=0$ on $y'=0$ {and so there exists $r_t(y, \eta)$ such that $\partial_{\eta'}q_t(y, \eta)=r_t(y, \eta)y'$}. 
Hence, by Lemma~\ref{l:commutator} for any $b\in \widetilde{S^k_{\Gamma_0,L_0,\rho}}$ 
$$
[Q_t,\Op_h(b)]= -ih{\Op_h(f) \!+O(h^\infty)_{{\Psi^{-\infty}}}, \;\;\;f\!=\! H_{q_t}b+\sum_{j=1}^r(r_t\lambda)_j(\partial_\lambda b)_j +\!O(h^{1-\rho})_{\widetilde{S^{k-2}_{\Gamma_0,L_0,\rho}}}}\!.
$$
Then, letting  $b_t^0:=a\circ K_{\kappa_t} {\in \widetilde{S^k_{\Gamma_0,L_0,\rho}}}$ and $B_t^0=\Op_h(b_t^0)$ yields
$$
hD_t{B_t^0}={-ih\Op_h\big(H_{q_t}b_t^0+ (r_t \mu)\cdot\partial_\mu b_t^0 \big)}=[Q_t,B_t^0]+h^{2-\rho}\Op_h(e^0_t)
$$
where $e^0_t\in \widetilde{S^{k-2}_{\Gamma_0,L_0,\rho}}$. {This follows from the fact that if we set $\mu(y)=y' h^{-\rho}$, then 
$$\partial_t (b_t^0(y, \eta, \mu(y)))=H_{q_t}b_t^0(y, \eta, \mu(y))+ \partial_\mu b_t^0(y, \eta, \mu(y))H_{q_t}(\mu(y))$$
and $H_{q_t}\mu(y)= r_t(y, \eta)\mu(y)$. 

}
Iterating this procedure and solving away successive errors finishes the proof of Proposition~\ref{p:invariance}. If $a\in \widetilde{S^{k}_{\Gamma_0,\rho}}$, then we need only use that $\partial_{\xi'}q_t=r_tx'$ and we obtain the remaining results.
\end{proof}

Our next lemma follows~\cite[Lemma 4.1]{SjZw:99} and gives a characterization of our second microlocal calculus in terms of the action of an operator. In what follows, given {operators  $A$ and $B$, we define the operator $\ad_A$ by
$\ad_A B =[A,B].$}

\begin{lemma}[Beal's criteria]
\label{l:beal}
Let $A_h:\mc{S}(\re^n)\to \mc{S}'(\re^n)$ {and $k\in \mathbb Z$}.  Then, $A_h=\Op_h(a)$ for some $a\in \widetilde{S^{k}_{\Gamma_0,L_0,\rho}}$ if and only if  {for any $\alpha, \beta \in \mathbb R^n$ there exists $C>0$ with}
\begin{equation*}
\|\ad_{h^{-\rho}x}^\alpha  \ad_{hD_{x}}^{\beta}A_hu \|_{|\beta|-\min(k,0)}
\leq Ch^{(1-\rho)(|\alpha|+|\beta|)}\|u\|_{\max(k,0)}
\end{equation*}
where 
$
\|u\|_{r}:=\|u\|_{L^2}+\|h^{-\rho r}|x'|^ru\|_{L^2},
$
{for $r\geq 0$}.
Similarly, $A_h=Op_h(a)$ for some $a\in \widetilde{S^k_{\Gamma_0,\rho}}$ if and only if
\begin{equation*}
\|\!\ad_{h^{-\rho}x'}^{\alpha'} \ad_{x''}^{\alpha''}   \ad_{hD_{x'}}^{\beta'} \ad_{hD_{x''}}^{\beta''}\!\!A_hu \|_{|\beta'|-\min(k,0)}
\!\leq \!Ch^{(1-\rho)(|\alpha'|+|\beta'|)+|\alpha''|+|\beta''|}\|u\|_{\max(k,0)},
\end{equation*}
then $A_h=Op_h(a)$ for some $a\in \widetilde{S^k_{\Gamma_0,\rho}}.$
\end{lemma}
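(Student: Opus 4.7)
The plan is to establish both directions of the equivalence, with the forward direction proceeding by iteration of Lemma~\ref{l:commutator} and the reverse direction by reduction to the classical Beals theorem through a rescaling argument.

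For the forward direction, I would begin from $A_h=\Op_h(a)$ with $a\in\widetilde{S^k_{\Gamma_0,L_0,\rho}}$ and compute the iterated commutators directly. The key identities are
\begin{equation*}
[h^{-\rho}x_j,\Op_h(a)]=ih^{1-\rho}\Op_h(\partial_{\xi_j}a),\qquad [hD_{x_j},\Op_h(a)]=-ih\Op_h(\partial_{x_j}a),
\end{equation*}
where by the chain rule $\partial_{x_j}a=(\partial_{x_j}\tilde a)+h^{-\rho}\partial_{\lambda_j}\tilde a$ for $j\leq r$, so that the second commutator also carries an effective $h^{1-\rho}$. Because $\partial_{\xi_j}$ lies in the Lagrangian foliation $L_0$, its action preserves the symbol class, while $\partial_{x_j}$ lowers the order by one. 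Iterating these identities $|\alpha|+|\beta|$ times yields $\ad_{h^{-\rho}x}^\alpha\ad_{hD_x}^\beta A_h=h^{(1-\rho)(|\alpha|+|\beta|)}\Op_h(a_{\alpha\beta})$ modulo an $O(h^\infty)$ remainder, with $a_{\alpha\beta}\in\widetilde{S^{k-|\beta|}_{\Gamma_0,L_0,\rho}}$. Applying Lemma~\ref{E: boundedness} in the weighted norm $\|\cdot\|_r$ then produces the claimed bound. For the stronger statement with $a\in\widetilde{S^k_{\Gamma_0,\rho}}$, one separates the $x'$ and $x''$ directions: since $x''$ is tangent to $\Gamma_0$ and $\partial_{x''}\tilde a$ has no $h^{-\rho}$ contribution from the chain rule, the corresponding commutators yield the sharper factor $h$ rather than $h^{1-\rho}$, matching the refined estimate.

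For the reverse direction, I would follow the standard Beals strategy through the unitary rescaling $T_\delta u(x)=h^{n\delta/2}u(h^\delta x)$ with $\delta=(1+\rho)/2$, as in the proof of Lemma~\ref{E: boundedness}. Conjugation by $T_\delta$ sends $h^{-\rho}x_j\mapsto h^{(1-\rho)/2}x_j$ and $hD_{x_j}\mapsto h^{(1-\rho)/2}D_{x_j}$, so that the hypothesized commutator bounds translate exactly into the classical Beals conditions for the standard semiclassical calculus $S_{(1-\rho)/2}$ with Planck constant $\tilde h=h^{1-\rho}$. The classical Beals characterization (see, e.g., \cite[Theorem 8.3]{EZB}) then produces $\tilde a_h\in S_{(1-\rho)/2}$, compactly supported at scale $h^{-(1-\rho)/2}$, with $T_\delta A_h T_\delta^{-1}=\Op_1(\tilde a_h)$. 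Undoing the rescaling and writing $\tilde a_h(x,\xi)=a(h^\delta x,h^{1-\delta}\xi,h^{1-\delta}x')$ yields a symbol $a(x,\xi,\lambda)$ with $\lambda=h^{-\rho}x'$ satisfying the estimates \eqref{E: symbol model L}. The order $k$ is extracted from the $\|\cdot\|_r$-bounds, which record the $\langle\lambda\rangle^k$ growth. The refined statement for $\widetilde{S^k_{\Gamma_0,\rho}}$ follows by combining this with a separate application of the standard (non-rescaled) Beals theorem in the $x''$ directions.

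The principal obstacle is the reverse direction. The delicate point is to verify that the symbol produced by Beals in the rescaled picture factors through the composite variable $\lambda=h^{-\rho}x'$ rather than carrying additional independent $x'$-dependence. Concretely, one must check that the mixed derivatives $\partial_{x'}^{\alpha'}\partial_\lambda^\gamma\tilde a$ satisfy the anisotropic estimates in~\eqref{E: symbol model L}, i.e.\ that after unpacking the rescaled Beals hypotheses one obtains the correct joint control in $x'$ and $\lambda$ separately. This is a lengthy but essentially routine bookkeeping step, and it is where the two scales $h$ and $h^\rho$ must be carefully reconciled.
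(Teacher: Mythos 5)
Your forward direction is fine and matches the structure of the paper's argument, which dispenses with it by simply noting it "follows directly from the model calculus." One small caveat: you invoke Lemma~\ref{E: boundedness} for the $\|\cdot\|_r$ norms, but that lemma is stated as an $L^2 \to L^2$ bound; to conclude in the weighted norms you need to absorb the $\langle h^{-\rho}x'\rangle$ factors into the symbol class (which works because $a_{\alpha\beta}\in\widetilde{S^{k-|\beta|}_{\Gamma_0,L_0,\rho}}$ has the corresponding $\langle\lambda\rangle^{k-|\beta|}$ decay), so you should be explicit that you are applying composition plus $L^2$ boundedness for the operators $\langle h^{-\rho}x'\rangle^{|\beta|-\min(k,0)}\Op_h(a_{\alpha\beta})\langle h^{-\rho}x'\rangle^{-\max(k,0)}$.

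The reverse direction has a genuine gap. You rescale by $T_{(1+\rho)/2}$ and then want the classical semiclassical Beals theorem (for the class $S_{(1-\rho)/2}$) to produce a symbol $\tilde a_h \in S_{(1-\rho)/2}$, after which you flag the extraction of the $\langle\lambda\rangle^{k}$ behavior as "routine bookkeeping." But the standard Beals theorem takes $L^2\to L^2$ commutator bounds as input and produces unweighted $S_{(1-\rho)/2}$ estimates as output; it cannot see the weights $\langle h^{-\rho}x'\rangle^{|\beta|-\min(k,0)}$ and $\langle h^{-\rho}x'\rangle^{\max(k,0)}$ built into the norms $\|\cdot\|_r$. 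Those weights carry precisely the information needed for the anisotropic estimates~\eqref{E: symbol model L}, i.e.\ the $\langle\lambda\rangle^{k-|\alpha|}$ factors. Dropping them gives a strictly weaker $L^2\to L^2$ hypothesis, and there is no way to recover the weighted symbol decay from the unweighted conclusion of Beals. So the step you call routine is in fact the entire content of the reverse implication.

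The paper handles this by rescaling with $U_h u(x) = h^{n/2}u(hx)$ (i.e.\ $\delta=1$), using the fixed-$h$ Beals theorem only to get the \emph{existence} of some symbol $a_h$ with $\tilde A_h = a_h(x,D)$, and then establishing all the estimates by hand: it sets $B_h = U_h\,\ad_{h^{-\rho}x}^\alpha\ad_{hD_x}^\beta(A_h)\,U_h^{-1}$, tests $B_h$ against shifted wave packets $\tau_{x_0}\hat\tau_{\xi_0}\psi$ and $\tau_{y_0}\hat\tau_{\eta_0}\phi$, reads off bounds on $\mathcal{F}\big((\tau_{y_0,\xi_0}\chi)\,\partial_\xi^\alpha\partial_x^\beta a_h\big)(\zeta,z)$ with the correct $\langle h^{1-\rho}(x_0)'\rangle^{k}\langle h^{1-\rho}(y_0)'\rangle^{-|\beta|}$ weights, and then converts these Fourier estimates into pointwise symbol estimates via integration by parts. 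That shift-operator argument is a Beals-type argument redone by hand precisely to incorporate the weights, and it is not subsumed by a citation of the classical theorem. To repair your proof you would need either to state and prove a weighted Beals theorem, or to carry out the paper's explicit Fourier-analytic computation, at which point the initial invocation of semiclassical Beals becomes redundant.
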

\begin{proof}
The fact that $A_h=\Op_h(a)$ for some $a\in \widetilde{S^k_{\Gamma_0,L_0,\rho}}$ implies the estimates above follows directly from the model calculus.
Let $U_h$ be the unitary (on $L^2$) operator,
$
U_hu(x)=h^{\frac{n}{2}}u(hx),
$
and note that 
$$
\|U_h^{-1} u\|_{r}=\|u\|_{L^2}+\|h^{(1-\rho)r}|x'|^r u\|_{L^2}.
$$

Then, consider 
$
\tilde{A}_h:=U_h A_h U_h^{-1}.
$
For fixed $h$, we can use Beal's criteria (see e.g.~\cite[Theorem 8.3]{EZB}) to see that there is $a_h$ such that
$
\tilde{A}_h=a_h(x,D).
$
Define $a$ such that 
$a(hx,\xi;h)=a_h(x,\xi)$
and hence, 
$
A_h=Op_h(a).
$
Note that for $\phi,\psi\in \mc{S}(\re^n)$,
\begin{equation}
\label{e:clownfish}
\langle \tilde{A}_h\psi, \phi\rangle =\frac{1}{(2\pi)^n} \iint e^{i\langle x,\xi\rangle }a_h(x,\xi)\hat{\psi}(\xi)\overline{\phi(x)}dxd\xi,
\end{equation}
where $\hat{\psi}(\xi)=(\mc{F}\psi)(\xi)=\int e^{-i\langle y,\xi\rangle }\psi(y)dy$.
Next, define $$B_h:=U_h\,{\ad_{h^{-\rho}x}^\alpha (\ad _{hD_x}^\beta (A_h))U_h^{-1}}.$$ 
{Since $D_x U_h= U_h hD_x$ and $U_h^{-1}D_x=hD_x U_h^{-1}$, we have}
\begin{align*}
B_h=\ad_{h^{1-\rho}x}^\alpha \ad _{D_x}^\beta \tilde{A}_h
=(-i)^{|\alpha|+|\beta|}h^{(1-\rho) |\alpha|}b_h(x,D),
\end{align*}
where
$
b_h(x,\xi)=(-\partial_{\xi})^\alpha \partial_x^\beta a_h(x,\xi).
$
{Our goal is then to understand the behavior of $b_h(x,\xi)$ in terms of $h$ and $\langle h^{1-\rho} x' \rangle$.} Let $\tau_{x_0}$ and $\hat{\tau}_{\xi_0}$ be the physical and frequency shift operators 
$$
\tau_{x_0}u(x)=u(x-x_0),\qquad \hat{\tau}_{\xi_0}u(x)=e^{i\langle x,\xi_0\rangle }u(x),
$$
with 
$\mc{F}\hat{\tau}_{\xi_0}=\tau_{\xi_0}\mc{F}$ and $ \mc{F}\tau_{x_0}=\hat{\tau}_{-x_0}.$
In addition, write $\|u\|_{(-r)}:=\|\langle h^{1-\rho}x'\rangle ^{-r}u\|_{L^2}$ for the dual norm to $\|u\|_{(r)}:=\|U_h^{-1}u \|_{r}$.

Assume that $k\geq 0$. Then, the definition of $B_h$ combined with the assumptions yield
\begin{equation*}
\label{e:aardvark}
|\langle B\tau_{x_0}\hat{\tau}_{\xi_0}\psi,\tau_{y_0}\hat\tau_{\eta_0}\phi\rangle| \leq h^{(1-\rho)(|\alpha|+|\beta|)} \|\tau_{x_0}\hat\tau_{\xi_0}\psi\|_{(k)}\|\tau_{y_0}\hat{\tau}_{\eta_0}\phi\|_{(-|\beta|)}.
\end{equation*}
In addition, note that for fixed $\psi,\phi\in \mc{S}$, 
$$
\|\tau_{x_0}\hat{\tau}_{\xi_0}\psi\|_{(k)}\sim \langle h^{1-\rho}(x_0)'\rangle^k,\qquad \|\tau_{y_0}\hat{\tau}_{\eta_0}\psi\|_{(-|\beta|)}\sim \langle h^{1-\rho}(y_0)'\rangle^{-|\beta|},
$$
Therefore, \eqref{e:aardvark} leads to
\begin{equation}
\label{e:aardvarkpro}
|\langle B\tau_{x_0}\hat{\tau}_{\xi_0}\psi,\tau_{y_0}\hat\tau_{\eta_0}\phi\rangle|\leq C h^{(1-\rho)(|\alpha|+|\beta|)}\langle h^{1-\rho}(x_0)'\rangle^{k}\langle h^{1-\rho}(y_0)'\rangle^{-|\beta|}.
\end{equation}

{On the other hand,} we have by~\eqref{e:clownfish} that 
\begin{align}
|\langle B\tau_{x_0}\hat{\tau}_{\xi_0}\psi,\tau_{y_0}\hat\tau_{\eta_0}\phi\rangle| 
&=\frac{h^{(1-\rho)|\alpha|}}{(2\pi)^n}\Big| \iint e^{i\langle x,\xi\rangle} b_h(x,\xi)\hat{\psi}(\xi-\xi_0)e^{-i\langle x_0,\xi-\xi_0\rangle -i\langle \eta_0,x-y_0\rangle }\bar{\phi}(x-y_0)dxd\xi\Big| \notag\\
&=h^{(1-\rho)|\alpha|}|\mc{F}((\tau_{y_0,\xi_0}\chi) b_h)(\eta_0-\xi_0,x_0-y_0)|, \label{e:aardvarkproLHS}
\end{align}
where 
$\chi(x,\xi)=e^{i\langle x,\xi\rangle }\hat{\psi}(\xi)\bar{\phi}(x).$
{Combining \eqref{e:aardvarkproLHS} with \eqref{e:aardvarkpro}} we then have
$$
|\mc{F}((\tau_{y_0,\xi_0}\chi) \partial_\xi^\alpha\partial_x^\beta a_h)(\eta_0-\xi_0,x_0-y_0)|\leq C h^{(1-\rho)|\beta|}\langle h^{1-\rho}(x_0)'\rangle^{k}\langle h^{1-\rho}(y_0)'\rangle^{-|\beta|}.
$$

Next, note that $\chi$ can be replaced by any fixed  function in $C_c^\infty$ by taking $\psi,\phi$ with $\hat{\psi}(\xi)\phi(x)\neq 0$ on $\supp \chi$. 
Putting $\zeta=\eta_0-\xi_0$ and $z=x_0-y_0$, we obtain {that for every $\tilde \alpha, \tilde \beta \in \mathbb N^n$}
$$
|\mc{F}(\partial_\xi^{\tilde{\alpha}}\partial_x^{\tilde{\beta}}(\tau_{y_0,\xi_0}\chi)\partial_\xi^\alpha \partial_x^\beta a_h)(\zeta,z)|\leq C h^{(1-\rho)|\beta|}\langle h^{1-\rho}(x_0)'\rangle^{k}\langle h^{1-\rho}(x_0-z)'\rangle^{-|\beta|}.
$$
Hence, 
$$
|z^{\tilde\alpha}\zeta^{\tilde{\beta}}\mc{F}((\tau_{y_0,\xi_0}\chi)\partial_{\xi}^\alpha\partial_x^\beta a_h)(\zeta,z)|\leq  C h^{(1-\rho)|\beta|}\langle h^{1-\rho}(x_0)'\rangle^{k}\langle h^{1-\rho}(x_0-z)'\rangle^{-|\beta|}.
$$
{In particular, for every $N>0$}
$$
|\mc{F}((\tau_{y_0,\xi_0}\chi)\partial_{\xi}^\alpha\partial_x^\beta a_h)(\zeta,z)|\leq  C h^{(1-\rho)|\beta|}\langle h^{1-\rho}(x_0)'\rangle^{k-|\beta|}\langle \zeta\rangle^{-N}\langle z\rangle^{-N},
$$
and, as a consequence, we obtain
$$
\partial_\xi^\alpha\partial_x^\beta a_h(x,\xi)=\partial_\xi^\alpha\partial_x^\beta(a(hx,\xi))=O(h^{(1-\rho)|\beta|}\langle h^{1-\rho}x'\rangle ^{k-|\beta|}).
$$
This gives the first claim of the lemma for $k\geq 0$. For $k\leq 0$, we consider $\langle h^{-\rho}x'\rangle^{-k}A$ and use the composition formulae.
A nearly identical argument yields the second claim.
\end{proof}

\subsection{Definition of the second microlocal class}

With Proposition~\ref{p:invariance} in place, { we are now in a position to define the class of operators with symbols in $S^{k}_{\Gamma,L,\rho}$. }

\begin{definition}
Let $\Gamma\subset U\subset T^*M$ be a co-isotropic submanifold, {$U$ an open set}, and $L$ a Lagrangian folation on $\Gamma$. A \emph{chart for $(\Gamma, L)$} is a symplectomorphism
$$
\kappa:U_0\to V,\qquad U_0\subset U,\qquad V\subset T^*\re^n,
$$
such that 
$\kappa(U_0\cap \Gamma)\subset V\cap \Gamma_0$ and ${\kappa_{*, q} L_q=(L_0)_{\kappa(q)}}$
 for $q\in \Gamma\cap U$.  
\end{definition}
 
 We now define the pseudodifferential operators associated to $(\Gamma,L)$.
\begin{definition}
Let $M$ be a smooth, compact manifold and $U\subset {T^*M}$ open, $\Gamma\subset U$ a co-isotropic submanifold, $L$ a Lagrangian foliation on $\Gamma$ and $\rho\in [0,1)$. We say that 
$
A:\mc{D}'(M)\to C_c^\infty(M)
$
is a \emph{semiclassical pseudodifferential operator with symbol class $S^{k}_{\Gamma,L,\rho}(U)$} (and write $A\in \Psi^k_{\Gamma,L,\rho}(U)$) {if  there are charts $\{\kappa_\ell\}_{\ell=1}^N$ for $(\Gamma, L)$ and symbols $\{a_\ell\}_{\ell=1}^N \subset \widetilde{S_{\Gamma,L,\rho}^k}(U)$ such that $A$ can be written in the form}
\begin{equation}
    \label{e:standardRep}
A=\sum_{\ell=1}^N T_\ell' \,\Op_h(a_\ell)\,T_\ell+O(h^\infty)_{\mc{D'}\to C^\infty}
\end{equation}
{where $T_\ell$ and $T_\ell'$ are FIOs quantizing $\kappa_\ell$ and $\kappa_\ell^{-1}$ for $\ell=1, \dots, N$.}

We say that $A$ is a \emph{semiclassical pseudodifferential operator with symbol class $S^{k}_{\Gamma,\rho}(U)$}, {and write $A\in \Psi^k_{\Gamma,\rho}(U)$}, if {there are symbols $\{a_\ell\}_{\ell=1}^N \subset \widetilde{S_{\Gamma,\rho}^k}(U)$ such that $A$ can be written in the form \eqref{e:standardRep}}.
\end{definition}

\begin{lemma}
\label{l:chartCompos}
Suppose that $\kappa:U\to T^*\re^n$ is a chart for $(\Gamma, L)$, $T$ quantizes $\kappa$, and $T'$ quantizes $\kappa^{-1}$.
{If $A\in \Psi^k_{\Gamma,L,\rho}(U)$, then} there is $a\in \widetilde{S^k_{\Gamma,L,\rho}}(U)$, with $\supp a(\cdot,\cdot,\lambda)\subset \kappa(U)$, such that  
$
TAT'=\Op_h(a)+O(h^\infty)_{\mc{D}'\to C^\infty}.
$
Moreover, if $A$ is given by~\eqref{e:standardRep}, then  
$$
a\circ K_\kappa =\sigma(T'T)\sum_{\ell=1}^N\sigma(T_\ell'T_\ell)\,(a_\ell\circ K_{\kappa_\ell})+O(h^{1-\rho})_{\widetilde{S^{k-1}_{\Gamma,L,\rho}}}.
$$
\end{lemma}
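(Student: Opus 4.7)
The plan is to substitute the standard representation \eqref{e:standardRep} of $A$ into $TAT'$ and then transport each piece to the chart $\kappa$ by applying Proposition~\ref{p:invariance} twice (once for $\kappa_\ell$, once for $\kappa^{-1}$). The main obstacle is that $T,T'$ and $T_\ell,T_\ell'$ are only mutual parametrices, not inverses, so the extra pseudodifferential factors $TT'$ and $T_\ell' T_\ell$ must be tracked carefully in the symbol formula.

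First I substitute to obtain $TAT' = \sum_\ell (TT_\ell')\,\Op_h(a_\ell)\,(T_\ell T') + O(h^\infty)_{\mc{D}'\to C^\infty}$. Choose FIO parametrices $T^{-1}$ for $T$ and $T_\ell^{-1}$ for $T_\ell$ (quantizing $\kappa^{-1}$ and $\kappa_\ell^{-1}$ respectively); then microlocally $T_\ell' = (T_\ell' T_\ell)\,T_\ell^{-1}$ and $T' = T^{-1}(TT')$ modulo $O(h^\infty)_{\mc{D}'\to C^\infty}$. The operators $T_\ell' T_\ell$ and $TT'$ are honest pseudodifferential operators with principal symbols $\sigma(T_\ell'T_\ell)$ and $\sigma(TT')$.

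Next I apply Proposition~\ref{p:invariance} with the chart $\kappa_\ell$ to conclude that $T_\ell^{-1}\Op_h(a_\ell)T_\ell = \Op_h(a_\ell\circ K_{\kappa_\ell}) + O(h^{1-\rho})_{\Psi^{k-1}_{\Gamma_0,L_0,\rho}}$. Composing on the left with $T_\ell' T_\ell$ and using Lemma~\ref{l:compose} to absorb the pseudodifferential factor yields
\[
T_\ell'\,\Op_h(a_\ell)\,T_\ell = \Op_h\bigl(\sigma(T_\ell'T_\ell)\,(a_\ell\circ K_{\kappa_\ell})\bigr) + O(h^{1-\rho})_{\Psi^{k-1}_{\Gamma_0,L_0,\rho}}.
\]
Then I sandwich this between $T$ and $T'$. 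Writing $T\,\Op_h(p_\ell)\,T' = \bigl(T\,\Op_h(p_\ell)\,T^{-1}\bigr)(TT')$ and invoking Proposition~\ref{p:invariance} with the FIO $T^{-1}$ (whose canonical transformation is $\kappa^{-1}$), I get
\[
T\,\Op_h(p_\ell)\,T' = \Op_h\bigl(\sigma(TT')\cdot p_\ell\circ K_{\kappa^{-1}}\bigr) + O(h^{1-\rho})_{\Psi^{k-1}_{\Gamma_0,L_0,\rho}},
\]
so that $TT_\ell'\,\Op_h(a_\ell)\,T_\ell T' = \Op_h\bigl(\sigma(TT')\cdot [\sigma(T_\ell'T_\ell)(a_\ell\circ K_{\kappa_\ell})]\circ K_{\kappa^{-1}}\bigr) + O(h^{1-\rho})$.

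Finally I sum over $\ell$ to identify the symbol $a$ of $TAT' = \Op_h(a) + O(h^\infty)$; the support condition $\supp a(\cdot,\cdot,\lambda)\subset \kappa(U)$ is inherited from the supports of the $a_\ell$ together with the fact that the canonical transformations $\kappa\circ\kappa_\ell^{-1}$ map into $\kappa(U_0)$. Composing the identity with $K_\kappa$, and using the two elementary identities $K_{\kappa^{-1}}\circ K_\kappa = \mathrm{id}$ and $\sigma(TT')\circ \kappa = \sigma(T'T)$ (the latter valid on the base and extended trivially in $\lambda$), collapses the chain of pullbacks and produces the stated formula
\[
a\circ K_\kappa = \sigma(T'T)\sum_{\ell=1}^N \sigma(T_\ell'T_\ell)\,(a_\ell\circ K_{\kappa_\ell}) + O(h^{1-\rho})_{\widetilde{S^{k-1}_{\Gamma,L,\rho}}}.
\]
The refinement to $\widetilde{S^{k-1}_{\Gamma,\rho}}$ when each $a_\ell\in \widetilde{S^k_{\Gamma,\rho}}$ follows from the corresponding stronger conclusion of Proposition~\ref{p:invariance} and Lemma~\ref{l:compose}.
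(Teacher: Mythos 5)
Your overall plan---substitute the standard representation \eqref{e:standardRep}, reduce to the single summand $TT_\ell'\,\Op_h(a_\ell)\,T_\ell T'$, and use Proposition~\ref{p:invariance} plus Lemma~\ref{l:compose} to pull out a model symbol---is the right one, and your final bookkeeping identities $K_{\kappa^{-1}}\circ K_\kappa=\mathrm{id}$ and $\sigma(TT')\circ\kappa=\sigma(T'T)$ are exactly what closes the computation. However, there is a genuine gap in how you invoke Proposition~\ref{p:invariance}.

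Proposition~\ref{p:invariance} applies only to an FIO quantizing a symplectomorphism of $T^*\re^n$ \emph{to itself} that fixes the model pair $(\Gamma_0,L_0)$; this is essential because the map $K_\kappa$ is built from the model structure ($\pi_{x'}$, $|y'|$, $|\mu|$). You attempt to apply it twice, once ``with the chart $\kappa_\ell$'' and once ``with the FIO $T^{-1}$.'' Neither of these is admissible: $\kappa_\ell$ is a chart from $U_\ell\subset T^*M$ to $T^*\re^n$, and $\kappa^{-1}$ maps back. Concretely, your intermediate identity $T_\ell^{-1}\Op_h(a_\ell)T_\ell=\Op_h(a_\ell\circ K_{\kappa_\ell})+O(h^{1-\rho})$ does not even typecheck: the left side acts on $L^2(M)$ while $\Op_h(\cdot)$ is the model quantization acting on $L^2(\re^n)$, and $K_{\kappa_\ell}$ is undefined for a map $T^*M\to T^*\re^n$ since $\Gamma_0\subset T^*\re^n$ does not sit inside $T^*M$. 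The same mismatch propagates into the sandwiched step $T\,\Op_h(p_\ell)\,T'$.

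The repair is exactly the observation the paper exploits: although $\kappa$ and $\kappa_\ell$ individually are only charts, the composition $\kappa_\ell\circ\kappa^{-1}:T^*\re^n\to T^*\re^n$ (quantized by $T_\ell T'$) and its inverse $\kappa\circ\kappa_\ell^{-1}$ (quantized by $TT_\ell'$) \emph{are} symplectomorphisms of $T^*\re^n$ preserving $(\Gamma_0,L_0)$, precisely because both charts transport $(\Gamma,L)$ to $(\Gamma_0,L_0)$. One then picks a microlocally unitary FIO $F_\ell$ quantizing $\kappa_\ell\circ\kappa^{-1}$, writes $T_\ell T'=C_{\!L}F_\ell$ and $TT_\ell'=F_\ell^{-1}C_{\!R}$ with $C_{\!L},C_{\!R}\in\Psi$, applies Proposition~\ref{p:invariance} \emph{once} to $F_\ell^{-1}(\,\cdot\,)F_\ell$, and absorbs the pseudodifferential factors $C_{\!R},C_{\!L}$ using Lemma~\ref{l:compose}; tracking symbols shows $\sigma(C_{\!R}C_{\!L})=\big(\sigma(T_\ell'T_\ell)\circ\kappa_\ell^{-1}\big)\big(\sigma(T'T)\circ\kappa_\ell^{-1}\big)$, and composing with $K_\kappa$ via $K_{\kappa_\ell\circ\kappa^{-1}}\circ K_\kappa=K_{\kappa_\ell}$ gives the claimed formula. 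Until you replace the two illegal applications of Proposition~\ref{p:invariance} with this single factorization through $F_\ell$, the argument does not go through.
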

\begin{proof}
Note that we can write
$
T A T'=\sum_{\ell=1}^N T T_\ell' \Op_h(a_\ell)T_\ell{T'} +O(h^\infty)_{\mc{D'}\to C^\infty}.
$
Next, note that $T T_\ell'$ quantizes $\kappa\circ \kappa_\ell^{-1}$ and that $T_\ell T'$ quantizes $\kappa_\ell \circ \kappa^{-1}$. Letting $F_\ell$ be a microlocally unitary FIO quantizing $\kappa_\ell \circ\kappa^{-1}$,  ${F_\ell}$ satisfies the hypotheses of {Proposition~\ref{p:invariance}} and we can write 
$$
T_\ell T'= C\sub{\!L}{F_\ell},\qquad {T}T_\ell' =  {F_\ell}^{-1}C\sub{\!R}
$$
with $C\sub{\!L},C\sub{\!R}\in \Psi(M)$ satisfying
$
\sigma(C\sub{\!R}C\sub{\!L})=\big(\sigma(T_\ell' T_\ell)\circ \kappa_\ell^{-1}\big) \big(\sigma(T'T)\circ  \kappa_\ell^{-1}\big).
$
Therefore,  
$$
\begin{gathered}T T_\ell' \Op_h(a_\ell)T_\ell T'= {F_\ell}^{-1}C\sub{\!R} \Op_h(a_\ell)C\sub{\!L} {F_\ell}= Op_h({b_\ell})+(h^\infty)_{\mc{D'}\to C^\infty},\\
{b_\ell}=\big(\sigma(C\sub{\!R}C\sub{\!L})\circ \kappa_\ell \circ\kappa^{-1}\big) \big(a_\ell \circ K_{\kappa_\ell \circ\kappa^{-1}}\big)+O(h^{1-\rho})_{\widetilde{S^{k-1}_{\Gamma,L,\rho}}}.
\end{gathered}
$$
The lemma follows.
\end{proof}

\begin{lemma}{Let $\Gamma\subset U\subset T^*M$ be a co-isotropic submanifold, {$U$ an open set}, and $L$ a Lagrangian foliation on $\Gamma$.}
There is a principal symbol map 
$${\sigma\sub{\Gamma, L}}:\Psi^k_{\Gamma,L,\rho}{(U)}\to S^k_{\Gamma,L,\rho}{(U)}/h^{1-\rho}S^{k-1}_{\Gamma,L,\rho}{(U)}$$ 
such that for $A\in \Psi^{k_1}_{\Gamma,L,\rho}{(U)}$, $B\in \Psi^{k_2}_{\Gamma,L,\rho}{(U)}$,
\begin{equation} 
\label{e:symbols}
{\sigma\sub{\Gamma, L}}(AB)={\sigma\sub{\Gamma, L}}(A){\sigma\sub{\Gamma, L}}(B),\quad {\sigma\sub{\Gamma, L}}([A,B])=-ih\{{\sigma\sub{\Gamma, L}}(A),{\sigma\sub{\Gamma, L}}(B)\}.
\end{equation}
Furthermore, {the sequence}
$$
0\;\mapsto\; h^{1-\rho}\Psi^{k-1}_{\Gamma,L,\rho}{(U)}\overset{{\sigma\sub{\Gamma, L}}}{\longrightarrow}\; S^{k}_{\Gamma,L,\rho}{(U)}/h^{1-\rho}S^{k-1}_{\Gamma,L,\rho}{(U)}\;\to\; 0
$$
is exact. {The same holds with  $\sigma\sub{\Gamma}$, $\Psi_{\Gamma,\rho}$ and $S^k_{\Gamma, \rho}$.}
\end{lemma}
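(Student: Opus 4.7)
The plan is to define the principal symbol chart by chart, check it is well-defined, and then verify the three stated properties (multiplicativity, commutator formula, exactness) by reducing each one to the model case via Lemma~\ref{l:chartCompos}.

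First I would define $\sigma\sub{\Gamma, L}$ locally. Given $A\in \Psi^k_{\Gamma,L,\rho}(U)$ and a chart $\kappa:U_0\to V_0$ for $(\Gamma,L)$, Lemma~\ref{l:chartCompos} yields $a\in \widetilde{S^k_{\Gamma,L,\rho}}$ with $TAT'=\Op_h(a)+O(h^\infty)_{\mc{D}'\to C^\infty}$, and declares
\[
\sigma\sub{\Gamma, L}(A)\circ K_\kappa := \sigma(T'T)^{-1}\,a \pmod{h^{1-\rho}\widetilde{S^{k-1}_{\Gamma,L,\rho}}}.
\]
Independence of the chosen representation \eqref{e:standardRep} is contained in the second assertion of Lemma~\ref{l:chartCompos}. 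For invariance under a change of chart $\kappa\to \tilde{\kappa}$, write $\tilde{T}A\tilde{T}'=(\tilde{T}T'){\cdot}TAT'{\cdot}(T\tilde{T}')+O(h^\infty)$ and apply Proposition~\ref{p:invariance} to the semiclassical FIOs $\tilde{T}T'$ and $T\tilde{T}'$, which quantize a symplectomorphism preserving $(\Gamma_0,L_0)$; the leading symbol thus transforms by the pullback along $K_{\tilde\kappa\kappa^{-1}}$, making $\sigma\sub{\Gamma, L}(A)$ a globally well-defined element of $S^k_{\Gamma,L,\rho}(U)/h^{1-\rho}S^{k-1}_{\Gamma,L,\rho}(U)$. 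Patching charts with a suitable microlocal partition of unity yields the global symbol. The analogous construction with $\widetilde{S^k_{\Gamma,\rho}}$ in place of $\widetilde{S^k_{\Gamma,L,\rho}}$ gives $\sigma\sub{\Gamma}$.

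Next I would verify the product and commutator formulas in \eqref{e:symbols}. Reducing to a common chart via Lemma~\ref{l:chartCompos}, multiplicativity is immediate from the composition expansion \eqref{e:composed} of Lemma~\ref{l:compose}, whose leading term is $ab$ modulo $h^{1-\rho}\widetilde{S^{k_1+k_2-1}_{\Gamma,L,\rho}}$. For the commutator, Lemma~\ref{l:commutator} gives
\[
[\Op_h(a),\Op_h(b)]=-ih^{1-\rho}\Op_h(c)+O(h^\infty),
\]
with $c=h^\rho\{a,b\}_{(x,\xi)}+\{a,b\}_{(\xi',\lambda)}+O(h^{1-\rho})$; factoring out the rescaling in $\lambda=h^{-\rho}x'$ inherent to $K_\kappa$, the two Poisson bracket pieces assemble into the single bracket on $T^*M$ (viewed through the model coordinates), which is precisely the statement $\sigma\sub{\Gamma, L}([A,B])=-ih\{\sigma\sub{\Gamma, L}(A),\sigma\sub{\Gamma, L}(B)\}$. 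The variant for $\Psi_{\Gamma,\rho}$ uses the refined remainders stated in Lemmas~\ref{l:compose} and \ref{l:commutator}.

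Finally I would establish exactness. The containment $h^{1-\rho}\Psi^{k-1}_{\Gamma,L,\rho}(U)\subset \ker \sigma\sub{\Gamma, L}$ is a direct consequence of the definition. Conversely, if $\sigma\sub{\Gamma, L}(A)=0$, then in every chart the local symbol lies in $h^{1-\rho}\widetilde{S^{k-1}_{\Gamma,L,\rho}}$, so $A\in h^{1-\rho}\Psi^{k-1}_{\Gamma,L,\rho}(U)$, giving injectivity at the left. For surjectivity, given $a\in S^k_{\Gamma,L,\rho}(U)$, choose a cover of $\supp a$ by chart domains $\{U_\ell\}$ with a subordinate partition of unity $\{\chi_\ell\}$ and set
\[
A:=\sum_\ell T_\ell'\,\Op_h\bigl((\chi_\ell a)\circ K_{\kappa_\ell}^{-1}\bigr)\,T_\ell,
\]
where $T_\ell,T_\ell'$ are microlocally inverse FIOs quantizing $\kappa_\ell,\kappa_\ell^{-1}$. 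By Lemma~\ref{l:chartCompos}, $\sigma\sub{\Gamma, L}(A)=a$ modulo $h^{1-\rho}S^{k-1}_{\Gamma,L,\rho}(U)$. I expect the subtlest point of the proof to be the chart-invariance step: one must verify that the transformation law supplied by Proposition~\ref{p:invariance} identifies the anisotropic Poisson bracket coming from Lemma~\ref{l:commutator} with a chart-independent bracket on the extended symbol space, so that \eqref{e:symbols} acquires intrinsic meaning.
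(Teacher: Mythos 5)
Your proposal is correct and follows essentially the same route as the paper's own proof: define $\sigma\sub{\Gamma,L}$ via the chart representation, deduce well-definedness from Lemma~\ref{l:chartCompos} (which itself packages Proposition~\ref{p:invariance}), obtain the algebra identities~\eqref{e:symbols} from Lemmas~\ref{l:compose} and~\ref{l:commutator}, and prove exactness by reducing to a single chart. You spell out the reassembly of the $h^\rho\{\cdot,\cdot\}_{(x,\xi)}$ and $\{\cdot,\cdot\}_{(\xi',\lambda)}$ terms into the intrinsic Poisson bracket a bit more explicitly than the paper, which simply cites Lemma~\ref{l:compose}, but this is a matter of detail rather than a different argument.
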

\begin{proof}
For $A$ as in~\eqref{e:standardRep}, we define 
$$
{\sigma\sub{\Gamma, L}}(A)=\sum_{\ell=1}^N\sigma(T_\ell T_\ell')(\tilde{a}_\ell\circ\kappa)
$$
where $\tilde{a}_\ell(x,\xi):=a_\ell(x,\xi,h^{-\rho}x')$. The fact that $\sigma$ is well defined then follows from Lemma~\ref{l:chartCompos}, and the formulae~\eqref{e:symbols} follow from Lemma~\ref{l:compose}.

To see that the sequence is exact, we only need to check that if $A\in \Psi^k_{\Gamma, L,\rho}$ and  $\sigma\sub{\Gamma, L}(A)=0$, then $A\in h^{1-\rho}\Psi^{k-1}_{\Gamma,L,\rho}.$ To do this, we may assume that ${\WFh'}(A)\subset U$ such that there is a chart $(\kappa, U)$ for $(\Gamma,L)$. Let $T$ {be a  microlocally unitary FIO quantizing $\kappa$} and suppose that $\sigma\sub{\Gamma,L}(A)\in h^{1-\rho}S_{\Gamma,L,\rho}^{k-1}$. 
Then, by the first part of Lemma~\ref{l:chartCompos} we know
$
TAT^{-1}= \Op_h(a)+O(h^\infty)
$
for some $a\in \widetilde{S^{k}_{\Gamma,L,\rho}}$. Then, by the second part of Lemma~\ref{l:chartCompos}, since $\sigma\sub{\Gamma, L}(A)\in h^{1-\rho}S_{\Gamma,L,\rho}^{k-1}$, {$a\in h^{1-\rho}{\widetilde{S^{k-1}_{\Gamma,L,\rho}}}$} and in particular, $A\in h^{1-\rho}\Psi^{k-1}_{\Gamma,L,\rho}.$
\end{proof}

Note that if $A\in \Psi^{\comp}(M)$, then $A\in \Psi^0_{\Gamma,L,\rho}$ and 
$
\sigma(A)=\sigma\sub{{\Gamma}}(A).
$
Furthermore, if $A\in \Psi^k_{\Gamma,\rho}$, then $A\in \Psi^k_{\Gamma,L,\rho}$ and 
$
\sigma\sub{{\Gamma}}(A)=\sigma\sub{{\Gamma,L}}(A).
$

\begin{lemma}Let $\Gamma\subset U\subset T^*M$ be a co-isotropic submanifold, {$U$ an open set}, and $L$ a Lagrangian foliation on $\Gamma$.
There is a non-canonical quantization procedure 
$$ Op_h^{\Gamma,L}:S^k_{\Gamma,L,\rho}{(U)}\to \Psi^k_{\Gamma,L,\rho}{(U)}$$ 
such that for all $A\in \Psi^{k}_{\Gamma,L,\rho}{(U)}$ there is $a\in S^{k}_{\Gamma, L,\rho}{(U)}$ such that
$
Op_h^{\Gamma, L}(a)=A+O(h^\infty)_{\mc{D}'\to C^\infty}.
$
and ${\sigma\sub{\Gamma,L}}\circ Op_h^{\Gamma,L}:S^k_{\Gamma,L,\rho}{(U)}\to S^k_{\Gamma,L,\rho}{(U)}/h^{1-\rho}S^{k-1}_{\Gamma,L,\rho}{(U)}$ is the natural projection map.
\end{lemma}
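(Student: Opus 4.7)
The plan is to build $Op_h^{\Gamma,L}$ chart-by-chart, verify it lands in $\Psi^k_{\Gamma,L,\rho}(U)$ essentially by the definition of that class, and then recover every operator from its symbol by iterating the principal symbol exact sequence with a Borel summation.

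First, I would cover the projection of $U$ by finitely many chart neighborhoods $\{U_\ell\}_{\ell=1}^N$ admitting charts $\kappa_\ell:U_\ell\to V_\ell\subset T^*\re^n$ for $(\Gamma,L)$, together with microlocally unitary FIOs $T_\ell$ quantizing $\kappa_\ell$ and $T_\ell'$ quantizing $\kappa_\ell^{-1}$, normalized so that $\sigma(T_\ell'T_\ell)\equiv 1$ on a neighborhood of $\WFh'(T_\ell)$. Choose a subordinate microlocal partition of unity $\{\chi_\ell\}_{\ell=1}^N\subset \Psi^{\comp}(M)$ with $\sum_\ell \chi_\ell \equiv 1$ microlocally on $U$. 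Given $a\in S^k_{\Gamma,L,\rho}(U)$, for each $\ell$ pull back $\sigma(\chi_\ell)\,a$ through $\kappa_\ell$ to obtain a symbol $a_\ell$ defined on $V_\ell$; because $\kappa_\ell$ maps $\Gamma$ to $\Gamma_0$ and preserves the Lagrangian foliation, the symbol estimates \eqref{e:anSymbEst} translate into the estimates defining $\widetilde{S^k_{\Gamma_0,L_0,\rho}}$, so there is a unique $\tilde a_\ell\in \widetilde{S^k_{\Gamma_0,L_0,\rho}}$ with $a_\ell(x,\xi)=\tilde a_\ell(x,\xi,h^{-\rho}x')$. Then set
$$
Op_h^{\Gamma,L}(a):=\sum_{\ell=1}^N T_\ell'\,\Op_h(\tilde a_\ell)\,T_\ell.
$$
By the very definition \eqref{e:standardRep} of $\Psi^k_{\Gamma,L,\rho}(U)$, this operator lies in $\Psi^k_{\Gamma,L,\rho}(U)$.

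Next, I would compute $\sigma\sub{\Gamma,L}\circ Op_h^{\Gamma,L}$. By Lemma \ref{l:chartCompos}, conjugating $Op_h^{\Gamma,L}(a)$ by any single chart $T$ gives a full symbol whose leading piece along $\Gamma$ is $\sum_\ell \sigma(T_\ell'T_\ell)(a_\ell\circ K_{\kappa_\ell})$ modulo $h^{1-\rho}\widetilde{S^{k-1}_{\Gamma,L,\rho}}$. Because of the normalizations $\sigma(T_\ell'T_\ell)\equiv 1$ on the relevant supports and $\sum_\ell \sigma(\chi_\ell)=1$ microlocally on $U$, this sum telescopes to $a$ modulo $h^{1-\rho}S^{k-1}_{\Gamma,L,\rho}$; thus $\sigma\sub{\Gamma,L}\circ Op_h^{\Gamma,L}$ is precisely the natural projection.

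For the surjectivity statement, given $A\in \Psi^k_{\Gamma,L,\rho}(U)$, set $a_0:=\sigma\sub{\Gamma,L}(A)\in S^k_{\Gamma,L,\rho}(U)$. Then $A - Op_h^{\Gamma,L}(a_0)\in h^{1-\rho}\Psi^{k-1}_{\Gamma,L,\rho}(U)$ by the exactness established in the previous lemma. Iterating this construction produces symbols $a_j\in h^{j(1-\rho)}S^{k-j}_{\Gamma,L,\rho}(U)$ such that $A-\sum_{j<J}Op_h^{\Gamma,L}(a_j)\in h^{J(1-\rho)}\Psi^{k-J}_{\Gamma,L,\rho}(U)$ for every $J$. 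A standard Borel summation argument in the $\widetilde{S^k_{\Gamma_0,L_0,\rho}}$ classes (applied chart by chart and glued via the partition of unity) produces a single symbol $a\sim\sum_j a_j$ in $S^k_{\Gamma,L,\rho}(U)$, for which $A=Op_h^{\Gamma,L}(a)+O(h^\infty)_{\mc{D}'\to C^\infty}$.

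The main obstacle is checking that the Borel summation can be carried out inside the anisotropic classes $\widetilde{S^k_{\Gamma_0,L_0,\rho}}$: the cutoff scale required at each order must be tuned to the $\rho$-dependent derivative losses in \eqref{E: symbol model L} so that the summed symbol genuinely satisfies the anisotropic estimates rather than merely the standard $S_{\rho/2}$ estimates. Once that is set up via the rescaling $T_\delta$ used in Lemma \ref{E: boundedness}, everything else reduces to the model-case composition and invariance results (Lemma \ref{l:compose} and Proposition \ref{p:invariance}).
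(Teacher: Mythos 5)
Your proposal is correct and follows essentially the same route as the paper: quantize chart-by-chart by pulling symbols back through the $\kappa_\ell$ and conjugating by the FIOs $T_\ell, T_\ell'$, check via Lemma~\ref{l:chartCompos} that $\sigma\sub{\Gamma,L}\circ Op_h^{\Gamma,L}$ is the natural projection, and iterate the exact sequence together with an asymptotic (Borel) summation for surjectivity. The only difference is cosmetic — the paper makes $\sigma(T_\ell'T_\ell)$ itself the partition of unity and takes $\chi_\ell\equiv 1$ on $\supp\sigma(T_\ell'T_\ell)$ as a cutoff, whereas you normalize $\sigma(T_\ell'T_\ell)\equiv 1$ near the relevant wavefront sets and carry a separate microlocal partition $\{\chi_\ell\}$; the two arrangements are interchangeable and lead to the same computation.
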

\begin{proof}
{Let $\{(\kappa_\ell, U_\ell)\}_{\ell=1}^N$ be charts for $(\Gamma, L)$ such that $\{U_\ell\}_{\ell=1}^N$ is a locally finite cover for $U$,  $T_\ell$ and $T_\ell'$ quantize respectively $\kappa_\ell$ and $\kappa_\ell^{-1}$, and  $\sigma(T_\ell'T_\ell)\in C_c^\infty(U_\ell)$ is a partition of unity on $U$.}
{Let $a\in S^{k}_{\Gamma, L,\rho}{(U)}$.} Then, define $a_\ell \in \widetilde{S^k_{\Gamma_0,L_0,\rho}}$ such that
$
a_\ell(x,\xi,h^{-\rho}x'):= (\chi_\ell a)\circ \kappa^{-1}{(x,\xi)}
$
where $\chi_\ell\equiv 1$ on $\supp \sigma(T_\ell'T_\ell)$. 
 We then define the quantization map
$$
Op_h^{\Gamma,L}(a):=\sum_{\ell=1}^NT_\ell' \Op_h(a_\ell)T_\ell.
$$
The fact that ${\sigma\sub{\Gamma,L}}\circ Op_h^{\Gamma,L}$ is the natural projection follows immediately. Now, fix $A\in \Psi^{k}_{\Gamma, L,\rho}(U)$. Put $a_0={\sigma\sub{\Gamma,L}}(A)$. Then, 
$
A={Op_h^{\Gamma,L}}(a_0)+h^{1-\rho}A_1
$
where $A_1\in \Psi^{k-1}_{\Gamma,L,\rho}.$ We define  $a_k={\sigma\sub{\Gamma,L}}(A_k)$ inductively for $k\geq 1$ by  
$$
h^{(k+1)(1-\rho)}A_{k+1}=A-\sum_{k=0}^k h^{k(1-\rho)}Op_h^{\Gamma,L}(a_k).
$$
Then, letting $a\sim \sum_k h^{k(1-\rho)}a_k$, we have 
$
A=Op_h^{\Gamma,L}(a)+O(h^\infty)_{\mc{D}'\to C^\infty}
$
as claimed.
\end{proof}

\begin{remark}
{Note that $E:=\sum_{\ell=1}^NT_\ell T_\ell'$ is an elliptic pseudodifferential operator with symbol $1$. Therefore, there is $E'\in \Psi^0$ with $\sigma(E')=1$ such that $E' EE'=\Id$. Replacing $T_\ell$ by $E'T_\ell$ and $T_\ell'$ by $T_\ell'E'$,  we may (and will) ask for $\sum_{\ell=1}^N T_\ell T_\ell'=\Id$, and so ${Op_h^{\Gamma,L}}(1)=\Id$.}
\end{remark}

\begin{lemma}
{Let $\Gamma\subset U\subset T^*M$ be a co-isotropic submanifold.}
If $A\in \Psi^k_{\Gamma,\rho}(U)$ and $P\in \Psi^m(U)$ with symbol $p$ such that for every $q\in \Gamma$ we have $H_p(q)\in T_q\Gamma.$ Then, 
$$
\frac{i}{h}[P,A]=Op_h^{\Gamma}(H_p  a)+O(h^{1-\rho})_{\Psi^{k-1}_{\Gamma,\rho}},
$$
where $a(x,\xi;h)= \sigma\sub\Gamma(A)(x,\xi, h^{-\rho}x')$.
\end{lemma}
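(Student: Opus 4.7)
The plan is to reduce to the model case of Lemma~\ref{l:commutator} via a chart adapted to $\Gamma$, apply the ``moreover'' clause of that lemma using the tangency hypothesis on $H_p$, and then recognize the resulting model symbol as $H_p a$ via the chain rule.

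First I would use a microlocal partition of unity to reduce to $A = T'\, Op_h(\tilde a)\, T + O(h^\infty)_{\mc{D}'\to C^\infty}$ for a single chart $\kappa : U_0 \to V_0$ of $\Gamma$, with $\tilde a \in \widetilde{S^k_{\Gamma_0,\rho}}$ and $T, T'$ microlocally inverse FIOs quantizing $\kappa$ and $\kappa^{-1}$. After a cutoff to the microsupport of $\tilde a$, $TPT'$ is a standard pseudodifferential operator with essentially compactly supported principal symbol $\tilde p = p\circ\kappa^{-1}$, and a routine calculation gives $T[P,A]T' = [Op_h(\tilde p), Op_h(\tilde a)] + O(h^{2-\rho})_{\Psi^{k-1}_{\Gamma_0,\rho}} + O(h^\infty)$, since lower-order contributions to $\sigma(TPT')$ yield commutators of order $h^{2-\rho}$ in the second-microlocal scale.

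Next I would exploit the tangency hypothesis. The condition $H_p(q)\in T_q\Gamma$ for $q\in\Gamma$ transfers under $\kappa$ to $H_{\tilde p}(q)\in T_q\Gamma_0 = \ker(dx')$ on $\Gamma_0 = \{x'=0\}$, forcing $\partial_{\xi'}\tilde p \equiv 0$ there; Hadamard's lemma then supplies a smooth matrix $e(x,\xi)$ with $\partial_{\xi'}\tilde p = e(x,\xi) x'$. This is precisely the setting of the ``moreover'' clause of Lemma~\ref{l:commutator}, which gives
\[
\tfrac{i}{h}[Op_h(\tilde p), Op_h(\tilde a)] = Op_h(c) + O(h^\infty), \qquad c = H_{\tilde p}\tilde a + (e\lambda)\cdot\partial_\lambda\tilde a + O(h^{1-\rho})_{\widetilde{S^{k-1}_{\Gamma_0,\rho}}}.
\]

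The final step will be to identify $c\big|_{\lambda = h^{-\rho}x'}$ with $H_{\tilde p}(a\circ\kappa^{-1})$, where $(a\circ\kappa^{-1})(x,\xi) = \tilde a(x,\xi, h^{-\rho}x')$ is the local expression of the principal symbol. The chain rule yields
\[
H_{\tilde p}(a\circ\kappa^{-1}) = (H_{\tilde p}\tilde a)\big|_{\lambda=h^{-\rho}x'} + h^{-\rho}(\partial_{\xi'}\tilde p)\cdot(\partial_\lambda\tilde a)\big|_{\lambda=h^{-\rho}x'},
\]
and substituting $\partial_{\xi'}\tilde p = ex'$ converts the extra term into $(e\lambda)\cdot\partial_\lambda\tilde a\big|_{\lambda=h^{-\rho}x'}$, matching the second term of $c$ exactly. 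Conjugating back by $T,T'$ identifies $Op_h(c)$ with $Op_h^\Gamma(H_p a)$ modulo $h^{1-\rho}\Psi^{k-1}_{\Gamma,\rho}$, and reassembling the partition of unity completes the argument. The main obstacle is precisely this final matching: without the tangency condition $\partial_{\xi'}\tilde p = ex'$, the $h^{-\rho}$ factor arising from differentiating the frozen argument $h^{-\rho}x'$ would be uncancelled, and the error would be of order $h^{-\rho}$ rather than the claimed $O(h^{1-\rho})$.
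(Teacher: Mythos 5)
Your proposal is correct and follows essentially the same route as the paper: reduce to a chart via an FIO $T$, write $TPT'=Op_h(p\circ\kappa^{-1})+O(h)_{\Psi^{m-1}}$, and invoke Lemma~\ref{l:commutator} to produce the model symbol $c$. The paper leaves the ``direct computation'' implicit, while you helpfully make explicit the steps it hides --- the tangency of $H_p$ to $\Gamma$ transferring to $\partial_{\xi'}(p\circ\kappa^{-1})=e(x,\xi)x'$ via Hadamard, applying the ``moreover'' clause, and the chain-rule cancellation of the $h^{-\rho}$ factor --- all of which are correct.
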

\begin{proof}
Suppose that {$WF_h'(A) \subset U_\ell$ for $U_\ell \subset U$ open, and suppose that $\kappa:U_\ell\to T^*\re^n$ is a chart for $(\Gamma, L)$. Note that
we may assume that $\WFh(A)' \subset U_\ell$ and then use a partition of unity to cover $U$ with a family $\{U_\ell\}_\ell$}. Therefore, there exist $ a\in \widetilde{S^k_{\Gamma,\rho}}$ {and a Fourier integral operator $T$ that is microlocally elliptic on {$U_\ell$} and quantizes $\kappa$}, such that 
$
A=T^{-1} \Op_h(a)T +O(h^\infty)_{\mc{D'}\to C^\infty}.
$
Then, on $\WFh'(A)$, 
$$
T[P,A]T^{-1}=[T PT^{-1},\Op_h( a)]+O(h^\infty)_{\mc{D}'\to C^\infty}. 
$$
Now, 
$
T PT^{-1}=Op_h(p\circ \kappa^{-1})+O(h)_{{\Psi^{m-1}}}.
$
Hence, {a direct computation using Lemma~\ref{l:commutator} gives}
$$
[TPT^{-1}, \Op_h( a)]=-ih\Op_h(c)+O(h^{2-\rho})_{\widetilde{\Psi^{k-2}_{\Gamma_0,\rho}}},
$$
with $c(x,\xi,h^{-\rho}x')=H_{p\circ \kappa^{-1}}{( a(x,\xi,h^{-\rho}x'))} {\in S^{k-1}\sub{\Gamma, \rho}(U_\ell)}.$
In particular, 
$$
[P,A]=-ihT^{-1}\Op_h(c)T+O(h^{2-\rho})_{\Psi^{k-2}_{\Gamma,\rho}}
$$
Therefore, 
$
[P,A]\in h\Psi^{k-1}_{\Gamma,\rho}.
$
with symbol $\sigma\sub{\Gamma}(ih^{-1}[P,A])=H_p {( a(x,\xi,h^{-\rho}x'))}$. 
\end{proof}

\section{An Uncertainty principle for co-isotropic localizers}
\label{s:uncertainMe}

The {first goal of this section is to build a family of cut-off operators $X_y$ with $y \in M$ that act as the identity on the shrinking ball $B(y, h^\rho)$ and such that they commute with $P$ in a fixed size neighborhood of $y$. This is the content of section \ref{S:co-isotrop}. The second goal is to control $\|X_{y_1} X_{y_2}\|_{L^2 \to L^2}$ in terms of the distance $d(y_1, y_2)$, as this distance shrinks to $0$. We do this in Section \ref{S:uncertainty}. Finally, in Section~\ref{s:almostOrthog}, we study the consequences of these estimates for the almost orthogonality of $X_{y_i}$.}

{In order to localize to the ball $B(y, h^\rho)$ in a way compatible with microlocalization we need to make sense of $$\chi_y(x)=\tilde{\chi}\big(\tfrac{1}{\e}h^{-\rho}d(x,y)\big)\qquad \tilde{\chi}\in C_c^\infty((-1,1)),$$ as an operator in some anisotropic pseudodifferential calculus. 
As a function, $\chi_y$ is in the symbol class $S^{-\infty}_{\Gamma_y, L_y}$, where $\Gamma_y, L_y$ are the co-isotropic submanifold and Lagrangian foliation defined as follows:  } Fix $\delta>0$, to be chosen small later, and {for each $x\in M$ let}
\begin{equation}
    \label{e:coiso}
\Gamma_y:=\bigcup_{|t|<\frac{1}{2}\inj(M)} \varphi_t(\Omega_y), \qquad \Omega_y:=\big\{{ \xi\in T^*_yM:\; \;\big|1-|\xi|_g\big|<\delta}\big\}.
\end{equation}

In this section, we construct localizers to $\Gamma_{y}$ adapted to the Laplacian and study the incompatibility between localization to $\Gamma_{y_1}$ and $\Gamma_{y_2}$ as a function of the distance between $y_1, y_2 \in M$.
{Let $y \in M$. In what follows we work with the Lagrangian foliation $L_y$ of $\Gamma_y$ given by 
$$
L_y=\{L_{y, \tilde q}\}_{\tilde q \in \Gamma_y}, \qquad 
L_{y, \tilde q}= (\varphi_t)_*(T_qT^*_yM),
$$
where $\tilde q= \varphi_t(q)$ for some $|t|<\frac{1}{2}\inj(M)$ and $q \in \Omega_y$.
}
{\begin{remark}
In fact, it will be enough for us to show that $\chi_y(x) \tilde{\chi}( \delta^{-1}(|hD|_g-1))\in \Psi_{\Gamma_y,L_y,\rho}$ since we will be working near the characteristic variety for the Laplacian.
\end{remark}
}

\subsection{Co-isotropic cutoffs adapted to the Laplacian}\label{S:co-isotrop}

\begin{lemma}\label{l:chi_h,y}
Let $y\in M$, $0<\e<\delta$, $0\leq \rho <1$, $\tilde{\chi}\in C_c^\infty((-1,1))$, and define the operator $\chi\sub{h,y}$ by
\begin{equation}\label{e:chi_y}
\chi\sub{h,y}u(x):=\tilde{\chi}(\tfrac{1}{\e}h^{-\rho}d(x,y))\;[Op_h(\tilde\chi(\tfrac{1}{\e}(|\xi|_g-1)))u](x).
\end{equation}
Then, $\chi\sub{h,y} \in \Psi_{\Gamma_{y},{L_{y}} ,\rho}^{-\infty}$.
\end{lemma}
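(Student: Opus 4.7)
The plan is to realize $\chi\sub{h,y}$ as a standard quantization whose symbol, after conjugation by a chart for $(\Gamma_y, L_y)$, lies in the model class $\widetilde{S^{-\infty}_{\Gamma_0, L_0, \rho}}$. I proceed in three stages.

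\emph{Reduction.} First I write $\chi\sub{h,y} = Op_h(\chi_1(x)\chi_2(\xi))$ where $\chi_1(x) := \tilde\chi(\e^{-1}h^{-\rho}d(x,y))$ and $\chi_2(\xi) := \tilde\chi(\e^{-1}(|\xi|_g-1))$. The microsupport lies in $\{d(x,y)<\e h^\rho\}\cap\{||\xi|_g-1|<\e\}$, which is an $O(h^\rho)$-neighborhood of the Lagrangian $\Omega_y \subset \Gamma_y$. A microlocal partition of unity reduces the claim to a small $h$-independent neighborhood $U_0$ of a point $\tilde q_0 = (y,\xi_0)\in\Omega_y$ with $|\xi_0|_g = 1$.

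\emph{Chart.} Next I construct a symplectic chart $\kappa : U_0 \to V_0 \subset T^*\R^n$ for $(\Gamma_y, L_y)$: first send the Lagrangian $T_y^*M$ to the cotangent fiber $\{X=0\}$ by Darboux, then arrange $\kappa_* H_p = \partial_{X_n}$ so that the flowout $\Gamma_y$ coincides with $\{X'=0\}=\Gamma_0$ and the leaves $\varphi_t(T_y^*M)$ correspond to the model leaves $\{X=(0,X''_0)\}$ of $L_0$. Let $T,T'$ be microlocally inverse FIOs quantizing $\kappa,\kappa^{-1}$. The key geometric fact is that $\pi\sub{M}(\Omega_y) = \{y\}$: since $\kappa(\Omega_y)\subset\{X=0\}$, the base projection satisfies $\pi\sub{M}\circ\kappa^{-1}(0,\Xi) = y$ for every $\Xi$, so every pure $\Xi$-derivative of $\pi\sub{M}\circ\kappa^{-1}$ vanishes on $\{X=0\}$ and is therefore $O(|X|) = O(h^\rho)$ on the microsupport.

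\emph{Symbol.} Set
\[\tilde a(X,\Xi,\lambda) := \bigl[(\chi_1\chi_2)\circ\kappa^{-1}\bigr]\!\left(h^\rho\lambda,\,X'',\,\Xi\right)\,\phi(\lambda),\]
where $\phi\in C_c^\infty(\R^{n-1})$ equals $1$ on the range of $h^{-\rho}X'$ over the microsupport. Derivatives in $X''$ give the allowed $O(h^{-\rho})$ factor, while $\partial_\lambda = h^\rho\partial_{X'}$ acting on $(\chi_1\chi_2)\circ\kappa^{-1}$ produces $O(1)$ via the $h^\rho\!\cdot\! h^{-\rho}$ cancellation, and $\phi$ provides the $-\infty$ decay in $\lambda$. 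The delicate $\partial_\Xi$-derivatives are also $O(1)$: Fa\`a di Bruno gives
\[\partial_\Xi^k(\chi_1\circ\pi\sub{M}\circ\kappa^{-1}) = \sum \partial_x^j\chi_1\cdot\prod_{i=1}^{j} \partial_\Xi^{a_i}(\pi\sub{M}\circ\kappa^{-1}),\]
with the outer sum over partitions of $\{1,\ldots,k\}$ into $j$ blocks of sizes $a_i\geq 1$; the geometric fact makes each factor in the product $O(h^\rho)$, so each term is $O(h^{-\rho j}\cdot h^{\rho k}) = O(h^{\rho(k-j)}) = O(1)$. Hence $\tilde a \in \widetilde{S^{-\infty}_{\Gamma_0,L_0,\rho}}$. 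A standard Egorov argument then shows $T\chi\sub{h,y}T' = Op_h(\tilde a)+O(h^\infty)_{\Psi^{-\infty}}$, yielding $\chi\sub{h,y}\in\Psi^{-\infty}_{\Gamma_y,L_y,\rho}$.

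The main obstacle is the $\partial_\Xi$-estimate, which naively gives $O(h^{-\rho})$ per derivative and would violate the class. The resolution rests on the collapse $\pi\sub{M}(\Omega_y)=\{y\}$, combined with the $h^\rho$ spatial confinement of $\chi_1$, which produces exactly the cancellation needed.
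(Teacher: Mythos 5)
Your geometric insight is exactly the one the paper relies on: the collapse $\pi\sub{M}(\Omega_y)=\{y\}$ forces pure fiber-derivatives of $\pi\sub{M}\circ\kappa^{-1}$ to vanish on $\{X=0\}$ and therefore be $O(h^\rho)$ on the microsupport, producing the $h^\rho\cdot h^{-\rho}$ cancellation. In the paper's language the identical fact appears as $\sigma(T'x_iT)=[b(x,\xi)x]_i$, i.e.\ the conjugated multiplication operators have symbols vanishing on $\{x=0\}$. The Fa\`a di Bruno estimate on the pulled-back symbol $(\chi_1\chi_2)\circ\kappa^{-1}$ is also correct.

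The gap is the sentence ``a standard Egorov argument then shows $T\chi\sub{h,y}T'=\Op_h(\tilde a)+O(h^\infty)_{\Psi^{-\infty}}$.'' There is no standard Egorov theorem that applies here. The symbol $\chi_1\chi_2$ lives in an $S_\rho$-type class with $\rho>\tfrac12$ in the application (recall $\rho>\tfrac12(\delta_2+1)$), so the ordinary FIO-conjugation expansion $h^j\partial^{2j}$ produces factors $h^{j(1-2\rho)}\to\infty$ and does not close. The whole point of the second microlocal structure is that the $h^{-\rho}$ loss occurs only transverse to $\Gamma$, so that after two layers of rescaling one regains summability; but to \emph{know} that $T\chi\sub{h,y}T'$ is even a quantization of some model symbol one needs a tool that sees this extra structure. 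Establishing the symbol estimates for the formal pullback $(\chi_1\chi_2)\circ\kappa^{-1}$ does not, by itself, show that the conjugated operator equals $\Op_h$ of that function modulo $h^\infty$. The paper avoids this chicken-and-egg issue by going through Beal's criterion (Lemma~\ref{l:beal}): it verifies the characterizing adjoint bounds $\ad^\alpha_{h^{-\rho}T'xT}\ad^\beta_{T'hD_xT}(\chi\sub{h,y})$ directly, computing the nested commutators with the model calculus (Lemma~\ref{l:commutator}) applied to $\chi\sub{h,y}$ viewed, in geodesic normal coordinates, as a quantization in the \emph{Lagrangian} ($r=n$) model class, and using your vanishing observation to gain $h$ rather than $h^{1-\rho}$ from each $\ad_{T'x_iT}$. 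Note also that Proposition~\ref{p:invariance} --- the only invariance result proved in the paper --- is for symplectomorphisms of $T^*\R^n$ \emph{preserving} $(\Gamma_0,L_0)$, so it cannot be invoked directly for your chart $\kappa:(\Gamma_y,L_y)\to(\Gamma_0,L_0)$. To repair your argument you must either route through Beal's criterion as the paper does, or first prove an Egorov-type invariance statement for FIOs that carry one pair $(\Gamma,L)$ to the model pair, which would require an argument of its own.

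A secondary, smaller point: your chart is over-specified (asking for both $\kappa(T^*_yM)=\{X=0\}$ and $\kappa_*H_p=\partial_{X_n}$ simultaneously requires a nontrivial normal form argument); what is actually used is only $\kappa(\Omega_y)\subset\{X=0\}$, which is automatic for any chart for $(\Gamma_y,L_y)$ since $\Omega_y$ is a single leaf.
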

\begin{proof}

We will use Lemma~\ref{l:beal} to prove the claim. First, observe that we may work in a single chart for $(\Gamma_y,L_y)$ by using a partition of unity. Therefore, suppose that $B\in \Psi^0$ and $\kappa: U_0\to T^*\mathbb{R}^n$ is a chart for $(\Gamma_y,L_y)$, $V_0\Subset U_0$, and $T$ is an FIO quantizing $\kappa$ that is microlocally unitary on $V_0$. Furthermore, since $\kappa_*L_y=L_0$, we may assume that $\kappa(U_0\cap T^*_yM)\subset T^*_{0}\mathbb{R}^n$. Denote the microlocal inverse of $T$ by $T'$. Then, observe that for $A$ and $B$ with wavefront set in $V_0$
$$
{\ad_A (TB T')=T \ad_{T'AT}(B)T'} +O(h^\infty)_{\mc{D'}\to C^\infty}.
$$

By a partition of unity, we will work as though $\chi\sub{h,y}$ were microsupported in $U_0$. We then {consider for all $N>0$, and $\alpha, \beta \in \mathbb N^n$,}
\begin{multline}
h^{-2N\rho}|x'|^{2N} \ad_{h^{-\rho}x}^\alpha \ad_{hD_{x}}^\beta {(T\chi\sub{h,y}T')} \notag\\
=h^{-2\rho N}T (T'|x'|^2T)^N \ad_{h^{-\rho}T' xT}^\alpha { (\ad_{T'hD_xT}^\beta {(\chi\sub{h,y})} )}T' +O(h^\infty)_{\mc{D}'\to C^\infty}. \label{E:ads}
\end{multline}

In order to prove the requisite estimates, we will actually view $\chi\sub{h,y}$ first as an element of the model microlocal class. In particular, {we work with $x\in M$ written in  geodesic normal coordinates centered at $y$, }so that
$$
\chi\sub{h,y}u(x)={\tilde{\chi}(\tfrac{1}{\e}h^{-\rho}{|x|})\;[Op_h(\tilde\chi(\tfrac{1}{\e}(|\xi|_g-1)))u](x)}.
$$
Then, $\chi\sub{h,y} {=\Op_h(\tfrac{1}{\e}\tilde \chi(\lambda)) \, Op_h(\tilde\chi(\tfrac{1}{\e}(|\xi|-1)))}$ is  an element of $\widetilde{\Psi^{-\infty}_{\Gamma_0,L_0,\rho}}$ {with $r=n$}, and so we can apply Lemma~\ref{l:commutator} to compute $\ad_A(\chi\sub{h,y})$ for $A\in \Psi^{-\infty}(M)$. In particular, 
\begin{equation}
\ad_{{T'hD_xT}}(\chi\sub{h,y})=\Op_h(c)+O(h^\infty) \label{E:Ads1}
\end{equation}
where $c\in h^{1-\rho}\widetilde{S^{-\infty}_{\Gamma_0,L_0,\rho}}$ is supported on {$\{(x, \xi, \lambda): \, |x|\leq \e h^\rho, \, |\lambda|\leq \e\}$}. Now, suppose $c\in\widetilde{S^{-\infty}_{\Gamma_0,L_0,\rho}}$ is supported on {$\{(x, \xi, \lambda): \, |x|\leq \e h^\rho, \, |\lambda|\leq \e\}$} and $B\in \Psi^{-\infty}$ with $\sigma(B)(0,\xi)=0$. Then, again using Lemma~\ref{l:commutator}, 
\begin{equation}
\ad_B (\Op_h(c)) =\Op_h(c')+O(h^\infty)\label{E:Ads2}
\end{equation}
where $c'\in h\widetilde{S^{-\infty}_{\Gamma_0,L_0,\rho}}$ is supported on {$\{(x, \xi, \lambda): \, |x|\leq \e h^\rho, \, |\lambda|\leq \e\}$}.

Now, note that since $\kappa (T^*_yM)\subset T^*_0\mathbb{R}^n$, 
then for all $i=1,\dots n$, $B=T'x_iT$ has symbol $\sigma(B)=[b(x,\xi)x]_i$ for some $b \in C^\infty(T^*M; {\mathbb{M}_{n\times n}})$.  Therefore,   \eqref{E:Ads1} and \eqref{E:Ads2} yield
$$
\ad_{h^{-\rho}T' xT}^\alpha (\ad_{T'hD_xT}^\beta (\chi\sub{h,y})) =h^{(1-\rho)(|\alpha|+|\beta|)}\Op_h(c')+O(h^\infty),
$$
where $c'\in \widetilde{S^{-\infty}_{\Gamma_0,L_0,\rho}}$ is supported on {$\{(x, \xi, \lambda): \, |x|\leq \e h^\rho, \, |\lambda|\leq \e\}$}. Finally, using again that $T'{x_i}T$ has symbol {$[b(x,\xi)x]_i$}, we have that \eqref{E:Ads2} gives
$$
\|h^{-2N\rho}|x'|^{2N}\ad_{h^{-\rho}T' xT}^\alpha (\ad_{T'hD_xT}^\beta (\chi\sub{h,y})) \|_{L^2\to L^2}\leq C h^{(1-\rho)(|\alpha|+|\beta|)}.
$$
\end{proof}

We next construct a pseudodifferential cutoff, $X_y\in \Psi^{-\infty}_{\Gamma_y,\rho}$ which is microlocally the identity near $S^*_yM$ and which essentially commutes with $P$ near $y$. In particular, we will have
$$\chi\sub{h,y}X_y=\chi\sub{h,y}+O(h^\infty)_{\Psi^{-\infty}}.$$
When considering the value of a quasimode $u$, that is $h^\rho$ close to the point $y$, this will allow us to effectively work with $X_yu$ instead. 
\begin{theorem}
\label{l:nice2ndCut}
Let $y\in M$, $0<\e<\delta$, $0\leq \rho <1$. 
{Then, there exists
$
X_{y}\in \Psi_{\Gamma_y,\rho}^{-\infty} \subset \Psi_{\Gamma_{y},{L_{y}} ,\rho}^{-\infty}
$
with 
\begin{enumerate}
    \item if $\chi\sub{h,y}$ is defined as in \eqref{e:chi_y}, then 
    \begin{equation}
    \label{e:2MicrolocalIdentity}
    \chi\sub{h,y}X_y=\chi\sub{h,y} + O(h^\infty)_{\Psi^{-\infty}}.
    \end{equation}
    \item 
    ${\WFh'}([P, X_{y}]) \cap \{(x,\xi): x \in B(y, \tfrac{1}{2}\conj(M)), \;\xi \in \Omega_x\} = \emptyset.$
\end{enumerate}
}
\end{theorem}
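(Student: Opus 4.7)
I plan to construct $X_y$ as the quantization of a flow-invariant principal symbol in $S^{-\infty}_{\Gamma_y, \rho}$ that equals $1$ on an $h^\rho$-neighborhood of $\Omega_y$, and then to remove lower-order commutator contributions by an iterative correction within the anisotropic calculus. The key structural input is that $\Gamma_y$ is flow-invariant by its very definition as the flowout of $\Omega_y$, so in a tubular neighborhood of $\Gamma_y$ one can build genuinely flow-invariant second-microlocal symbols.

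Specifically, choose a hypersurface $\mathcal{H} \subset T^*M$ of codimension one, transverse to $H_p$ and containing $\Omega_y$, constructed in analogy with Section~\ref{s:tubes}. By injectivity of the flow map $\Psi(t,q) := \varphi_t(q)$ on $(-\tfrac{1}{2}\inj M, \tfrac{1}{2}\inj M) \times V$ for $V$ a neighborhood of $\Omega_y$ in $\mathcal{H}$, inverse coordinates $(t(\rho), q(\rho))$ are defined on a tubular neighborhood of the flowout of $V$. Let $\sigma : \mathcal{H} \to \mathbb{R}^{n-1}$ be transverse coordinates to $\Omega_y$ inside $\mathcal{H}$, so that $\Gamma_y = \{\sigma \circ q = 0\}$ locally. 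Pick $\phi \in C_c^\infty(\mathcal{H})$ with $\phi \equiv 1$ near $\Omega_y$ and supported in $V$, and $\tilde{\psi} \in C_c^\infty(\mathbb{R})$ with $\tilde{\psi} \equiv 1$ near $0$, and set
$$a_0(\rho) := \phi(q(\rho))\, \tilde{\psi}\big(h^{-\rho}|\sigma(q(\rho))|\big), \qquad X_y^{(0)} := Op_h^{\Gamma_y}(a_0).$$
Then $a_0 \in S^{-\infty}_{\Gamma_y, \rho}$ thanks to the $h^\rho$-scale decay in the $(n-1)$ transverse directions, $a_0 \equiv 1$ on an $h^\rho$-neighborhood of $\Omega_y$, and since $a_0$ depends on $\rho$ only through $q(\rho)$ one has $H_p a_0 \equiv 0$ on the entire domain of $\Psi$, which contains a fixed open neighborhood of the region $R := \{(x,\xi) : x \in B(y, \tfrac{1}{2}\inj M),\ \xi \in \Omega_x\}$. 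Property (1) now follows directly from the composition formula in the anisotropic calculus: the symbol of $\chi_{h,y}$ is supported in $\{d(x,y)<\e h^\rho\} \cap \{||\xi|_g - 1|<\e\}$, which for $\e < \delta$ lies inside $\{a_0 \equiv 1\}$, so $\chi_{h,y}\, X_y^{(0)} = \chi_{h,y} + O(h^\infty)_{\Psi^{-\infty}}$.

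For property (2), the commutator lemma from Section~\ref{s:anisotropic} yields $\frac{i}{h}[P, X_y^{(0)}] = Op_h^{\Gamma_y}(H_p a_0) + h^{1-\rho}R_1$ with $R_1 \in \Psi^{-\infty}_{\Gamma_y, \rho}$, and the principal symbol $H_p a_0$ vanishes in a neighborhood of $R$. I then iterate: construct $a_{k+1} \in S^{-\infty}_{\Gamma_y, \rho}$, flow-invariant on a neighborhood of $R$ and supported away from $\Omega_y$, so that the second-microlocal symbol of the commutator of $P$ with $Op_h^{\Gamma_y}\big(\sum_{j=0}^{k+1} h^{j(1-\rho)} a_j\big)$ vanishes on $R$ to one higher order. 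At each step, solving the transport equation $H_p a_{k+1} = -(\text{remainder symbol vanishing on }R)$ along flow lines is possible because $\Gamma_y$ is $H_p$-invariant; choosing the integration constants along each flow line so that $a_{k+1}$ vanishes near $\Omega_y$ preserves property (1). A Borel sum produces $X_y \in \Psi^{-\infty}_{\Gamma_y, \rho}$ with $\WFh'([P, X_y]) \cap R = \emptyset$.

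The main obstacle lies in carrying out the iteration cleanly within $\Psi^{-\infty}_{\Gamma_y, \rho}$: the correction symbols must simultaneously lie in the anisotropic class, be flow-invariant on a fixed neighborhood of $R$, and vanish near $\Omega_y$, and one needs to check that the transport equation at each order admits a solution satisfying all three constraints. This is compatible because the transport equation is only required to hold microlocally near $R$, the support away from $\Omega_y$ is achieved by a cut-off in the $t$-coordinate (which is flow-equivariant and does not affect $H_p$-invariance of the remaining factor), and the commutator formula from Section~\ref{s:anisotropic} ensures each step produces a remainder of strictly improved order relative to the second-microlocal scale, so the Borel sum converges in $\Psi^{-\infty}_{\Gamma_y, \rho}$.
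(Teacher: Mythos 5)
Your construction is essentially the paper's: both build the leading symbol by flowing out a cutoff on a transversal containing $\Omega_y$ (flow-invariant, $\equiv 1$ at scale $h^\rho$ near $\Omega_y$), then iteratively kill the commutator with $P$ near the target region by solving $H_p\tilde\chi_k=e_{k-1}$ with vanishing data on the transversal, and Borel-sum. The only thing the paper spells out that you gloss over is the compactly supported cutoff $\psi\in C_c^\infty(T^*M)$ (constant on $B(y,\tfrac12\conj M)\cap\{|\xi|_g<2\}$) multiplied into $\chi_0$ so that the symbol is globally defined and the nonvanishing of $H_p\chi_0$ is confined to $\supp\partial\psi$, disjoint from the region $R$; your parenthetical about a $t$-cutoff not affecting $H_p$-invariance should be replaced by exactly this kind of statement.
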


\begin{proof}
{First, we note that we will actually prove that $X_{y}\in  \Psi_{\Gamma_y,\rho}^{-\infty}$, and so the result will follow since $\Psi_{\Gamma_{y} ,\rho}^{-\infty} \subset \Psi_{\Gamma_{y},{L_{y}} ,\rho}^{-\infty}$.}
Let $\mc{H}\subset T^*M$ be transverse to {the Hamiltonian flow} $H_p$ such that $\Omega_y\subset \mc{H}$. Next, let $\varkappa \in C_c^\infty((-2,2))$ with $\varkappa \equiv 1$ on $[-1,1]$ and define ${\varkappa_0\in C_c^\infty(\mc{H})}$, by
$$
\varkappa_0={\varkappa}(h^{-\rho}d(x,y)) \varkappa(\tfrac{2}{\delta}(1-|\xi|_g)),
$$
{where $\delta$ is as in the definition of $\Omega_y$.} Let $\psi\in C_c^\infty(T^*M)$ with 
$$\psi \equiv 1\text{ on }B(y,\tfrac{1}{2}\conj(M))\cap \{|\xi|_g<2\},\qquad \supp \psi\subset B(y,\tfrac{3}{4}\conj(M)).$$
Then, let $\chi_0$ be defined locally by
$
H_p\chi_0\equiv 0$ and $\chi_0|_{\mc{H}}=\varkappa_0.
$
so that $\chi_0\in { S^{-\infty}_{\Gamma_y,\rho}}$.
That is,
$
\chi_0(\varphi_t(q))=\psi(\varphi_t(q))\chi_0(q)
$
 for $|t|<\conj(M)$ and $q\in \mc{H}$.
Next, observe that there is {$e_0\in S^{-\infty}_{\Gamma_y,\rho}$} such that
$$
-\tfrac{i}{h}[P,Op_h^{\Gamma_y}(\chi_0)]=h^{1-\rho}Op_h^{\Gamma_y}(e_0),\qquad
\supp e_0 \cap B(y,\tfrac{1}{2}\conj(M)) \subset \bigcup_{|t|< \tfrac{3}{4}\conj(M)} \varphi_t(\mc{H}\cap \supp \partial \varkappa_0).
$$
Suppose that {there exist  $\chi_{k-1}, e_{k-1}\in{ S^{-\infty}_{\Gamma_y,\rho}}$ such that}
$$
-\tfrac{i}{h}[P,Op_h^{\Gamma_y}(\chi_{k-1})]=h^{k(1-\rho)}Op_h(e_{k-1}),\qquad
\supp e_{k-1} \cap B(y,\tfrac{1}{2}\conj(M)) \subset  \hspace{-.4cm} \bigcup_{|t|< \tfrac{3}{4}\conj(M)}\hspace{-.5cm} \varphi_t(\mc{H}\cap \supp \partial \varkappa_0).
$$
 Then,  define $\tilde{\chi}_k\in{ S^{-\infty}_{\Gamma_y,\rho}}$ by solving locally
$
H_p\tilde{\chi}_k=e_{k-1}$ and  $\tilde{\chi}_{k}|_{\mc{H}}=0.$
Note that then 
{
$$
\supp \tilde{\chi}_k \cap B(y,\tfrac{1}{2}\conj(M)) \subset \bigcup_{|t|< \tfrac{3}{4}\conj(M)} \varphi_t(\mc{H}\cap \supp \partial \varkappa_0)
$$
and}
$$
h^{-k(1-\rho)}\sigma\Big(\tfrac{i}{h}\Big[P,Op_h^{\Gamma_y}\big(\chi_{k-1}+h^{k(1-\rho)}\tilde{\chi}_k\big)\Big]\Big)=H_p\tilde{\chi}_k-e_{k-1}=0.
$$
In particular, with $\chi_k:=\chi_{k-1}+h^{k(1-\rho)}\tilde{\chi}_k$, we obtain
$
-\tfrac{i}{h}[P,Op_h^{\Gamma_y}(\chi_{k})]=h^{(k+1)(1-\rho)}Op_h(e_{k})
$
with $e_k\in S^{-\infty}_{\Gamma_y,\rho}$ and
{
$$
\supp e_{k} \cap B(y,\tfrac{1}{2}\conj(M)) \subset \bigcup_{|t|< \tfrac{3}{4}\conj(M)} \varphi_t(\mc{H}\cap \supp \partial \varkappa_0).
$$} 
Setting
$$
{X_y=Op_h^{\Gamma_y}(\chi_\infty)}, \qquad \chi_\infty\sim \big(\chi_0 +\sum_k \chi_{k+1}-\chi_k)\big),
$$
we have that $X_y$ satisfies the second claim and, moreover, $\chi_\infty\equiv 1$ on 
$$
{\bigcup_{|t|\leq \frac{1}{4}\conj(M)}\varphi_t\Big(\mc{H}\cap \{d(x,y)<h^\rho\}\cap \big\{\big||\xi|_g-1\big|<\tfrac{\delta}{2}\big\}\Big).}
$$

{To see the first claim, observe that for $\e>0$ small enough,
$$
B(y,\e h^\rho)\cap \big\{\big||\xi|_g-1\big|<\delta\}\subset {\bigcup_{|t|\leq \frac{1}{4}\conj(M)}\varphi_t\Big(\mc{H}\cap \{d(x,y)< h^\rho\}\cap \big\{\big||\xi|_g-1\big|<\tfrac{\delta}{2}\big\}\Big).}
$$
and hence 
$
\chi\sub{h,y}X_y=\chi\sub{h,y}Op_{h}^{\Gamma,L}(1)+O(h^\infty)_{\Psi^{-\infty}}=\chi\sub{h,y}+O(h^\infty)_{\Psi^{-\infty}}.
$}
\end{proof}

\subsection{An uncertainty principle for co-isotropic localizers}\label{S:uncertainty}

Let $\Gamma(t)\subset T^*\re^n$, $t\in {(-\e_0,\e_0)}$ be a smooth family of co-isotropic submanifolds of dimension $n+1$ with 
$$
{\Gamma(0)}=\{(0,x_n,\xi',\xi_n):\;{ x_n}\in \re,\,\xi'\in \re^{n-1}, \,\xi_n \in \R\}.
$$
Assume that $\Gamma(t)$ is defined by $\{q_i(t)\}_{i=1}^{n-1}$ with $q_i(0)=x_i$. Moreover, assume that there are $c,C>0$ such that for $i=1,\dots,n-1$,
\begin{equation}
\label{e:uncertainAssume1}
|\{q_i(t),x_i\}|\geq c|t|\qquad\text{ on }\;\Gamma(0)\cap \Gamma(t),\qquad |t|>0,
\end{equation}
and for all $i,j=1,\dots, n-1$, {and all $t\in {(-\e_0,\e_0)}$,}
\begin{equation} 
\label{e:uncertainAssume2}
\{q_i(t),q_j(t)\}=0,\qquad \{q_i(t),\xi_n\}=0,\qquad |\{q_i(t),x_j\}|\leq Ct^2,\; i\neq j.
\end{equation}

The main goal of this section is to prove the following proposition
\begin{proposition}
\label{p:uncertainty}
Let $0<\rho<1$ {and $\{\Gamma(t):\; t\in (-\e_0,\e_0)\}$ be as above.} Suppose that $X(t)\in \Psi_{\Gamma(t),\rho}^{-\infty}$ {for all $t\in (-\e_0,\e_0)$}, and that there is $\e>0$ such that $h^{\rho-\e}\leq {|t|<\e_0}$. Then, 
$$
\|X(0)X(t)\|_{L^2\to L^2}\leq Ch^{\frac{n-1}{2}(2\rho-1)}t^{\frac{1-n}{2}}.
$$
\end{proposition}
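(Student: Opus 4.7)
The strategy is to reduce $X(0)X(t)$ to an explicit model via the second microlocal calculus developed in Section~\ref{s:anisotropic}, and then bound its $L^2\to L^2$ norm by a Hilbert--Schmidt estimate in the transverse phase-space directions, combined with an $L^\infty$ bound in the tangential ones.

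First, since $\Gamma(0)=\{x'=0\}$ is already the standard model $\Gamma_0$, we can write $X(0)=Op_h(a_0(x,\xi,h^{-\rho}x'))+O(h^\infty)$ with $a_0\in\widetilde{S^{-\infty}_{\Gamma_0,\rho}}$ compactly supported. The Poisson bracket hypotheses~\eqref{e:uncertainAssume1} and~\eqref{e:uncertainAssume2} imply that the defining functions of $\Gamma(t)$ admit an expansion $q_i(t,x,\xi)=x_i+t\,\alpha_i(\xi)+O(t^2)$ with $|\partial_{\xi_i}\alpha_i|\geq c>0$; indeed, \eqref{e:uncertainAssume1} gives the diagonal non-degeneracy, the quadratic bound on $\{q_i(t),x_j\}$ for $i\neq j$ in~\eqref{e:uncertainAssume2} forces the leading $\xi$-dependence to be diagonal, and $\{q_i(t),\xi_n\}=0$ kills the $x_n$-dependence of $\alpha_i$. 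Choosing a $t$-dependent symplectic chart $\kappa_t$ aligning $\Gamma(t)$ with $\Gamma_0$ and quantizing it by an FIO $T_t$, we likewise put $X(t)$ into the form $T_t\,Op_h(\tilde a_t(x,\xi,h^{-\rho}x'))\,T_t^{-1}+O(h^\infty)$.

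I would then apply the unitary rescaling $Uu(x',x_n):=h^{-\rho(n-1)/2}u(h^{-\rho}x',x_n)$, which turns the problem into a $\tilde h$-semiclassical one in the transverse directions with $\tilde h=h^{1-\rho}$, sending the support of $X(0)$ to unit scale in $y'=x'/h^\rho$. After conjugation, the composition $X(0)X(t)$ becomes an operator whose symbol is supported where $|y'|\leq C$ (from $X(0)$) and, after imposing this constraint, where $|\xi'|\leq Ch^\rho/t$ (from $X(t)$, using the non-degeneracy of each $\alpha_i$). The transverse phase-space volume is therefore $\lesssim(h^\rho/t)^{n-1}$, while the tangential variables $(x_n,\xi_n)$ are bounded at unit scale. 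Treating $(x_n,\xi_n)$ as parameters and applying a Hilbert--Schmidt estimate in $(y',\xi')$ for the rescaled $\tilde h$-pseudodifferential operator gives
\[
\|X(0)X(t)\|_{L^2\to L^2}^2\;\leq\;C\,\tilde h^{-(n-1)}\,(h^\rho/t)^{n-1}\;=\;C\Big(\frac{h^{2\rho-1}}{t}\Big)^{n-1},
\]
from which the claim follows by taking square roots.

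The main obstacle is Step 1: composing operators lying in the two a priori incompatible anisotropic calculi $\Psi_{\Gamma(0),\rho}^{-\infty}$ and $\Psi_{\Gamma(t),\rho}^{-\infty}$. It is precisely the integrability $\{q_i(t),q_j(t)\}=0$ together with $|\{q_i(t),x_j\}|\leq Ct^2$ for $i\neq j$ that lets the chart $\kappa_t$ be constructed uniformly in the range $h^{\rho-\varepsilon}\leq t<\varepsilon_0$ by a Darboux-type argument, with $t$-dependent factors tracked through the composition via Proposition~\ref{p:invariance}. A secondary technical point is that the Hilbert--Schmidt estimate in the transverse variables must be combined with boundedness in the tangential direction $(x_n,\xi_n)$; this relies on the fact that, to leading order after the normalization of Step 1, the symbol separates the tangential parameters from the transverse variables, reducing the operator-valued kernel in $(x_n,y_n)$ to one whose operator norm on $L^2(\mathbb{R}^{n-1}_{x'})$ at each fixed slice is bounded by the claimed Hilbert--Schmidt quantity.
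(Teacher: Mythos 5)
Your proposal captures the right heuristic—the gain comes from a phase-space volume count in the transverse directions, driven by the quantitative transversality of $\Gamma(0)$ and $\Gamma(t)$—and the exponent-matching at the end is correct. However, there is a genuine gap at the core of the argument.

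The obstruction is that the reduction to a ``model'' does not produce a pseudodifferential operator for which the symbol localization and Hilbert--Schmidt estimate can be invoked. After using a chart $\kappa_t$ for $\Gamma(t)$ and a quantizing FIO $T_t$ to write $X(t)=T_t\,Op_h(\tilde a_t)\,T_t^{-1}$, one is left with $X(0)X(t)=Op_h(a_0)\,T_t\,Op_h(\tilde a_t)\,T_t^{-1}$, which (modulo the harmless unitary $T_t^{-1}$) is $Op_h(a_0)\,T_t\,Op_h(\tilde a_t)$: a Fourier integral operator, not a pseudodifferential operator. The $\tilde h=h^{1-\rho}$ rescaling does not repair this. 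Writing the phase of $T_t$ as $\phi(t,x,\eta)=\langle x,\eta\rangle+t\tilde\phi(t,x,\eta)$, the rescaled phase perturbation is $t\tilde\phi/h$; in the regime $|t|\geq h^{\rho-\e}$ this is of size $h^{\rho-\e-1}$, which is not $O(1)$ relative to the leading $\tilde h$-phase $h^{\rho-1}\langle y',\eta'\rangle$---it is larger by $h^{-\e}$. So the rescaled $T_t$ is a genuine $\tilde h$-FIO with non-identity canonical relation, not a small perturbation of the identity, and there is no well-defined ``symbol of the composition'' to localize and integrate. The crucial stationary-phase analysis of the paper (stationary phase in $\eta'$ with small parameter $h/t$, which is where the Hessian non-degeneracy $\|(\partial^2_{\eta'}\tilde\phi)^{-1}\|\leq C$ is used and the factor $t^{1-n}$ is extracted) is exactly what replaces the missing composition calculus, and it cannot be skipped.

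A secondary but real point: even granting the transverse symbol-support estimate, the na\"ive Hilbert--Schmidt norm over all of $T^*\re^n$ loses a factor $h^{-1/2}$ from the unconstrained tangential pair $(x_n,\xi_n)$; you acknowledge this and propose to treat $(x_n,\xi_n)$ as parameters with an operator-norm bound, but that separation-of-variables structure is not automatic---it is produced by the stationary phase in $\eta'$ followed by the integration by parts in $\xi_n$, which yields Schur-summability of the kernel in $x_n$. The paper's route---compute the Schwartz kernel of $\tilde A(t)\tilde A(t)^*$, perform stationary phase in $(w_n,\eta_n)$ and then in $\eta'$, integrate by parts in $\xi_n$, and finish with Schur's lemma---is not an optional alternative to the HS count; it is what makes the count valid. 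Also worth noting: the bridge from the Poisson-bracket hypotheses \eqref{e:uncertainAssume1}--\eqref{e:uncertainAssume2} to the uniform invertibility of $\partial^2_{\eta'}\tilde\phi$ on $\Gamma(0)\cap\Gamma(t)$ is a substantive geometric lemma in the paper; your sketch of the diagonal structure of $\partial_\xi q_i(t)$ is the right starting point but is far from the full argument.
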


\begin{proof}

We begin by finding a convenient chart for $\Gamma(t)$. By Darboux's theorem, there is a smooth family of sympletomorphisms $\kappa_t: V_1 \to V_2$ such that for $j=1,\dots,n-1$,
{\begin{equation}\label{E: kappa}
\kappa_t^*(q_j(t))=y_j,\qquad \kappa_t^*\xi_n=\eta_n,
\end{equation}
where $V_1, V_2$ are simply connected neighborhoods of $0$}. {Note that $\kappa_t(\Gamma(0))=\Gamma(t)$ with this setup, so $\kappa_t^{-1}$ is a chart for $\Gamma(t)$}.
By \cite[Theorem 11.4]{EZB}, the symplectomorphism $\kappa_t$ can be extended to a family of symplectomorphisms on $T^*\re^n$ that is the identity outside a compact set, and such that there is a smooth family of symbols ${p_t}\in C^\infty(T^*\re^{n})$ satisfying 
$
\partial_t\kappa_t=(\kappa_t)_*H_{p_t}.
$

Now, let $U(t):L^2\to L^2$ solve 
$$
(hD_t+Op_h({p_t}))U(t)=0,\qquad U(0)=\Id.
$$
Then, {$U(t)$} is microlocally unitary from $V_1$ to $V_2$ and quantizes $\kappa_t$. Moreover,
$$
U(t)=\frac{1}{(2\pi h)^{n}}\int_{\R^n} e^{\frac{i}{h}(\phi(t,x,\eta)-\langle y,\eta\rangle)}b(t,x,\eta;h)d\eta
$$
{where $b(t, \cdot)\in S^{\comp}(T^*\R^n)$   and the phase function $\phi(t,\cdot)\!\in C^\infty\!(T^*\R^n;\R)$  satisfies}
$$
\partial_t\phi+{p_t}(x,\partial_{x}\phi)=0,\qquad \phi(0,x,\eta)=\langle x,\eta\rangle,
$$
for all $t\in (-\e_0,\e_0)$.
Since $U(t)$ is microlocally unitary, it is enough to estimate the operator 
$${A(t)}:={X(0)}X(t)U(t).$$
First, note that since $X(t)\in \Psi_{\Gamma(t),\rho}^{-\infty}$, {and $U(t)$ quantizes $\kappa_t$, there exist}  $a(t)\in \widetilde{S^{-\infty}_{\Gamma_0,\rho}}$ {with $t\in {(-\e_0,\e_0)}$} such that {$X(t)=U(t)\Op_h(a(t))[U(t)]^* + O(h^\infty)_{L^2 \to L^2}$} and so
$$
A(t)=\Op_h(a(0)) U(t)\Op_h(a(t))+{O(h^\infty)_{L^2\to L^2}}.
$$

Fix $N>n-1$ and let $\chi=\chi(\lambda)\in \widetilde{S^{-N}_{\Gamma_0,\rho}}$ be such that $|\chi(\lambda)|\geq c\langle \lambda\rangle^{-N}$. 
Now, since $a(t)\in \widetilde{S^{-\infty}_{\Gamma_0,\rho}}$, by the elliptic parametrix construction there are $e\sub{L}(t),e\sub{R}(t)\in \widetilde{S^{-\infty}_{\Gamma_0,\rho}}$ such that
\begin{gather*}
\Op_h(e\sub{L}(t))\Op_h(\chi)=\Op_h(a(t))+O(h^\infty)_{L^2\to L^2},\quad \Op_h(\chi)\Op_h(e\sub{R}(t))=\Op_h(a(t))+O(h^\infty)_{L^2\to L^2},
\end{gather*}
{for all $t\in {(-\e_0,\e_0)}$}.
Note that we are implicitly using the fact that $a(t)$ is compactly supported in $(x,\xi)$ to handle the fact that $\chi$ is not compactly supported in $(x,\xi)$.  
Thus, 
$$
A(t)=\Op_h(e\sub{L}(0))\Op_h(\chi)U(t)\Op_h(\chi)\Op_h(e\sub{R}(t))+O(h^\infty)_{L^2\to L^2}.
$$
Since $\Op_h(e\sub{L}(t))$ and $\Op_h(e\sub{R}(t))$ are $L^2$ bounded uniformly in $t\in (-\e_0,\e_0)$, we estimate
$$
{\tilde{A}(t)}:=\Op_h(\chi)U(t)\Op_h(\chi)
$$
In fact, we estimate $B(t):=\tilde{A}(t)(\tilde{A}(t))^*$ by considering its kernel. 
\begin{align*}
{B(t;x,y)}&=\int U(t)(x,w)U(t)^*(w,y)\chi(h^{-\rho}x')\chi(h^{\rho}y')\chi(h^{-\rho}w')^2 dw\\
&\hspace{-0.3cm}=\!\!\frac{1}{(2\pi h)^{2n}}\!\!\int e^{\frac{i}{h}\Phi(t,x,w,y,\eta,\xi)}b(t,x,\eta)\bar{b}(t,y,\xi)\chi(h^{-\rho}x')\chi(h^{-\rho}y'){\chi(h^{-\rho}w')^2}dwd\eta d\xi, 
\end{align*}
with
$
\Phi(t,x,w,y,\eta,\xi)=\phi({t},x,\eta)-\phi({t},y,\xi)+\langle w,\xi-\eta\rangle.
$
First, performing stationary phase in $(w_n,\eta_n)$ yields
{
\begin{align*}
B(t;x,y)&=\frac{1}{(2\pi h)^{2n-1}}\int F(t,x,w', \xi_n) \overline{F(t,y,w', \xi_n)}  dw'd\xi_n,
\end{align*}
$$
F(t,x,w', \xi_n):=\int e^{\frac{i}{h} (\phi({t}, x, \eta', \xi_n)-\langle w', \eta'\rangle)} b_{{1}}(t,x,\eta', \xi_n)\chi(h^{-\rho}x')\chi(h^{-\rho}w')^2 d\eta'
$$
for some $b_1\in S^{\comp}(T^*\re^n)$.
}
Next, note that {since $\phi(0,x,\eta)=\langle x,\eta\rangle$,}
$$
\phi({t},x,\eta)-\langle x,\eta\rangle 
=t\tilde{\phi}(t,x,\eta)
$$
with {$\tilde \phi$ such that for every multi-index $\alpha$ there exists $C_\alpha>0$ with}
$
|\partial^\alpha_{t,x,\eta} \tilde{\phi}|\leq C_\alpha.
$

Next, we claim that {there exists $C>0$ such that  
\begin{equation}\label{E:claim on derivative}
\|(\partial^2_{\eta'} \tilde{\phi}(t, x, \eta))^{-1}\|\leq C \qquad \;\;\text{if}\quad (x,\eta)\in \Gamma(0),\; \,\partial_{\eta'}\phi(t, x, \eta)=0.
\end{equation}}
We postpone the proof of \eqref{E:claim on derivative} and proceed to finish the proof of the lemma.

To continue the proof, note that  modulo an $O(h^{N\e})$ error, we may assume that the integrand of $B(t;x,y)$ is supported, in $\{(x,y,w'): |x'|\leq h^{\rho-\e},\,|y'|\leq h^{\rho-\e},\,|w'|\leq h^{\rho-\e}\},$
{and $h^{\rho-\ep}\leq |t|$.} Therefore, the bound in \eqref{E:claim on derivative} continues to hold on the support of the integrand. By~\eqref{E:claim on derivative} and
\begin{equation}
\label{e:derVanish}
\partial^2_{\eta'}{\big(\phi(t,x,\eta)-t\tilde{\phi}(t,x,\eta) \big)}=0,
\end{equation} 
{there is a unique critical point $\eta'_c(t,x,w',\xi_n)$ for the map $\eta' \mapsto \phi(t,x,\eta', \xi_n)-\langle w',\eta'\rangle$,} in an $O(1)$ neighborhood of $\eta'_c$. In particular, {$\eta'_c$ is} the unique solution to 
$
\partial_{\eta'}\phi(t,x,\eta'_c,\xi_n)-w'=0.
$

Next, again using~\eqref{e:derVanish}, {by applying the method of stationary phase in $\eta'$ to $F$, with small parameter $h/t$}, we obtain 
\[
B(t; x,y)=\frac{1}{(2\pi h)^{n}t^{n-1}}\int e^{\frac{i}{h}\Phi_1(t,x,w',y,\xi_n)}{B_1(t;x,y,w', \eta_c', \xi)}dw' d\xi_n,
\]
\begin{gather*}
\Phi_1(t,x,w',y,\xi_n):=\Psi(t,x,w',\xi_n)-\Psi(t,y,w',\xi_n),\\
\Psi(t,x,w',\xi_n):=\phi(t,x,\eta'_c(t,x,w',\xi_n),\xi_n)-\langle w',\eta'_c(t,x,w',\xi_n)\rangle,\\
B_1(t;x,y,w', \eta', \xi):=b_{2}(t,x,\eta', \xi_n)\bar{b}(t,y,\xi',\xi_n)\chi(h^{-\rho}x')\chi(h^{-\rho}y'){\chi(h^{-\rho}w')^2}.
\end{gather*}
for some $b_2\in S^{\comp}(T^*\re^n)$.
Next, observe that 
$
\partial_{x_n}\partial_{\xi_n}\Psi(t,x,w',\xi_n)= 1+O(t), 
$
and therefore, {there exist $c>0$ and a function $g=g(x',y,w',\xi_n)$} so that
$
|\partial_{\xi_n}\Phi_1|\geq c |x_n-g_n|.
$
In particular, integration by parts in $\xi_n$ shows that for any $N>0$ there is $C_N>0$ such that 
$$
|B(t; x,y)|\leq C_N{h^{-n}}t^{1-n}h^{\rho(n-1)}\chi(h^{-\rho}y')\chi(h^{-\rho}x')\frac{ h^{2N}+h^N|x_n-g_n|}{(h^2+|x_n-g_n|^2)^N}.
$$

Applying Schur's lemma together with the fact that {there exists $C>0$ such that} {for all $t$}
$$
\sup_{x}\int |{B(t;x,y)}|dy+\sup_y\int |{B(t;x,y)})|dy\leq Ch^{(2\rho-1)(n-1)}t^{1-n},
$$
yields {that}
$
\|{B(t)}\|_{L^2\to L^2}\leq Ch^{(2\rho-1)(n-1)}t^{1-n},
$
for all $t\in (-\e_0,\e_0)$,
and hence 
$
\|X(0)X(t)\|_{L^2\to L^2}\leq Ch^{\frac{n-1}{2}(2\rho-1)}t^{\frac{1-n}{2}},
$
as claimed.\smallskip

\noindent{\emph{Proof of the bound in \eqref{E:claim on derivative}.}} 
{Let $\phi_t(x,\eta):=\phi(t,x, \eta)$ and $\varphi_t(x,y,\eta):=\phi_t(x,\eta)+\langle y, \eta\rangle$. Then, 
$C_{\varphi_t}=\{(x,y, \eta):\; \partial_\eta \phi_t(x,\eta)=y\}$ and so
$$\Lambda_{\varphi_t}=\{(x,\partial_x \phi_t(x,\eta), \partial_\eta \phi_t(x,\eta), -\eta) \}\subset T^*\R^n \times T^*\R^n.$$
In particular, since $\Lambda_{\varphi_t}$ is the twisted graph of $\kappa_t$, we have that $\kappa_t$ is characterized by 
$$ \kappa_t(\partial_\eta \phi_t(x,\eta), \eta)= (x,\partial_x \phi_t(x,\eta)).$$
Furthermore, since $\kappa_t(\Gamma(0))=\Gamma(t)$, we have
$$\Gamma(t)=\{(x, \xi): \; \kappa_t(y, \eta)=(x,\xi), \; y=\partial_\eta \phi_t(x,\eta), \;  \xi=\partial_x \phi_t(x,\eta),\;  (y, \eta) \in \Gamma(0)\}.$$
Then, using $\kappa_t^* \xi_n =\eta_n$, 
$$\Gamma(t)=\{(x, \xi): \; \xi'=\partial_{x'} \phi_t(x,\eta), \;  \partial_{\eta'} \phi_t(x,\eta)=0, \; \xi_n=\eta_n,\; \eta\in \R^n\}.$$

Next, let $\tilde p:=(\tilde x, \tilde \eta)\in \Gamma(0)$ be such that $\partial_{\eta'}\phi_t(\tilde x, \tilde \eta)=0$.
Without loss of generality, in what follows we assume that $\tilde x_n=0$.
Letting $\Gamma_0(t):=\Gamma(t)\big|_{\{ x_n=0\}}$
we have that 
$$\Gamma_0(t)=\{(x, \xi): \; \xi'=\partial_{x'} \phi_t(x,\eta), \;  \partial_{\eta'} \phi_t(x,\eta)=0, \;x_n=0,\; \xi_n=\eta_n,\; \eta\in \R^n\}.$$
In particular, letting $\tilde{\xi}:=(\partial_{x'}\phi_t(\tilde{p}),\tilde{\eta}_n)$, and $\tilde{p}_0:=(\tilde{x},\tilde{\xi})$, we have  $\tilde p_0\in \Gamma_0(t) \cap \Gamma_0(0),$
and 
\begin{align*}
    T_{\tilde p_0}\Gamma_0(t)
    =\{(\delta_x, \delta_\xi): 
    \; &\delta_{\xi'}=\partial_x\partial_{x'} \phi_t(\tilde p) \delta_x+\partial_\eta\partial_{x'} \phi_t(\tilde p) \delta_\eta , \\
    \;& \partial_x\partial_{\eta'} \phi_t(\tilde p)\delta_x+\partial_\eta\partial_{\eta'} \phi_t(\tilde p)\delta_\eta =0,
    \;\delta_{x_n}=0,\; \delta_{\xi_n}=\delta_{\eta_n},\; \delta_{\eta}\in \R^n\}.
\end{align*}
Next, we note that $\partial_{x_n} \in T_{\tilde p_0}\Gamma(t)$ and $H_{q_i(t)} \in T_{\tilde p_0}\Gamma(t)$ for all $i=1, \dots, n-1$. Therefore, since $\partial_{x_n}q_i(t)=0$, we also know that 
 $H_{q_i(t)}':=(\partial_{\xi'}q_i(t), 0, -\partial_{x'}q_i(t),0) \in T_{\tilde p_0}\Gamma_0(t)$ for all $i=1, \dots, n-1$.
  We claim that there exists $C>0$ such that for all  $v=(\delta_{x'},0, \delta_{\xi'}, 0)  \in \text{span}\{H_{q_i(t)}'\}_{i=1}^{n-1} \subset  T_{\tilde p_0}\Gamma(t)$ we have
 \begin{equation}\label{e: delta relation}
     \|\delta_{x'}\| \geq C t \| \delta_{\xi'}\|.
 \end{equation}
Suppose that the claim in \eqref{e: delta relation} holds. Then,  note that for each such $v$, since $\delta_{x_n}=0$ and $\delta_{\xi_n}=0$, we have that there is $\delta_{\eta'} \in \R^{n-1}$ such that
$$
 \delta_{\xi'}=\partial_{x'}^2 \phi_t(\tilde p) \delta_{x'}+\partial_{\eta'x'}^2 \phi_t(\tilde p) \delta_{\eta'}, \qquad 
    \; \partial_{x'\eta'}^2 \phi_t(\tilde p)\delta_{x'}+\partial_{\eta'}^2 \phi_t(\tilde p)\delta_{\eta'} =0.
$$
Using that $\partial_{x'\eta'}^2 \phi_t(\tilde p)=\Id +O(t)$ and $\partial_{x'}^2 \phi_t(\tilde p)=O(t)$, we conclude that 
$$
\partial_{\eta'}^2 \phi_t(\tilde p)[\partial_{\eta'x'}^2 \phi_t(\tilde p)]^{-1}\delta_{\xi'}= \left(\partial_{\eta'}^2 \phi_t(\tilde p)[\partial_{\eta'x'}^2 \phi_t(\tilde p)]^{-1}\partial_{x'}^2 \phi_t(\tilde p) -\partial_{x'\eta'}^2 \phi_t(\tilde p) \right)\delta_{x'},
$$
and so 
\begin{equation}\label{e: startfish}
\partial_{\eta'}^2 \phi_t(\tilde p)(\Id +O(t))\delta_{\xi'}={(-\Id +O(t))}\delta_{x'}.
\end{equation}

Let  $H_{q_i(t)}'=(\delta_{x'}^{(i)}, 0, \delta_{\xi'}^{(i)},0)$.
Since $\tilde p_0 \in \Gamma(t) \cap \Gamma(0)$, assumptions \eqref{e:uncertainAssume1} and \eqref{e:uncertainAssume2}  yield that the vectors $\{\delta_{x'}^{(i)}\}_{i=1}^{n-1}$ are linearly independent. Indeed, setting $e_i:=(\delta_{ij})_{j=1}^{n-1} \in \R^{n-1}$,
\begin{equation}\label{e: vectors are li}
\delta_{x'}^{(i)}=\partial_{\xi_i}q_i(t) e_i +O(t^2), \qquad |\partial_{\xi_i}q_i(t)|\geq Ct,
\end{equation}
for $t$ small enough.
Furthermore, \eqref{e: startfish} then yields that $\{\delta_{\xi'}^{(i)}\}_{i=1}^{n-1}$ are linearly independent. Then,  combining \eqref{e: startfish}  with \eqref{e: delta relation}  yields \eqref{E:claim on derivative} as claimed.

To finish it only remains to prove \eqref{e: delta relation}. Let $v=(\delta_{x'},0, \delta_{\xi'}, 0)  \in \text{span}\{H_{q_i(t)}'\}_{i=1}^{n-1}$. Then, there is $a \in \R^{n-1}$ such that $\delta_{x'}=\sum_{i=1}^{n-1} a_i \delta_{x'}^{(i)}$ and $\delta_{\xi'}=\sum_{i=1}^{n-1} a_i \delta_{\xi'}^{(i)}$. Next, note that by \eqref{e: vectors are li} we have $\|\delta_{x'}\| \geq \|a\| (Ct +O(t^2))$. Since $\|\delta_{\xi'}\| \leq C_0 \|a\|$ for some $C_0>0$, the claim in \eqref{e: delta relation} follows.
%
%
%
}
\end{proof}

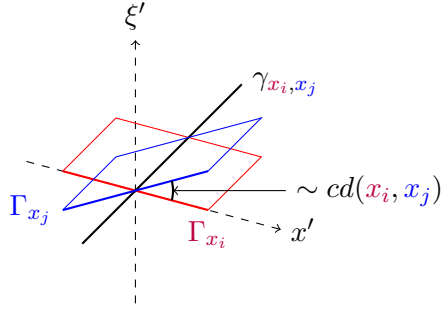
\begin{figure}
\label{f:twisting}
    \begin{tikzpicture}
        \begin{scope}[scale=1]
        \draw[->,dashed] (-15:-1.5)--(-15:2)node[right]{$x'$};
        \draw[->,dashed] (0,-1.5)--(0,2)node[above]{$\xi'$}; 
        \draw[red,thick] (-15:-1)--(-15:1) node[below]{$\red{\Gamma_{x_i}}$};
        \draw[red] (-15:1)--++(45:1)--++(-15:-2)--++(45:-1);
        \draw[thick](45:-1)--(45:2)node [right]{$\gamma_{\red{x_i},\blue{x_j}}$};
        \draw[blue, thick] (15:-1)node[left]{$\blue{\Gamma_{x_j}}$}--(15:1) ;
        \draw[blue](15:1)--++(45:1)--++(15:-2)--++(45:-1);
        \begin{scope}[rotate=-15]
        \draw[thick] (.5,0) arc (0:30:.5);
        \draw[->](15:2)node[right]{$\sim cd(\red{x_i},\blue{x_j})$}--(15:.5);
        \end{scope}
        \end{scope}
    \end{tikzpicture}
    \caption{A pictorial representation of the co-isotropics involved in Corollary~\ref{c:corUncertain} where $\gamma_{x_i,x_j}$ is the geodesic from $x_i$ to $x_j$. Localization to both $\Gamma_{x_i}$ and $\Gamma_{x_j}$ implies localization in the non-symplectically orthogonal directions, $x'$ and $\xi'$. The uncertainty principle then rules this behavior out.} 
\end{figure}

For each $x \in M$ let $\Gamma_x$ be as in~\eqref{e:coiso}. (See Figure~\ref{f:twisting} for a schematic representation of these two co-isotropic submanifolds.) Then we have the following result.
\begin{corollary}
\label{c:corUncertain}
Let $0<\rho<1$, {$0<\e<\rho$}, and $\gamma(t):(-\e_0,\e_0)\to M$ be a unit speed geodesic.  {Then, for $X(t)\in \Psi^{-\infty}_{\Gamma_{\gamma(t)},\rho}$ and $h$  such that $h^{\rho-\e}\leq {|t|<\e_0}$},
$$
\|X(0)X(t)\|_{L^2\to L^2}\leq Ch^{\frac{n-1}{2}(2\rho-1)}t^{\frac{1-n}{2}}.
$$
\end{corollary}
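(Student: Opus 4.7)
The plan is to apply Proposition~\ref{p:uncertainty} to the smooth family of coisotropics $\Gamma(t) := \Gamma_{\gamma(t)}$. By a microlocal partition of unity, we may localize near a single non-radial point $q_0$ on the lifted geodesic inside the characteristic variety $\{p=0\}$, where $p=|\xi|_g-1$.

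First, I will construct symplectic coordinates $(x',x_n,\xi',\xi_n)$ near $q_0$ in which three compatible normalizations hold simultaneously: $\Gamma_{\gamma(0)}=\{x'=0\}$, the Lagrangian foliation $L_{\gamma(0)}$ equals $L_0$, and $p=\xi_n$. The first two conditions come from the normal form lemma for coisotropics with Lagrangian foliations proved in Section~\ref{s:anisotropic}. The third is compatible because $H_p$ is tangent to $\Gamma_{\gamma(0)}$ and preserves the family of Lagrangian leaves (in fact the leaves are by definition the images of the fibers under $H_p$-flow), so $p$ can be incorporated as one of the Darboux momentum coordinates without disturbing the previous two conditions. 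In these coordinates $H_{\xi_n}=-\partial_{x_n}$ is the geodesic flow. Conjugating by a microlocally unitary FIO quantizing this symplectomorphism, and invoking the FIO-invariance of the class $\Psi^{-\infty}_{\Gamma,\rho}$ established in Proposition~\ref{p:invariance}, reduces the corollary to the setting of Proposition~\ref{p:uncertainty} in $T^*\mathbb{R}^n$.

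Next, I will exhibit defining functions $q_i(t)$ of $\Gamma(t)$ satisfying the four bracket hypotheses. Since $H_{\xi_n}=H_p$ preserves each $\Gamma(t)$, each $\Gamma(t)$ is locally $\partial_{x_n}$-invariant, so its defining functions can be chosen independent of $x_n$, which gives $\{q_i(t),\xi_n\}=-\partial_{x_n}q_i(t)=0$. The conditions $\{q_i(t),q_j(t)\}=0$ are then arranged by a Moser-type isotopy in the symplectically reduced space $\{x_n=\mathrm{const}\}/\partial_{x_n}$: the reduced submanifolds $\Gamma(t)\cap\{x_n=0\}$ form a smooth family of (Lagrangian) submanifolds through $\{x'=0\}$, so there is a smooth family of symplectomorphisms $\Phi_t$ of the reduced space with $\Phi_0=\mathrm{Id}$ and $\Phi_t(\{x'=0\})=\Gamma(t)\cap\{x_n=0\}$, and setting $q_i(t):=x_i\circ\Phi_t^{-1}$ (extended trivially in $x_n$) produces Poisson-commuting defining functions with $q_i(0)=x_i$. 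The intersection $\Gamma(0)\cap\Gamma(t)$ is, microlocally, the two-dimensional lifted geodesic $\{x'=0,\;\xi'=0\}$. On this locus, $\{q_i(t),x_i\}=-\partial_{\xi_i}q_i(t)$ vanishes at $t=0$, and its $t$-derivative is governed by the infinitesimal deformation of $\Gamma(t)$, which in turn reflects the translation of the base point by $\dot\gamma(0)$. The positive-definite fiber Hessian of $|\xi|_g$ on $S^*M$ (the condition ensuring Laplace-type operators have positive second fundamental form on their characteristic variety) yields
\[
\big|\{q_i(t),x_i\}\big|_{\Gamma(0)\cap\Gamma(t)}\big| \;\geq\; c|t|,
\]
while the off-diagonal estimate $|\{q_i(t),x_j\}|\leq Ct^2$ for $i\neq j$ follows from the fact that this Hessian is diagonal in the chosen coordinates (the Lagrangian leaves of $L_0$ being spanned by the $\partial_{\xi_k}$).

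With all hypotheses of Proposition~\ref{p:uncertainty} verified, it yields $\|X(0)X(t)\|_{L^2\to L^2}\leq C h^{\frac{n-1}{2}(2\rho-1)}t^{\frac{1-n}{2}}$ throughout the assumed range $h^{\rho-\e}\leq|t|<\e_0$. The main obstacle is the joint normalization in the first step: finding a single Darboux chart that puts $(\Gamma_{\gamma(0)},L_{\gamma(0)})$ in model form \emph{and} straightens the geodesic flow into $-\partial_{x_n}$. This is made possible by the particular compatibility between the Hamiltonian $p$ and the geometric objects $\Gamma_y$, $L_y$, namely that $H_p$ preserves each $\Gamma_y$ and permutes the leaves of $L_y$; everything else in the proof is then bookkeeping organized around this one-step normalization.
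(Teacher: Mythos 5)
Your overall framework is the same as the paper's: reduce to Proposition~\ref{p:uncertainty} by conjugating with a microlocally unitary FIO, then verify the Poisson bracket hypotheses~\eqref{e:uncertainAssume1}--\eqref{e:uncertainAssume2}. Reorganizing so that the Darboux normalization (putting $\Gamma_{\gamma(0)}$, $L_{\gamma(0)}$, and $p$ simultaneously in model form) comes at the start rather than at the end, and constructing the defining functions $q_i(t)$ by a Moser isotopy in the reduced space rather than by propagating coordinate functions from a transversal $\mc{H}_t$ along $H_p$, are legitimate alternative routes; the Moser construction does give $\{q_i(t),q_j(t)\}=0$ and $\{q_i(t),\xi_n\}=0$ just as you claim.

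The genuine gap is in the last step, where the two crucial quantitative bracket estimates are asserted rather than proved, and the justification offered for the off-diagonal one is incorrect. First, the diagonal lower bound $|\{q_i(t),x_i\}|\geq c|t|$ on $\Gamma(0)\cap\Gamma(t)$: the Moser isotopy $\Phi_t$ is highly non-unique, and different choices of $\Phi_t$ produce different $\partial_t q_i(t)|_{t=0}$, hence different $t$-derivatives of $\{q_i(t),x_i\}$ at $t=0$. Some choices of $\Phi_t$ would \emph{fail} the lower bound. So one cannot simply invoke ``the positive-definite fiber Hessian yields'' this; one must exhibit a specific construction of the $q_i(t)$ and carry out the bracket computation. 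The paper does this by solving the transport equation $H_p q_i(t)=0$, $q_i(t)|_{\mc{H}_t}=x_i$, which yields the explicit expansion $q_i(0)=x_i + x_n\partial_{\xi_i}a_E+O(x_n^2)$ (where $p-E=e_E(\xi_n-a_E)$); the bracket then evaluates to $t\,\partial^2_{\xi_i\xi_j}a_E+O(t^2)$, and the lower bound comes from $\partial^2_{\xi'}a_E$ inheriting definiteness from $\partial_\xi^2 p|_{T\{p=E\}}>0$. Second, your claim that ``this Hessian is diagonal in the chosen coordinates (the Lagrangian leaves of $L_0$ being spanned by the $\partial_{\xi_k}$)'' is false: $L_0=\operatorname{span}\{\partial_{\xi_k}\}$ is invariant under any linear transformation of $x'$ with the contragredient action on $\xi'$, and this freedom is exactly what is needed (and \emph{must be used}) to diagonalize $\partial_{\xi'}^2 a_E$ at $\gamma(0)$; the diagonality is not automatic from the Lagrangian normalization. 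Without this extra rotation the off-diagonal entries are $O(t)$, not $O(t^2)$, and the hypothesis~\eqref{e:uncertainAssume2} fails as stated. In short: the plan and the geometric mechanism (positivity of the second fundamental form of $\{p=0\}$ forcing the two coisotropics to twist away from each other) are correct, but the verification of Proposition~\ref{p:uncertainty}'s hypotheses — which is the entire content of the Corollary — is not actually carried out, and the one piece of justification given for the off-diagonal bound is wrong.
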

\begin{proof}
To do this, we study the geometry of the flow-out co-isotropics $\Gamma_{\gamma(t)}$. {Namely, we prove that $\Gamma_{\gamma(t)}$ is defined by some functions $\{q_i(t)\}_{i=1}^n$ with $q_i(0)=x_i$ and satisfying \eqref{e:uncertainAssume1} and \eqref{e:uncertainAssume2}. We then apply Proposition
\ref{p:uncertainty} to $\Gamma(t)=\kappa^{-1}(\Gamma_{\gamma(t)})$, for a suitable symplectomorphism $\kappa$.} 

Fix coordinates {$(x',x_n) \in \R^{n-1}\times \R$} on $M$ so that $\gamma(t)=( 0,t)$, and for each $t \in (-\e_0,\e_0)$ let  $\mc{H}_t$  be the submanifold  transverse to the Hamiltonian vector field $H_p$ defined by
$$
\mc{H}_{t}:=\{ (x',t, \xi', \xi_n):\; 2\xi_n> |\xi|_g, \; {|x'| \leq \delta_0}\},
$$
{where $\delta_0>0$ is chosen such that }
$
{\Gamma_{\gamma(t)}}\cap \mc{H}_t= \{(0,t,\xi', \xi_n): \, 2\xi_n>|\xi|_g,\; \big||\xi|_g-1\big|<\delta\}.
$

In particular, {as a subset of $\big\{\big||\xi|_g-1\big|<\delta\big\}$, } $\Gamma_{\gamma(t)}\cap \mc{H}_t$ is defined by the coordinate functions $\{x_i\}_{i=1}^{n-1}$. {For each $t \in (-\e_0,\e_0)$} let $\tilde{q}_i(t):\mc{H}_t\to \re$ be given by $\tilde{q}_i(t)=x_i$ for  $i=1,\dots, n-1$. Then, define $\{q_i(t)\}_{i=1}^{n-1}$ {on $T^*M$} by
$$
H_pq_i(t)=0,\qquad q_i(t)|\sub{\mc{H}_t}=\tilde{q}_i(t).
$$
Note that {for all $t$}
$
H_p(H_{q_i(t)}q_j(t))=0
$
and 
$$
\{q_i(t),q_j(t)\}\mid\sub{\mc{H}_t}=\partial_{\xi_n}q_i(t)\partial_{x_n}q_j(t)-\partial_{\xi_n}q_j(t)\partial_{x_n}q_i(t)+\tilde{H}_{q_i(t)}q_j(t),$$
where $\tilde{H}$ is the Hamiltonian vector field in $T^*\{x_n=t\}$. In particular, since $\partial_{\xi_n}\tilde{q}_i(t)=0$ and $\tilde{H}_{q_i(t)}$ is tangent to $\mc{H}_t$, we have
$
\{q_i(t),q_j(t)\}\mid\sub{\mc{H}_t}=0.
$
Hence, $\{q_i(t),q_j(t)\}\equiv 0$, $\{q_i(t),p\}=0$, {$q_i(0)=x_i$}, and $\{q_i(t)\}_{i=1}^{n-1}$ define $\Gamma_{\gamma(t)}.$
Next, observe that {there exists $s \in \R$ such that for each $i=1, \dots, n-1$,}
$
q_i(0)(x, \xi)=x_i(\varphi_{s}(x,\xi))$ with $\varphi_{s}(x,\xi)\in\mc{H}_0.
$
Since $\partial_{\xi_n}p\neq 0$ on $\mc{H}_0$, {for $E$ near $0$ there exist $a_{\sub{\!E}}$ and $e_{\sub{\!E}}$ such that}
$$
p(x, \xi)-E=e_{\sub{\!E}}(x,\xi)(\xi_n-a_{\sub{\!E}}(x,\xi'))
$$
with $e_{\sub{\!E}}>c$ {for some constant $c>0$.}
In particular, $\varphi_s=\exp(sH_p)$ {is a reparametrization of $\tilde \varphi_s:=\exp(s(H_{\xi_n-a_{\sub{\!E}}}))$ on $\{p=E\}$} and we have that for $(x,\xi)\in \{p=E\}$, {and all $i=1, \dots, n-1$},
{$$
q_i(0)(x,\xi)=x_i(\tilde \varphi_{-x_n}(x,\xi))={x_i}+x_n\partial_{\xi_i}a_{\sub{\!E}}(x,\xi')+O(x_n^2)_{C^\infty}.
$$}
In particular, on $\mc{H}_t\cap \{p=E\}$, since $H_{q_j(t)}$ is tangent to $\{p=E\}$, we have
\begin{align*}
\{q_j(t),q_i(0)\}|\sub{\mc{H}_t\cap \{p=E\}}&=\partial_{\xi_n}q_j(t)\partial_{x_n}q_i(0)-\partial_{x_n}q_j(t)\partial_{\xi_n}q_i(0) +\tilde{H}_{q_j(t)}q_i(0)
=O(t^2)+\partial_{\xi_j}{(t\partial_{\xi_i}a_{\sub{\!E}})}.
\end{align*}
Now, since $\partial_\xi^2p|\sub{T\{p=E\}}>0$, and for all $i, j=1, \dots, n$  
\begin{align*}
&\partial_{\xi_i\xi_j}p=\partial_{\xi_j}\partial_{\xi_i}e_{\sub{\!E}}(\xi_n-a_{\sub{\!E}})+\partial_{\xi_i}e_{\sub{\!E}}(\delta_{nj}-\partial_{\xi_j}a_{\sub{\!E}})+\partial_{\xi_j}e_{\sub{\!E}}(\delta_{ni}-\partial_{\xi_i}a_{\sub{\!E}})-e_{\sub{\!E}}\partial_{\xi_j}\partial_{\xi_i}a_{\sub{\!E}},
\end{align*}
then, as quadratic forms,
$
\partial_{\xi}^2p|\sub{T\{p=E\}}=-e_{\sub{\!E}}{\partial_\xi^2}a_{\sub{\!E}}|\sub{T\{p=E\}}.
$
Hence, $\partial_{\xi'}^2a_{\sub{\!E}}<0$, and  {there is $c>0$ with}
$$
c\delta_{ij}t+O(t^2)  \leq \Big|\{q_i(t),q_j(0)\}|_{\mc{H}_t\cap \{p=E\}}\Big|\leq  C\delta_{ij}t+O(t^2).
$$
Then,  
$
c\delta_{ij}t+O(t^2)  \leq \Big|\{q_i(t),q_j(0)\}|_{ \{p=E\}} \Big|\leq  C\delta_{ij}t+O(t^2) 
$
by invariance under $H_p$.
Since $E$ small is arbitrary, this holds on $\Gamma_{\gamma(0)}\cap \Gamma_{\gamma(t)}$.

Now, by Darboux's theorem, there is a symplectomorphism $\kappa$ such that for all $i=1, \dots, n-1$
$
\kappa^*q_i(0)=x_i$ and $\kappa^*p=\xi_n.
$
In particular, 
$
\kappa^{-1}(\Gamma_{\gamma(0)})\subset {\Gamma(0)=}\{(0,x_n,\xi', \xi_n): \;x_n\in \re,\,\xi\in \re^{n-1} \times \re\}
$
and, abusing notation slightly by relabeling $q_i(t)=\kappa^*q_i(t)$, we have that~\eqref{e:uncertainAssume1} and~\eqref{e:uncertainAssume2} hold.{ In particular, Proposition
~\ref{p:uncertainty} applies to $\Gamma(t)=\kappa^{-1}(\Gamma_{\gamma(t)})$. }

Now, let $U$ be a microlocally unitary quantization of $\kappa$, and $X(t)\in \Psi^{-\infty}_{\Gamma_{\gamma(t)},\rho}$. Then, 
$U^{-1}X(t)U\in \Psi^{-\infty}_{\Gamma(t),\rho}$ and hence the corollary is proved.
\end{proof}

\subsection{Almost orthogonality for coisotropic cutoffs}
\label{s:almostOrthog}

{In this section, we finally prove an estimate which shows that co-isotropic cutoffs associated with $\Gamma_{x_i}$ for many $x_i$ are almost orthogonal. This, together with the fact that these cutoffs respect pointwise values near $x_i$, is what allows us to control the number of points at which a quasimode may be large.}
\begin{proposition}\label{P:orthogonality}
Let $\{B(x_i,R)\}_{i=1}^{N(h)}$ be a $(\mathfrak{D},R)$-good cover for $M$, and $X_i\in \Psi^{-\infty}_{\Gamma_{x_i},\rho}$ $i=1,\dots N(h),$ with uniform symbol estimates. Then, there are $C>0$ and $h_0>0$ such that for all $0<h<h_0$,  $\mc{J}\subset \{1,\dots ,N(h)\}$, and  $u\in L^2(M)$, we have
\begin{equation}
\label{e:orthogonality}
{\sum_{j\in \mc{J}}\|X_j u\|^2_{L^2}\leq C \Big(1+(h^{2\rho-1}R^{-1})^{\frac{n-1}{2}}|\mc{J}|^{\frac{3n+1}{2n}}\big(1+(h^{2\rho-1}R^{-1})^{\frac{n-1}{4}}\big) \Big)\|u\|_{L^2}^2}
\end{equation}

\end{proposition}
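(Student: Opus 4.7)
The plan is to use a $TT^*$ reduction together with the Schur test, with Corollary~\ref{c:corUncertain} providing the crucial off-diagonal decay. Define $T:L^2(M)\to \ell^2(\mc{J};L^2(M))$ by $Tu=(X_ju)_{j\in\mc{J}}$, so that
\[
\sum_{j\in\mc{J}}\|X_j u\|_{L^2}^2 \;=\; \|Tu\|^2 \;\le\; \|TT^*\|_{L^2\to L^2}\,\|u\|_{L^2}^2.
\]
The operator $TT^*$ has block matrix entries $(TT^*)_{ij}=X_i X_j^*$, and a standard Schur-type estimate gives $\|TT^*\|\le \max_{i\in\mc{J}}\sum_{j\in\mc{J}}\|X_i X_j^*\|_{L^2\to L^2}$. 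The problem is thereby reduced to controlling these row sums.

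For each pair $i\ne j$, observe that the class $\Psi^{-\infty}_{\Gamma_{x_j},\rho}$ is closed under adjoints, so $X_j^*\in\Psi^{-\infty}_{\Gamma_{x_j},\rho}$. Since $R\ge h^{\delta_2}$ and $\rho>(\delta_2+1)/2>\delta_2$, we may choose $\e>0$ with $R\ge h^{\rho-\e}$, and hence every pair of distinct centers satisfies $d(x_i,x_j)\ge R/2\ge h^{\rho-\e}/2$. For $d(x_i,x_j)<\tfrac{1}{2}\inj(M)$, apply Corollary~\ref{c:corUncertain} with $X(0)=X_i$ and $X(t)=X_j^*$ along the unique unit-speed geodesic from $x_i$ to $x_j$ to obtain
\[
\|X_i X_j^*\|_{L^2\to L^2} \;\le\; C\bigl(h^{2\rho-1}/d(x_i,x_j)\bigr)^{(n-1)/2}.
\]
For $d(x_i,x_j)\ge\tfrac{1}{2}\inj(M)$, the projections to $M$ of $\Gamma_{x_i}$ and $\Gamma_{x_j}$ are disjoint, so a non-stationary phase argument yields $\|X_i X_j^*\|=O(h^\infty)$; on the diagonal, $L^2$-boundedness of the class (Lemma~\ref{E: boundedness}) gives $\|X_i X_i^*\|\le C$.

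To sum the off-diagonal contributions, fix $i$ and partition $\{j\in\mc{J}:j\ne i\}$ into dyadic shells $d(x_i,x_j)\in[2^kR,2^{k+1}R]$. By $R$-separation each shell contains at most $C\,2^{kn}$ indices, and since only $|\mc{J}|$ indices are available one has the effective cutoff $2^k\lesssim |\mc{J}|^{1/n}$. The $k$-th shell contributes at most $C\,(h^{2\rho-1})^{(n-1)/2}R^{(1-n)/2}\,2^{k(n+1)/2}$, and the geometric sum is dominated by its largest term, giving
\[
\sum_{j\in\mc{J},\,j\ne i}\|X_i X_j^*\| \;\le\; C\,(h^{2\rho-1}R^{-1})^{(n-1)/2}\,|\mc{J}|^{(n+1)/(2n)}.
\]
Combining with the diagonal and the $O(h^\infty)$ far-field terms, one obtains $\|TT^*\|\le C(1+(h^{2\rho-1}R^{-1})^{(n-1)/2}|\mc{J}|^{(n+1)/(2n)})$. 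The claim~\eqref{e:orthogonality} then follows from the trivial estimates $|\mc{J}|^{(n+1)/(2n)}\le |\mc{J}|^{(3n+1)/(2n)}$ and $1\le 1+(h^{2\rho-1}R^{-1})^{(n-1)/4}$; the form stated in the proposition absorbs technical slack that is convenient (and harmless) in the application in Lemma~\ref{l: |I_km|}, where the multiplicative factor $1+(h^{2\rho-1}R^{-1})^{(n-1)/4}$ is bounded due to the standing assumption $h^{2\rho-1}R^{-1}=o(1)$.

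The principal obstacle is to verify that Corollary~\ref{c:corUncertain} applies \emph{uniformly} across all pairs $(X_i,X_j^*)$ with constants independent of $i,j\in\mc{J}$; this in turn depends on the adjoint-closedness of the second microlocal class and on the uniform symbol estimates for $\{X_i\}_i$ assumed in the hypothesis. Once this uniformity is established, the remaining argument is a routine dyadic summation.
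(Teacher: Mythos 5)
Your proof is correct and takes a genuinely different route from the paper's. The paper decomposes
\[
\sum_{i\in\mc{J}}\|X_iu\|^2 = \Big\|\sum_{i}X_iu\Big\|^2 - \Big\langle\sum_{i\neq j}X_j^*X_iu,u\Big\rangle
\]
and estimates each piece by Cotlar--Stein: once for $\|\sum_i X_i\|$ and once (more delicately, with auxiliary operators $\tilde X_\ell$) for the cross-term $\|\sum_{i\neq j}X_j^*X_i\|$. You instead pass directly to $T^*T$/$TT^*$ and run a Schur test on the block matrix $(X_iX_j^*)_{i,j}$. This is simpler, and it is also genuinely sharper: Cotlar--Stein takes square roots of the off-diagonal norms, which halves the decay rate in the dyadic sum from $(2^kR)^{(1-n)/2}$ to $(2^kR)^{(1-n)/4}$ and thereby inflates the exponent of $|\mc{J}|$ from $(n+1)/(2n)$ to $(3n+1)/(4n)$ before squaring, yielding $(3n+1)/(2n)$ overall. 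Your Schur bound gives $|\mc{J}|^{(n+1)/(2n)}$ and no extraneous factor $(1+(h^{2\rho-1}R^{-1})^{(n-1)/4})$, so it strictly improves the stated inequality (and would in turn slightly improve the threshold in Lemma~\ref{l: |I_km|}). The two caveats you flag are real but shared with the paper's own argument: both proofs require $X_j^*\in\Psi^{-\infty}_{\Gamma_{x_j},\rho}$ (adjoint closure of the anisotropic class, which the paper uses implicitly but never records), and both apply Corollary~\ref{c:corUncertain} at all scales $d(x_i,x_j)$ without addressing the intermediate regime where $d(x_i,x_j)$ exceeds the corollary's $\e_0$ but the phase-space supports may still overlap; your note that the far-field term is $O(h^\infty)$ by support separation is the right idea but should be stated for separation in $T^*M$ rather than for the base projections and at the correct radius $\gtrsim(1+\delta)\inj(M)$. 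None of this affects the correctness of the overall argument.
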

\begin{proof}
To prove this bound we will decompose the sum in \eqref{e:orthogonality} as
\begin{equation}\label{e:Xgoal}
\sum_{i\in \mc{J}} \|X_i u\|_{L^2}^2=\Big\|\sum_{i\in \mc{J}} X_iu \Big\|_{L^2}^2- \Big\langle \sum_{\substack{i,j\in \mc{J}\\i\neq j}}{X_j^*}X_iu,u \Big\rangle. 
\end{equation}
First, we note that by Corollary~\ref{c:corUncertain}, {there exists $C>0$ such that for $i \neq j$}
$$
\|{X_j^*}X_i\|\leq Ch^{(n-1)(\rho-\frac{1}{2}})d({x_i},x_j)^{\frac{1-n}{2}}.
$$
Therefore, by the the Cotlar-Stein lemma, 
\begin{align*}
\Big\|\sum_{j\in \mc{J}} X_j\Big\|
&{\leq \sup_{j\in \mc{J}} \Big(\| X_j\| + \sum_{{i \in \mc{J} \backslash\{j\}}}\|X_j^*X_i\|^{\tfrac{1}{2}}\Big)}\leq 2+C h^{\frac{n-1}{2}(\rho-\frac{1}{2})}\, \sup_{j\in \mc{J}}\sum_{{i \in \mc{J} \backslash\{j\}}}d(x_i,x_j)^{\frac{1-n}{4}}.
\end{align*}
To estimate the sum, observe that {there exists $C>0$ such that for any $j \in \mc{J}$  and any positive integer $k$}
$
\tfrac{1}{C}2^{kn}\leq \#\{i:\;  2^{k}R\leq d(x_i,x_j)\leq 2^{k+1}R\}\leq C2^{(k+1)n}.
$
{In particular,
there is $C>0$ such that for any $j \in \mc{J}$
\begin{equation}\label{E:adding wo j}
\sum_{i \in \mc{J} \backslash\{j\}} d(x_i,x_j)^{\frac{1-n}{4}}\leq C\sum_{k=0}^{\frac{1}{n}\log_2|\mc{J}|}2^{kn}(2^kR)^{\frac{1-n}{4}}\leq C|\mc{J}|^{\frac{3n+1}{4n}}R^{\frac{1-n}{4}}.
\end{equation}}
Therefore, we shall bound the first term in \eqref{e:Xgoal} using
\begin{equation}\label{e:X_j}
\Big\|\sum_{j\in \mc{J}}X_j \Big\|\leq C+Ch^{\frac{n-1}{2}(\rho-\frac{1}{2})}R^{\frac{1-n}{4}}|\mc{J}|^{\frac{3n+1}{4n}}.
\end{equation}

We next proceed to control the second term in \eqref{e:Xgoal}. Let $\tilde{X}_j\in \Psi^{-\infty}_{\Gamma_{x_j},\rho}$ such that $\tilde{X}_jX_j=X_j+O(h^\infty)_{L^2\to L^2}$. 
By the Cotlar-Stein Lemma, 
\begin{equation}\label{e:sec term}
\Big\|\sum_{\substack{i,j\in \mc{J}\\i\neq j}} X_j^*X_i \Big\|
    \leq \sup_{\substack{k,\ell\in \mc{J}\\k\neq \ell}} \sum_{\substack{i,j\in \mc{J}\\i\neq j}} \|X_k^*\tilde{X}_\ell X_\ell X_j^*\tilde{X}_j^*X_i\|^{\frac{1}{2}}+O(h^\infty|\mc{J}|^2).
\end{equation}
By Corollary \ref{c:corUncertain} there exists $C>0$ such that for $k \neq \ell$, $i \neq j$, 
$$
\|{X_k^* \tilde X_\ell X_\ell X_j^*\tilde X_j^*X_i}\|\leq C  h^{(n-1)(2\rho-1)}\min\{1,   h^{\frac{(n-1)}{2}(2\rho-1)}d(x_j,x_\ell)^{-\frac{n-1}{2}}\}(d(x_k,x_\ell)d(x_j,x_i))^{\frac{1-n}{2}}.
$$

Using that $\sup_{\substack{k,\ell\in \mc{J}\\k \neq \ell}}d(x_k,x_\ell)^{\frac{1-n}{4}}\leq R^{\frac{1-n}{4}}$, adding first in $i \in \mc{J}\backslash \{j\}$ in \eqref{e:sec term}, and combining with the bound in \eqref{E:adding wo j},  yields
\begin{align}\label{e:X_jX_k}
    \Big\|\sum_{\substack{i,j\in \mc{J}\\i\neq j}} X_j^*X_i\Big\|
    &\leq  C h^{\frac{n-1}{2}(2\rho-1)}  (1+h^{\frac{n-1}{4}(2\rho-1)}|\mc{J}|^{\frac{3n+1}{4n}}R^{\frac{1-n}{4}}) |\mc{J}|^{\frac{3n+1}{4n}}R^{\frac{1-n}{2}}. 
\end{align}
In particular,  combining \eqref{e:X_j} and \eqref{e:X_jX_k} into \eqref{e:Xgoal} we obtain
\begin{align*}
\sum_{i\in \mc{J}} \|X_i u\|^2&\leq C \Big(1+h^{(n-1)(\rho-\frac{1}{2})}R^{\frac{1-n}{2}}|\mc{J}|^{\frac{3n+1}{2n}}+h^{\frac{3(n-1)}{4}(2\rho-1)}R^{\frac{3(1-n)}{4}}{|\mc{J}|^{\frac{3n+1}{2n}}}\Big)\|u\|_{L^2}^2 \notag\\
&\leq C (1+h^{(n-1)(\rho-\frac{1}{2})}R^{\frac{1-n}{2}}(1+(h^{2\rho-1}R^{-1})^{\frac{n-1}{4}}){|\mc{J}|^{\frac{3n+1}{2n}}})\|u\|_{L^2}^2.
\end{align*}

\vspace{-.5cm}
\end{proof}

\bibliography{biblio}{}
\bibliographystyle{alpha}
\end{document}